\renewenvironment{algomathdisplay}
 {\[}
 {\@endalgocfline\vspace{-\baselineskip}\]{\DontPrintSemicolon\;}}
\begin{document}
	\listoffixmes 
	\frontmatter
	\titelpage


\chapter*{Declaration of Authorship}
\thispagestyle{empty}

I hereby declare that the thesis submitted is my own unaided work. All direct or indirect sources used are acknowledged as references.

\vspace{.5\baselineskip}
This thesis was not previously presented to another examination board and has not been published.

\vspace{4\baselineskip}
\begin{center}
\parbox{.8\textwidth}{City, Date \hfill Signature}
\end{center}



\chapter*{Acknowledgments}
    
I want to thank my colleagues at the Faculty for Mathematics, in particular
my supervisor Prof. Dr. Leif Döring, for the helpful discussions and their open
ears to bounce ideas off of.
For proof reading I want to thank Katharina Enin, Julie Naegelen and especially
Linda Ritzau for their numerous suggestions and corrections.


	\cleardoublepage

	\setcounter{tocdepth}{1}

	\tableofcontents

	{

\chapter{Introduction}

\section{Optimization in Machine Learning}

In statistics or machine learning we usually want to find some model
which ``best describes'' our data. So let \(\model(X)\) be a prediction for
\(Y\), parametrized by weights \(\weights\), where \(Z=(X,Y)\) is drawn from a
distribution \(\dist\) of real world examples. We formalize the prediction error
of our parameter \(\weights\) with a loss function \(l(\weights, z)\), e.g. 
a squared loss
\begin{align}\label{eq: square loss}
	\loss(\weights, z) := (\model(x) - y)^2.
\end{align}
Then we want to minimize the expected loss over all (possible) examples, i.e.
``(theoretical) risk''
\begin{align*}
	\Loss(\weights) := \E_{Z\sim \dist} [\loss(\weights, Z)].
\end{align*}
In general, we do not have access to this expected loss. Instead we usually
assume to be in possession of only a sample \(\sample=(Z_1, \dots,
Z_{\sampleSize})\) of data, drawn (independently) from distribution \(\dist\).
This gives rise to the ``empirical risk''
\begin{align}\label{eq: empirical risk}
	\Loss_\sample(\weights)
	:= \frac{1}{\sampleSize}\sum_{k=1}^\sampleSize \loss(\weights, Z_k)
	= \E_{Z\sim \dist_\sample}[\loss(\weights, Z)],
\end{align}
where \(\dist_\sample\) is the empirical distribution generated by sampling
uniformly from the set \(\sample\). The empirical risk usually approaches
the theoretical risk as \(\sampleSize\to\infty\), since some law of large number
is usually applicable (e.g. if we sample from \(\sample\) independently). Thus the
next best thing to minimizing \(\Loss\) is often to minimize \(\Loss_\sample\).
In the case of a linear model
\begin{align*}
	\model(x) = \langle x, \weights \rangle
\end{align*}
minimizing the empirical risk (\ref{eq: empirical risk}) induced by a square
loss (\ref{eq: square loss})
\begin{align*}
	\Loss_\sample(\weights)
	= \frac{1}{\sampleSize}\sum_{k=1}^\sampleSize (\model(X_k)-Y_k)^2
	= \frac{1}{\sampleSize}\| \mathbf{X}^T\weights - \mathbf{Y}\|^2
\end{align*}
can be done analytically, resulting in the well known least squared
regression
\begin{align*}
	\minimum = (\mathbf{X}^T\mathbf{X})^{-1}\mathbf{X}^T\mathbf{Y} \quad \text{where}\quad 
	\begin{aligned}
		\mathbf{X}&=(X_1,\dots,X_\sampleSize)^T\\
		\mathbf{Y}&=(Y_1,\dots,Y_\sampleSize)^T
	\end{aligned}.
\end{align*}
But this is not possible in general. And even if it was possible, it might still
be difficult to come up with the solution, slowing down experimentation
with different types of models. We are therefore interested in (numeric)
algorithms which can minimize arbitrary loss functions.

This train of thought leads us to various ``black box'' models where we assume
no knowledge of the loss function itself but assume we have some form of oracle
telling us things about the loss function. ``Zero order oracles'' allow us to
evaluate the loss function at a particular point, i.e.\ retrieve \(\Loss(\weights)\)
for any \(\weights\) at a fixed cost. ``First order oracles'' provide us not
only with \(\Loss(\weights)\) but also with \(\nabla\Loss(\weights)\), and so on. 

In some sense these black box models are extremely wasteful as we do not utilize
any of the inherent structure of our model. ``Structural optimization'' addresses
this problem and opens up the black box and makes further assumptions about
the loss function. Proximal methods for example assume that some form of convex
regularization function (e.g. distance penalty) is part of our overall loss
function \parencite[e.g.][]{bottouOptimizationMethodsLargeScale2018}:
\begin{align*}
	\Loss(\weights)
	= F(\weights) + \lambda \underbrace{\Omega(\weights)}_{\mathclap{\text{regularizer}}}.
\end{align*}
Nevertheless we will mostly navigate around structural optimization in
this work, only touching on it again with ``mirror descent'' in
Remark~\ref{rem: mirror descent}.

We will also assume that the set of possible weights is a real
vector space \(\reals^\dimension\). Since projections \(\Pi_\Omega\) to convex
subsets \(\Omega\) are contractions \parencite[Lemma
3.1]{bubeckConvexOptimizationAlgorithms2015}, i.e.
\begin{align*}
	\| \Pi_\Omega(x) - \Pi_\Omega(y) \| \le \| x - y \|,
\end{align*}
they can often be applied without any complications to the proofs for unbounded
optimization we will cover, which show a reduction in the distance to the
minimum
\begin{align*}
	\|\weights_{n+1} - \minimum \|
	&= \|\Pi_\Omega(\tilde{\weights}_{n+1}) - \Pi_\Omega(\minimum)\|
	\le \|\tilde{\weights}_{n+1} - \minimum \|\\
	&\le \dots \text{ [unbounded arguments] }\\
	&\le \|\weights_n - \minimum\|.
\end{align*}

The next section will explain why we will mostly skip zero order
optimization.

\section{Zero Order Optimization}

If we assume an infinite number of possible inputs \(\weights\), it does not matter
how often and where we sample \(\Loss\), there is always a different \(\weights\) where
\(\Loss\) might be arbitrarily lower (see various ``No-Free-Lunch Theorems'').
We must therefore start making assumptions about \(\Loss\), to make this
problem tractable.

\subsection{Grid Search}

It is reasonable to assume that the value of our loss \(\Loss\) at some point
\(\weights\) will not differ much from the loss of points in close proximity.
Otherwise we would have to sample every point, which is not possible for continuous
and thus infinite parameter sets. This assumption is continuity of the loss
\(\Loss\). One possible formulation is ``\(\lipConst\)-Lipschitz continuity''
\begin{align*}
	| \Loss(\weights_1) - \Loss(\weights_2) | < \lipConst | \weights_1 - \weights_2 |.
\end{align*}
And while Lipschitz continuity is sufficient to find an \(\lossError\)-solution
\(\hat{\weights}\) inside a unit (hyper-)cube
\begin{align*}
	\Loss(\hat{\weights}) \le \inf_{ 0\le\weights_i\le 1} \Loss(\weights) + \lossError
\end{align*}
in a finite number of evaluations of the loss, it still requires a full
``grid search'' (i.e.\ tiling our parameter set with a grid into (tiny) cells
and trying out one point for every cell) with time complexity \parencite[pp.
12,13]{nesterovLecturesConvexOptimization2018}
\begin{align*}
	\left(\left\lfloor \frac{\lipConst}{2\lossError}\right\rfloor + 1\right)^\dimension
\end{align*}
where \(\dimension\) is the dimension of \(\weights\). The fact that grid search is
sufficient can be seen by tiling the space into a grid and asking the oracle
for the value of the loss function at each center. If the minimum is in
some tile, the Lipschitz continuity forces the center of that tile to be \(\lossError\)
close to it \parencite[cf.][p. 11]{nesterovLecturesConvexOptimization2018}.

The fact that this is necessary can be seen by imagining a ``resisting
oracle'' which places the minimum in just the last grid element where we look
\parencite[cf.][p. 13]{nesterovLecturesConvexOptimization2018}. The exponential
increase of complexity in the dimension makes the problem intractable for even
mild dimensions and precisions (e.g. \(\dimension=11\), \(\lossError=0.01\), \(\lipConst=2\)
implies millions of years of computation).

\subsection{Other Algorithms}

There are more sophisticated zero order optimization techniques than grid
search like ``simulated annealing'' \parencite[e.g.][]{bouttierConvergenceRateSimulated2019}
or ``evolutionary algorithms'' \parencite[e.g.][]{heConditionsConvergenceEvolutionary2001}.
But in the end they all scale relatively poor in the dimension. Calculating the
\mbox{(sub-)gradient} on the other hand scales only linearly in the dimension.
Calculating the second derivative (Hessian) requires \(O(\dimension^2)\)
operations, and doing something useful with it (e.g. solving a linear
equation) usually causes \(O(\dimension^3)\) operations. For this reason first
order methods seem most suited for high dimensional optimization, as their
convergence guarantees are independent of the dimension. But there are attempts
to make second order methods viable by avoiding the calculation of the Hessian.
These methods are usually called ``Quasi Newton methods'' of which we will 
sketch the most common methods in Chapter~\ref{chap: other methods}.

\section{Related Work}

A similar review was conducted by
\textcite{bottouOptimizationMethodsLargeScale2018} with a stronger focus on
second order methods but only passing interest in
momentum methods which are quite successful in practice.
The second half of \textcite{nesterovLecturesConvexOptimization2018} introduces
more ideas for structural optimization, the first half covers both first order
and second order methods extensively but does not consider stochasticity much
and provides little intuition.
A review with heavier focus on geometric methods (Section~\ref{sec: geometric methods}) 
was conducted by \textcite{bubeckConvexOptimizationAlgorithms2015}.

\section{Readers Guide}

In Chapter~\ref{chap: gradient descent} we will review the well
established method of gradient descent which can minimize deterministic
functions. It might be tempting to skip this chapter, but note that this chapter 
introduces important building blocks we will use in subsequent chapters. We
prove convergence of gradient descent mostly as motivation for these building blocks.
The convergence proofs themselves are actually very short.

The complexity bounds in Section~\ref{sec: complexity bounds} on the other hand,
are not required for subsequent chapters. Neither is the loss surface analysis
in Section~\ref{sec: loss surface} and backtracking in Section~\ref{sec:
backtracking}, but they motivate assumptions we make.

While gradient descent is applicable to \(\Loss_\sample\), i.e.\ allows us
to minimize empirical risk (\ref{eq: empirical risk}), we will question this
approach in Chapter~\ref{chap: sgd} where we show that it is possible to
minimize \(\Loss\) itself, provided we have an infinite supply of samples. If we
start to reuse samples we would just end up minimizing \(\Loss_\sample\) again.

In Chapter~\ref{chap: momentum} we will discuss methods to speed up
gradient descent. This chapter is mostly independent of Chapter~\ref{chap: sgd},
so they can be read in any order.

Chapter~\ref{chap: other methods} provides a review of all the methods we did
not have time to go over in detail. This review should provide enough intuition
to understand their importance, but we will skip convergence proofs entirely
and sometimes only sketch the ideas.


}
	\mainmatter 
	{

\chapter{Gradient Descent (GD)}\label{chap: gradient descent}

One relatively obvious way to utilize the gradient information
\(\nabla\Loss(\weights)\), provided by first order oracles, is to incrementally
move in the direction of steepest descent
\parencite{cauchyMethodeGeneralePour1847}, i.e.
\begin{align}
	\label{eq: gradient descent}
	\tag{gradient descent}
	\weights_{n+1} = \weights_n - \lr\nabla \Loss(\weights_n).
\end{align}
Where \(\lr\) denotes the ``learning rate''. A useful way to look at this
equation is to notice that it is the discretization of an ordinary differential
equation (ODE)
\begin{align*}
	\frac{d}{dt}\weights_n \approx \frac{\weights_{n+1} - \weights_n}{\lr}
	= - \nabla \Loss(\weights_n),
\end{align*}
if we view the learning rate \(\lr\) as a time delta between times
\(t_n\) and \(t_{n+1}\) with \(t_n = n\lr\). As then for
\(\lr\to 0\) we arrive at the ODE in time \(t\)
\begin{align}\label{eq: gradient flow}
	\tag{gradient flow}
	\dot{\weights}(t) = \frac{d}{dt}\weights(t)= -(\nabla_\weights \Loss)(\weights(t)).
\end{align}
That we are taking the gradient with respect to \(\weights\), will from now on be
implied. With the \ref{eq: gradient flow} equation and the fundamental theorem of
analysis we get\footnote{
	if you are familiar with Lyapunov functions you probably recognize this argument
}
\begin{align}\label{eq: gradient integral}
	\Loss(\weights(t_1)) - \Loss(\weights(t_0))
	&= \int_{t_0}^{t_1} \nabla \Loss(\weights(s)) \cdot \dot{\weights}(s) ds
	= \int_{t_0}^{t_1} -\|\nabla \Loss(\weights(s))\|^2 ds
	\le 0,
\end{align}
which implies a decreasing loss sequence
\begin{align*}
	\Loss(\weights(t_0)) \ge \Loss(\weights(t_1)) \ge \dots \ge \inf_\weights \Loss(\weights) \ge 0
\end{align*}
implying convergence of \(\Loss(\weights(t))\). But it does not necessitate a
convergent \(\weights(t)\) nor convergence to \(\inf_\weights
\Loss(\weights)\). This highlights why there are three different types of
convergence we are interested in
\begin{description}
	\item[Convergence of the Weights] to a (the) minimum \(\minimum\)
	\begin{align*}
		\|\weights_n - \minimum\| \to 0 \qquad (n\to\infty),
	\end{align*}
	\item[Convergence of the Loss]
	\begin{align*}
		|\Loss(\weights_n) - \Loss(\minimum)|
		=\Loss(\weights_n) - \Loss(\minimum) \to 0,
	\end{align*}
	\item[Convergence of the Gradient]
	\begin{align*}
		\|\nabla\Loss(\weights_n)\|
		= \|\nabla\Loss(\weights_n) - \nabla\Loss(\minimum)\| \to 0.
	\end{align*}
\end{description}

\section{Visualizing the 2nd Taylor Approximation}\label{sec: visualize gd}

To build some intuition what leads to convergence of the weights let us consider
more simple cases. For the second order Taylor approximation \(T_2\Loss\) of the
loss \(\Loss\)
\begin{align}\label{eq: taylor approx}
	\Loss(\weights+x) \approx T_2\Loss(\weights+x)
	= \Loss(\weights) + x^T \nabla \Loss(\weights) + \tfrac12 x^T \nabla^2 \Loss(\weights) x,
\end{align}
we can use the first order condition, i.e.\ set the first derivative to zero
\begin{align}\label{eq: Taylor approx derivative}
	\nabla T_2\Loss(\weights+x) = \nabla \Loss(\weights) + \nabla^2\Loss(\weights) x \xeq{!} 0
\end{align}
to find the minimum 
\begin{align}\label{eq: newton minimum approx relative coords}
	\hat{x} = -(\nabla^2 \Loss(\weights))^{-1}\nabla \Loss(\weights).
\end{align}
This assumes our Hessian \(\nabla^2 \Loss(\weights)\) exists and is
positive definite (all eigenvalues are positive), which also means it is invertible.
Plugging (\ref{eq: newton minimum approx relative coords}) into
\begin{align*}
	\nabla T_2\Loss(\weights+x) = \nabla^2 \Loss(\weights)(x-\hat{x}).
\end{align*}
and comparing it to (\ref{eq: Taylor approx derivative})
best explains where it comes from.
Similarly we can rewrite the original Taylor approximation (\ref{eq: taylor
approx}) using (\ref{eq: newton minimum approx relative coords})
\begin{align*}
	T_2\Loss(\weights+x)
	&\lxeq{(\ref{eq: newton minimum approx relative coords})} \Loss(\weights) - x^T \nabla^2 \Loss(\weights) \hat{x} + \tfrac12 x^T \nabla^2 \Loss(\weights) x \\
	&= \underbrace{
		\Loss(\weights) - \tfrac12 \hat{x} \nabla^2 \Loss(\weights) \hat{x}
	}_{
		=: c(\weights) \text{ (const.)}
	} + \tfrac12 (x-\hat{x})^T \nabla^2 \Loss(\weights)(x-\hat{x}).
\end{align*}
So far we have expressed the location of the minimum \(\hat{x}\) of the Taylor
approximation only relative to \(\weights\). To obtain its absolute coordinates
\(\theta:=\weights+x\) we define\footnote{
	We now motivated the Newton-Raphson Method in passing, which is a second
	order method utilizing the second derivative.
}
\begin{align}\label{eq: newton minimum approx}
	\tag{Newton-Raphson step}
	\hat{\weights} := \weights + \hat{x}
	\xeq{(\ref{eq: newton minimum approx relative coords})}
	\weights -(\nabla^2 \Loss(\weights))^{-1}\nabla \Loss(\weights).
\end{align}
Using \(x-\hat{x}=\theta-\hat{\weights}\), we therefore obtain the notion of a
paraboloid centered around vertex \(\hat{\weights}\)
\begin{align}\label{eq: paraboloid approximation of L}
	\Loss(\theta)
	= \overbrace{
		\tfrac12 (\theta- \hat{\weights})^T \nabla^2 \Loss(\weights) (\theta-\hat{\weights})
		+ c(\weights)
	}^{T_2\Loss(\weights+x)} + o(\|\theta-\weights\|^2).
\end{align}
\begin{wrapfigure}{O}{0.65\textwidth}
	\centering
	\def\svgwidth{0.65\textwidth}
	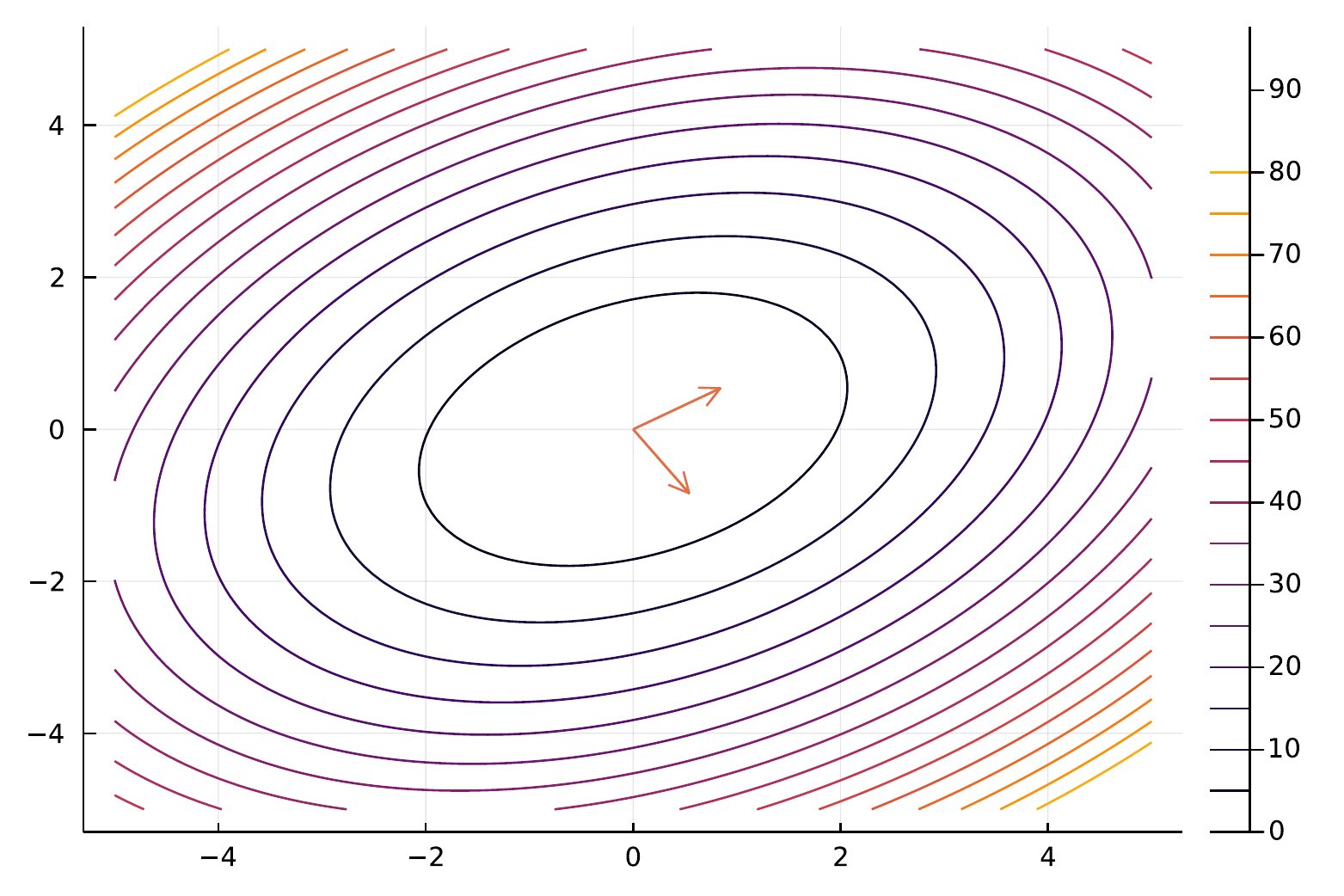
	\caption{Assuming center \(\hat{\weights}=0\), eigenvalues \(\hesseEV_1=1,
	\hesseEV_2=2\), and the respective eigenvectors \(v_1=(\sin(1), \cos(1))\)}
	\label{fig: 2d paraboloid}
\end{wrapfigure}
To fully appreciate Figure~\ref{fig: 2d paraboloid}, we now only need to realize
that the diagonizability of \(\nabla^2 \Loss(\weights)\)
\begin{align}\label{eq: diagnalization of the Hesse matrix}
	V \nabla^2 \Loss(\weights) V^T
	= \diag(\hesseEV_1,\dots,\hesseEV_\dimension), \qquad V=(v_1,\dots, v_\dimension)
\end{align}
implies, that once we have selected the center \(\hat{\weights}\) and the
direction of one eigenspace in two dimensions, the other eigenspace has to be
orthogonal to the first one. This is due to orthogonality of eigenspaces with
different eigenvalues of symmetric matrices. The direction of the
second eigenspace is therefore fully determined by the direction of the second
eigenspace.

Before we move on, it is noteworthy that we only really needed the positive
definiteness of \(\nabla^2 \Loss(\weights)\) to make it invertible, such that the
vertex \(\hat{x}\) is well defined. If it was not positive definite but still
invertible, then \(\hat{x}\) would not be a minimum but all the other arguments
would still hold.
In that case the eigenvalues might be negative as well as positive representing
a saddle point (cf. Figure~\ref{fig: visualize saddle point gd}), or all
negative representing a maximum.

Taking the derivative of the representation (\ref{eq: paraboloid approximation
of L}) of \(\Loss\) we can write the gradient at \(\weights\) as
\begin{align}\label{eq: hesse representation of gradient}
	\nabla \Loss(\weights)
	=  \nabla^2 \Loss(\weights)(\weights-\hat{\weights})
	\ (= -\nabla^2 \Loss(\weights)\hat{x}).
\end{align}
But note that the vertex \(\hat{\weights}\) depends on \(\weights\), so we need
to index both by \(n\) to rewrite \ref{eq: gradient descent}
\begin{align*}
	\weights_{n+1} &= \weights_n - \lr\nabla \Loss(\weights_n)
	= \weights_n - \lr\nabla^2 \Loss(\weights_n)(\weights_n - \hat{\weights}_n).
\end{align*}
Subtracting \(\hat{\weights}_n\) from both sides we obtain the following
transformation 
\begin{align}\label{eq: Matrix GD Formulation}
	\weights_{n+1} - \hat{\weights}_n
	&= (\identity - \lr\nabla^2 \Loss(\weights_n) ) (\weights_n - \hat{\weights}_n),
\end{align}
where \(\identity\) denotes the identity.
Taking a closer look at this transformation matrix, we can use the
diagonalization (\ref{eq: diagnalization of the Hesse matrix}) to see
\begin{align*}
	\identity - \lr\nabla^2 \Loss(\weights_n)
	&= V(\identity - \lr\cdot\diag(\hesseEV_1,\dots,\hesseEV_\dimension) )V^T \\
	&= V\cdot\diag(1-\lr\hesseEV_1, \dots,1-\lr\hesseEV_\dimension)V^T.
\end{align*}
Now, if we assume like \textcite{gohWhyMomentumReally2017} that the second
taylor approximation is accurate, and thus that \(\nabla^2 \Loss(\weights)=H\) is a
constant, then \(\hat{\weights}_n = \minimum\) is the true minimum (if all
eigenvalues are positive). And by induction we get
\begin{align}
	\weights_n - \minimum
	= V\cdot\diag[(1-\lr\hesseEV_1)^n,\dots,(1-\lr\hesseEV_\dimension)^n] V^T (\weights_0 - \minimum).
\end{align}
Decomposing the distance to the minimum \(\weights - \minimum\) into the
eigenspaces of \(H\), we can see that each component scales exponentially on its
own 
\begin{align*}
	\langle \weights_n -\minimum, v_i\rangle
	= (1-\lr\hesseEV_i)^n \langle \weights_0 - \minimum, v_i\rangle.
\end{align*}
This is beautifully illustrated with interactive graphs in
\citetitle{gohWhyMomentumReally2017} by \citeauthor{gohWhyMomentumReally2017}.

\section{Assumptions on the Loss \texorpdfstring{\(\Loss\)}{ℒ}}

\subsection{Negative Eigenvalues}\label{subsec: Negative Eigenvalues}

If our eigenvalues are not all positive and \(\hesseEV_i<0\), then
\(1-\lr\hesseEV_i\) is greater than one. This repels this component
\(\langle \weights_0 - \minimum, v_i\rangle\) away from our vertex \(\minimum\),
which is a good thing, since \(\minimum\) is a maximum in this component. Therefore
we will walk down local minima and saddle points no matter the learning
rate, assuming this component was not zero to begin with. In case of a maximum
this would mean that we would start right on top of the maximum, in case of a
saddle point it implies starting on the rim such that one can not roll down
either side.

Assuming a stochastic initialization (with zero probability on zero measure
sets) we would start in these narrow equilibria with probability zero. So
being right on top of them is of little concern. But being close to zero in such
a component still means slow movement away from these equilibria, since we are
merely multiplying factors to this initially small distance. This fact,
illustrated by Figure~\ref{fig: visualize saddle point gd}, is a possible
explanation for the common observation in deep learning of ``long plateaus on
the way down when the error hardly change[s], followed by sharp drops''
\parencite{sejnowskiUnreasonableEffectivenessDeep2020} when the exponential
factor ramps up.
\begin{figure}[h]
	\centering
	\def\svgwidth{1\textwidth}
	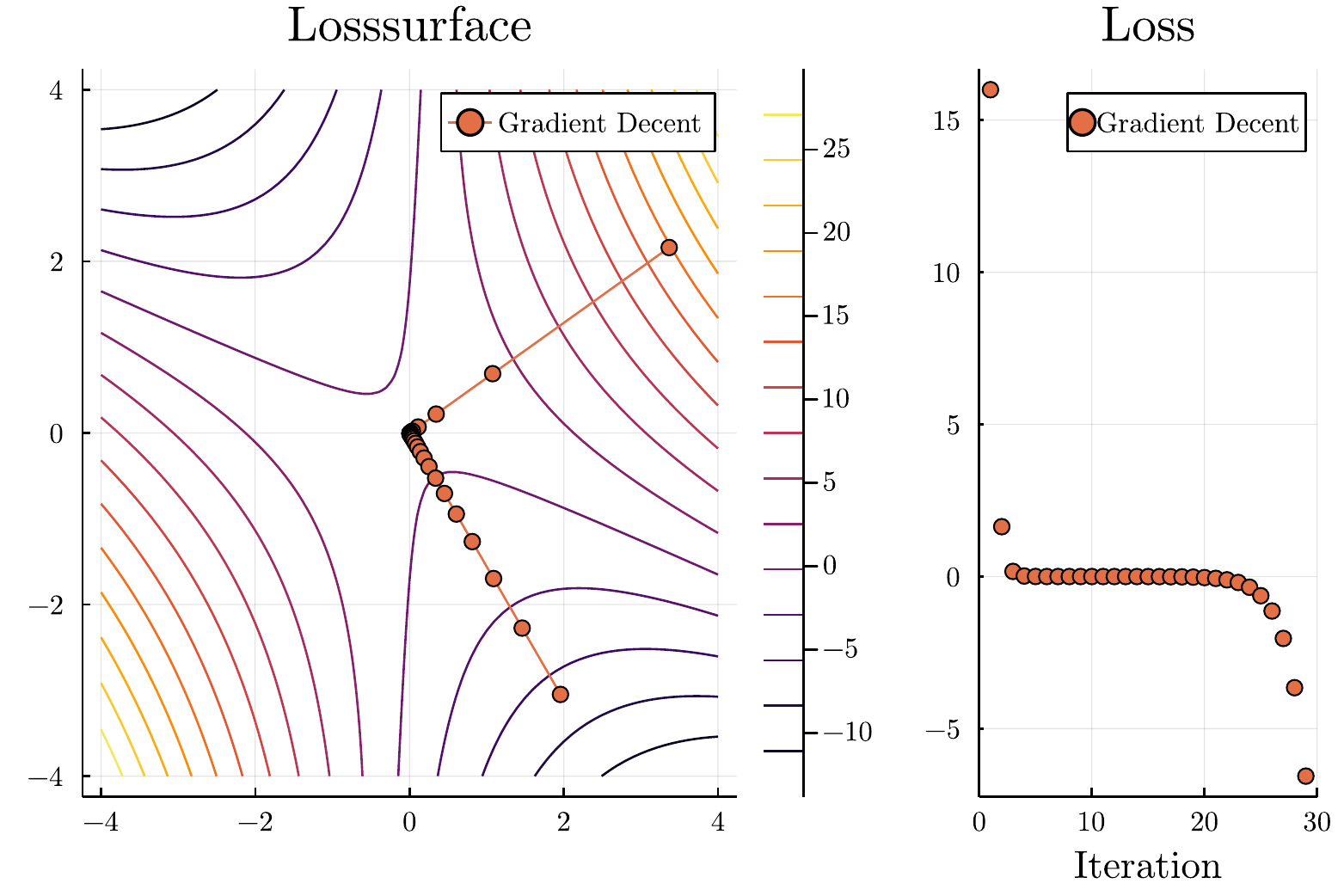
	\caption{Losssurface of \(\Loss\) with vertex at zero and eigenvalues of its
	Hessian being \(\hesseEV_1=-1, \hesseEV_2=2\). GD with start
	\(0.001v_1+4v_2\) and learning rate \(\lr=0.34\)}
	\label{fig: visualize saddle point gd}
\end{figure}

\subsection{Necessary Assumptions in the Quadratic Case}
\label{subsec: necessary assumptions in the quadratic case}

Since negative eigenvalues repel, let us consider strictly positive eigenvalues
now and assume that our eigenvalues are already sorted
\begin{align}
	0 < \hesseEV_1 \le \dots \le \hesseEV_\dimension.
\end{align}
Then for positive learning rates \(\lr\) all exponentiation bases are smaller
than one
\begin{align*}
	1-\lr\hesseEV_\dimension \le \dots \le 1-\lr\hesseEV_1 \le 1.
\end{align*}
But to ensure convergence we also need all of them to be larger than \(-1\).
This leads us to the condition
\begin{align}\label{eq: learning rate restriction (eigenvalue)}
	0< \lr < \tfrac{2}{\hesseEV_\dimension}.
\end{align}
Selecting \(\lr = \tfrac{1}{\hesseEV_\dimension}\) reduces the exponentiation
base (``rate'') of the corresponding eigenspace to zero ensuring convergence in
one step. But if we want to maximize the convergence rate of \(\weights_n\),
this is not the best selection.

We can reduce the rates of the other eigenspaces if we increase the learning
rate further, getting their rates closer to zero. At this point we are of course
overshooting zero with the largest eigenvalue, so we only continue with this
until we get closer to \(-1\) with the largest eigenvalue than we are to \(1\)
with the smallest. In other words the overall convergence rate is
\begin{align*}
	\rate(\lr)=\max_{i} |1-\lr\hesseEV_i| = \max\{|1-\lr\hesseEV_1|, |1-\lr\hesseEV_\dimension|\}.
\end{align*}
This is minimized when
\begin{align*}
	1-\lr\hesseEV_1 = \lr\hesseEV_\dimension -1,
\end{align*}
implying an optimal learning rate of
\begin{align}\label{eq: optimal SGD lr eigenvalue representation}
	\lr^* = \frac{2}{\hesseEV_1 + \hesseEV_\dimension}.
\end{align}
If \(\hesseEV_1\) is much smaller than \(\hesseEV_\dimension\), this leaves \(\lr\)
at the upper end of the interval in (\ref{eq: learning rate restriction
(eigenvalue)}). The optimal convergence rate
\begin{align*}
	\rate(\lr^*)
	= 1 - \frac{2}{1+\condition}
	\qquad \condition:=\hesseEV_\dimension/\hesseEV_1 \ge 1
\end{align*}
becomes close to one if the ``condition number'' \(\condition\) is large.
If all the eigenvalues are the same on the other hand, the condition number
becomes one and the rate is zero, implying instant convergence. This is not
surprising if we recall our visualization in Figure~\ref{fig: 2d paraboloid}.
When the eigenvalues are the same, the contour lines are concentric circles and
the gradient points right at the center.

\subsection{Generalizing Assumptions}

To prove convergence in a more general setting, we need to formulate the assumptions
we made about the eigenvalues of the Hessian without actually requiring
the existence of a Hessian if possible. 
\begin{definition}
	We call a matrix \(A\) ``positive definite'', if
	\begin{align*}
		\langle x, Ax \rangle \ge 0 \quad \forall x
	\end{align*}
	which is equivalent to \(A\) having only non-negative eigenvalues, as can be seen
	by testing the definition with eigenvectors and utilizing linearity to generalize.
	We denote
	\begin{align*}
		A \precsim B :\iff B-A \text{ is positive definite}.
	\end{align*}
\end{definition}

\subsubsection{Convexity}

Instead of non-negative eigenvalues we can demand ``convexity'' which does
not require the existence of a Hessian, but is equivalent to its positive
definiteness (non-negative eigenvalues) if it exists.
\begin{definition}[Convexity]\label{def: convexity}
	A function \(f:\reals^\dimension\to \reals\) is called ``convex'', if 
	\begin{align}\label{eq: convex combination definition}
		f(x + \hesseEV(y-x)) \le f(x) + \hesseEV (f(y)-f(x))
		\qquad \forall x,y\in\reals^\dimension,\ \forall\hesseEV\in[0,1].
	\end{align}
	It can be shown \parencite[e.g.][Prop.
	1.1]{bubeckConvexOptimizationAlgorithms2015} that this definition is
	equivalent to non-emptyness of the set of subgradients for all \(x\)
	\begin{align}\label{eq: subgradient definition}
		\partial f(x) \coloneqq \{
			g \in\reals^\dimension: f(x) + \langle g, y-x\rangle \le f(y)
			\quad \forall y\in\reals^\dimension
		\}.
	\end{align}
	If \(f\) is differentiable and convex, then the gradient \(\nabla f\) is in
	this set of subgradients \parencite[e.g.][Prop.
	1.1]{bubeckConvexOptimizationAlgorithms2015} and therefore makes it non-empty.
	This means that for differentiable \(f\) the statement
	\begin{align*}
		f(x) + \langle\nabla f(x), y-x\rangle \le f(y)\qquad \forall x,y\in\reals^\dimension
	\end{align*}
	is another possible definition. If \(f\) is twice differentiable, then another
	equivalent statement \parencite[e.g.][Theorem
	2.1.4]{nesterovLecturesConvexOptimization2018} is the positive definiteness
	of \(\nabla^2 f\)
	\begin{align*}
		0\precsim\nabla^2 f.
	\end{align*}
\end{definition}
\begin{figure}[h]
	\centering
	\def\svgwidth{1\textwidth}
	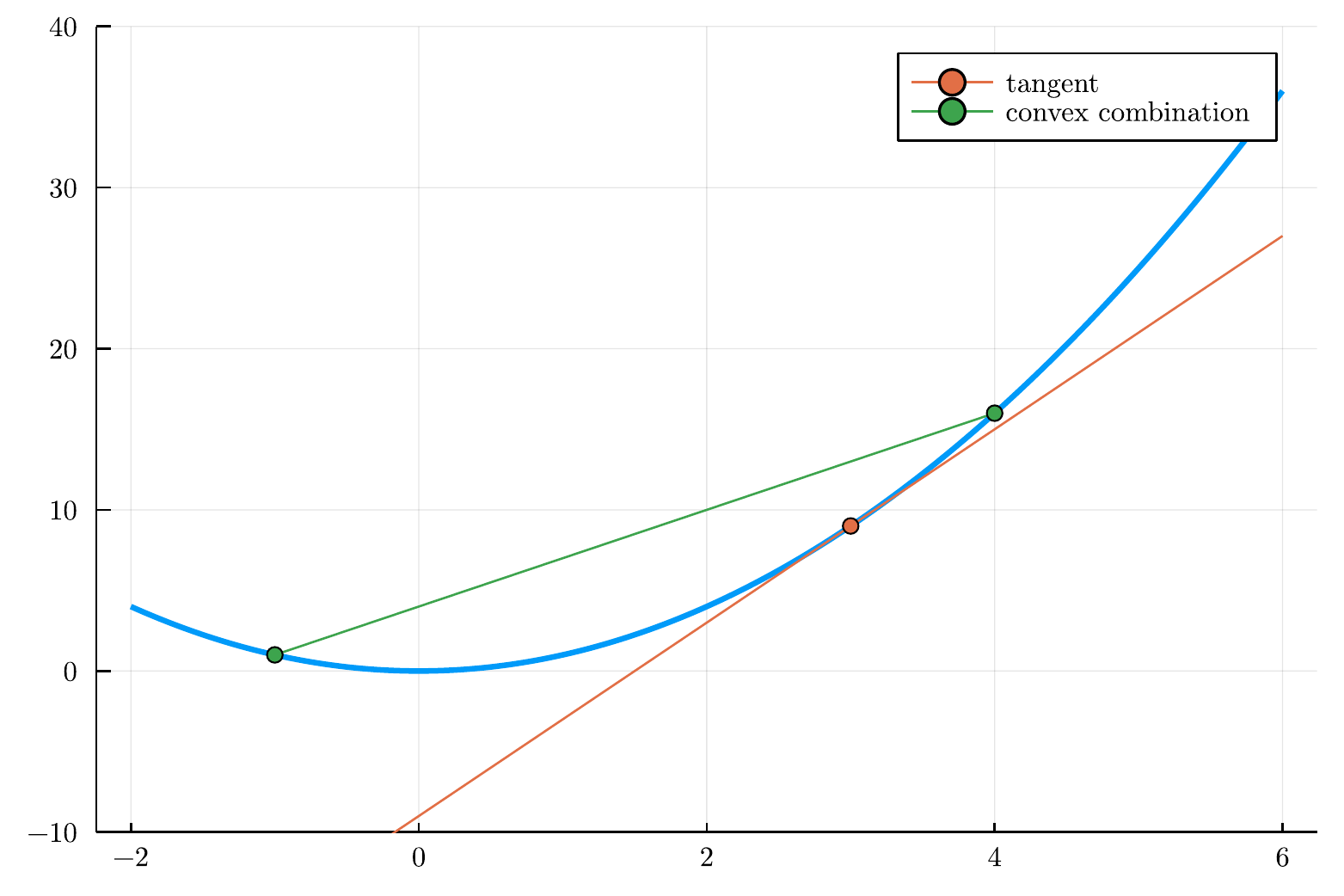
	\caption{
		A visualization of convexity definitions for a function plotted in blue.
		We can see that a convex combination of two function values will stay
		above the convex combination of its inputs
		(\ref{eq: convex combination definition}) and that there is a slope inducing
		a line which touches the function in some point and stays below it
		(\ref{eq: subgradient definition}).
	}
	\label{fig: visualize convexity definition}
\end{figure}
\subsubsection{Strong Convexity and Lipschitz Continuous Gradient}

If you recall how the rate of convergence is related to the condition number
\(\condition:=\hesseEV_\dimension/\hesseEV_1\), then it is not surprising that
the lack of a lower and upper bound will have us struggle to get convergence, as
\(\hesseEV_\dimension\) can be arbitrarily large and \(\hesseEV_1\) arbitrarily
small. To get similar convergence results we have to assume something like
\begin{align}\label{eq: upper lower bound on hesse matrix}
	\lbound \identity \precsim \nabla^2 \Loss(\weights) \precsim \ubound \identity, \qquad \lbound,\ubound >0.
\end{align}
We will see that the lower bound is not necessary to get convergence of the
loss function, but it is necessary to get convergence of the weights. It is
also necessary for a linear\footnote{
	In numerical analysis  ``exponential rate of convergence'' is usually called
	``linear convergence'' because the relationship
	\[
		\|\weights_{n+1} - \minimum\| \le c \|\weights_n - \minimum\|
	\]
	for some constant \(c\) is linear. In general, convergence order \(q\) implies
	\[
		\|\weights_{n+1} -\minimum\| \le c\|\weights_n - \minimum\|^q.
	\]
	This is likely because quadratic convergence (\(q=2\)) already results in an ``actual
	rate of convergence'' of \(c^{\left(2^n\right)}\) which is double exponential and
	repeated application of the exponential function is annoying to write, so
	writers prefer the recursive notation above which motivates this seemingly
	peculiar naming. Slower convergence orders are referred to as ``sub-linear''
	convergence.
}
rate of convergence. When it comes to the upper bound on the other hand recall
that we use it to bound the learning rate, ensuring that
\(1-\lr\hesseEV_\dimension\) is larger than \(-1\). If we did not have an upper
bound, then we could not guarantee this ``stability'' of \ref{eq: gradient descent}.

As we do not want to assume the Hessian exists, let us integrate (\ref{eq:
upper lower bound on hesse matrix}) and use the constants to make sure that we
have equality of the functions and derivatives in \(\weights\). Then we get
\begin{align}\label{eq: bregman divergence upper and lower bound}
	\frac{\lbound}{2}\| \theta - \weights\|^2
	\le \underbrace{
		\Loss(\theta) - \Loss(\weights) - \langle \nabla \Loss(\weights), \theta-\weights\rangle
	}_{=:\bregmanDiv{\Loss}(\theta, \weights)\quad\text{(Bregman Divergence)}}
	\le \frac{\ubound}{2}\| \theta - \weights\|^2.
\end{align}
The ``Bregman Divergence'' is the distance between the real function and its first
Taylor approximation. If we move the first Taylor approximation from the
middle to the sides on the other hand, this represents a quadratic upper and lower
bound on \(\Loss\) as illustrated in Figure~\ref{fig: visualize strong
convexity}. For \(\lbound=0\) we get the definition of convexity again.
\begin{definition}\label{def: strong convexity}
	A function \(\Loss\) is \(\lbound\)-\emph{strongly convex} for some
	\(\lbound>0\), if for all \(x,y\in\reals^\dimension\)
	\begin{align*}
		\Loss(x) + \langle \nabla \Loss(x), y-x\rangle + \frac{\lbound}{2}\|y-x\|^2 \le \Loss(y).
	\end{align*}
\end{definition}

The upper bound of \ref{eq: bregman divergence upper and lower bound} can be
motivated in another way: First notice that the operator norm is equal to the
largest absolute eigenvalue
\begin{align*}
	\|A\| := \sup_{\|x\| =1} \|Ax\|
	= \sup_{\|x\| =1} \sqrt{\langle Ax, Ax\rangle}
	= \sup_{\|x\| =1} \sqrt{\sum_{i=1}^\dimension \hesseEV_i^2 x_i^2},
	= \max_{i=1,\dots,\dimension} |\hesseEV_i|,
\end{align*}
where we have used the orthonormal basis of eigenvectors \((v_i)_i\) with
eigenvalues \((\hesseEV_i)_i\) of a (symmetric) matrix A to represent a point
\(x\in\reals^\dimension\)
\begin{align*}
	x = \sum_{i=1}^{d}x_i v_i.
\end{align*}
As we have no negative eigenvalues in the convex case, the upper bound on the
Hessian is equivalent to a bound on its operator norm. And since a
bounded derivative is equivalent to Lipschitz continuity of the function itself
(cf. Lemma~\ref{lem-appendix: lipschitz and bounded derivative}), another way
to express our upper bound is by requiring Lipschitz continuity of the gradient
\(\nabla \Loss\). Under convexity, Lipschitz continuity and our upper bound are
in fact equivalent.
\begin{figure}[h]
	\centering
	\def\svgwidth{1\textwidth}
	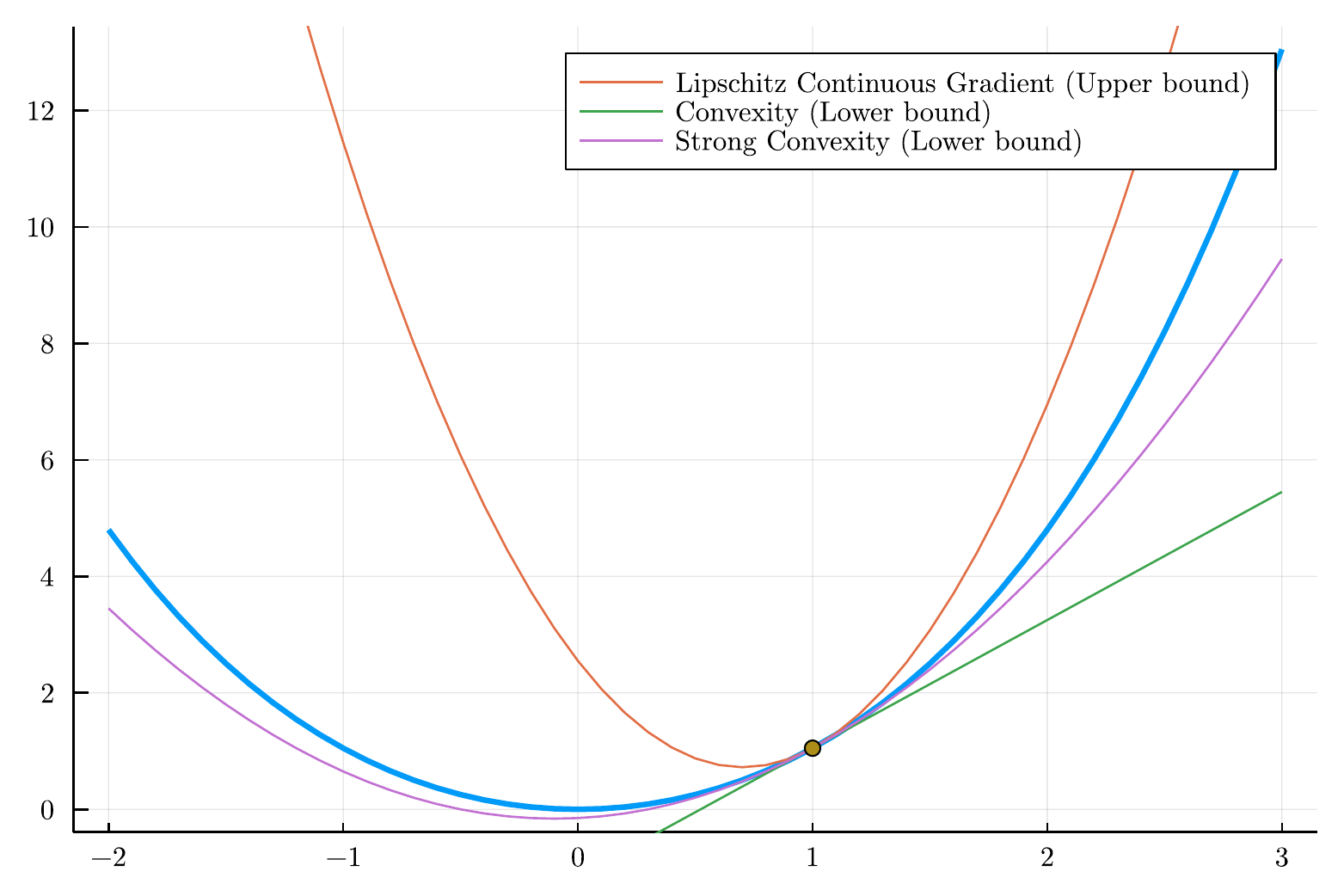
	\caption{
		Strong convexity provides a tighter lower bound than convexity
		while Lipschitz continuity of the gradient provides an upper bound.
		Visualized for a function plotted in blue.
	}
	\label{fig: visualize strong convexity}
\end{figure}
\begin{lemma}[{\cite[Lemma~1.2.3,2.1.5]{nesterovLecturesConvexOptimization2018}}]
	\label{lem: Lipschitz Gradient implies taylor inequality}
	If \(\nabla \Loss\) is \(\ubound\)-Lipschitz continuous, then
	\begin{align*}
		| \Loss(y) - \Loss(x) - \langle \nabla \Loss(x), y-x\rangle |
		\le \tfrac{\ubound}2 \|y-x\|^2
	\end{align*}
	If \(\Loss\) is convex, then the opposite direction is also true.
\end{lemma}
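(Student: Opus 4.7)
The plan is to prove the two implications separately. The forward direction (Lipschitz gradient $\Rightarrow$ quadratic Bregman bound) is a direct calculation via the fundamental theorem of calculus, entirely analogous in spirit to \eqref{eq: gradient integral}. The converse direction (quadratic Bregman bound together with convexity $\Rightarrow$ Lipschitz gradient) is subtler and will require an auxiliary function combined with a ``hypothetical'' single gradient step on it.

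For the forward direction, I would apply the fundamental theorem of calculus along the segment from $x$ to $y$,
\[
    \Loss(y) - \Loss(x) = \int_0^1 \langle \nabla \Loss(x + t(y-x)), y-x\rangle\,dt,
\]
rewrite $\langle \nabla \Loss(x), y-x\rangle = \int_0^1 \langle \nabla \Loss(x), y-x\rangle\,dt$ so that it can be absorbed inside the integral, and then use Cauchy-Schwarz together with the $\ubound$-Lipschitz bound $\|\nabla \Loss(x+t(y-x)) - \nabla \Loss(x)\| \le \ubound t\|y-x\|$ to dominate the integrand in absolute value by $\ubound t\|y-x\|^2$. Integrating in $t$ from $0$ to $1$ supplies the factor $\tfrac12$.

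For the converse, fix $x$ and consider the auxiliary function $g_x(z) := \Loss(z) - \langle \nabla \Loss(x), z\rangle$. Since $g_x$ differs from $\Loss$ only by a linear term, it inherits convexity from $\Loss$ and the hypothesized Bregman upper bound transfers verbatim,
\[
    g_x(v) - g_x(u) - \langle \nabla g_x(u), v-u\rangle \le \tfrac{\ubound}{2}\|v-u\|^2.
\]
Crucially, $\nabla g_x(x) = 0$, so by convexity the point $x$ is a \emph{global} minimizer of $g_x$. Applying the above upper bound at $u = y$, $v = y - \tfrac{1}{\ubound}\nabla g_x(y)$ yields the descent-lemma inequality $g_x(v) \le g_x(y) - \tfrac{1}{2\ubound}\|\nabla g_x(y)\|^2$; combining with $g_x(x) \le g_x(v)$ and recalling $\nabla g_x(y) = \nabla \Loss(y) - \nabla \Loss(x)$ gives
\[
    \tfrac{1}{2\ubound}\|\nabla \Loss(y) - \nabla \Loss(x)\|^2 \le \Loss(y) - \Loss(x) - \langle \nabla \Loss(x), y-x\rangle.
\]
Adding this estimate to its analogue with the roles of $x$ and $y$ swapped collapses the right-hand side into $\langle \nabla \Loss(y) - \nabla \Loss(x), y-x\rangle$, after which Cauchy-Schwarz divides out one factor of $\|\nabla \Loss(y) - \nabla \Loss(x)\|$ and leaves the desired Lipschitz estimate.

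The main obstacle is the converse. Without convexity the conclusion really does fail (there is no global minimum of $g_x$ to compare against), so the argument must use convexity in an essential way, and the elegant route is precisely to turn the upper bound into a descent lemma and then play it off against global optimality of $x$ for $g_x$. Everything else — the FTC computation for the forward direction and the symmetrization plus Cauchy-Schwarz at the end — is routine bookkeeping.
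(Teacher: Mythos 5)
Your proposal is correct and follows essentially the same route as the paper: the forward direction is the identical FTC-plus-Cauchy-Schwarz computation, and your converse argument via the auxiliary function \(g_x\) is exactly the paper's argument unpacked — \(g_x\) is (up to a constant) the Bregman divergence \(\bregmanDiv{\Loss}(\cdot,x)\) used in Lemma~\ref{lem: bregmanDiv lower bound}, the descent-lemma step is Lemma~\ref{lem: smallest upper bound} applied under the weakened hypothesis, and the symmetrization plus Cauchy-Schwarz finish is the same. The only cosmetic difference is that the paper compresses this by citing that those earlier lemmas only used convexity and the quadratic upper bound, whereas you re-derive the chain self-contained.
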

\begin{proof}
	See Appendix \ref{Appdx-lem: Lipschitz Gradient implies taylor inequality}.
\end{proof}
\begin{definition}
	Finally let us borrow some notation from \citeauthor{nesterovLecturesConvexOptimization2018}:
	\begin{description}
		\item[{\(\lipGradientSet[q,p]{\ubound}\)}] the set of \(q\) times
		differentiable, convex functions with an \(\ubound\)-Lipschitz continuous
		\(p\)-th derivative
		\item[{\(\strongConvex[q,p]{\lbound}{\ubound}\)}] the \(\lbound\)-strongly convex
		functions from \(\lipGradientSet[q,p]{\ubound}\)
	\end{description}
\end{definition}

\section{Lipschitz Continuity of the Gradient}
\label{sec: lipschitz continuity of the Gradient}

Using \(x\le|x|\) on Lemma~\ref{lem: Lipschitz Gradient implies taylor inequality}
explains Lipschitz continuity of the gradient as a distance penalty

\begin{align}\label{eq: lin approx + distance penalty notion}
	\Loss(\theta)
	= \overbrace{
		\Loss(\weights_n)+ \langle\nabla\Loss(\weights_n), \theta-\weights_n\rangle 
	}^{\text{linear approximation}}
	+ \overbrace{\underbrace{\bregmanDiv{\Loss}(\theta,\weights_n)}_{
		\xle{(\ref{eq: bregman divergence upper and lower bound})}
		\tfrac\ubound{2} \|\theta-\weights_n\|^2 
	}}^{\text{distance penalty}},
\end{align}
penalizing the fact that we do not have enough information about the loss if we
move farther away from our current point. Our limit on the change of the gradient
allows us to create this worst case upper bound though. Gradient descent
with learning rate \(\frac1\ubound\) minimizes this upper bound.
\begin{lemma}[{\cite[Theorem~2.1.5]{nesterovLecturesConvexOptimization2018}}]
	\label{lem: smallest upper bound}
	Let \(\nabla \Loss\) be \(\ubound\)-Lipschitz continuous, then for any \(\weights\in\reals^d\)
	\begin{align}
		\weights - \tfrac{1}{\ubound}\nabla \Loss(\weights) 
		&= \arg\min_{\theta}\{
			\Loss(\weights)+ \langle\nabla \Loss(\weights), \theta-\weights\rangle + \tfrac{\ubound}{2}\|\theta-\weights\|^2 
		\}\\
		\label{eq: min approximation}
		\Loss\left(\weights-\tfrac{1}{\ubound}\nabla\Loss(\weights)\right)
		&\le \min_{\theta} \{
			\Loss(\weights)+ \langle\nabla \Loss(\weights), \theta-\weights\rangle
			+ \tfrac{\ubound}{2}\|\theta-\weights\|^2 
		\} \\ \nonumber
		&= \Loss(\weights) - \tfrac{1}{2\ubound} \|\nabla \Loss(\weights)\|^2
	\end{align}
	More generally, we have for all \(\lr\in(0, \tfrac{2}{\ubound})\)
	\begin{align*}
		\Loss(\weights - \lr\nabla\Loss(\weights)) - \Loss(\weights)
		\le-\lr(1-\tfrac{\ubound}2 \lr)\|\nabla\Loss(\weights)\|^2.
	\end{align*}
\end{lemma}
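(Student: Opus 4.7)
The plan is to handle the three assertions in sequence, all of which reduce to applying Lemma~\ref{lem: Lipschitz Gradient implies taylor inequality} (the upper direction, which does not require convexity) together with elementary quadratic algebra.

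First I would verify the minimization identity. The map
\[
    q(\theta) := \Loss(\weights) + \langle \nabla\Loss(\weights), \theta - \weights\rangle + \tfrac{\ubound}{2}\|\theta - \weights\|^2
\]
is a strictly convex quadratic in \(\theta\) with Hessian \(\ubound\,\identity\), so it has a unique minimizer, obtained from the first-order condition \(\nabla_\theta q(\theta) = \nabla\Loss(\weights) + \ubound(\theta - \weights) = 0\). This yields \(\theta^{\ast} = \weights - \tfrac{1}{\ubound}\nabla\Loss(\weights)\), which is exactly the gradient descent step with learning rate \(1/\ubound\). Substituting \(\theta^{\ast}\) into \(q\) gives
\[
    q(\theta^{\ast}) = \Loss(\weights) - \tfrac{1}{\ubound}\|\nabla\Loss(\weights)\|^2 + \tfrac{1}{2\ubound}\|\nabla\Loss(\weights)\|^2 = \Loss(\weights) - \tfrac{1}{2\ubound}\|\nabla\Loss(\weights)\|^2,
\]
which is the closed-form value appearing on the right-hand side of~(\ref{eq: min approximation}).

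Second, for the inequality~(\ref{eq: min approximation}) itself, Lemma~\ref{lem: Lipschitz Gradient implies taylor inequality} supplies the quadratic upper bound \(\Loss(\theta) \le q(\theta)\) for every \(\theta\). Evaluating this at \(\theta = \theta^{\ast}\) and chaining with the identity just computed produces the claimed inequality. Note that only the upper half of Lemma~\ref{lem: Lipschitz Gradient implies taylor inequality} is used, so convexity of \(\Loss\) plays no role here.

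Finally, for the general learning-rate statement, I apply the same upper bound at \(\theta = \weights - \lr\nabla\Loss(\weights)\) and expand:
\[
    \Loss(\weights - \lr\nabla\Loss(\weights)) \le \Loss(\weights) - \lr\|\nabla\Loss(\weights)\|^2 + \tfrac{\ubound}{2}\lr^2\|\nabla\Loss(\weights)\|^2 = \Loss(\weights) - \lr\bigl(1 - \tfrac{\ubound}{2}\lr\bigr)\|\nabla\Loss(\weights)\|^2.
\]
Rearranging yields the stated inequality. There is no real obstacle; the only subtle point to flag is that the coefficient \(1 - \tfrac{\ubound}{2}\lr\) is positive precisely when \(\lr \in (0, 2/\ubound)\), which explains why this range appears in the hypothesis and guarantees a genuine decrease of the loss. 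Taking \(\lr = 1/\ubound\) in this last inequality recovers the optimal reduction \(\tfrac{1}{2\ubound}\|\nabla\Loss(\weights)\|^2\) of the middle claim, providing a sanity check.
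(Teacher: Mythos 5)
Your proof is correct, but it takes a different path to the minimizer than the paper does. You observe that \(q(\theta) := \Loss(\weights)+ \langle\nabla \Loss(\weights), \theta-\weights\rangle + \tfrac{\ubound}{2}\|\theta-\weights\|^2\) is a strictly convex quadratic and apply the first-order condition \(\nabla_\theta q = 0\) directly, solving a linear equation. The paper instead decomposes the minimization into two stages: first it fixes the step length \(r = \|\theta - \weights\|\) and uses the Cauchy--Schwarz inequality to show the optimal \emph{direction} is \(-\nabla\Loss(\weights)/\|\nabla\Loss(\weights)\|\), reducing the \(\dimension\)-dimensional problem to a one-dimensional one over \(\lr\); then it optimizes (and, for the last claim, merely evaluates) the resulting scalar quadratic. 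Your route is shorter and more routine; the paper's buys the geometric observation that the negative gradient is the optimal search direction among all directions of a given step length, which is why gradient descent is "Markovian optimal" as remarked immediately after the lemma. Both proofs invoke the same ingredient — the upper half of Lemma~\ref{lem: Lipschitz Gradient implies taylor inequality} — to convert the quadratic surrogate into a bound on the true loss, and both perform the same elementary expansion for the general-\(\lr\) case, so there is no gap in your argument.
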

\begin{proof}
	The direction is irrelevant for the distance penalty term \(\|\theta-\weights\|^2\).
	So if we keep the length of the vector \(\theta-\weights\) constant at \(r\)
	we can first optimize over the direction and then later optimize over its
	length separately. Now the Cauchy-Schwarz inequality implies the optimal
	direction is along the gradient vector:
	\begin{align*}
		\big\langle 
			\nabla\Loss(\weights),
			-\tfrac{r}{\|\nabla\Loss(\weights)\|}\nabla\Loss(\weights)
		\big\rangle
		&= - \|\Loss(\weights)\| r\\
		&\lxle{\text{C.S.}}
		\min_{\theta} \{\langle \nabla \Loss(\weights), \theta-\weights \rangle : \|\theta-\weights\|=r \}.
	\end{align*}
	So we only need to find the right step size to optimize the entire equation
	\begin{align*}
		&\min_{\theta} \{
			\Loss(\weights)+ \langle\nabla \Loss(\weights), \theta-\weights\rangle
			+ \tfrac{\ubound}{2}\|\theta-\weights\|^2 
		\} \\
		&= \min_{\lr} \{ \Loss(\weights) + \langle \nabla \Loss(\weights), -\lr \nabla \Loss(\weights)\rangle
		+ \tfrac{\ubound}{2}\|\lr\nabla \Loss(\weights)\|^2\}.
	\end{align*}
	And a simple expansion of terms results in
	\begin{align*}
		\Loss(\weights-\lr\nabla\Loss(\weights))
		&\lxle{(\ref{eq: lin approx + distance penalty notion})}
		\Loss(\weights) + \langle \nabla \Loss(\weights), -\lr \nabla \Loss(\weights)\rangle
		+ \tfrac{\ubound}{2}\|\lr\nabla \Loss(\weights)\|^2\\
		&=\Loss(\weights) - \underbrace{\lr(1-\tfrac{\ubound}2\lr)}_{=\tfrac{2}{\ubound}\alpha(1-\alpha)}\|\nabla\Loss(\weights)\|^2
	\end{align*}
	using the reparametrization
	\begin{align*}
		\lr=\tfrac{2}{\ubound}\alpha\qquad \alpha \in (0,1).
	\end{align*}
	Therefore the learning rate \(\lr=\tfrac1\ubound\), right in the middle of the
	interval \((0, \tfrac2\ubound)\) of guaranteed decrease, minimizes our upper bound.
\end{proof}
\begin{remark}[Markovian Optimality of GD]
	What we have seen is that \ref{eq: gradient descent} is optimal in a ``Markovian''
	(history independent) sense, with regard to a certain upper bound. As every
	step optimizes the decrease of our upper bound, regardless of the history
	leading up to this point. But it is not optimal over \emph{multiple} steps, as
	it might sometimes be sensible to incorporate past gradient information into
	our current step (Chapter~\ref{chap: momentum} Momentum).
\end{remark}

\begin{remark}[Mirror Descent]\label{rem: mirror descent}
	The basic idea of ``Mirror Descent''
	\parencite[e.g.][]{guptaAdvancedAlgorithmsFall2020,chenLargeScaleOptimizationData2019,bubeckConvexOptimizationAlgorithms2015}
	is: If we had a better upper bound on the Bregman divergence than Lipschitz
	continuity of the gradient 
	\begin{align*}
		\min_y f(y)
		&= \min_y \{f(x) + \langle f(x), y-x\rangle +\bregmanDiv{f}(y,x)\} \\
		&\le \min_y\{f(x) + \langle f(x), y-x\rangle +\tfrac{\ubound}{2} \|y-x\|^2\},
	\end{align*}
	then we could also improve on \ref{eq: gradient descent} by taking the minimum of this
	improved upper bound.
\end{remark}

\subsection{Convergence of the Gradient}

\begin{theorem}[{\cite[Theorem~4.10]{bottouOptimizationMethodsLargeScale2018}}]
	\label{thm: convergence of the gradient (only lip cont)}
	For a loss function with \(\ubound\)-Lipschitz gradient and finite lower bound
	(i.e.\ \(\inf_{\weights}\Loss(\weights) >-\infty\)) \ref{eq: gradient descent} with
	learning rate \(\lr\in(0,\tfrac2\ubound)\) induces convergence of the gradient.
	The convergence rate is at least \(o(\tfrac1n)\). More precisely, the average
	gradient converges with rate \(O(\tfrac1n)\)
	\begin{align*}
		\frac1{n}\sum_{k=0}^n \|\nabla\Loss(\weights_k)\|^2
		&\le \frac{\Loss(\weights_0) - \inf_{\weights}\Loss(\weights)}{n\lr(1-\tfrac{\ubound}2\lr)}.
	\end{align*} 
\end{theorem}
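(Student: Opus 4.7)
The plan is to combine the one-step descent bound from Lemma~\ref{lem: smallest upper bound} with a telescoping argument; the finite lower bound on $\Loss$ then caps the total descent and hence the sum of squared gradient norms.

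Concretely, I would first apply the last inequality of Lemma~\ref{lem: smallest upper bound} with $\weights = \weights_k$. Since $\lr \in (0, 2/\ubound)$, the constant $c := \lr(1 - \tfrac{\ubound}{2}\lr)$ is strictly positive, and the one-step descent reads
\begin{align*}
	\Loss(\weights_{k+1}) - \Loss(\weights_k)
	\le - c \, \|\nabla \Loss(\weights_k)\|^2.
\end{align*}
Summing this telescoping inequality from $k = 0$ to $n$ and using the finite lower bound $\inf_\weights \Loss(\weights) \le \Loss(\weights_{n+1})$ on the left yields
\begin{align*}
	c \sum_{k=0}^{n} \|\nabla \Loss(\weights_k)\|^2
	\le \Loss(\weights_0) - \Loss(\weights_{n+1})
	\le \Loss(\weights_0) - \inf_\weights \Loss(\weights).
\end{align*}
Dividing by $nc$ immediately gives the $O(1/n)$ bound on the running average of squared gradient norms claimed in the theorem.

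For the $o(1/n)$ statement, the key observation is that the right-hand side of the telescoped inequality is bounded \emph{uniformly in $n$}, so the non-negative series $\sum_{k=0}^{\infty} \|\nabla \Loss(\weights_k)\|^2$ converges. From this I would deduce that $\|\nabla \Loss(\weights_k)\|^2 \to 0$, and more sharply that $\min_{k \le n} \|\nabla \Loss(\weights_k)\|^2 = o(1/n)$: given $\varepsilon > 0$, choose $N$ so large that $\sum_{k \ge N} \|\nabla \Loss(\weights_k)\|^2 < \varepsilon$; then for $n \ge 2N$ one has $(n-N) \min_{N \le k \le n} \|\nabla \Loss(\weights_k)\|^2 \le \sum_{k=N}^n \|\nabla \Loss(\weights_k)\|^2 < \varepsilon$, so that $n \cdot \min_{k \le n} \|\nabla \Loss(\weights_k)\|^2 \to 0$.

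I do not expect any genuinely hard step. The only subtle point is pinning down what "rate $o(1/n)$" means without monotonicity of $\|\nabla \Loss(\weights_k)\|$: the natural reading is the running minimum, which is what the summability argument above delivers. Everything else is a direct application of the previously established Markovian descent bound together with a telescoping sum.
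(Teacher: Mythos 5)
Your proof is correct and follows essentially the same route as the paper: apply the descent inequality from Lemma~\ref{lem: smallest upper bound}, telescope, and use the finite lower bound. Your treatment of the $o(1/n)$ claim via the running minimum is in fact more careful than the paper's one-line appeal to divergence of the harmonic series, which glosses over the standard subtlety that summability of $\|\nabla\Loss(\weights_k)\|^2$ does not by itself yield $\|\nabla\Loss(\weights_k)\|^2 = o(1/k)$ pointwise.
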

\begin{proof}
	Let the weights \(\weights_n\) be generated with \ref{eq: gradient descent} with learning rate \(\lr\).
	Then we have	
	\begin{align*}
		\sum_{k=0}^n \|\nabla\Loss(\weights_k)\|^2
		&\xle{(\ref{eq: loss implies gradient convergence})}
		\sum_{k=0}^n \frac{\Loss(\weights_k) - \Loss(\weights_{k+1})}{\lr(1-\tfrac{\ubound}2\lr)}
		\xeq{\text{telescoping}} \frac{\Loss(\weights_0) - \Loss(\weights_{n+1})}{\lr(1-\tfrac{\ubound}2\lr)}
		\\
		&\le \frac{\Loss(\weights_0)
		- \inf_{\weights}\Loss(\weights)}{\lr(1-\tfrac{\ubound}2\lr)}.
	\end{align*}
	Taking \(n\to\infty\) this implies summability, and thus convergence of the
	gradients. Since the harmonic series \(\sum_{k=0}^n \tfrac1{k}\) is divergent,
	the gradients must necessarily converge faster.
\end{proof}

Since we cannot guarantee convergence to a (global) minimum as convergence of
the gradient only implies convergence to a critical point, it is the weakest
form of convergence.

\begin{theorem}[Convergence Chain]\label{thm: convergence chain}
	If the loss \(\Loss\) has Lipschitz continuos gradient, then we have
	\begin{align*}
		\text{weight convergence}
		\implies \text{loss convergence}
		\implies \text{gradient convergence}.
	\end{align*}
	More precisely,
	\begin{align}
	\label{eq: weight implies loss convergence}
		\tfrac{1}{2\ubound}\|\nabla\Loss(\weights)\|^2
		&\le \Loss(\weights) - \Loss(\minimum)\\
	\label{eq: loss implies gradient convergence}
		&\le \tfrac{\ubound}{2}\|\weights-\minimum\|^2.
	\end{align}
\end{theorem}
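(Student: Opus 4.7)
The theorem packages two inequalities, and both sides of the sandwich have essentially already been prepared earlier in the chapter. My plan is to verify each bound by a one-line invocation of a prior lemma, and then read off the implication chain as an immediate consequence.

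For the upper bound $\Loss(\weights) - \Loss(\minimum) \le \tfrac{\ubound}{2}\|\weights - \minimum\|^2$, I would apply Lemma~\ref{lem: Lipschitz Gradient implies taylor inequality} with $x = \minimum$ and $y = \weights$. Since $\Loss$ is differentiable (Lipschitz continuous gradient forces this) and $\minimum$ is an unconstrained minimizer, the first-order condition $\nabla\Loss(\minimum) = 0$ kills the linear term, leaving $|\Loss(\weights) - \Loss(\minimum)| \le \tfrac{\ubound}{2}\|\weights - \minimum\|^2$; the absolute value can be dropped because $\Loss(\weights) \ge \Loss(\minimum)$.

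For the lower bound $\tfrac{1}{2\ubound}\|\nabla\Loss(\weights)\|^2 \le \Loss(\weights) - \Loss(\minimum)$, I would use equation~(\ref{eq: min approximation}) from Lemma~\ref{lem: smallest upper bound}, which already furnishes
\[
  \Loss\bigl(\weights - \tfrac{1}{\ubound}\nabla\Loss(\weights)\bigr)
  \le \Loss(\weights) - \tfrac{1}{2\ubound}\|\nabla\Loss(\weights)\|^2.
\]
A single gradient step from $\weights$ cannot undershoot the global minimum, so the left-hand side is at least $\Loss(\minimum)$, and rearranging yields the claim. Notice that this argument in fact works with $\inf_\weights \Loss$ in place of $\Loss(\minimum)$, so the gradient-side bound does not even require a minimizer to exist.

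With both bounds secured, the implication chain is essentially bookkeeping: if $\|\weights_n - \minimum\| \to 0$ then the upper bound sandwiches $\Loss(\weights_n) - \Loss(\minimum) \to 0$, and if the latter tends to zero then the lower bound forces $\|\nabla\Loss(\weights_n)\| \to 0$. There is no genuine obstacle here; the only things worth flagging in the write-up are the appeal to $\nabla\Loss(\minimum) = 0$ (which is the only place the minimality of $\minimum$ enters the upper bound) and the fact that the chain really is one-way, since gradient convergence only locates a critical point and loss convergence does not by itself pin down the weights.
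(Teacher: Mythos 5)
Your proof is correct and takes essentially the same route as the paper: both bounds are obtained by invoking Lemma~\ref{lem: smallest upper bound} (specifically~(\ref{eq: min approximation})) for the gradient-side lower bound and Lemma~\ref{lem: Lipschitz Gradient implies taylor inequality} with $\nabla\Loss(\minimum)=0$ for the weight-side upper bound. Your added remark that the lower bound only needs $\inf_\weights\Loss$ rather than an attained minimizer is a nice observation the paper does not state but does not change the argument.
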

\begin{proof}
	The first claim follows from Lemma~\ref{lem: smallest upper bound} (\ref{eq: min approximation})
	\begin{align*}
		\Loss(\minimum) \le \Loss(\weights -\tfrac{1}{\ubound}\nabla\Loss(\weights))
		\le \Loss(\weights) - \tfrac{1}{2\ubound}\|\nabla\Loss(\weights)\|^2.
	\end{align*}
	Claim (\ref{eq: loss implies gradient convergence}) follows from Lemma~\ref{lem:
	Lipschitz Gradient implies taylor inequality} using
	\(\nabla\Loss(\minimum)=0\):
	\begin{align*}
		\Loss(\weights) - \Loss(\minimum)
		= |\Loss(\weights) - \Loss(\minimum) - \langle \nabla\Loss(\minimum), \weights-\minimum\rangle|
		&\le \tfrac{\ubound}{2}\|\weights-\minimum\|^2
		\qedhere
	\end{align*}
\end{proof}

\subsection{Discussion}

Lipschitz continuity of the gradient provides us with a ``trust bound''
\(\lr\in(0,\frac{2}{\ubound})\) on the gradient. It limits the rate of change of the
gradient, and thus allows us to formulate and optimize over an upper bound of
the loss function, resulting in learning rate \(\frac{1}{\ubound}\).

If the gradient can change faster, we can trust it less and have to take smaller
steps. Even with infinitesimal steps (i.e.\ in the \ref{eq: gradient flow} case)
we need at least uniform continuity of \(\nabla\Loss\) such that the inequality
\begin{align}\label{eq: bounded gradient integral}
	\int_{t_0}^\infty \|\nabla \Loss(\weights(s))\|^2 ds
	&= \Loss(\weights(t_0)) - \lim_{t\to\infty} \Loss(\weights(t)) \\
	&\le \Loss(\weights(t_0)) - \inf_{\weights} \Loss(\weights) < \infty \nonumber
\end{align}
derived from (\ref{eq: gradient integral}) is sufficient for a convergent
\(\|\nabla \Loss(\weights(t))\|\), otherwise the gradients could be arbitrarily
large or small. The discrete version of this integral inequality is the inequality
from Theorem~\ref{thm: convergence of the gradient (only lip cont)}
\begin{align*}
	\frac1{n}\sum_{k=0}^n \|\nabla\Loss(\weights_k)\|^2
	&\le \frac{\Loss(\weights_0) - \inf_{\weights}\Loss(\weights)}{n\lr(1-\tfrac{\ubound}2\lr)}.
\end{align*} 
Note that if we use \(\tfrac{1}{\ubound}\) as the time increment \(\lr\)
between the \(\weights_k\), we only lose the factor \(\tfrac12\) in our
discretization of the continuos version (\ref{eq: bounded gradient integral}) so
the bound is quite tight.

Another motivation for Lipschitz continuity is the fact that the gradient defines
the \ref{eq: gradient flow} ODE, and it is very common to use Lipschitz
continuity to argue for existence, uniqueness and stability of ODEs.

If the gradient does not converge, neither can the weights, since their updates
(i.e., the difference between \(\weights_{n+1}\) and \(\weights_n\))
are proportional to the gradient. So convergence of the gradient is necessary.

\subsubsection{Gradient Convergence does not imply Weight Convergence}

If we only have that the derivative \(\dot{\weights}(t) = -\nabla \Loss(\weights(t))\)
converges, then the logarithm is an obvious example where the other direction
is wrong. Convergence of the gradient is therefore necessary but not
sufficient.

If the series of unsquared gradient norms were finite as well on the other hand,
the \(\weights_n\) would be a Cauchy sequence, i.e.
\begin{align*}
	\|\weights_n - \weights_m \|
	\le \sum_{k=m}^{n-1} \|\weights_{k+1} - \weights_k\|
	= \lr \sum_{k=m}^{n-1} \|\nabla \Loss(\weights_k)\|.
\end{align*}
This provides us with some intuition how a situation might look like when the
gradient converges but not the sequence of \(\weights_n\). The gradient would
behave something like the harmonic series as its squares converges and the weights
\(\weights_n\) would behave like the partial sums of the harmonic series which
behaves like the logarithm in the limit\footnote{
\(
	\log(n+1) = \int_1^{n+1} \tfrac{1}x dx \le \sum_{k=0}^n \tfrac1k \le 1+ \log(n)
\)
}.
So for
\begin{align*}
	\Loss(\weights) = \exp(-\weights)
\end{align*}
the \ref{eq: gradient flow} ODE is solved by
\begin{align*}
	\weights(t) &= \log(t),\\
	\nabla \Loss(t) &= -\tfrac1t,
\end{align*}
which provides intuition how ``flat'' a minima has to be to cause such behavior.
The minimum at infinity has an infinitely wide basin which flattens out
more and more. If we wanted such an example in a bounded space we would have
to try and coil up such an infinite slope into a spiral, which spirals outwards
to avoid convergence. A spiral is of course not convex, so this hints
at why we will need a notion of increasing or infinite dimensions to
provide complexity bounds in Section~\ref{sec: complexity bounds} essentially
spiralling through the dimensions.

Since our example is one dimensional, we can also immediately see the eigenvalue
\begin{align*}
	\nabla^2 \Loss(\weights(t)) = \exp(-\weights(t)) = \tfrac1{t},
\end{align*}
which decreases towards zero as \(t\to\infty\), stalling the movement towards
the minimum. But we do have convexity in this example, so we can get convergence
of the loss as we will see in the next section.

\section{Convergence with Convexity}\label{sec: convex convergence theorems}

We have used Lemma~\ref{lem: smallest upper bound} for an upper bound on the
sum of gradients, using the fact that the distance of the current loss to the
minimal loss is bounded. But if we want to ensure convergence of the loss, we
want to flip this around and find a lower bound on the gradient to ensure
the loss decreases at a reasonable speed.

One way to ensure such a lower bound on the gradient is using convexity.

\begin{theorem}[{\cite[Theorem~2.1.14]{nesterovLecturesConvexOptimization2018}}]
	\label{thm: convex function GD loss upper bound}
	Let \(\Loss\in\lipGradientSet{\ubound}\), let \(\minimum\) be the unique minimum.
	Then \ref{eq: gradient descent} with learning rate \(0 < \lr < 2/\ubound\) results in
	\begin{align}\label{eq: convex function loss upper bound}
		\Loss(\weights_n) - \Loss(\minimum)
		\le \frac{2\ubound\|\weights_0 - \minimum\|}{4 + n\ubound\lr(2-\ubound\lr)}.
	\end{align}
	The optimal rate of convergence 
	\begin{align*}
		\Loss(\weights_n) - \Loss(\minimum)
		\le \frac{2\ubound\|\weights_0-\minimum\|}{4+n}
		\in O\Big(\frac{\ubound\|\weights_0-\minimum\|}{n}\Big)
	\end{align*}
	is achieved for \(\lr=\tfrac1{\ubound}\).
\end{theorem}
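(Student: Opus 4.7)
The plan is to turn the per-step descent guarantee from Lemma~\ref{lem: smallest upper bound} into a recursion on the suboptimality gap $\Delta_n := \Loss(\weights_n) - \Loss(\minimum)$ by using convexity to lower-bound the gradient norm, and then to invert the resulting quadratic recursion via the standard reciprocal-telescoping trick. Write $R_n := \|\weights_n - \minimum\|$ for brevity.

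First I would observe that convexity gives the linear lower bound
\[
    \Delta_n \;\le\; \langle \nabla\Loss(\weights_n),\, \weights_n - \minimum\rangle \;\le\; \|\nabla\Loss(\weights_n)\|\cdot R_n,
\]
where the first inequality is the subgradient definition at $\weights_n$ evaluated at $\minimum$, and the second is Cauchy--Schwarz. Combined with Lemma~\ref{lem: smallest upper bound} in the form $\Delta_{n+1} - \Delta_n \le -\tfrac{\lr(2-\ubound\lr)}{2}\|\nabla\Loss(\weights_n)\|^2$, this yields the quadratic recursion
\[
    \Delta_{n+1} \;\le\; \Delta_n - \frac{\lr(2-\ubound\lr)}{2R_n^2}\,\Delta_n^2,
\]
provided we can replace $R_n$ by a fixed constant.

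The main obstacle is therefore to show $R_n \le R_0$ for all $n$. I would establish this by expanding
\[
    R_{n+1}^2 = R_n^2 - 2\lr\langle\nabla\Loss(\weights_n),\weights_n-\minimum\rangle + \lr^2\|\nabla\Loss(\weights_n)\|^2
\]
and invoking the co-coercivity inequality $\langle\nabla\Loss(x),x-\minimum\rangle \ge \tfrac{1}{\ubound}\|\nabla\Loss(x)\|^2$, which I would derive in a short auxiliary step by applying Lemma~\ref{lem: smallest upper bound} to the shifted convex function $\phi(z) := \Loss(z) - \langle\nabla\Loss(\minimum), z\rangle$ (whose unique minimum is $\minimum$, since $\nabla\Loss(\minimum)=0$). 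This gives $R_{n+1}^2 \le R_n^2 - \lr(\tfrac{2}{\ubound}-\lr)\|\nabla\Loss(\weights_n)\|^2$, and for $\lr \in (0,2/\ubound)$ the prefactor is non-negative, so $R_n \le R_0$ throughout.

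With $R_n \le R_0$ secured, I would divide the recursion by $\Delta_n\Delta_{n+1}$, use $\Delta_{n+1} \le \Delta_n$ from the descent lemma to simplify the factor of $\Delta_n/\Delta_{n+1}$, and obtain
\[
    \frac{1}{\Delta_{n+1}} - \frac{1}{\Delta_n} \;\ge\; \frac{\lr(2-\ubound\lr)}{2R_0^2}.
\]
Telescoping from $0$ to $n$ and bounding the starting gap by $\Delta_0 \le \tfrac{\ubound}{2}R_0^2$ via~(\ref{eq: loss implies gradient convergence}) in Theorem~\ref{thm: convergence chain} yields exactly $\Delta_n \le \tfrac{2\ubound R_0^2}{4 + n\ubound\lr(2-\ubound\lr)}$. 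The optimal rate at $\lr = 1/\ubound$ then drops out by maximizing the concave quadratic $\lr \mapsto \ubound\lr(2-\ubound\lr)$ on $(0, 2/\ubound)$, whose maximum value $1$ is attained precisely at $\lr = 1/\ubound$.
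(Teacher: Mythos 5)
Your proposal mirrors the paper's proof almost exactly: the paper also (i) lower-bounds the gradient norm via convexity and Cauchy--Schwarz under the assumption $\|\weights_n-\minimum\|\le\|\weights_0-\minimum\|$, (ii) justifies that assumption by the co-coercivity inequality (Lemma~\ref{lem: bregmanDiv lower bound}), (iii) combines the result with Lemma~\ref{lem: smallest upper bound} to get the ``diminishing contraction'' recursion $\Delta_{n+1}\le\Delta_n(1-q\Delta_n)$, and (iv) inverts it by dividing through by $\Delta_n\Delta_{n+1}$, using $\Delta_n/\Delta_{n+1}\ge 1$, and telescoping (this is precisely what the paper isolates as Lemma~\ref{lem: upper bound on diminishing contraction}). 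The citation of $\Delta_0\le\tfrac{\ubound}{2}R_0^2$ to clean up the constant is also the same step.

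The one place your write-up has a genuine gap is the auxiliary derivation of co-coercivity. You propose to apply Lemma~\ref{lem: smallest upper bound} to $\phi(z):=\Loss(z)-\langle\nabla\Loss(\minimum),z\rangle$, but since $\nabla\Loss(\minimum)=0$ this $\phi$ is just $\Loss$ itself; that application only yields
\begin{align*}
    \Loss(z)-\Loss(\minimum)\;\ge\;\tfrac{1}{2\ubound}\|\nabla\Loss(z)\|^2,
\end{align*}
and chaining with the convexity inequality $\langle\nabla\Loss(z),z-\minimum\rangle\ge\Loss(z)-\Loss(\minimum)$ gives only the weaker constant $\tfrac{1}{2\ubound}$, not the $\tfrac{1}{\ubound}$ you invoke. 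With the weaker constant you would get $R_{n+1}^2\le R_n^2-\lr(\tfrac{1}{\ubound}-\lr)\|\nabla\Loss(\weights_n)\|^2$, and monotonicity of $R_n$ would only follow for $\lr\le 1/\ubound$, not for the full range $\lr\in(0,2/\ubound)$ required by the theorem. To get the full $\tfrac{1}{\ubound}$ you need the second tilt as well: apply Lemma~\ref{lem: smallest upper bound} to $\psi(z):=\Loss(z)-\langle\nabla\Loss(\weights_n),z\rangle$ (whose minimum is $\weights_n$) to obtain $\bregmanDiv{\Loss}(\minimum,\weights_n)\ge\tfrac{1}{2\ubound}\|\nabla\Loss(\weights_n)\|^2$, and then \emph{add} the two Bregman bounds; this is exactly how the paper proves Lemma~\ref{lem: bregmanDiv lower bound}. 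Simpler still, just cite that lemma directly as the paper does, rather than re-deriving it inline.
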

\begin{proof}
	To lower bound our gradient we use convexity and the Cauchy-Schwarz inequality
	\begin{align*}
		\Loss(\weights_n)-\Loss(\minimum)
		\xle{\text{convexity}} -\langle \nabla\Loss(\weights_n), \minimum-\weights_n\rangle
		\xle{\text{C.S.}} \|\weights_n - \minimum\| \|\nabla\Loss(\weights_n)\|.
	\end{align*}
	Assuming we have
	\begin{align*}
		\|\weights_n - \minimum \| \le \|\weights_0 - \minimum\|,
	\end{align*}
	this provides us with a lower bound 
	\begin{align}\label{eq: lower bound gradient size}
		\|\nabla\Loss(\weights)\|
		\ge \frac{\Loss(\weights_n)-\Loss(\minimum)}{\|\weights_n-\minimum\|}
		\ge \frac{\Loss(\weights_n)-\Loss(\minimum)}{\|\weights_0-\minimum\|}
	\end{align}
	Now we need to make sure, that our weights do not move away from our minimum
	\(\minimum\). Utilizing convexity again, we will prove in Lemma~\ref{lem:
	bregmanDiv lower bound} that
	\begin{align*}
		\langle\nabla\Loss(\weights) - \nabla\Loss(\minimum), \weights-\minimum\rangle
		\ge \tfrac1{\ubound} \|\nabla\Loss(\weights) - \nabla\Loss(\minimum)\|^2.
	\end{align*}
	Plugging the recursive definition of \ref{eq: gradient descent} into \(\weights_{n+1}\)
	with \(\nabla \Loss(\minimum)=0\) in mind results in
	\begin{align}
		\nonumber
		\|\weights_{n+1} - \minimum\|^2
		&=\|\weights_n -\lr\nabla\Loss(\weights_n) - \minimum\|^2\\
		&= \|\weights_n - \minimum\|^2
		- 2\lr\underbrace{
			\langle \nabla\Loss(\weights_n), \weights_n - \minimum\rangle
		}_{
			\ge \tfrac{1}\ubound \|\nabla \Loss (\weights_n) - \nabla\Loss(\minimum)\|^2
			\mathrlap{\quad \text{using Lemma~\ref{lem: bregmanDiv lower bound}}}
		} + \lr^2\|\nabla \Loss(\weights_n)\|^2
		\\
		&\le \|\weights_n - \minimum\|^2 - 
		\underbrace{\lr}_{>0}\underbrace{(\tfrac{2}{\ubound}-\lr)}_{>0}
		\|\nabla\Loss(\weights_n)\|^2.
		\label{eq: decreasing weight difference}
	\end{align}
	Now that we justified our lower bound of the gradient, we can bound the
	convergence of the loss using Lemma~\ref{lem: smallest upper bound}
	\begin{align*}
		\Loss(\weights_{n+1}) - \Loss(\minimum)
		&\lxle{(\ref{eq: min approximation})} \Loss(\weights_n)-\Loss(\minimum)
		- \overbrace{\lr(1-\lr\tfrac{\ubound}2)}^{=:\xi} \|\nabla \Loss(\weights_n)\|^2\\
		&\lxle{(\ref{eq: lower bound gradient size})}
		\Loss(\weights_n)-\Loss(\minimum)
		\underbrace{
			\left(1 - \tfrac{\xi}{\|\weights_0-\minimum\|^2}(\Loss(\weights_n)-\Loss(\minimum))\right)
		}_{\text{``diminishing contraction term''}}.
	\end{align*}
	The convergence rate of such a recursion can be bounded (which we will do in
	Lemma~\ref{lem: upper bound on diminishing contraction}). This results in 
	\begin{align*}
		\Loss(\weights_n)-\Loss(\minimum)
		&\le \frac{1}{
			\frac{1}{\Loss(\weights_0)-\Loss(\minimum)} + \tfrac{\xi}{\|\weights_0-\minimum\|^2}n
		}.
	\end{align*}
	Since our upper bound is increasing in \(\Loss(\weights_0)-\Loss(\minimum)\)
	we can use
	\begin{align*}
		\Loss(\weights_0) - \Loss(\minimum)
		&\xle{(\ref{eq: weight implies loss convergence})}
		\frac{\ubound}{2}\|\weights_0-\minimum\|^2
	\end{align*}
	to get an upper bound dependent only on \(\|\weights_0 - \minimum\|^2\)
	\begin{align*}
		\Loss(\weights_n)-\Loss(\minimum)
		&\le \frac{\|\weights_0-\minimum\|^2}{
			\frac{2}{\ubound} + \xi n
		}
		= \frac{2\ubound\|\weights_0-\minimum\|^2}{
			4 + n 2\ubound \xi
		}.
	\end{align*}
	Finally we obtain our claim (\ref{eq: convex function loss upper bound})
	using our definition of \(\xi\).
\end{proof}
Now to the Lemma we promised, to ensure that our weights do not move away further
from the minimum.
\begin{lemma}[{\cite[Theorem~2.1.5]{nesterovLecturesConvexOptimization2018}}]\label{lem: bregmanDiv lower bound}
	For \(f\in\lipGradientSet{\ubound}\) we have
	\begin{subequations}
	\begin{align}
		\bregmanDiv{f}(y,x)
		&\ge \tfrac{1}{2\ubound}\|\nabla f(x) - \nabla f(y)\|^2,\\
		\label{eq: bregmanDiv lower bound b}
		\langle \nabla f(x) - \nabla f(y), x-y\rangle
		&\ge \tfrac{1}{\ubound}\|\nabla f(x) - \nabla f(y)\|^2.
	\end{align}	
	\end{subequations}	
\end{lemma}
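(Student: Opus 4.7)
The plan is to reduce (a) to Lemma~\ref{lem: smallest upper bound} applied to a shifted version of \(f\), and then derive (b) from (a) by symmetrization.

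For (a), fix \(x\) and consider the auxiliary function \(\phi_x(y) := \bregmanDiv{f}(y,x) = f(y) - f(x) - \langle \nabla f(x), y-x\rangle\). The key observations are: \(\phi_x\) is still convex (we only subtracted an affine function), its gradient \(\nabla \phi_x(y) = \nabla f(y) - \nabla f(x)\) is still \(\ubound\)-Lipschitz, and \(\nabla \phi_x(x) = 0\) with \(\phi_x(x) = 0\); by convexity of \(f\) this means \(x\) is a \emph{global} minimizer and \(\phi_x \ge 0\) everywhere. Now I apply Lemma~\ref{lem: smallest upper bound} to \(\phi_x\) at the point \(y\) with the optimal learning rate \(1/\ubound\), which gives
\[
    \phi_x\bigl(y - \tfrac{1}{\ubound}\nabla \phi_x(y)\bigr)
    \le \phi_x(y) - \tfrac{1}{2\ubound}\|\nabla \phi_x(y)\|^2.
\]
The left-hand side is nonnegative (since \(\phi_x \ge 0\)), so rearranging yields \(\phi_x(y) \ge \tfrac{1}{2\ubound}\|\nabla f(y) - \nabla f(x)\|^2\), which is exactly (a).

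For (b), I write (a) once as stated and once with \(x\) and \(y\) exchanged, and sum the two inequalities. The left-hand sides sum to
\[
    \bregmanDiv{f}(y,x) + \bregmanDiv{f}(x,y)
    = -\langle \nabla f(x), y-x\rangle - \langle \nabla f(y), x-y\rangle
    = \langle \nabla f(x) - \nabla f(y), x-y\rangle,
\]
since the \(f(x)\) and \(f(y)\) terms cancel, while the right-hand sides sum to \(\tfrac{1}{\ubound}\|\nabla f(x) - \nabla f(y)\|^2\). This gives (b).

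The only non-routine step is the reduction in (a): one has to notice that after subtracting the first-order Taylor polynomial at \(x\), the resulting function inherits both \(\ubound\)-Lipschitz gradient and convexity, and that Lemma~\ref{lem: smallest upper bound} combined with the global minimum property \(\phi_x \ge 0\) turns a ``decrease after one gradient step'' statement into a ``gradient controls function value'' statement. Once (a) is in hand, (b) is a one-line symmetrization.
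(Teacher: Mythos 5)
Your proof is correct and follows essentially the same route as the paper: both define \(\phi_x(y) = \bregmanDiv{f}(y,x)\), observe that it inherits convexity and \(\ubound\)-Lipschitz gradient, apply Lemma~\ref{lem: smallest upper bound} together with the fact that \(x\) is the global minimizer of \(\phi_x\) (so \(\min \phi_x = 0\)), and then obtain (b) by adding (a) to its \(x \leftrightarrow y\) mirror. The only cosmetic difference is that you phrase the last step as nonnegativity of the left side rather than invoking \(\min_z \phi_x(z) = 0\) on the right, which is the same observation.
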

\begin{proof}
	Since \(y\mapsto \langle\nabla f(x), y-x\rangle\) is linear, we know that
	\begin{align*}
		\phi(y):=\bregmanDiv{f}(y,x) = f(y)-f(x)-\langle \nabla f(x), y-x\rangle 
	\end{align*}
	still has \(\ubound\)-Lipschitz gradient
	\begin{align}\label{eq: bregman divergence gradient}
		\nabla\phi(y) = \nabla f(y) - \nabla f(x).
	\end{align}
	This gradient is equal to the change of gradient from \(x\) to \(y\) and 
	therefore represents the error rate we are making assuming a constant gradient
	and linearly approximating \(f\), which is by definition the derivative of the size of
	\(\bregmanDiv{f}(y,x)\). Now this error rate can be translated
 	back into a real error using the stickiness of the gradient due to
	its Lipschitz continuity. i.e.\ by Lemma~\ref{lem: smallest upper bound} we know that
	\begin{align*}
		\bregmanDiv{f}(x,x) = 0
		\xeq{f\text{ convex}} \min_z\phi(z)
		\xle{(\ref{eq: min approximation})} \phi(y) - \tfrac{1}{2\ubound}\|\nabla \phi(y)\|^2.
	\end{align*}
	Now (\ref{eq: bregman divergence gradient}) implies our first result
	\begin{align*}
		\tfrac{1}{2\ubound}\|\nabla f(y) - \nabla f(x)\|^2
		\le \phi(y) =\bregmanDiv{f}(y,x).
	\end{align*}
	The second inequality follows from adding \(\bregmanDiv{f}(x,y)\) to
	\(\bregmanDiv{f}(y,x)\).
\end{proof}

\begin{lemma}[Diminishing Contraction]
	\label{lem: upper bound on diminishing contraction}
	Let \(a_0 \in [0, \frac{1}{q}]\) for \(q>0\), and assume for a sequence
	\((a_n)_{n\in\naturals_0}\)
	\begin{align}\label{eq: diminishing contraction}
		0\le a_{n+1} \le (1-q a_n)a_n \quad \forall n \ge 0.
	\end{align}
	Then we have
	\begin{align*}
		a_n \le \frac{1}{nq + \frac1{a_0}}\le\frac1{(n+1)q}.
	\end{align*}
\end{lemma}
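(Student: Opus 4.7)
The plan is to linearize the recursion by passing to the reciprocals $b_n := 1/a_n$, at which point the multiplicative contraction becomes an additive increment of at least $q$, and telescoping yields the bound.

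First, I would handle the degenerate cases. If $a_0 = 0$, then $a_n = 0$ for all $n$ by induction, and the first bound holds with the convention $1/a_0 = +\infty$. If $a_n = 0$ for some $n$, then $a_m = 0$ for $m \ge n$, so we may henceforth assume $a_n > 0$. Next I would check by induction that $a_n \le 1/q$ for every $n$: the base case is the hypothesis on $a_0$, and the step is
\begin{align*}
  a_{n+1} \le (1-qa_n)a_n \le a_n \le \tfrac1q,
\end{align*}
using $1 - qa_n \in [0,1]$. In particular $1 - qa_n \ge 0$, so the reciprocal of $(1-qa_n)a_n$ makes sense (with the convention $1/0 = +\infty$ when $qa_n = 1$, which forces $a_{n+1} = 0$ and returns us to the already-handled zero case).

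With positivity and $qa_n \in [0,1)$ secured, the main computation is short: from $a_{n+1} \le (1-qa_n)a_n$ we obtain
\begin{align*}
  \frac{1}{a_{n+1}} - \frac{1}{a_n}
  \ge \frac{1}{(1-qa_n)a_n} - \frac{1}{a_n}
  = \frac{q}{1-qa_n}
  \ge q.
\end{align*}
Hence $b_n \ge b_0 + nq$ by telescoping, which rearranges to the first claimed bound
\begin{align*}
  a_n \le \frac{1}{nq + 1/a_0}.
\end{align*}
The second bound is then immediate from $a_0 \le 1/q$, i.e.\ $1/a_0 \ge q$, giving $nq + 1/a_0 \ge (n+1)q$.

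I do not anticipate a real obstacle here; the only subtlety is being careful that the reciprocal step is legal, which is why the preliminary induction $a_n \le 1/q$ (and the separate treatment of the $a_n = 0$ case) is written out first. Everything after that is a one-line telescoping argument.
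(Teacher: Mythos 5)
Your proof is correct and takes essentially the same route as the paper: pass to reciprocals, show the increment $1/a_{n+1} - 1/a_n$ is at least $q$, and telescope. The paper gets this increment by dividing the reordered inequality $a_n \ge a_{n+1} + qa_n^2$ by $a_na_{n+1}$ and then using $a_n/a_{n+1}\ge 1$, whereas you invert the contraction directly and use $1-qa_n \le 1$; these are the same estimate dressed slightly differently. Your version is if anything a bit more careful: you explicitly dispose of the cases $a_0 = 0$ and $a_n = 0$ and verify $a_n \le 1/q$ before dividing, while the paper tacitly assumes all $a_n$ are positive when it divides by $a_na_{n+1}$.
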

\begin{proof}
	The proof is an adaptation of a technique used in \textcite[Theorem
	2.1.14]{nesterovLecturesConvexOptimization2018}. We start by dividing the
	reordered contraction (\ref{eq: diminishing contraction}), i.e.
	\begin{align*}
		a_n \ge a_{n+1} + qa_n^2
	\end{align*}
	by \(a_na_{n+1}\) resulting in
	\begin{align*}
		\frac1{a_{n+1}}
		\ge \frac1{a_n} + q \underbrace{\frac{a_n}{a_{n+1}}}_{\ge 1}
		\ge \frac1{a_n} + q,
	\end{align*}
	where we have used that the sequence is monotonously decreasing which
	can be proven by induction. This allows us to lower bound the telescoping
	sum
	\begin{align*}
		\frac1{a_n} - \frac1{a_0}
		= \sum_{k=0}^{n-1} \frac1{a_{k+1}} - \frac1{a_k}
		\ge nq
	\end{align*}
	which (after some reordering) results in the first inequality of our claim.
	The second inequality follows from our assumption \(a_0 \le \frac{1}{q}\).
\end{proof}
\begin{remark}
	This bound is tight (see appendix Lemma~\ref{lem-appendix: diminishing contraction}).
\end{remark}

\subsection{Discussion}

Convexity allows us to find explicit rates not just limiting rates. In case
of Theorem~\ref{thm: convergence of the gradient (only lip cont)} we do not
know how many times the gradients got really low (e.g. close to a saddle
point) until they actually stay low. This is all eaten up by the
constant in \(o(\tfrac1n)\).

As we have seen in Subsection~\ref{subsec: Negative Eigenvalues}, negative
eigenvalues act as a repelling force, preventing the transformation (\ref{eq:
Matrix GD Formulation}) from being a contraction and pushing us into convex
regions.

More specifically if we assume that our loss function \(\Loss\) goes to infinity when
our parameters go to infinity, then (using the fact that we are descending down
the loss) we get a bounded (compact) area 
\begin{align*}
	\{\weights : \Loss(\weights) \le \Loss(\weights_0)\} = \Loss^{-1}([0, \Loss(\weights_0)])
\end{align*}
in which we will stay. We can use that boundedness to argue that we will
eventually end up in a convex region if we are pushed out of the non-convex
regions.

That such a convex region exists follows from the existence of a minimum of
\(\Loss\) in that compact region which necessitates non-negative
eigenvalues. And continuity of the second derivative allows us to extend that to 
a local ball around the minimum. Eigenvalues equal to zero
are a bit of an issue, but if they become negative in some epsilon ball
then moving in that direction would lead us further down. This is a contradiction
to our assumption that we created a ball around the minimum of \(\Loss\).

The statement that we are pushed out of the non-convex regions is a bit
dubious, as we could start right on top of local maxima or rims of saddle points.
But as we are ultimately interested in machine learning applications with
stochastic losses, these zero measure areas are of little concern. On the other
hand we already found out in Subsection~\ref{subsec: Negative Eigenvalues},
starting close to such a feature
increases the amount of time it takes for us to escape that area. This means
we cannot really provide an upper bound on the time it takes to end up in
a locally convex area, as we can arbitrarily increase the escape time by
setting the starting point closer and closer to the local maxima or rim of a
saddle point.

So the best we can do without actually getting into probability theory is to
hand-wave this starting phase away with the unlikelihood of it taking too long
due to the exponential repulsion from negative eigenvalues. The probability
theory to make more precise statements is still in active development and will
require a metastability analysis of the stochastic differential equation (SDE)
induced by stochastic gradient descent
\parencite[e.g.][]{bovierMetastabilityPotentialTheoreticApproach2015,nguyenFirstExitTime2019}
resulting in exit time bounds as well as an upper bound on the distance of SGD
to this limiting SDE
\parencite[e.g.][]{liStochasticModifiedEquations2017,ankirchnerApproximatingStochasticGradient2021}.

This difficulty is the reason why we skip this starting phase and only
provide a convergence analysis once we enter the final convex area.

\subsection{Convergence without Lipschitz Continuous Gradient}\label{subsec: subgradient method}

We might need Lipschitz continuity of the gradient for convergence of the
gradients. But if we can not have convergence of the parameters \(\weights\) we
might not care as much about the convergence of the gradient either. In that
case it turns out that we can even get convergence of the loss if we only assume
convexity and \(\lipConst\)-Lipschitz continuity of \(\Loss\) itself
(boundedness of the gradient), i.e.
\begin{align*}
	\Loss \in \lipGradientSet[0,0]{\lipConst}.
\end{align*}
Since we can not guarantee a monotonic decrease with subgradients, the name
``descent'' is generally avoided and we have to keep a running minimum or
average all the weights. A proper treatment is provided in \textcite[Section
2.2.3]{nesterovLecturesConvexOptimization2018} or \textcite[Section
2.1]{bubeckConvexOptimizationAlgorithms2015}. But we will also use the same proof
technique in a more general stochastic setting in Section~\ref{sec: SGD with
Averaging} where this deterministic setting is a special case. The convergence
rate is then only
\begin{align*}
	O\Big(\frac{\|\weights_0-\weights_*\|\lipConst}{\sqrt{n}}\Big).
\end{align*}

\section{Convergence with Strong Convexity}\label{sec: Strong Convexity}

Given that we have proven (\ref{eq: decreasing weight difference}), i.e.
\begin{align}\label{eq: weight iteration}
	\|\weights_{n+1}-\minimum\|^2
	\le \|\weights_n - \minimum\|^2 - \lr(\tfrac2{\ubound} -\lr)\|\nabla\Loss(\weights_n)\|^2
\end{align}
for convex functions and provided a lower bound for the gradient, it might seem
like we should be able to prove a convergence statement for the weights as
well. But the lower bound on our gradient (\ref{eq: lower bound gradient size})
uses the current loss delta
\begin{align*}
	\Loss(\weights_n)	- \Loss(\minimum).
\end{align*}
And if our loss function is really flat, then the loss difference might already
be really small even if we are still far away in parameter space.

Strong convexity prevents this by requiring a minimum of curvature (cf.
Figure~\ref{fig: visualize strong convexity}). With it we can provide a lower
bound of the gradient using the weight difference directly
\begin{align}
	\nonumber
	\|\nabla\Loss(\weights)\|\|\weights -\minimum\|
	&\lxge{\text{C.S.}} \langle \nabla\Loss(\weights) -\nabla\Loss(\minimum), \weights - \minimum \rangle\\
	\nonumber
	&= \bregmanDiv{\Loss}(\weights, \minimum) + \bregmanDiv{\Loss}(\minimum, \weights)\\
	\label{eq: strong convexity implies PL}
	&\lxge{\text{strong convexity}} \lbound \|\weights-\minimum\|^2.
\end{align}
The resulting lower bound on the gradient 
\begin{align*}
	\|\nabla\Loss(\weights)\|\ge \lbound\|\weights -\minimum\|
\end{align*}
allows us to prove convergence of the weights with convergence of the gradients.
More specifically we can bound our weight iteration (\ref{eq: weight iteration})
\begin{align*}
	\|\weights_{n+1}-\minimum\|^2
	\le \left(1-\lr\left(\tfrac2{\ubound} -\lr\right)\lbound\right)
	\|\weights_n - \minimum\|^2.
\end{align*}
In other words we are closing the loop of the convergence type implications in
Theorem~\ref{thm: convergence chain}. Optimizing over the learning rate \(\lr\)
(similarly to Lemma~\ref{lem: smallest upper bound}) results in
\(\lr=\tfrac1\ubound\) and convergence rate
\begin{align*}
	\|\weights_{n+1}-\minimum\|^2
	\le (1-\tfrac{\lbound}{2\ubound})\|\weights_n - \minimum\|^2
	\le (1-\tfrac{1}{2\condition})^{n+1}\|\weights_0-\minimum\|^2
\end{align*}
which is in the same ballpark as we got in the quadratic problem in
Subsection~\ref{subsec: necessary assumptions in the quadratic case}. But with a
bit more work we can actually achieve the same rates.
\begin{theorem}[{\cite[Theorem~2.1.15]{nesterovLecturesConvexOptimization2018}}]
	\label{thm: gd strong convexity convergence rate}
	If \(\Loss\in\strongConvex{\lbound}{\ubound}\), then for learning rate
	\[0 \le \lr \le \tfrac{2}{\ubound+\lbound}\]
	\ref{eq: gradient descent} generates a sequence \(\weights_n\) satisfying
	\begin{subequations}
	\begin{align}
		\label{eq: gd strong convexity convergence rate 1}
		\|\weights_n - \minimum\|
		&\le \left(
			1- 2\lr\frac{\ubound\lbound}{\ubound+\lbound}
		\right)^{\tfrac{n}2}
		\|\weights_0 - \minimum\|, \\
		\label{eq: gd strong convexity convergence rate 2}
		\Loss(\weights_n) - \Loss(\minimum)
		&\le \frac\ubound{2} \left(
			1- 2\lr\frac{\ubound\lbound}{\ubound+\lbound}
		\right)^{n}
		\|\weights_0 - \minimum\|^2.
	\end{align}
	\end{subequations}
	In particular for \(\lr=\tfrac2{\ubound+\lbound}\), we achieve the optimal
	rate of convergence
	\begin{subequations}\label{eq: gd strong convexity optimal rate}
	\begin{align}
		\|\weights_n - \minimum\|
		&\le \left(
			1- \frac{2}{1+\condition}
		\right)^n
		\|\weights_0 - \minimum\|, \\
		\Loss(\weights_n) - \Loss(\minimum)
		&\le \frac\ubound{2} \left(
			1- \frac{2}{1+\condition}
		\right)^{2n}
		\|\weights_0 - \minimum\|^2,
	\end{align}
	\end{subequations}
	where \(\condition:=\tfrac\ubound\lbound\) is the condition number.
\end{theorem}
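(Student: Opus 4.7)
The plan is to mirror the convex case of Theorem~\ref{thm: convex function GD loss upper bound}, but upgrade the one-step recursion (\ref{eq: decreasing weight difference}) by replacing Lemma~\ref{lem: bregmanDiv lower bound} with a sharper \emph{mixed} co-coercivity inequality that uses strong convexity and Lipschitz smoothness simultaneously:
\begin{align*}
\langle \nabla\Loss(x) - \nabla\Loss(y), x-y\rangle
\ge \frac{\lbound\ubound}{\lbound+\ubound}\|x-y\|^2
+ \frac{1}{\lbound+\ubound}\|\nabla\Loss(x) - \nabla\Loss(y)\|^2.
\end{align*}
The trick is to consider the auxiliary function $\phi(z) := \Loss(z) - \tfrac{\lbound}{2}\|z\|^2$. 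Strong convexity of $\Loss$ makes $\phi$ convex (its Bregman divergence, after subtracting the quadratic, is still non-negative), while the sandwich in (\ref{eq: upper lower bound on hesse matrix}) together with Lemma~\ref{lem: Lipschitz Gradient implies taylor inequality} yields $\phi\in\lipGradientSet{\ubound-\lbound}$. Applying (\ref{eq: bregmanDiv lower bound b}) to $\phi$ and unpacking $\nabla\phi(z) = \nabla\Loss(z) - \lbound z$ produces the mixed inequality after the linear and quadratic cross-terms collect cleanly.

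Next I would expand $\|\weights_{n+1}-\minimum\|^2$ as in the convex proof, using $\nabla\Loss(\minimum)=0$:
\begin{align*}
\|\weights_{n+1}-\minimum\|^2
= \|\weights_n-\minimum\|^2 - 2\lr\langle\nabla\Loss(\weights_n), \weights_n-\minimum\rangle + \lr^2\|\nabla\Loss(\weights_n)\|^2.
\end{align*}
Substituting the mixed inequality with $y=\minimum$ yields
\begin{align*}
\|\weights_{n+1}-\minimum\|^2
\le \Big(1 - \frac{2\lr\lbound\ubound}{\lbound+\ubound}\Big)\|\weights_n-\minimum\|^2
+ \lr\Big(\lr - \frac{2}{\lbound+\ubound}\Big)\|\nabla\Loss(\weights_n)\|^2.
\end{align*}
For $\lr \le \tfrac{2}{\lbound+\ubound}$ the second term is non-positive and can be dropped; iterating the resulting contraction and taking the square root gives (\ref{eq: gd strong convexity convergence rate 1}). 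The loss bound (\ref{eq: gd strong convexity convergence rate 2}) then follows by feeding this into (\ref{eq: loss implies gradient convergence}). Finally the optimal rate (\ref{eq: gd strong convexity optimal rate}) arises from plugging in $\lr = \tfrac{2}{\lbound+\ubound}$: the contraction factor equals $\bigl(1 - \tfrac{2}{1+\condition}\bigr)^2$, whose square root reproduces the claimed rate on $\|\weights_n-\minimum\|$ and whose direct power gives the quadratic exponent on the loss.

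The main obstacle is the mixed co-coercivity inequality itself. Verifying convexity and $(\ubound-\lbound)$-Lipschitz smoothness of $\phi$ without invoking a Hessian has to lean on the equivalent characterizations collected in Definition~\ref{def: convexity} and Lemma~\ref{lem: Lipschitz Gradient implies taylor inequality}, and the degenerate case $\lbound=\ubound$ (where Lemma~\ref{lem: bregmanDiv lower bound} becomes vacuous through a $0/0$ issue) must be handled separately by noting that $\nabla\Loss$ is then an affine map and the mixed inequality reduces to an identity. Once the mixed bound is in hand, everything else is careful bookkeeping around the completed-square $\lr^2 - \tfrac{2\lr}{\lbound+\ubound}$ step.
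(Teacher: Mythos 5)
Your proposal is correct and matches the paper's own argument: the paper likewise proves the mixed co-coercivity inequality as Lemma~\ref{lem: bregmanDiv lower bound (strongly convex)} by subtracting the quadratic \(\tfrac{\lbound}{2}\|\cdot - x\|^2\) (your centered \(\phi\) differs only by an affine term, which is immaterial since co-coercivity depends only on gradient differences), then feeds it into the standard one-step expansion of \(\|\weights_{n+1}-\minimum\|^2\) and invokes Theorem~\ref{thm: convergence chain} for the loss bound. The only cosmetic difference is that you apply (\ref{eq: bregmanDiv lower bound b}) directly to \(\phi\) rather than first passing through the Bregman-divergence version and symmetrizing, which slightly streamlines the derivation without changing the substance.
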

\begin{proof}
	This proof starts like the proof for convex functions, cf. (\ref{eq:
	decreasing weight difference})
 \begin{align*}
		\|\weights_{n+1} - \minimum\|^2
		&= \| \weights_n -\minimum - \lr\nabla\Loss(\weights_n)\|^2\\
		&= \| \weights_n - \minimum\|^2
		- 2\lr\langle \nabla\Loss(\weights_n), \weights_n - \minimum\rangle
		+ \lr^2 \| \nabla\Loss(\weights_n)\|^2.
	\end{align*}
	But now we need to find a tighter lower bound for the scalar product in the
	middle. Previously we used the lower bound on the Bregman Divergence from
	Lemma~\ref{lem: bregmanDiv lower bound}. Now the lower bound
	from the strong convexity property is also available. But we can not use
	both, suggesting we lose sharpness in our bound. And indeed, we will prove in
	Lemma~\ref{lem: bregmanDiv lower bound (strongly convex)} that this bound can
	be improved to be
 	\begin{align*}
		&\langle \nabla\Loss(\weights_n) -\nabla\Loss(\minimum), \weights_n - \minimum\rangle	\\
		&\ge \tfrac{\ubound\lbound}{\ubound+\lbound}\|\weights_n - \minimum\|^2
		+ \tfrac{1}{\ubound+\lbound}
		\|\nabla\Loss(\weights_n) - \nabla\Loss(\minimum)\|^2.
	\end{align*}
	Using this lower bound we get
 \begin{align*}
		\|\weights_{n+1} - \minimum\|^2
		\le \left(1- 2\lr \tfrac{\ubound\lbound}{\ubound+\lbound}\right)
		\|\weights_n - \minimum\|^2
		+ \underbrace{\lr}_{\ge 0}
		\underbrace{\left(\lr-\tfrac{2}{\ubound+\lbound}\right)}_{\le0}
		\|\nabla\Loss(\weights_n)\|^2,
	\end{align*}
	immediately proving (\ref{eq: gd strong convexity convergence rate
	1}) with induction. Note that we could additionally use our lower bound on the
	gradient. But this does not represent a significant improvement,
	especially for the optimal rates where we max out the learning rate, making
	the term on the right in our inequality equal to zero. Equation~(\ref{eq: gd
	strong convexity convergence rate 2}) follows from our upper bound on the
	loss using weights, cf. Theorem~\ref{thm: convergence chain} (\ref{eq: weight
	implies loss convergence})
	\begin{align*}
		\Loss(\weights_n) - \Loss(\minimum)
		\le \tfrac\ubound{2} \| \weights_n - \minimum\|^2.
	\end{align*}
	To get (\ref{eq: gd strong convexity optimal rate}) we simply have to plug
	in the assumed optimal learning rate \(\lr=\tfrac2{\ubound+\lbound}\) into
	our general rate and show that the rate is still positive. This means that
	we have not overshot zero, implying that the upper bound on our learning
	rate (we assume to be optimal) is binding:
	\begin{align*}
		\left(1- 4 \frac{\ubound\lbound}{(\ubound+\lbound)^2}\right)
		= \frac{(\ubound^2 + 2\ubound\lbound + \lbound^2) - 4 \ubound\lbound}{(\ubound+\lbound)^2}
		= \left(\frac{\ubound-\lbound}{\ubound+\lbound}\right)^2
		= \left(1-\frac{2\lbound}{\ubound+\lbound}\right)^2.
	\end{align*}
	For the representation in (\ref{eq: gd strong convexity optimal rate}) we
	just have to divide both enumerator and denominator by \(\lbound\).
\end{proof}

Now as promised we still have to show the improved lower bound.

\begin{lemma}[{\cite[Theorem~2.1.12]{nesterovLecturesConvexOptimization2018}}]
	\label{lem: bregmanDiv lower bound (strongly convex)}
	If \(f\in\strongConvex{\lbound}{\ubound}\), then for any
	\(x,y\in\reals^\dimension\) we have
	\begin{align*}
		\langle \nabla f(x) - \nabla f(y), x-y\rangle 
		\ge \tfrac{\lbound\ubound}{\lbound+\ubound} \| x-y\|^2
		+ \tfrac{1}{\lbound+\ubound}\|\nabla f(x) -\nabla f(y)\|^2.
	\end{align*}
\end{lemma}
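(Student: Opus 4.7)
The plan is to reduce this strong-convexity improvement to the plain-convexity lemma (Lemma~\ref{lem: bregmanDiv lower bound}) by absorbing the quadratic lower bound into a new function. Concretely, I would set
\[
    \phi(x) := f(x) - \tfrac{\lbound}{2}\|x\|^2,
\]
so that $\nabla\phi(x) = \nabla f(x) - \lbound x$. Strong convexity of $f$ with parameter $\lbound$ is precisely the statement that $\phi$ is convex (the quadratic lower bound in Definition~\ref{def: strong convexity} becomes the usual convexity inequality for $\phi$), and since $\nabla f$ is $\ubound$-Lipschitz, the map $x\mapsto \nabla f(x) - \lbound x$ is $(\ubound-\lbound)$-Lipschitz. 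Hence $\phi \in \lipGradientSet{\ubound-\lbound}$, which is exactly the regime in which the co-coercivity bound (\ref{eq: bregmanDiv lower bound b}) applies.

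Next I would apply Lemma~\ref{lem: bregmanDiv lower bound} to $\phi$:
\[
    \langle \nabla\phi(x) - \nabla\phi(y),\, x-y\rangle
    \ge \tfrac{1}{\ubound-\lbound}\|\nabla\phi(x) - \nabla\phi(y)\|^2,
\]
and then re-express both sides in terms of $f$. The left-hand side becomes $\langle \nabla f(x) - \nabla f(y), x-y\rangle - \lbound\|x-y\|^2$, while expanding the squared norm on the right yields $\|\nabla f(x) - \nabla f(y)\|^2 - 2\lbound\langle \nabla f(x)-\nabla f(y), x-y\rangle + \lbound^2\|x-y\|^2$. Writing $A := \langle \nabla f(x) - \nabla f(y), x-y\rangle$, $B := \|x-y\|^2$, $C := \|\nabla f(x) - \nabla f(y)\|^2$ and multiplying through by $\ubound-\lbound$, the inequality collapses to
\[
    (\ubound+\lbound)A \ge C + \lbound\ubound B,
\]
which is precisely the claim after dividing by $\ubound+\lbound$.

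The one subtle point is the case $\lbound=\ubound$, since then $\phi$ has a $0$-Lipschitz gradient and the factor $\tfrac{1}{\ubound-\lbound}$ degenerates. But $\nabla\phi\equiv 0$ forces $f(x)=\tfrac{\lbound}{2}\|x\|^2+\langle a,x\rangle+c$, and a direct computation gives equality in the target inequality; so this boundary case can just be checked separately. Beyond that, no step is really an obstacle — the whole argument is algebraic once one sees the trick of subtracting $\tfrac{\lbound}{2}\|\cdot\|^2$ to turn the strongly convex problem into a purely convex one. The only thing to be careful about is keeping track of signs when expanding $\|\nabla\phi(x)-\nabla\phi(y)\|^2$, since the cross term contributes a $-2\lbound A$ that combines with the left-hand side $-\lbound B$ in a way that is crucial for producing the clean $\tfrac{\lbound\ubound}{\lbound+\ubound}$ coefficient rather than something messier.
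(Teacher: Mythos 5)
Your argument is essentially the paper's: both proofs subtract a quadratic from $f$ to remove its strong convexity, observe that the remainder lies in $\lipGradientSet{\ubound-\lbound}$, invoke Lemma~\ref{lem: bregmanDiv lower bound}, and rearrange; your $\phi(x) = f(x) - \tfrac{\lbound}{2}\|x\|^2$ differs from the paper's $g_x(y) = f(y) - \tfrac{\lbound}{2}\|y-x\|^2$ only by an affine function of $y$, so the gradient differences and Bregman divergences agree. Applying (\ref{eq: bregmanDiv lower bound b}) directly (rather than re-deriving it by symmetrizing the Bregman divergences as the paper does) is a slight streamlining, and you correctly flag the degenerate case $\ubound = \lbound$ for separate handling, just as the paper does.
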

\begin{proof}
	Similarly to Lemma~\ref{lem: bregmanDiv lower bound} we want to
	add \(\bregmanDiv{f}(x,y)\) and \(\bregmanDiv{f}(y,x)\) together to
	get a lower bound for the scalar product equal to this sum. But since we
	already have a lower bound due to strong convexity
	\begin{align}\label{eq: strong convexity implies bregmanDiv lower bound}
		\bregmanDiv{f}(y,x) = f(y) - f(x) -\langle\nabla f(x), y-x\rangle
		\ge \tfrac{\lbound}2 \|y-x\|^2,
	\end{align}
	we first have to ``remove'' this lower bound from \(f\) to apply our other
	lower bound. So we define
	\begin{align*}
		g_x(y) := f(y) - \tfrac{\lbound}2\|y-x\|^2, \qquad
		\nabla g_x(y) = \nabla f(y) - \lbound(y-x),
	\end{align*}
	which still has positive Bregman Divergence, i.e.\ is convex 
	\begin{align*}
		\bregmanDiv{g_x}(y,z)
		&= g_x(y) - g_x(z) - \langle\nabla g_x(z), y-z\rangle \\
		&= 
		\begin{aligned}[t]
			& f(y) - f(z) -\langle\nabla f(z), y-z\rangle \\
			&- \tfrac\lbound{2}\underbrace{\|y-x\|^2}_{
				=\|y-z\|^2 + \mathrlap{2\langle y-z, z-x\rangle + \|z-x\|^2}
			}
			+ \tfrac\lbound{2}\|z-x\|^2
			+ \lbound\langle z-x, y-z\rangle
		\end{aligned}\\
		&= \bregmanDiv{f}(y,z) - \tfrac{\lbound}2\|y-z\|^2
		\xge{(\ref{eq: strong convexity implies bregmanDiv lower bound})} 0,
	\end{align*}
	but has its strong convexity property removed.
	It also has \((\ubound-\lbound)\)-Lipschitz continuous gradient, as can be
	seen by plugging
	\begin{align}
		\label{eq: bregmanDiv upper bound}
		\bregmanDiv{f}(y,z) \xle{\text{Lemma \ref{lem: bregmanDiv lower bound}}} \tfrac{\ubound}2\|y-z\|^2,
	\end{align}
	into the previous equation, and applying Lemma~\ref{lem: Lipschitz Gradient
	implies taylor inequality} again to get \((\ubound-\lbound)\)-Lipschitz
	continuity from our upper bound on the Bregman Divergence.  Therefore
	\(g_x\in\lipGradientSet{\ubound-\lbound}\) and if \(\ubound -\lbound>0\) we
	can apply Lemma~\ref{lem: bregmanDiv lower bound} to get
	\begin{align}
		\bregmanDiv{f}(y,x)
		&= \bregmanDiv{g_x}(y,x) + \tfrac{\lbound}2 \|y-x\|^2
		\nonumber \\
		&\ge \tfrac{1}{2(\ubound-\lbound)} \|\nabla g_x(x) - \nabla g_x(y)\|^2
		+ \tfrac{\lbound}2 \|y-x\|^2 
		\nonumber \\
		\label{eq: improved lower bound strong convex bregman Divergence}
		&\ge \tfrac{1}{2(\ubound-\lbound)}
		\|\nabla f(x) - \lbound x - (\nabla f(y)-\lbound y)\|^2
		+ \tfrac{\lbound}2 \|y-x\|^2.
	\end{align}
	Since the \(\|y-x\|^2\) lower bound improves the convergence rate in
	our convergence theorem (\ref{thm: gd strong convexity convergence rate}), it
	is helpful to view it as the ``good'' part of our lower bound and view the
	application of the gradient lower bound as a fallback.

	The case \(\ubound=\lbound\), which we have to cover separately, offers some
	more insight into that. In this case our lower bound  (\ref{eq:
	strong convexity implies bregmanDiv lower bound}) makes (\ref{eq: bregmanDiv
	upper bound}) an equality. This means we could get
	\begin{align*}
		\langle \nabla f(x) - \nabla f(y), x-y\rangle
		= \bregmanDiv{f}(x,y) + \bregmanDiv{f}(y,x) = \lbound \|y-x\|^2.
	\end{align*}
	But instead we apply Lemma~\ref{lem: bregmanDiv lower bound} to half of it
	which results in the statement of this lemma.

	Now we just have to finish the case \(\ubound>\lbound\). In equation
	(\ref{eq: improved lower bound strong convex bregman Divergence}) we have
	already removed the dependence on our helper function \(g_x\) and will now
	use its symmetry to add together the mirrored Bregman Divergences to get
	\begin{align*}
		&\langle \nabla f(x) - \nabla f(y), x-y\rangle
		= \bregmanDiv{f}(x,y) + \bregmanDiv{f}(y,x) \\
		&\ge \tfrac{1}{\ubound-\lbound}
		\underbrace{\|\nabla f(x) - \lbound x - (\nabla f(y)-\lbound y)\|^2}_{
			=\|\nabla f(x) - \nabla f(y)\|^2
			- 2\lbound \langle \nabla f(x) - \nabla f(y), x-y\rangle
			\mathrlap{+ \lbound^2 \| x-y\|^2}
		}
		+ \lbound \|y-x\|^2.
	\end{align*}
	Moving the scalar product to the left we get
	\begin{align*}
		\overbrace{\frac{ \ubound+\lbound }{ \ubound-\lbound }}^{
			= (1+\tfrac{2\lbound}{\ubound-\lbound})
		}
		&\langle \nabla f(x) - \nabla f(y), x-y\rangle \\
		&\ge \tfrac{1}{\ubound-\lbound}\|\nabla f(x) - \nabla f(y)\|^2
		+ \underbrace{(\tfrac{\lbound^2}{\ubound-\lbound}-\lbound)}_{
			=\frac{\ubound\lbound}{ \ubound-\lbound }
		} \|y-x\|^2.
	\end{align*}
	Dividing by the factor on the left finishes this proof.
 \end{proof}

\subsection{Discussion}

\textcite{karimiLinearConvergenceGradient2020} show that virtually all
attempts to generalize strong convexity are special cases of the
Polyak-\L{}ojasiewicz condition
\begin{align}
	\label{eq: PL condition}\tag{PL}
	\|\nabla \Loss(\weights) \|^2 \ge c(\Loss(\weights)-\inf_{\theta}\Loss(\theta)).
\end{align}
This condition does not require convexity but so called ``invexity'', i.e.
\begin{align*}
	\Loss(\theta) \ge \Loss(\weights) + \nabla\Loss(\weights)\eta(\weights, \theta)
\end{align*}
for some vector valued function \(\eta\). Convexity is a special case for \(\eta(\weights,
\theta)=\weights-\theta\). Considering Lemma~\ref{lem: smallest upper bound}
and Theorem~\ref{thm: convergence chain}, it is perhaps not too surprising that a
lower bound on the gradient for a given loss difference allows for
convergence proofs. In fact the convergence proof is just two lines:
Using Lemma~\ref{lem: smallest upper bound} and the \ref{eq: PL condition}-assumption we get
\begin{align*}
	\Loss(\weights_{n+1})- \Loss(\weights_n) \le -\tfrac1{2\ubound} \|\nabla\Loss(\weights_n)\|^2
	\le -\tfrac{c}{2\ubound}(\Loss(\weights_n) - \Loss(\minimum)),
\end{align*}
subtracting \(\Loss(\minimum)-\Loss(\weights_n)\) from both sides we finally get
\begin{align*}
	\Loss(\weights_{n+1}) - \Loss(\minimum)
	\le (1-\tfrac{c}{2\ubound})(\Loss(\weights_n) - \Loss(\minimum)).
\end{align*}
But while convexity comes with a geometric intuition this \ref{eq: PL
condition}-condition does not.

And since we have built intuition on how non-convex regions
repel iterates into convex regions, we now have intuition how general smooth
functions behave. It is more difficult to build this intuition for non-invex
regions. So we do not really gain much intuition by this generalization. But
this generalization highlights the attributes of (strong)-convexity we
actually need. And we have basically already proven this condition with
(\ref{eq: strong convexity implies PL}). We only need to use (\ref{eq: weight
implies loss convergence}) from Theorem~\ref{thm: convergence chain}, i.e.
weight convergence implies loss convergence.

\section{Loss Surface}\label{sec: loss surface}

A pain point of arguing that we will end up in \emph{some} convex area
(as we are repelled from non-convex regions), is that we will only 
converge to some \emph{local} minima. This problem has been addressed by
\textcite{pascanuSaddlePointProblem2014} who summarize a distribution analysis
by \textcite{brayStatisticsCriticalPoints2007} of critical points in random
Gaussian fields.

Intuitively the dimension increases the number of eigenvalues
of the Hessian, making it exponentially less likely that \emph{all} of them are
either positive or negative. In other words: In high dimension almost all
critical points are saddle points! But the lower the loss at the critical point,
the likelier it is in fact a minimum. This means that on a Gaussian random field
critical points which are local minima are likely close to the global minimum.

\textcite{pascanuSaddlePointProblem2014}
then confirm this hypothesis empirically by analyzing the share of saddle
points in all critical points found at certain loss levels of a loss function
induced by training models on a reduced MNIST dataset. So it \emph{appears} that
random Gaussian fields approximate deep learning problems sufficiently well to
argue that local minima will generally be close enough to the global minima,
since all other critical points are overwhelmingly likely to be saddle points.
Further theoretical justification for neuronal networks in particular, is
provided by \textcite{choromanskaLossSurfacesMultilayer2015}.

\textcite{garipovLossSurfacesMode2018} found in an empirical study that local
minima can be connected by a simple path (e.g. linear spline with just one
free knot) with near-constant loss. In other words, the local minima do not only
appear to be of similar height (in loss), they also appear to be connected
by a network of valleys. They then propose purposefully making SGD unstable
with higher learning rates once it converged, to explore these valleys and find
other minima to create ensemble models with (e.g. bagging, boosting, etc.). 

\textcite{izmailovAveragingWeightsLeads2019} find that averaging the weights
of these minima instead of creating ensembles from them seems to work as well.
In a convex area this is no surprise as a convex combination of points
has smaller loss than the linear combination of losses (see also
Section~\ref{sec: SGD with Averaging}).

\subsection{Second Order Methods}

The assumption of strong convexity leads some to suggest second order
methods. In particular the Newton-Raphson method which has much better
convergence guarantees. But while we do assume convexity to be able to bound
convergence rates, we still want to use our method on non-convex functions. And
the Newton-Raphson method has a bunch of issues in this case.

As we can see in Section~\ref{sec: visualize gd} (\ref{eq: newton minimum
approx}), the method jumps right to the vertex of a quadratic function. This is
nice if the vertex is a minimum (which convexity guarantees) but it is not so
nice if the vertex is a saddle point or maximum. Additionally the method becomes
unstable (or impossible) for small (or zero) eigenvalues, since then the Hessian
is hard to numerically invert (or no
longer invertible).
On top of these fundamental problems, the storage of the Hessian requires
\(O(\dimension^2)\) space already and its inversion takes \(O(\dimension^3)\)
computation time, which is infeasible for high dimensional data. Especially
considering that \ref{eq: gradient descent} will do \(O(\dimension^2)\) steps
while the Newton-Raphson method does one inversion. Lastly, there are issues
obtaining the Hessian in a stochastic setting.

So the difficulty in crafting a working second order method lies in addressing
all these concerns at once. We will discuss some attempts in Chapter~\ref{chap:
other methods}.

\section{Backtracking}\label{sec: backtracking}

In general we do not know the parameters \(\ubound\) or \(\lbound\). While
the learning rate \(\tfrac1\ubound\) might not be optimal as it treats the largest
eigenvalue preferential over the smallest, it results in decent progress and
is much more stable as we will find out in Section~\ref{sec: nesterov momentum convergence}.

And with an idea first proposed by \textcite{armijoMinimizationFunctionsHaving1966}
we can actually emulate this rate without knowing \(\ubound\). The idea is the
following: We start with some (optimistic) initial guess \(\lr_0 := \tfrac{1}{\ubound_0}>0\) and check
whether
\begin{align}
	\label{eq: armijo's condition}
	\tag{Armijo's Condition}
	\Loss\left(\weights - \lr_0\nabla\Loss(\weights)\right)
	\le \Loss(\weights) - \tfrac{\lr_0}{2}\|\nabla\Loss(\weights)\|^2
\end{align}
is satisfied. If that is not the case, we try \(\lr_1 := \delta\lr_0\)
for some \(\delta\in(0,1)\). We continue testing the same equation with
\begin{align*}
	\lr_n = \delta \lr_{n-1} = \delta^n \lr_0
\end{align*}
until the equation is satisfied and then we use this particular \(\lr_n\).
Lemma~\ref{lem: smallest upper bound} implies that if \(\nabla\Loss\) is
\(\ubound\)-Lipschitz continuous, then for any \(\lr_n < \tfrac{1}{\ubound}\)
this equation is satisfied, since the function is also
\(\tfrac{1}{\lr_n}\)-Lipschitz continuous in that case.

\begin{lemma}
	The number of iterations \(n_A\) to find an appropriate learning rate for some
	\(\Loss\in\lipGradientSet{\ubound}\) starting with guess \(\ubound_0>0\) and
	reduction factor \(\delta\), is bounded by
	\begin{align*}
		n_A \le \begin{cases}
			1 & \ubound_0 \ge \ubound \qquad \text{``too pessimistic''}, \\
			\left\lceil \log\left(\frac{\ubound}{ \ubound_0 }\right)\log\left(\frac1\delta\right) \right\rceil
			& \ubound_0 < \ubound.
		\end{cases}
	\end{align*}
\end{lemma}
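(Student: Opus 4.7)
The plan is to reduce the entire statement to Lemma~\ref{lem: smallest upper bound}, whose general decrease bound says that for every $\lr\in(0,2/\ubound)$,
\[
\Loss(\weights - \lr\nabla\Loss(\weights)) - \Loss(\weights) \le -\lr\bigl(1 - \tfrac{\ubound}{2}\lr\bigr)\|\nabla\Loss(\weights)\|^2.
\]
The key structural observation is that whenever $\lr \le 1/\ubound$ one has $1 - \tfrac{\ubound}{2}\lr \ge \tfrac{1}{2}$, so the right-hand side dominates $-\tfrac{\lr}{2}\|\nabla\Loss(\weights)\|^2$, which is exactly \ref{eq: armijo's condition}. In other words, \ref{eq: armijo's condition} is automatically satisfied the moment the tested step size drops at or below $1/\ubound$.

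In the pessimistic case $\ubound_0 \ge \ubound$, the initial candidate $\lr_0 = 1/\ubound_0 \le 1/\ubound$ already lies in the safe region, so \ref{eq: armijo's condition} triggers on the very first evaluation and hence $n_A \le 1$.

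In the optimistic case $\ubound_0 < \ubound$, the backtracking produces the geometric sequence $\lr_n = \delta^n/\ubound_0$. I would locate the smallest index $n$ with $\lr_n \le 1/\ubound$, which after taking logarithms and dividing by $\log(1/\delta)>0$ amounts to $n \ge \log(\ubound/\ubound_0)/\log(1/\delta)$. Rounding this threshold up to the nearest integer gives the stated bound, and by the observation above the procedure is guaranteed to terminate no later than this index.

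The argument is essentially bookkeeping on a geometric decay once the right invariant is identified; the main obstacle — to the extent there is any — is the initial translation from the general decrease inequality of Lemma~\ref{lem: smallest upper bound} into \ref{eq: armijo's condition}, which amounts to pinning the admissible test range of $\lr$ down to $(0, 1/\ubound]$ via the factor $\tfrac{1}{2}$ appearing in Armijo's inequality. Everything after that step is just arithmetic on $\delta^n$.
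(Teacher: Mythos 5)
Your proof is correct and follows essentially the same route as the paper: both reduce the problem to locating the first $n$ with $\delta^n/\ubound_0 \le 1/\ubound$, and both rely on Lemma~\ref{lem: smallest upper bound} to justify that any step size $\lr \le 1/\ubound$ automatically satisfies \ref{eq: armijo's condition}. One small remark worth flagging: your derived threshold $n \ge \log(\ubound/\ubound_0)/\log(1/\delta)$ is the mathematically correct quotient; the paper's lemma and proof display this as a \emph{product} $\log(\ubound/\ubound_0)\log(1/\delta)$, which is a typo that creeps in after the line ``dividing by $\log(\delta)$'' (one can see from the verbal description and the preceding inequality that a quotient was intended), so your version should be taken as the corrected statement.
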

\begin{proof}
	If \(\ubound_0\) is too pessimistic (larger than \(\ubound\)), then the test
	will immediately succeed and the only overhead is a single evaluation every step.
	In the other case we only need
	\begin{align*}
		\lr_{n_A} = \frac{\delta^{n_A}}{\ubound_0} \xle{!} \lr = \frac{1}{\ubound},
	\end{align*}
	or equivalently
	\begin{align*}
		n_A \log(\delta) \le \log\left(\tfrac{ \ubound_0 }{ \ubound }\right).
	\end{align*}
	As \(\delta<1\) dividing by \(\log(\delta)\) flips the inequality
	\begin{align*}
		n_A \ge \log\left(\tfrac{ \ubound_0 }{ \ubound }\right)\log(\delta)
		= \log\left(\tfrac{\ubound}{ \ubound_0 }\right)\log\left(\tfrac1\delta\right).
	\end{align*}
	Selecting the
	smallest \(n_A\) such that this equation is satisfied results in our claim.
\end{proof}

\textcite[Lemma 3.1]{truongBacktrackingGradientDescent2019} prove that for any
compact region of an arbitrary \(C^1\) loss function, an upper bound for the
number of iterations to find a suitable \(\lr_{n_A}\) exists. They use this
result to prove convergence (without rates) of \ref{eq: gradient descent} to
some critical point for general \(C^1\) functions.\fxnote{present result here?}

They also propose a more sophisticated rule (two-way backtracking) which uses
the previous learning rate as a starting point for the next iteration. This
gambles on the fact that the Lipschitz constant does not change much in a
neighborhood of our current iterate. Experimental results seem promising.

\section{Complexity Bounds}\label{sec: complexity bounds}

Now that we found convergence rates for \ref{eq: gradient descent} on (strongly) convex
problems, the question is, can we do better? i.e.\ how fast could an algorithms
possibly be? To tackle this problem let us first make an assumption about what
this algorithm can do. Unrolling our \ref{eq: gradient descent} algorithm
\begin{align*}
	\weights_n = \weights_0 - \lr\sum_{k=0}^{n-1} \nabla \Loss(\weights_k)
\end{align*}
we can see that even if we allowed custom learning rates \(\lr_n\) for every
iteration \(n\), i.e.
\begin{align*}
	\weights_n = \weights_0 - \sum_{k=0}^{n-1} \lr_k \nabla \Loss(\weights_k)
\end{align*}
we would still end up in the linear span of all gradients, shifted by \(\weights_0\).
And since we are in the class of first order optimization methods, where we are
only provided with the function evaluation itself and the gradient, an obvious
assumption for a class of optimization methods might be
\begin{assumption}[{\cite[Assumption 2.1.4]{nesterovLecturesConvexOptimization2018}}]
	\label{assmpt: parameter in linear hull of gradients}
	The \(n\)-th iterate of the optimization method is contained in the span of all
	previous gradients shifted by the initial starting point
	\begin{align*}
		\weights_n \in \linSpan\{\nabla \Loss(\weights_k) : 0\le k \le n-1\} + \weights_0.
	\end{align*}
\end{assumption}
So how can we utilize this assumption to construct a function which is difficult to
optimize?  \textcite{gohWhyMomentumReally2017} provides an intuitive
interpretation for a loss function taken from \textcite[Section
2.1.2]{nesterovLecturesConvexOptimization2018} which we will further modify into
the following example.

\begin{figure}[h]
	\centering
	\def\svgwidth{1\textwidth}
	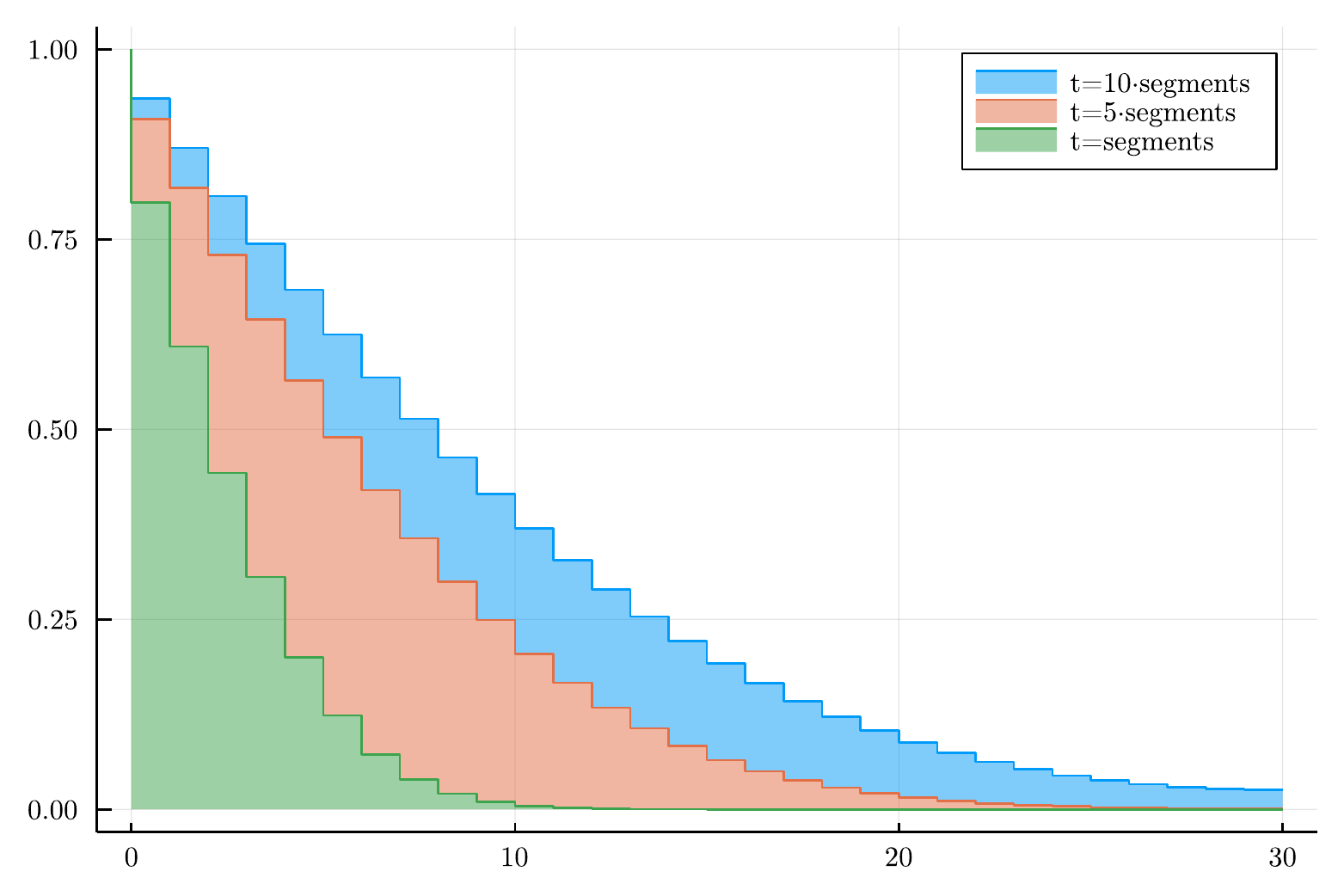
	\caption{
		Even after \(\dimension\) steps (when all segments have a non-zero value)
		\ref{eq: gradient descent} is still far off from the solution. Even when
		using the learning rate \(\tfrac1\ubound\) as is the case here.
	}
	\label{fig: visualize coloring problem}
\end{figure}
Consider a cold, zero temperature, 1-dimensional rod which is heated from one side
by an infinite heat source at temperature one. Then the second segment of the rod
will only start to get hot once the first segment of it has heated up. Similarly
the \(n\)-th segment will only increase in temperature once the \((n-1)\)-th segment
is no longer at zero temperature. If the heat transfer would only depend on the
current heat level difference, one could thus express a simplified heat level
recursion for the \(i\)-th segment as follows\footnote{
	Taking first the learning rate to zero and afterwards the space discretization, would
	result in the well known differential equation for heat with boundary
	conditions.
}: 
\begin{align*}
	\weights_{n+1}^{(i)}
	&= \weights_{n}^{(i)}
	+ \lr 
	\begin{cases}
		[\weights_{n}^{(i-1)} - \weights_{n}^{(i)}] + [\weights_{n}^{(i+1)} - \weights_{n}^{(i)}]
		&  1 < i < \dimension, \\
		[1 - \weights_{n}^{(1)}] + [\weights_{n}^{(2)} - \weights_{n}^{(1)}]
		& i = 1, \\
		[\weights_{n}^{(i-1)} - \weights_{n}^{(i)}]
		& i = \dimension.
	\end{cases}
\end{align*}
For the loss function
\begin{align}\label{eq: naive complexity counterexample loss}
	\tilde{\Loss}(\weights)
	:= \frac12 (\weights^{(1)}-1)^2
	+ \frac12 \sum_{k=1}^{\dimension-1} (\weights^{(k)}-\weights^{(k+1)})^2
\end{align}
this is just the \ref{eq: gradient descent} recursion. Now here is the crucial insight:

Since any weight is only affected once its neighbors are
affected, the second weight can not be affected before the second step since
the first weight is only heated in the first step. And since the second weight
will be unaffected until the second step, the third weight will be unaffected
until the third step, etc.

Therefore the \(\dimension - n\) last components will still be zero in
step \(n\), because the linear span of all the gradients so far is still
contained in the subspace \(\reals^n\) of \(\reals^\dimension\). Formalizing
this argument results in the following theorem inspired by \textcite[Theorem
2.1.7]{nesterovLecturesConvexOptimization2018} and \textcite{gohWhyMomentumReally2017}.

\begin{remark}
	This type of loss function is not unlikely to be encountered in real
	problems. E.g. in reinforcement learning with sparse rewards the estimation
	of the value function over states requires the back-propagation of this
	reward through the states that lead up to it. ``Eligibility traces''
	\parencite[Chapter 12]{suttonReinforcementLearningIntroduction2018}
	are an attempt to speed this process up.
\end{remark}
\begin{theorem}[{\cite[Theorem~2.1.7]{nesterovLecturesConvexOptimization2018}}]
	\label{thm: convex function complexity bound}
	For any \(\weights_0\in\reals^d\), any \(n\in\naturals\) such that 
	\[0\le n\le \tfrac12 (d-1),\]
	there exists \(\Loss\in\lipGradientSet[\infty,1]{\ubound}\)
	such that for any first order method \(\firstOrderMethod\)
	satisfying Assumption~\ref{assmpt: parameter in linear hull of gradients}
	we have
	\begin{subequations}
	\begin{align}
		\Loss(\weights_n) - \Loss(\minimum)
		&\ge \frac{\ubound \|\weights_0 - \minimum\|^2}{16(n+1)^2}, \\
		\|\weights_n -\minimum\|^2 
		&\ge \frac12 \|\weights_0 - \minimum\|^2.
	\end{align}
	\end{subequations}
	where \(\minimum = \arg\min_\weights \Loss(\weights)\) is the unique minimum.
\end{theorem}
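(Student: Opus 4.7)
The plan is to exhibit a single quadratic ``worst-case'' loss (depending on $n$ and $\weights_0$) and then exploit Assumption~\ref{assmpt: parameter in linear hull of gradients} to trap the iterates in a low-dimensional subspace that cannot yet reach the minimizer. First, I would reduce to $\weights_0 = 0$: the class $\lipGradientSet[\infty,1]{\ubound}$ is closed under translations, so if $\tilde{\Loss}$ works for $\weights_0 = 0$ then $\Loss(\weights) := \tilde{\Loss}(\weights - \weights_0)$ works in general. Second, following the heat-chain intuition behind $\tilde{\Loss}$ in (\ref{eq: naive complexity counterexample loss}), I would set $m := 2n+1$ and define
\begin{align*}
    \Loss(\weights) = \frac{\ubound}{8}\left[
        \tfrac12(\weights^{(1)})^2 + \tfrac12\sum_{i=1}^{m-1}(\weights^{(i)}-\weights^{(i+1)})^2 + \tfrac12(\weights^{(m)})^2 - \weights^{(1)}
    \right],
\end{align*}
with the remaining coordinates appearing nowhere. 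The condition $n \le \tfrac12(d-1)$ ensures $m \le d$.

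Third, I would verify that $\Loss \in \lipGradientSet[\infty,1]{\ubound}$. The Hessian equals $(\ubound/8)A$ where $A$ is the tridiagonal matrix with $2$'s on the diagonal and $-1$'s on the off-diagonal (embedded in $\reals^{d\times d}$ by zeros). A standard Gershgorin / Fourier estimate gives $0 \precsim A \precsim 4\identity$, hence $0\precsim \nabla^2\Loss \precsim \ubound \identity$, proving convexity and $\ubound$-Lipschitz continuity of $\nabla \Loss$.

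Fourth, I would run the key induction. At $\weights_0 = 0$ we have $\nabla\Loss(0) = -(\ubound/8) e_1 \in \linSpan\{e_1\}$, so Assumption~\ref{assmpt: parameter in linear hull of gradients} yields $\weights_1 \in \linSpan\{e_1\}$. Because the Hessian is tridiagonal, if $\weights_k \in \linSpan\{e_1,\dots,e_k\}$ then $\nabla\Loss(\weights_k) \in \linSpan\{e_1,\dots,e_{k+1}\}$, so by induction $\weights_n \in \linSpan\{e_1,\dots,e_n\}$. In particular every coordinate $\weights_n^{(i)}$ with $i > n$ vanishes.

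Finally, I would solve $A\minimum = e_1$ explicitly: the linear recursion $\minimum^{(i+1)} = 2\minimum^{(i)} - \minimum^{(i-1)}$ with boundary values fixed by the $e_1$ term gives
\begin{align*}
    \minimum^{(i)} = \begin{cases} 1 - \tfrac{i}{m+1} & 1 \le i \le m, \\ 0 & \text{otherwise},\end{cases}
    \qquad \Loss(\minimum) = -\frac{\ubound}{16}\left(1 - \frac{1}{m+1}\right).
\end{align*}
Since $\weights_n^{(i)} = 0$ for $i > n$, we get
\begin{align*}
    \|\weights_n - \minimum\|^2 \ge \sum_{i=n+1}^{m} (\minimum^{(i)})^2.
\end{align*}
With the explicit formula $\|\weights_0 - \minimum\|^2 = \sum_{i=1}^{m}(1 - i/(m+1))^2$, the choice $m = 2n+1$ makes the tail sum at least half of the total, giving the second claim. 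For the loss bound, I would minimize $\Loss$ over the restricted subspace $\{\weights : \weights^{(n+1)} = \dots = \weights^{(d)} = 0\}$; this produces another tridiagonal system of size $n$ whose solution and optimal value can be compared to $\Loss(\minimum)$ using the same closed-form expressions, yielding a gap of order $\ubound/(n+1)^2$ which matches $\ubound\|\weights_0 - \minimum\|^2/(16(n+1)^2)$ after plugging in $\|\weights_0-\minimum\|^2 \asymp m$.

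The main obstacle is the arithmetic bookkeeping in this last step: extracting the precise constants $1/16$ and $1/2$ from the sums $\sum_{i=1}^m(1 - i/(m+1))^2 = \tfrac{m(m+2)}{3(m+1)}$ and its tail $\sum_{i=n+1}^m(\cdots)^2$, and showing that the minimum of $\Loss$ on the restricted subspace is at least $\Loss(\minimum) + \ubound/(16(n+1)^2) \cdot \|\weights_0-\minimum\|^2$. Choosing $m = 2n+1$ (rather than $m=n$ or $m=d$) is exactly what balances the tail, which is why the hypothesis $n \le \tfrac12(d-1)$ appears.
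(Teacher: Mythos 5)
Your construction is essentially Nesterov's original Theorem 2.1.7 (primary loss with both a source at coordinate $1$ and a sink at coordinate $m$), whereas the paper deliberately modifies it, and this difference is load-bearing. With the sink, the minimizer is the linearly decreasing ramp $\minimum^{(i)} = 1 - i/(m+1)$, and the key step you assert — that choosing $m = 2n+1$ makes the tail $\sum_{i=n+1}^m (\minimum^{(i)})^2$ at least half of $\|\minimum\|^2$ — is false. Because the ramp decays toward zero, the tail carries less mass: the ratio equals $\frac{(n+2)(2n+3)}{2(2n+1)(4n+3)}$, which is $\tfrac{5}{14} < \tfrac12$ already for $n=1$ and tends to $\tfrac18$ as $n\to\infty$. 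Indeed Nesterov's own statement only claims the weaker constant $\tfrac18$ for $\|\weights_n-\minimum\|^2$. The paper gets the stronger $\tfrac12$ by dropping the sink from the primary loss: then $\minimum = (1,\dots,1)$, so $\|\weights_0-\minimum\|^2 = d$ exactly and the tail is $d-n = n+1 \ge \tfrac12(2n+1)$ trivially. The sink-loss $\Loss_n$ is still used, but only as an \emph{auxiliary} function that agrees with $\Loss$ on the subspace $\{\weights: \weights^{(i)}=0\text{ for } i>n\}$ and has an explicitly solvable minimum.

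Two further arithmetic issues compound the problem. Your extra factor of $\tfrac12$ inside the brackets gives a Hessian $\tfrac{\ubound}{8}A$ with operator norm at most $\tfrac{\ubound}{2}$, not $\ubound$; the resulting loss gap is $\tfrac{\ubound}{32(n+1)}$ rather than the $\tfrac{\ubound}{16(n+1)}$ achieved with Nesterov's $\tfrac{L}{4}$ prefactor, and one can check the inequality $\tfrac{\ubound}{32(n+1)} \ge \tfrac{\ubound\|\minimum\|^2}{16(n+1)^2}$ then fails for all $n\ge 2$. Also, $\sum_{i=1}^m\bigl(1-\tfrac{i}{m+1}\bigr)^2 = \tfrac{m(2m+1)}{6(m+1)}$, not $\tfrac{m(m+2)}{3(m+1)}$. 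The fix is to adopt the paper's source-only primary loss together with the auxiliary $\Loss_n$; your induction argument and the reduction to $\weights_0 = 0$ then carry over unchanged.
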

\begin{proof}
	First, since we could always define
	\(\Loss(\weights):=\tilde{\Loss}(\weights-\weights_0)\)
	we can assume without loss of generality that \(\weights_0=\mathbf{0}\). 
	
	Second, we can define \(\tilde{\dimension} := 2n +1 \le \dimension	\)	
	and interpret \(\weights_0 = \mathbf{0} \in\reals^\dimension\) as an element of
	\(\reals^{\tilde{\dimension}}\). If our constructed loss simply
	ignores the dimensions between \(\tilde{\dimension}\) and \(\dimension\) then
	this part of the gradients will then be zero keeping all \(\weights_n\) in
	\(\reals^{\tilde{\dimension}}\). We can therefore assume without loss of
	generality that 
	\begin{align}\label{eq: w.l.o.g dim=2n+1}
		\dimension=2n+1.
	\end{align}

	Third, we want to make sure that our loss function is actually convex. To
	see this note that we can write the gradient as
	\begin{align}\label{eq: naive complexity example loss gradient}
		\nabla \tilde{\Loss}(\weights)
		= \graphLaplacian\weights + (\weights^{(1)} -1) \stdBasis_1
		= (\graphLaplacian + \stdBasis_1 \stdBasis_1^T)\weights - \stdBasis_1,
	\end{align}
	where \(\stdBasis_1=(1,0,\dots,0)^T\) is the first standard basis
	vector and \(\graphLaplacian\) is the ``Graph Laplacian``\footnote{
		It might seem a bit excessive to introduce the concept of graphs when
		we could have simply calculated the derivative by hand to get the same
		result. But this graph view is very suggestive how one would generalize
		this ``heat spread'' problem to more complicated structures than a
		1-dimensional rod. In particular the connectedness of graphs is related
		to its condition number \parencite{gohWhyMomentumReally2017}. And this
		provides some more intuition for how ``worst case problems'' look like.
	} for the 
	\fxnote{tikz graph}{undirected graph} \(\graph=(\vertices, \edges)\) with 
	\begin{align*}
		\vertices = \{1,\dots, \dimension\},
		\qquad
		\edges = \{(i, i+1) : 1\le i \le \dimension-1\},
	\end{align*}
	\begin{align*}
		\graphLaplacian^{(i,j)} 
		&= 
		\begin{cases}
			[\text{degree of vertex i}] & i=j\\
			-1 & (i,j)\in\edges \text{ or } (j,i)\in\edges\\
			0 & \text{ else}
		\end{cases}
		\\
		&=
		\begin{pmatrix*}[r]
			1 & -1 & 0  & \cdots & \cdots & 0 \\
			-1 & 2 & -1 & \ddots &  &  \vdots \\ 
			0 & -1 & 2 & \ddots & \ddots & \vdots \\
			\vdots & \ddots & \ddots & \ddots & -1 & 0 \\
			\vdots &  & \ddots & -1 & 2 & -1 \\
			0 & \cdots & \cdots & 0  & -1 & 1
		\end{pmatrix*}\in \reals^{\dimension\times\dimension}
	\end{align*}
	This means our loss function is quadratic with
	\begin{align*}
		H := \nabla^2\tilde{\Loss}(\weights) = (A_G + \stdBasis_1 \stdBasis_1^T)
	\end{align*}
	And the Hessian is positive definite because
	\begin{align*}
		\langle \weights , H \weights\rangle
		&= \langle \weights, (A_G + \stdBasis_1 \stdBasis_1^T) \weights \rangle
		= \langle \weights, \stdBasis_1\rangle^2
		+ \langle \weights, A_G \weights\rangle
		\\
		&= (\weights^{(1)})^2
		+ \sum_{k=1}^{\dimension-1}(\weights^{(k)}-\weights^{(k+1)})^2
		\ge 0.
	\end{align*}
	The last equality can be verified by taking the derivative of both sides and
	realizing they are the same, which is sufficient because they are both zero	
	in the origin. We simply used the fact that our original loss function
	is essentially already a quadratic function, except for the constant influx
	at the first tile. And that constant in the derivative has no bearing on the
	Hessian.

	To calculate the operator norm of \(H\) we can use the equality above and	
	\((a-b)^2 \le 2(a^2 + b^2)\) for any real \(a,b\) to get
	\begin{align*}
		\langle \weights, H\weights\rangle
		\le (\weights^{(1)})^2 + 2\sum_{k=1}^{d-1}(\weights^{(k)})^2 + (\weights^{(k+1)})^2 
		\le 4 \sum_{k=1}^{d-1} (\weights^{(k)})^2
		= 4 \|\weights\|^2.
	\end{align*}
	This immediately implies that the largest eigenvalue is smaller than 4. To
	obtain a convex function with a Lipschitz continuos gradient with constant
	\(\ubound\), we can now simply define
	\begin{align*}
		\Loss(\weights):= \frac{\ubound}{4}\tilde{\Loss}(\weights)
		\xeq{(\ref{eq: naive complexity counterexample loss})} \frac\ubound{8}\left[
			(\weights^{(1)}-1)^2
			+ \sum_{k=1}^{\dimension-1} (\weights^{(k)}-\weights^{(k+1)})^2
		\right].
	\end{align*}
	Further, we know that \(\minimum = (1,\dots,1)^T\) achieves a loss of zero which is
	the unique minimum as the loss function is positive and convex. Since we
	assumed w.l.o.g. that \(\weights_0=\mathbf{0}\) we get
	\begin{align}\label{eq: initial distance}
		\|\minimum - \weights_0\|^2 = \dimension.
	\end{align}
	We also know
	that Assumption~\ref{assmpt: parameter in linear hull of gradients} implies
	\begin{align*}
		\weights_n \in \linSpan\{\nabla\Loss(\weights_k): 0 \le k \le n-1\}
		\subseteq \reals^n \subseteq \reals^\dimension,
	\end{align*}
	which immediately results in the second claim of the theorem
	\begin{align*}
		\|\minimum - \weights_n\|^2
		&\ge \sum_{k=n+1}^d \underbrace{(\minimum^{(k)}- \weights_n^{(k)})^2}_{=1}
		= d-n \\
		&\lxeq{(\ref{eq: w.l.o.g dim=2n+1})} n+1
		\xeq{(\ref{eq: w.l.o.g dim=2n+1})} \tfrac{n+1}{2n+1} d
		\ge \tfrac12 d
		\xge{(\ref{eq: initial distance})} \tfrac12 \|\minimum - \weights_0\|^2. 
	\end{align*}

	To get the bound on the loss function, we have to be a bit more subtle.
	On \(\reals^n\subseteq \reals^\dimension\) the modified loss function
	\begin{align}\label{eq: sink loss}
		\Loss_n(\weights) := \frac\ubound{8}\left[
			(\weights^{(1)} -1)^2
			+ (\weights^{(n)} - 0)^2
			+ \sum_{k=1}^{n-1} (\weights^{(k)} - \weights^{(k+1)})^2
		\right]
	\end{align}
	is equal to \(\Loss(\weights)\) because \(\weights^{(i)}=0\) for \(i>n\).
	Therefore we have
	\begin{align*}
		\Loss(\weights_n) - \underbrace{\inf_{\weights}\Loss(\weights)}_{=0}
		= \Loss(\weights_n) = \Loss_n(\weights_n)
		\ge \inf_{\weights}\Loss_n(\weights).
	\end{align*}
	Having a closer look at \(\Loss_n\) will provide us with a lower bound
	without having to know anything about \(\firstOrderMethod\). Since \(\Loss_n\)
	is similarly convex, setting its derivative equal to zero will actually provide us
	with its minimum. And similarly to \(\Loss\) itself, \(\Loss_n\) is a
	quadratic function, so the gradient is just an affine function with a
	constant Hessian. Therefore finding the minimum only requires solving
	a linear equation. One can just go through the same motions as we did
	before to obtain the equation:
	\begin{align}\label{eq: optimality condition for sink loss}
		A_n \weights - \stdBasis_1 \xeq{!} 0, \qquad
		A_n =\begin{cases}
			2 & i=j\le n, \\
			-1 & i=j+1\le n \text{ or } j=i+1\le n, \\
			0 & \text{else}.
		\end{cases}
	\end{align}
	But at least for solving it, it helps to have an intuition what \(\Loss_n\)
	actually represents. Recall that in our motivation, \(\Loss\) represents just
	a single heat source at interval \(\weights^{(1)}\) at constant
	temperature one, which slowly heats up our rod to this level. Now
	\(\Loss_n\) not only has a source, it also has a sink with constant
	temperature zero at \(n\), which cools down the rod from this other side.

	It is therefore quite intuitive that the equilibrium solution
	\(\hat{\weights}_n\) (optimal solution for \(\Loss_n\)) should be a linearly
	decreasing slope
	\begin{align*}
		\hat{\weights}_n^{(i)} = \begin{cases}
			1 - \tfrac{i}{n+1} & i \le n+1,\\
			0	& i \ge n+1.
		\end{cases}
	\end{align*}	
	And this is in fact the solution of our linear equation, (\ref{eq: optimality
	condition for sink loss}), as can be verified by calculation. Since
	\(\weights_n \in \reals^n\) on which \(\Loss\) and \(\Loss_n\) are equal
	we finally get
	\begin{align*}
		\Loss(\weights_n)-\inf_\weights\Loss(\weights)
		&=\Loss(\weights_n) = \Loss_n(\weights_n) \ge \Loss_n(\hat{\weights}_n)
		= \Loss(\hat{\weights}_n)\\
		&= \frac{\ubound}{8}\left[
			\left(-\tfrac1{n+1}\right)^2 + \left(1-\tfrac{n}{n+1}\right)^2
			+ \sum_{k=1}^{n-1}\left(\tfrac{k+1}{n+1}-\tfrac{k}{n+1}\right)^2
		\right]\\
		&= \frac{\ubound}{8}\sum_{k=0}^n \tfrac{1}{(n+1)^2}
		=\frac{\ubound}{8(n+1)}
		\xeq{(\ref{eq: initial distance})} \frac{L\|\weights_0 - \minimum\|^2}{8(n+1)d}\\
		&\lxge{(\ref{eq: w.l.o.g dim=2n+1})}
		\frac{L\|\weights_0 - \minimum\|^2}{16(n+1)^2}.
		\qedhere
	\end{align*}
\end{proof}

\subsection{Complexity Bounds for Strongly Convex Losses}

For a strongly convex loss we simply add a regularization term:
\begin{align*}
	\Loss(\weights)
	&= \frac{\ubound -\lbound}{8} \left[
		(\weights^{(1)}-1)^2
		+ \sum_{k=1}^{\dimension-1} (\weights^{(k)}-\weights^{(k+1)})^2
	\right]
	+ \frac\lbound{2} \| \weights \|^2\\
	&= \frac{\ubound - \lbound}{4} \tilde{\Loss}(\weights)
	+ \frac{\lbound}{2}\| \weights \|^2, \qquad \lbound>0.
\end{align*}
From the proof of Theorem~\ref{thm: convex function complexity bound} we know
about \(\tilde{\Loss}\) that
\begin{align*}
	0 \precsim \nabla^2\tilde{\Loss} \precsim 4\identity.
\end{align*}
Therefore we know that \(\Loss\in\strongConvex[\infty,1]{\lbound}{\ubound}\) because
\begin{align*}
	\lbound\identity
	\precsim \nabla^2\Loss
	&= \frac{\ubound - \lbound}{4} \nabla^2 \tilde{\Loss} + \lbound\identity\\
	&\precsim (\ubound - \lbound + \lbound)\identity = \ubound \identity.
\end{align*}
Using \(\lbound(\condition -1)=\ubound-\lbound\) where
\(\condition=\tfrac\ubound\lbound\) is our condition number and (\ref{eq: naive
complexity example loss gradient}), we can set the gradient to zero
\begin{align*}
	0 &\lxeq{!} \nabla \Loss(\weights)
	=  \frac{\lbound(\condition -1)}{4} \nabla\tilde{\Loss}(\weights) - \lbound \weights\\
	&= \left[
		\frac{\lbound(\condition-1)}{4}(A_G + \stdBasis_1\stdBasis_1^T) - \lbound\identity
	\right]\weights - \frac{\lbound(\condition-1)}{4}\stdBasis_1.
\end{align*}
Assuming \(\condition>1\), this condition can be rewritten as
\begin{align*}
	0 =  \left[
		A_G + \stdBasis_1\stdBasis_1^T + \frac{4}{\condition-1}\identity
	\right] - \stdBasis_1
	\weights.
\end{align*}
\(\condition=1\) implies \(\Loss=\lbound\|\cdot\|^2\) and
\(\minimum=\mathbf{0}\).
All entries on the diagonal except for the last dimension (which only has one
connection and no source or sink) are equal to
\begin{align*}
	2+\frac{4}{\condition-1} = 2\frac{\condition +1}{\condition-1}.
\end{align*}
This results in the system of equations
\begin{subequations}
\label{eq: solution to the strongly convex coloring problem}
\begin{align}
	0&=2\frac{\condition+1}{\condition -1}\weights^{(1)} - \weights^{(2)} -1, \\
	0&=2\frac{\condition+1}{\condition -1}\weights^{(i)}
	- \weights^{(i+1)} - \weights^{(i-1)}
	&& 2\le i <d, \\
	\label{eq: solution to the strongly convex coloring problem c}
	0&= \left(2\frac{\condition+1}{\condition -1} -1 \right)\weights^{(d)}
	- \weights^{(d-1)}.
\end{align}
\end{subequations}
Defining \(\weights^{(0)} :=1\) we can unify the first two equations. Flipping
the sign and dividing by \(\weights^{(i-1)}\) results in
\begin{align*}
	0&= 
	\frac{\weights^{(i+1)}}{\weights^{(i)}}\frac{\weights^{(i)}}{\weights^{(i-1)}}
	- 2\frac{\condition+1}{\condition -1}\frac{\weights^{(i)}}{\weights^{(i-1)}}
	+ 1  && i <d.
\end{align*}
This means that for a solution \(q\) of the quadratic equation
\begin{align}\label{eq: coloring solution quadratic equation}
	0&= q^2 - 2\frac{\condition+1}{\condition -1}q + 1,
\end{align}
\(\weights^{(i)} := q^i\) will be a solution to the first \(d-1\) equations, as
\begin{align*}
	\underbrace{\frac{\weights^{(i+1)}}{\weights^{(i)}}}_{=\frac{q^{i+1}}{q^i}=q}
	\underbrace{\frac{\weights^{(i)}}{\weights^{(i-1)}}}_{=q}
	- 2\frac{\condition+1}{\condition-1}
	\underbrace{\frac{\weights^{(i)}}{\weights^{(i-1)}}}_{=q} +1 = 0.
\end{align*}
Unfortunately neither of the solutions
\begin{align}\label{eq: solutions to strong convexity complexity quadratic problem}
	q_{1/2} &= \frac{\condition+1}{\condition -1} \pm 
	\sqrt{\left(\tfrac{\condition+1}{\condition-1}\right)^2 -1}
	=\frac{\condition +1 \pm \sqrt{4\condition}}{\condition -1}\\
	&= \frac{(\sqrt{\condition}\pm 1)^2}{(\sqrt{\condition}-1)(\sqrt{\condition}+1)}
\end{align}
results in a solution to the last equation, (\ref{eq: solution to the strongly
convex coloring problem c}). This is why we will formulate the following
theorem only for \(\dimension=\infty\)\footnote{
	For large dimensions the real solution seems to be almost
	indistinguishable from the limiting case (e.g. \(\dimension=40,
	\condition=10\) results in a maximal error of the order \(10^{-11}\). The
	error in the first 35 dimensions is even of the order \(10^{-13}\)).
}.
We formalize \(\dimension=\infty\) as the sequence space with euclidean norm
\begin{align*}
	\sequenceSpace := \Big\{
		f:\naturals \to \reals \mid \| f\|^2 = \sum_{k\in\naturals} f(k)^2< \infty
	\Big\}.
\end{align*}
\begin{theorem}[{\cite[Theorem 2.1.13]{nesterovLecturesConvexOptimization2018}}]
	\label{thm: strong convexity complexity bound}
	For any \(\weights_0\in\sequenceSpace\) there exists
	\(\Loss\in\strongConvex[\infty,1]{\lbound}{\ubound}\) such that for any
	first order method \(\firstOrderMethod\) satisfying Assumption~\ref{assmpt:
	parameter in linear hull of gradients}, we have
	\begin{subequations}
	\begin{align}
		\|\weights_n - \minimum\|
		&\ge \left(1-\frac2{1+\sqrt{\condition}}\right)^n \|\weights_0 - \minimum\| \\
		\Loss(\weights_n) - \Loss(\minimum),
		&\ge \tfrac{\lbound}{2}
		\left(1-\frac2{1+\sqrt{\condition}}\right)^{2n} \|\weights_0 - \minimum\|^2.
	\end{align}
	\end{subequations}
	where \(\minimum\) is the unique minimum of \(\Loss\).
\end{theorem}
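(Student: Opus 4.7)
The plan is to mirror the structure of the proof of Theorem~\ref{thm: convex function complexity bound}, but with the strongly convex construction already set up in the text. First, I would reduce to $\weights_0 = \mathbf{0}$ without loss of generality by translating the loss. The candidate hard function is the strongly convex coloring loss constructed above; from the computation $\lbound \identity \precsim \nabla^2\Loss \precsim \ubound \identity$ it indeed lies in $\strongConvex[\infty,1]{\lbound}{\ubound}$, so this choice is legitimate.

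Next I would identify the minimum $\minimum$. On $\sequenceSpace$ the finite-dimensional boundary equation (\ref{eq: solution to the strongly convex coloring problem c}) disappears and only the two-term recursion (\ref{eq: coloring solution quadratic equation}) governs the optimality conditions. Of the two roots in (\ref{eq: solutions to strong convexity complexity quadratic problem}) only
\[
q = \frac{(\sqrt{\condition}-1)^2}{(\sqrt{\condition}-1)(\sqrt{\condition}+1)}
= \frac{\sqrt{\condition}-1}{\sqrt{\condition}+1}
= 1-\frac{2}{1+\sqrt{\condition}}
\]
satisfies $|q|<1$, so $\minimum^{(i)} = q^i$ is the unique square-summable solution and is the minimum by strong convexity. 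This is the root whose value matches the rate in the statement, which explains the $\sqrt\condition$ dependence.

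Then I would establish, by induction on $n$, the inclusion $\weights_n \in \reals^n$ (meaning only coordinates $1,\dots,n$ are nonzero). The Hessian $H$ is tridiagonal and the inhomogeneity is proportional to $\stdBasis_1$, so $\nabla\Loss(\weights)$ for $\weights\in\reals^k$ lies in $\reals^{k+1}$. Combined with Assumption~\ref{assmpt: parameter in linear hull of gradients} and $\weights_0 = \mathbf{0}$, this gives $\weights_n \in \linSpan\{\nabla\Loss(\weights_0),\dots,\nabla\Loss(\weights_{n-1})\} \subseteq \reals^n$.

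Finally I would compute the lower bound directly from the tail of the geometric series. Since $\weights_n^{(i)}=0$ for $i>n$,
\begin{align*}
\|\weights_n-\minimum\|^2 \;\ge\; \sum_{i=n+1}^\infty q^{2i}
= q^{2n}\sum_{i=1}^\infty q^{2i}
= q^{2n}\|\weights_0 - \minimum\|^2,
\end{align*}
which is exactly the first claim after taking a square root. The second claim then drops out of strong convexity applied at $\minimum$ with $\nabla\Loss(\minimum)=0$ (Definition~\ref{def: strong convexity}), giving $\Loss(\weights_n)-\Loss(\minimum) \ge \tfrac{\lbound}{2}\|\weights_n-\minimum\|^2$. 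The only genuinely subtle point, and what I expect to be the main obstacle, is the passage to infinite dimensions: one must verify that $\Loss$ really is well-defined on $\sequenceSpace$, that $\minimum\in\sequenceSpace$, and that the optimality condition holds coordinate-wise without any boundary term — essentially, that the finite-dimensional artefact (\ref{eq: solution to the strongly convex coloring problem c}) simply vanishes in the limit. Everything else is a direct calculation.
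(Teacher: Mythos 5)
Your proposal follows essentially the same route as the paper's proof: reduce to \(\weights_0=\mathbf{0}\), take the strongly convex coloring loss already shown to lie in \(\strongConvex[\infty,1]{\lbound}{\ubound}\), identify \(\minimum^{(i)}=q^i\) with \(q=1-\tfrac{2}{1+\sqrt{\condition}}\) as the unique square-summable solution of the optimality recursion, use the subspace containment \(\weights_n\in\reals^n\) from Assumption~\ref{assmpt: parameter in linear hull of gradients} to bound \(\|\weights_n-\minimum\|^2\) below by the geometric tail \(q^{2n}\|\weights_0-\minimum\|^2\), and conclude the loss bound from strong convexity with \(\nabla\Loss(\minimum)=0\). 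The only cosmetic differences are that you spell out the induction for \(\weights_n\in\reals^n\) and flag the infinite-dimensional well-definedness explicitly, while the paper additionally notes the trivial \(\condition=1\) case; the argument is correct.
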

\begin{proof}
	Since our factor becomes zero for \(\condition=1\) we can assume without loss
	of generality that \(\condition>1\).
	
	Further, we can again assume without loss of generality that
	\(\weights_0=\mathbf{0}\). Of the two solutions (\ref{eq: solutions to
	strong convexity complexity quadratic problem}) to the linear equations
	(\ref{eq: solution to the strongly convex coloring problem}) induced by the
	two solutions \(q_{1/2}\) of (\ref{eq: coloring solution quadratic equation})
	only
	\begin{align*}
		\minimum^{(i)} = \left(\frac{\sqrt{\condition}-1}{\sqrt{\condition}+1}\right)^i
		= \left(1 - \frac{2}{1+\sqrt{\condition}}\right)^i =: q^i
	\end{align*}
	is an element of \(\sequenceSpace\) and therefore the unique minimum. Since 
	\(\weights_n\) stays in the subspace \(\reals^n\) by assumption we know
	\begin{align*}
		\|\weights_n -\minimum\|^2
		\ge \sum_{k=n+1}^\infty (\minimum^{(k)})^2
		= q^{2n} \sum_{k=0}^\infty q^{2k}
		= q^{2n} \| \minimum\|^2
		= q^{2n} \|\weights_0 - \minimum\|^2.
	\end{align*}
	Taking the root results in the first claim. The second claim immediately
	follows from the definition of strong convexity (Definition~\ref{def: strong convexity})
	and \(\nabla\Loss(\minimum) = 0\):
	\begin{align*}
		\Loss(\weights_n) - \Loss(\minimum)
		&\ge \langle \nabla\Loss(\minimum), \weights_n -\minimum\rangle
		+\tfrac\lbound{2} \|\weights_n -\minimum\|^2
		\qedhere
	\end{align*}
\end{proof}

In view of Lemma~\ref{lem: smallest upper bound} which states that \ref{eq:
gradient descent} is in some sense optimal (minimizes the upper bound induced by
Lipschitz continuity of the gradient), it might be surprising how far away
\ref{eq: gradient descent} is from these lower bounds. Maybe these lower bounds are not
tight enough? This is not the case. In Chapter~\ref{chap: momentum} we will get
to know a family of methods which do achieve these rates of convergence.

So can we explain Lemma~\ref{lem: smallest upper bound}? Well, first of all it
is of course just an upper bound we are minimizing. But the more important issue
is that we are taking a very local perspective. We only care about minimizing
the next loss, using nothing but the current gradient. In
Assumption~\ref{assmpt: parameter in linear hull of gradients} on the other
hand, we consider the linear span of all gradients so far. This distinction is
important as it will make the ``momentum method'' discussed in
Chapter~\ref{chap: momentum} seem much more obvious.

Lastly since this generalization of assumptions improved our rate of convergence
we might ask if we could generalize our assumption further. Heuristics like
the well known Adagrad, Adadelta and RMSProp (see Section~\ref{sec: heuristics
for adpative learning rates}) use different learning rates for different
components of the gradients for example. Their iterates can thus
be expressed by
\begin{align*}
	\weights_n = \weights_0 + \sum_{k=0}^{n-1} H_k \nabla\Loss(\weights_k),
\end{align*}
where \(H_k\) are diagonal matrices. But at least upper triangle matrices do
not change the fact that \(\weights_n\) is contained in subspace \(\reals^n\).
Since we only use this fact, one could thus generalize Assumption~\ref{assmpt:
parameter in linear hull of gradients} to
\begin{assumption}[{\cite{gohWhyMomentumReally2017}}]
	\label{assmpt: parameter in generalized linear hull of gradients}
	The \(n\)-th iterate of the optimization method can be expressed as
	\begin{align*}
		\weights_n = \weights_0 + \sum_{k=0}^{n-1} H_k \nabla\Loss(\weights_k),
	\end{align*}
	where \(H_k\) are upper triangular matrices.
\end{assumption}

Both complexity bounds continue to hold with this assumption in place of
Assumption~\ref{assmpt: parameter in linear hull of gradients}. And since one
could just switch the relation \((\weights_n-\weights_{n+1})^2\) to
\((\weights_n+\weights_{n+1})^2\) to force a flip from positive to negative
at this point, it is unclear how one should extrapolate to components not
available in any gradient. But making further assumptions about
\(\Loss(\weights)\) might enable us to find better bounds.


 }
	{

\chapter{Stochastic Gradient Descent (SGD)}\label{chap: sgd}

So far we have only discussed the minimization of a function we can get accurate
gradient information from. If our loss function is stochastic, the best we can
do so far is to minimize the empirical risk (\ref{eq: empirical risk}), i.e.
\begin{align*}
	\Loss_\sample(\weights) = \frac{1}{\sampleSize}\sum_{k=1}^\sampleSize \loss(\weights, Z_k)
	= \E_{Z\sim \dist_\sample}[\loss(\weights, Z)]
\end{align*}
for some sample \(\sample=(Z_1,\dots,Z_\sampleSize)\), inducing the empirical
distribution \(\dist_\sample\). Now if we try to minimize \(\Loss_\sample\),
resulting in a guess \(\hat{\weights}\) of its minimum \(\minimum^{(\sample)}\),
to get close to the minimum \(\minimum\) of \(\Loss\), we are really applying a
type of triangle
inequality \parencite[e.g.][]{bottouOptimizationMethodsLargeScale2018}
\begin{align*}
	&\Loss(\hat{\weights}) - \Loss(\minimum)\\
	&=
	\underbrace{\Loss(\hat{\weights}) - \Loss_\sample(\hat{\weights})}_{
	\le \sup_{\weights}|\mathrlap{\Loss_\sample(\weights)-\Loss(\weights)|}
	}
	{
	+ \Loss_\sample(\hat{\weights}) - \Loss_\sample(\minimum^{(\sample)})
	+ \underbrace{
		\scriptstyle
		\Loss_\sample(\minimum^{(\sample)}) - \Loss_\sample(\minimum)
	}_{\le 0\ (\text{def. }\minimum^{(\sample)})}
	}
	+ \underbrace{
		\Loss_\sample(\minimum)- \Loss(\minimum)
	}_{\le \sup_{\weights}|\Loss_\sample(\weights)-\Loss(\weights)|}
	\\
	&\le \underbrace{\Loss_\sample(\hat{\weights}) - \Loss_\sample(\minimum^{(\sample)})}_{
		\text{apply GD results}
	}
	+ 2\underbrace{\sup_{\weights}|\Loss_\sample(\weights)-\Loss(\weights)|}_{\text{see statistical learning}}.
\end{align*}
Bounding the distance between the theoretical and empirical risk uniformly is
the topic of statistical learning. In general this distance can be bounded with
some measure -- most prominently the Vapnik-Chervonenkis (VC) dimension -- of how
expressive the hypothesis class \(\{\model\}\) is
\parencite{vapnikOverviewStatisticalLearning1999}. The more dimensions our
weight vectors \(\weights\) have, the larger the VC-dimension usually is. But
VC dimensions try to remove ``duplicates'', i.e.\ parameters which do not
actually allow for different models. Chaining linear functions for example only
results in a linear function and therefore does not increase the expressiveness
of our hypothesis class. Nevertheless, deep learning models should still be
highly over-parametrized according to this theory, and have no business
generalizing to unseen examples. Yet they seem to.

To solve this mystery one has to make the upper bounds more algorithm specific
and avoid taking the supremum over \emph{all} possible parameters \(\weights\).
Instead we want to consider the weights that our algorithm would actually pick.
For an algorithm \(\algo\) for example,
\textcite{hardtTrainFasterGeneralize2016} use the upper bound
\begin{align*}
	&\Loss(\algo(\sample)) - \Loss(\minimum)\\
	&= \underbrace{\Loss(\algo(\sample)) - \Loss_\sample(\algo(\sample))}_{
		\text{generalization error}
	}
	+ \underbrace{\Loss_\sample(\algo(\sample)) - \Loss_\sample(\minimum^{(\sample)})}_{\text{optimization error (GD)}}
	+ \Loss_\sample(\minimum^{(\sample)}) - \Loss(\minimum).
\end{align*}
In their analysis in expectation over \(\sample\) (and \(\algo\) if it is a
randomized algorithm), they use
\begin{align*}
	\Loss(\minimum) &= \inf_{\weights}\E[\loss(\weights, Z)]
	= \inf_{\weights}\E\underbrace{
		\left[\frac{1}{\sampleSize}\sum_{k=1}^\sampleSize\loss(\weights, Z_k)\right]
	}_{\ge \Loss_\sample(\minimum^{(\sample)})}
	\ge \E[\Loss_\sample(\minimum^{(\sample)})]
\end{align*}
to get rid of the last term and focus on the expected generalization error
\begin{align*}
	\E[\Loss(\algo(\sample)) - \Loss_\sample(\algo(\sample))].
\end{align*}
Their bounds are much better in the sense that they are independent of
the (VC-)dimension of parameters. They only depend on the amount of data and
the sum of learning rates of all iterations in the convex case. Recall the sum of
all learning rates is equal to the time we let gradient flow run if we view
gradient descent as a discretization of this flow.\footnote{
	They also discuss the strongly convex case with compact parameter set, where
	one can get rid of this time dependency with the fact that gradient descent
	has contractive properties, i.e.\ starting from two different points and
	making one gradient step results in weights being closer to each other. 
	In a compact and thus bounded set, the length of the path of movement gradient
	flow takes is thus finite. This provides some intuition why the length
	of time can be discussed away here.

	Additionally they discuss the non-convex case where the relation to the time worsens,
	i.e.\ the bound is no longer proportional to the time spent in gradient flow
	but rather worsens faster than our time increases.
} And if we do not run our optimizer for long, it is perhaps not surprising that
we do not get far from the start and thus do not have time to overfit.

Stochastic gradient descent avoids splitting the error into generalization and
optimization error entirely which will provide us with even tighter bounds. The
price for this improved bound, is the requirement to never reuse data, i.e.\ we
need a source of independent \(\dist\) distributed random variables which we use
for a stochastic gradient
\begin{align*}
	\nabla_\weights\loss (\weights, X,Y), \qquad (X,Y)\sim\dist,
\end{align*}
of which the expected value\footnote{
	We need to be able to swap the expectation with differentiation. A sufficient 
	condition is continuity of \(\nabla_\weights\loss\) in \(\weights\), which allows us
	to use the mean value theorem
	\begin{align*}
		\frac{\partial}{\partial \weights_k}\E[\loss(\weights, X,Y)]
		&= \lim_{n\to\infty}
		\int\frac{\loss(\weights+\frac{\stdBasis_k}{n}, X,Y)-\loss(\weights,X,Y)}{\frac{ 1 }{ n }}d\Pr
		\\
		&=\lim_{n\to\infty} \int\frac{\partial}{\partial \weights_k}\loss(\xi_n, X,Y)d\Pr
		\qquad \xi_n \in \left[\weights, \weights + \frac{ \stdBasis_k }{ n }\right].
	\end{align*}
	Then we use the boundedness of a continuous function on the compact interval
	\([\weights, \weights + \frac{ \stdBasis_k }{ n }]\) to move the limit in using
	dominated convergence.
} is the gradient of the risk \(\Loss\).

If we were to reuse data, then we might argue we are really picking samples
from \(\dist_\sample\) not \(\dist\) and thus our results of convergence would
apply to \(\Loss_\sample\) instead of \(\Loss\). This would force us to use
these triangle inequality upper bounds again. Which, as of writing, are often
much worse. So much worse, for example, that (even when the number of iterations go to
infinity) the upper bound is worse than if we had stopped the iteration at the
number of samples \(\sampleSize\) and used the bounds without the triangle
inequality\footnote{
	\textcite{hardtTrainFasterGeneralize2016} use their bounds to prove that
	the averaging algorithm discussed in Section~\ref{sec: SGD with Averaging}
	Theorem~\ref{thm: optimal averaging rates} converges with rate
	\begin{align*}
		\Loss(\overline{\Weights}_n) - \Loss(\minimum)
		\le \Loss(\overline{\Weights}_n) - \E[\Loss_\sample(\minimum^{(\sample)})]
		\le \frac{\|\weights_0 -\minimum^{(\sample)}\|\sqrt{\lipConst^2 + \stdBound^2}}{\sqrt{\sampleSize}}
		\underbrace{\sqrt{\frac{\sampleSize+2n}{n}}}_{\to \sqrt{2} \quad (n\to\infty)}
	\end{align*}
}.

Of course intuitively, reusing a couple of examples should not harm
generalization that much. And since all the iterations up to \(\sampleSize\)
do not overfit in some sense, they might lead us into the vicinity of a well
generalizing minimum already. If that is also a minimum of \(\Loss_\sample\)
further optimization with regard to \(\Loss_\sample\) might only barely hurt
generalization. As the ``true'' model \(\model[\minimum]\) should also perform
well on \(\Loss_\sample\), this seems very plausible.

So even if it might not be realistic, we will assume access to an infinite
source of independent \(\dist\) distributed random variables \(((X_k,Y_k), k\ge 1)\).
We can always interpret them as drawn from \(\dist_\sample\) if appropriate.

Note that if one wants to minimize \(\Loss\), then full gradients are not
possible in the first place. But for \(\Loss_\sample\) one might compare
convergence rates of SGD and GD and deem GD on \(\nabla\Loss_\sample\) more
appropriate.

But while it is difficult to quantize due to the step change between no-reuse
and reuse of random variables, using the full gradient \(\nabla\Loss_\sample\)
probably hurts generalization. Empirically it does in fact
seem that SGD leads to wider\footnote{
\textcite{hochreiterFlatMinima1997} first suggested that flatter minima might
	generalize better since they require less precision in the weights (less
	information entropy) and are thus \emph{simpler} models.
}
(better generalizing) minima \parencite{liVisualizingLossLandscape2018}.
We will try to build intuition for this using mini-batches. I.e. we
will assume that our batches are independent within and between each other,
which is fine for small batch sizes with regard to \(\sampleSize\) but breaks
down if batch sizes approach full batches. For full batches our analysis would
only be valid for one gradient step which is obviously ridiculous.

To shorten notation we will assume that the gradient is always meant
with respect to \(\weights\) unless otherwise stated.
We define ``stochastic gradient descent'' as
\begin{align}
	\tag{SGD}
	\Weights_{n+1}
	&= \Weights_n - \lr_n\nabla\loss(\Weights_n, X_{n+1},Y_{n+1})\\
	\nonumber
	&= \Weights_n - \lr_n\nabla\Loss(\Weights_n)
	+ \lr_n\underbrace{
		[\nabla\Loss(\Weights_n) - \nabla\loss(\Weights_n, X_{n+1}, Y_{n+1})]
	}_{=:\martIncr_{n+1}}.
\end{align}
Where \(\martIncr_n\) are martingale increments for the filtration
\begin{align*}
	\filtration_n :=\sigma((X_k, Y_k): k\le n ),
	\qquad (X_k,Y_k)\stackrel{\text{iid}}{\sim}\dist,
\end{align*}
since \(\Weights_n\) is \(\filtration_n\)-measurable which is independent of
\((X_{n+1},Y_{n+1})\), i.e.\ 
\begin{align}\label{eq: conditional independence of martingale increments}
	\E[\martIncr_{n+1}\mid \filtration_n]
	= \E[\nabla\Loss(\Weights_n) - \nabla\loss(\Weights_n, X_{n+1}, Y_{n+1})\mid \filtration_n]
	= 0.
\end{align}
\section{ODE View}

The first way to view SGD is through the lense of approximating the ODE with
integral equation
\begin{align*}
	\weights(t) =  \weights_0 -\int_{0}^{t}\nabla\Loss(\weights(s))ds.
\end{align*}
Recall its Euler discretization is gradient descent 
\begin{align*}
	\weights_{t_{n+1}}:=\weights_{n+1}
	= \weights_n - \lr_n \nabla\Loss(\weights_n),
\end{align*}
where
\begin{align*}
	0=t_0 \le \dots \le t_N= T \quad \text{with} \quad t_{n+1}-t_n=:\lr_n
	\quad \text{and}\quad \lr:=\max_n \lr_n.
\end{align*}
A constant learning rate \(\lr\) represents an equidistant discretization.
Since \(\nabla\Loss\) is Lipschitz continuous, the local discretization error
\begin{align*}
	\|\weights(\lr)  - \weights_\lr \|
	&= \Big\|
		\Big(\weights_0 - \int_0^\lr\nabla\Loss(\weights(s))ds\Big)
		- (\weights_0-\lr\nabla\Loss(\weights_0))
	\Big\|\\
	&\le \int_0^\lr \underbrace{\|\nabla\Loss(\weights_0) - \nabla\Loss(\weights(s))\|}_{
		\le \ubound \|\weights(s) - \weights(0)\| \in O(s) \mathrlap{\quad\text{(Taylor approx., Lip. \(\nabla\Loss\))}}
	}ds 
\end{align*}
is of order \(O(\lr^2)\). 
Using the discrete Gr\"onwall's inequality (Lemma~\ref{lem-appendix: discrete
gronwall}) and stability of the ODE (due to Lipschitz continuity of
\(\nabla\Loss\) and the continuous Gr\"onwall inequality) the global
discretization error is therefore of order 1, i.e.\ 
\begin{align*}
	\max_{n} \|\weights(t_n) - \weights_n\| \in O(\lr),
\end{align*}
\fxnote{refer to book instead of course?}{as is usually covered in ``Numerics of ODE's'' courses.}
Due to \((a+b)^2\le2(a^2+b^2)\), and therefore 
\begin{align*}
	\max_n\E[\|\weights(t_n)-\Weights_n\|^2]
	\le 2(\underbrace{\max_n \|\weights(t_n)-\weights_n\|^2}_{O(\lr^2)}
	+\max_{n}\E[\|\weights_n - W_n\|^2]),
\end{align*}
it is enough to bound the distance between SGD and GD.
\begin{theorem}\label{thm: distance SGD vs GD}
	In general we have
	\begin{align*}
		\E\|\weights_n-\Weights_n\|^2
		\le \sum_{k=0}^{n-1}\lr_k^2\E\|\martIncr_{k+1}\|^2\exp\left(
			\sum_{j=k+1}^{n-1}\lr_j^2\ubound^2 + 2\lr_j\ubound
		\right),
	\end{align*}
	which can be simplified for \(\lr:=\max_n\lr_n\) and bounded martingale
	increment variances \(\E\|\martIncr_n\|^2 \le \stdBound^2\) to
	\begin{align*}
		\max_{n}\E\|\weights_n	-\Weights_n\|^2
		\le \lr T\stdBound^2\exp[T(\lr\ubound^2 + 2\ubound)] \in O(\lr)
	\end{align*}
\end{theorem}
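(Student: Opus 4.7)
The plan is to set $e_n := \weights_n - \Weights_n$ and derive a recursion for $\E\|e_n\|^2$. Subtracting the SGD and GD updates gives
\begin{align*}
    e_{n+1} = e_n - \lr_n\bigl[\nabla\Loss(\weights_n) - \nabla\Loss(\Weights_n)\bigr] - \lr_n\martIncr_{n+1},
\end{align*}
so expanding the squared norm produces three terms: a ``deterministic'' squared-norm term, a cross term with $\martIncr_{n+1}$, and the pure noise term $\lr_n^2\|\martIncr_{n+1}\|^2$. The first key observation is that $e_n$, $\weights_n$ and $\Weights_n$ are all $\filtration_n$-measurable, so by (\ref{eq: conditional independence of martingale increments}) the cross term vanishes under $\E[\,\cdot\mid\filtration_n]$ and hence under $\E$.

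Next I would bound the remaining deterministic squared-norm term. Expanding,
\begin{align*}
    \bigl\|e_n - \lr_n[\nabla\Loss(\weights_n)-\nabla\Loss(\Weights_n)]\bigr\|^2
    = \|e_n\|^2 - 2\lr_n\langle e_n, \nabla\Loss(\weights_n)-\nabla\Loss(\Weights_n)\rangle + \lr_n^2\|\nabla\Loss(\weights_n)-\nabla\Loss(\Weights_n)\|^2,
\end{align*}
and then applying Cauchy--Schwarz together with the $\ubound$-Lipschitz continuity of $\nabla\Loss$ gives the clean bound $(1 + 2\lr_n\ubound + \lr_n^2\ubound^2)\|e_n\|^2$. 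Combining with the (conditionally independent) noise piece yields the recursion
\begin{align*}
    \E\|e_{n+1}\|^2 \le (1+2\lr_n\ubound + \lr_n^2\ubound^2)\,\E\|e_n\|^2 + \lr_n^2\,\E\|\martIncr_{n+1}\|^2.
\end{align*}

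Starting from $e_0=0$, a straightforward induction (or the discrete Gr\"onwall-type lemma) unrolls the recursion into
\begin{align*}
    \E\|e_n\|^2 \le \sum_{k=0}^{n-1}\lr_k^2\,\E\|\martIncr_{k+1}\|^2 \prod_{j=k+1}^{n-1}(1+2\lr_j\ubound+\lr_j^2\ubound^2),
\end{align*}
and applying $1+x\le e^x$ to each factor in the product yields the first claim of the theorem.

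For the simplified bound I would use $\sum_j \lr_j = T$ together with $\lr_j \le \lr$, which gives $\sum_j 2\lr_j\ubound \le 2T\ubound$ and $\sum_j \lr_j^2\ubound^2 \le \lr T\ubound^2$, so the exponential factor is uniformly bounded by $\exp(T(\lr\ubound^2+2\ubound))$. The same trick handles the prefactor: $\sum_k \lr_k^2 \E\|\martIncr_{k+1}\|^2 \le \stdBound^2 \lr \sum_k \lr_k \le \lr T \stdBound^2$. Taking the maximum over $n$ produces the stated $O(\lr)$ bound. The only subtle point is the vanishing of the cross term, which hinges on the measurability of both $\weights_n$ and $\Weights_n$ with respect to $\filtration_n$ (and the fact that $\weights_n$ is entirely deterministic); everything else is algebra and Gr\"onwall.
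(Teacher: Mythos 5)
Your proof is correct and follows essentially the same route as the paper: subtract the two recursions, expand the squared norm, kill the cross term by conditional independence, bound the deterministic part via Cauchy--Schwarz and $\ubound$-Lipschitz continuity to get the factor $(1+2\lr_n\ubound+\lr_n^2\ubound^2)$, then apply the discrete Gr\"onwall lemma and $1+x\le e^x$. The simplification step is also handled the same way, bounding $\sum_j\lr_j\le T$ and using $\lr_j\le\lr$ to pull out one factor of $\lr$.
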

Now before we get to the proof let us build some intuition why this result
is not surprising. Unrolling SGD we can rewrite it as
\begin{align}\label{eq: unrolled SGD}
	\Weights_{t_n}
	= \weights_0 - \sum_{k=0}^{n-1} \lr_k \nabla\Loss(\Weights_k)
	+ \sum_{k=0}^{n-1} \lr_k\martIncr_{k+1}.
\end{align}
This is the same recursion as in GD except for the last term. Now for an
equidistance grid we have \(\lr=\frac{T}{N}\). Intuitively the last term in \(\Weights_T\)
should therefore disappear due to some law of large numbers for martingales
\begin{align}\label{eq: mean of martingale increments}
	\sum_{k=0}^{N-1}\lr\martIncr_{k+1} = \frac{T}{N}\sum_{k=1}^N\martIncr_k.
\end{align}
And since the variance of a sum of martingale increments is the sum of variances
as mixed terms disappear due to conditional independence (\ref{eq: conditional
independence of martingale increments})
\begin{align*}
	\E\left\|\sum_{k=1}^N\martIncr_k\right\|^2 = \sum_{k=1}^N \E\|\martIncr_k\|^2,
\end{align*}
the variance of a mean decreases with rate \(O(\frac{1}{N})=O(\lr)\), which is the rate
we get in Theorem~\ref{thm: distance SGD vs GD}.
\begin{proof}[Proof (Theorem~\ref{thm: distance SGD vs GD})]
	Using the conditional independence (\ref{eq: conditional independence of martingale increments})
	we can get rid of a all the mixed terms with
	\(\E[\langle \cdot, \martIncr_{n+1}\rangle\mid\filtration_n]=0\) as everything
	else is \(\filtration_n\)-measurable:
	\begin{align*}
		&\E\|\Weights_{n+1}-\weights_{n+1}\|^2
		= \E\|\Weights_n - \weights_n
		+ \lr_n(\nabla\Loss(\weights_n)-\nabla\Loss(\Weights_n))
		+ \lr_n\martIncr_{n+1}\|^2\\
		&\lxeq{(\ref{eq: conditional independence of martingale increments})}
		\begin{aligned}[t]
			&\E\|\Weights_n-\weights_n\|^2 + \lr_n^2\E\|\martIncr_{n+1}\|^2\\
			&+\underbrace{
				\lr_n^2\E\|\nabla\Loss(\weights_n)-\nabla\Loss(\Weights_n)\|^2
				+ 2\lr_n\E\langle\Weights_n-\weights_n,
				\nabla\Loss(\weights_n)-\nabla\Loss(\Weights_n)\rangle.
			}_{
				\le 0 \qquad
				\text{if \(\Loss\) was convex (Lemma~\ref{lem: bregmanDiv lower bound}) and }
				\lr_n<\frac2\ubound
			}
		\end{aligned}
	\end{align*}
	But since we do not want to demand convexity of \(\Loss\) just yet, we will
	have to get rid of these terms in a less elegant fashion. Using Lipschitz
	continuity and the Cauchy-Schwarz inequality we get
	\begin{align*}
		\|\nabla\Loss(\weights_n)-\nabla\Loss(\Weights_n)\|^2
		&\le \ubound^2\|\weights_n-\Weights_n\|^2\\
		\langle\Weights_n-\weights_n,
		\nabla\Loss(\weights_n)-\nabla\Loss(\Weights_n)\rangle,
		&\le \ubound \|\weights_n-\Weights_n\|^2,
	\end{align*}
	which leads us to
	\begin{align*}
		\E\|\Weights_{n+1}-\weights_{n+1}\|^2
		\le (1+\lr_n^2\ubound^2 + 2\lr_n\ubound)\E\|\Weights_n-\weights_n\|^2
		+ \lr_n^2\E\|\martIncr_{n+1}\|^2.
	\end{align*}
	Now we apply the discrete Gr\"onwall inequality (Lemma~\ref{lem-appendix:
	discrete gronwall}) to get
	\begin{align*}
		\E\|\Weights_n-\weights_n\|^2
		\le \sum_{k=0}^{n-1}\lr_k^2\E\|\martIncr_k\|^2
		\underbrace{
			\prod_{j=k+1}^{n-1}(1+\lr_j^2\ubound^2 + 2\lr_j\ubound)
		}_{
			\le \exp\left(\sum_{j=k+1}^{n-1}\lr_j^2\ubound^2 + 2\lr_j\ubound\right)
		}
	\end{align*}
	using \(1+x\le\exp(x)\) for the first claim. Assuming bounded martingale
	increment variances \(\E\|\martIncr_n\|^2\le\stdBound^2\) and \(\lr=\max_n\lr_n\)
	we therefore have 
	\begin{align*}
		\E\|\Weights_n-\weights_n\|^2
		&\le \lr\stdBound^2\underbrace{\sum_{k=0}^{n-1}\lr_k}_{\le T}
		\exp\Bigg((\lr\ubound^2+2\ubound)\underbrace{\sum_{j=k+1}^{n-1}\lr_j}_{\le T}\Bigg).
		\qedhere
	\end{align*}
\end{proof}

\subsection{Excursion: Stochastic Approximation}

Theorem~\ref{thm: distance SGD vs GD} can feel somewhat unnatural as it requires
us to completely restart SGD with smaller learning rates to get behavior closer
to GD and its ODE. If the ODE has an attraction point/area/etc.\ one might
instead want to prove convergence to that within one run of SGD using a
decreasing learning rate sequence. If we have a look at (\ref{eq: unrolled SGD})
it is quite intuitive that we will need
\begin{align*}
	\sum_{k=0}^\infty \lr_k = \lim_{n\to\infty}\sum_{k=0}^{n-1} \lr_k
	= \lim_{n\to\infty}t_n = \infty.
\end{align*}
Otherwise we cannot really claim that SGD should behave similar to the asymptotic
behavior of an ODE as we cannot let \(t\to\infty\) in the ODE. Additionally
we somehow need to do variance reduction to get rid of
\begin{align*}
	\sum_{k=m}^{m-1} \lr_k\martIncr_{k+1}
\end{align*}
to argue that from starting point \(\Weights_{t_m}\) onwards SGD behaves like the ODE
and therefore ends up in its attraction areas. So we also need
\(\lr_k \to 0\). How fast the learning rate needs to vanish depends on the
amount of (bounded) moments we have on \(\martIncr_k\) \parencite[p.
110]{kushnerStochasticApproximationAlgorithms1997}.
If we only have bounded second moments as we have assumed in Theorem~\ref{thm:
distance SGD vs GD}, then we get the classic stochastic approximation conditions
on the learning rate
\begin{align*}
	\sum_{n=0}^\infty \lr_n = \infty \qquad \text{and} \qquad \sum_{n=0}^\infty \lr_n^2 <\infty,
\end{align*}
i.e.\ learning rates \(\lr_n\) which decrease faster than \(O(\frac1{ \sqrt{n} })\) but
not faster than \(O(\frac1n)\). See \textcite[ch.
5]{kushnerStochasticApproximationAlgorithms1997} for a proof of
convergence with probability one of the recursion
\begin{align*}
	\Weights_{n+1} = \Weights_n - \lr_n(g(\Weights_n) + \martIncr_{n+1})
\end{align*}
towards the asymptotic areas of the limiting
ODE
\begin{align*}
	\dot{\weights} = g(\weights)
\end{align*}
under those assumptions.

Here \(g\) can be, but needs not be, a gradient (we have not used that in
Theorem~\ref{thm: distance SGD vs GD} either). And this direction of
generalization is necessary to craft convergence proofs for Reinforcement
Learning where we do not get unbiased estimates of some gradients towards the
minimum, but rather unbiased estimates of a coordinate of the Bellman Operator
(Fixed Point Iteration towards the optimum). See also ``coordinate descent''
(Section~\ref{sec: coordinate descent}).

\section{Quadratic Loss Functions}\label{sec: quadratic loss SGD}

While we have proven that SGD behaves roughly like GD for small learning rates, we
are not really interested in how close SGD is to GD but rather how good it
is at optimizing \(\Loss\). To build some intuition, let us consider a
quadratic loss function again before we get to general convex functions.
Using the same trick we used in Section~\ref{sec: visualize gd} 
\begin{align}\label{eq: hessian gradient reminder}
	\nabla\Loss(\weights)
	= \nabla^2\Loss(\weights)(\weights-\minimum)
	= H(\weights-\minimum)
\end{align}
we can rewrite SGD (by induction using the triviality of \(n=0\)) as
\begin{align}
	\nonumber
	&\Weights_{n+1}-\minimum\\
	\nonumber
	&\lxeq{(\ref{eq: hessian gradient reminder})}
	\Weights_n - \minimum - \lr_n H(\Weights_n-\minimum) + \lr_n\martIncr_{n+1}\\
	\label{eq: recursive SGD formula in the quadratic case}
	&=(1-\lr_n H)\underbrace{(\Weights_n - \minimum)}_{
		\xeq{\text{ind.}} (\weights_n - \minimum)
		\mathrlap{+ \sum_{k=0}^{n-1}\lr_k\left(\prod_{i=k+1}^{n-1}(1-\lr_iH)\right)\martIncr_{k+1}}
	} + \lr_n\martIncr_{n+1}\\
	\nonumber
	&= \underbrace{(1-\lr_n H)(\weights_n - \minimum)}_{=\weights_{n+1}-\minimum\ (\implies\text{ind. step})}
	+ \sum_{k=0}^n\lr_k\left(\prod_{i=k+1}^n(1-\lr_iH)\right)\martIncr_{k+1}\\
	\label{eq: unrolled SGD weights (general quadratic loss case)}
	&=\left(\prod_{k=0}^n(1-\lr_kH)\right)(\weights_0-\minimum)
	+ \sum_{k=0}^n\lr_k\left(\prod_{i=k+1}^n(1-\lr_iH)\right)\martIncr_{k+1}.
\end{align}
Here we can see that SGD splits into two parts, deterministic GD and a sum of
martingale increments.

\subsection{Variance Reduction}\label{subsec: variance reduction}

One particularly interesting case is \(H=\identity\). Why is that interesting?
Considering that
\begin{align*}
	\E[\tfrac12\|\weights-X\|^2]=:\E[\loss(\weights,X)]=:\Loss(\weights)
\end{align*}
is minimized by \(\minimum=\E[X]\), and we have
\begin{align*}
	\E[\nabla\loss(\weights,X)]
	=\E[\weights-X]=\weights- \minimum=\nabla\Loss(\weights),
\end{align*}
we necessarily also have
\begin{align*}
	\Loss(\weights) = \tfrac12\|\weights-\minimum\|^2 + \text{const}.
\end{align*}
This implies \(H=\nabla^2\Loss=\identity\). So this case is essentially
trying to find the expected value of some random variables with as few
iterations and data as possible.

\subsubsection{Constant Learning Rates}

For constant learning rates \(\lr=\frac{T}N<1\) we have
\begin{align*}
	\Weights_n - \minimum
	&= (1-\lr)^n(\weights_0 -\minimum)+ \sum_{k=0}^{n-1}\lr(1-\lr)^{n-1-k}
	\smash{\overbrace{\martIncr_{k+1}}^{X_{k+1}-\minimum}}\\
	&= \left((1-h)^n\weights_0 + \sum_{k=0}^{n-1}\lr(1-\lr)^{n-1-k}X_{k+1}\right)-\minimum.
\end{align*}
This implies we do not weight our \(X_k\) equally, but rather use an
exponential decay giving the most recent data the most weight.

The first interesting thing to highlight is the fact that the convergence
rate of GD actually worsens with finer discretization \(N\)
\begin{align*}
	(1-h)^N = (1-\tfrac{T}N)^N \nearrow \exp(-T) \qquad (N\to\infty).
\end{align*}
This is surprising as more information, more gradient evaluations should help
convergence, which seems not to be the case. To be fair, it does not worsen
much, if the learning rate is small to begin with. But a learning rate of one
allows instant convergence while \ref{eq:
gradient flow} only converges for \(T\to\infty\). So even if we would get
the additional gradient evaluations for free, a finer discretization is worse!

Another view to make this more intuitive is the analysis we did when selecting
optimal learning rates for GD, where the rates of the eigenspaces improved until
we moved \(\lr\hesseEV\) beyond one. From then on they start to deteriorate
again\footnote{
	and quickly, since we
	do not only increase the absolute value of the factor, but also multiply fewer 
	of them together. While in the case \(\lr\hesseEV<1\) these things
	counteract each other.
}.
But this means that, at least for quadratic functions, gradient descent
converges faster than gradient flow!

Also note how we can reobtain the rates from Theorem~\ref{thm: distance
SGD vs GD} by throwing the exponential decay away
\begin{align*}
	\E\|\Weights_N-\weights_N\|^2
	&\xeq{(\ref{eq: unrolled SGD weights (general quadratic loss case)})} \E\left\|\sum_{k=0}^{N-1}\lr(1-\lr)^{N-1-k}\martIncr_{k+1}\right\|^2\\
	&= \sum_{k=0}^{N-1}\lr^2\underbrace{(1-\lr)^{2(N-1-k)}}_{\le1}
	\underbrace{\E\|\martIncr_{k+1}\|^2}_{\le \stdBound^2}\\
	&\le \lr^2 N \stdBound^2 = \lr T\stdBound^2 = \frac{T^2}{N}\stdBound^2
\end{align*}
We can do the same thing with the general case (\ref{eq: unrolled SGD weights
(general quadratic loss case)}) if we take \(\lr<2/\ubound\) for \(|H|\le\ubound\).

Together with the upper bound on GD, we can thus see by
\begin{align*}
	\E\|\Weights_N - \minimum\|^2
	&= \E\|\weights_N - \minimum\|^2 + \E\|\Weights_N - \weights_N\|^2\\
	&\le \exp(-2T)\|\weights_0 - \minimum\|^2 + \frac{T^2}{N}\stdBound^2,
\end{align*}
how we need to increase \(T\) with \(N\) at a roughly logarithmic rate to
minimize our \(L^2\) norm for a given \(N\). Therefore our convergence rate
would be roughly of order
\begin{align*}
	O(\tfrac{\log(N)}N).
\end{align*}
This is a bit worse than the rate \(O(\frac1N)\) we can achieve with the mean
\begin{align*}
	\E\|\overline{X}_N - \E[X]\|^2 = \frac{\E\|X-\E[X]\|^2}{N},
\end{align*}
which is the best linear unbiased estimator (BLUE) for the expected value.
Note that making only one step and increasing the batch size (Section~\ref{sec:
batch learning}) to our sample size, realizes the mean. 

\subsubsection{Decreasing Learning Rates}

So is batch learning better? Well, if we use a particular learning rate, we can
actually realize the mean with SGD as well. Using \(\lr_n=\frac1{n+1}\), we get
by induction
\begin{align}
	\label{eq: getting rid of w_0 first step}
	\Weights_1 &= \weights_0 - \frac{1}{0+1}(\weights_0 - X_1) = X_1,\\
	\label{eq: telescoping product}
	\Weights_{n+1}
	&= \Weights_n - \tfrac{1}{n+1}(\Weights_n - X_{n+1})
	\xeq{\text{ind.}}\smash{\underbrace{\left(1-\tfrac1{n+1}\right)}_{=\frac{n}{n+1}}}
	\frac1n\sum_{k=1}^n X_k + \tfrac{1}{n+1}X_{n+1}\\
	\nonumber
	&= \frac{1}{n+1}\sum_{k=1}^{n+1}X_k.
\end{align}

\subsection{Optimal Rates for General Quadratic Functions}

Since we know we can get optimal rates in the case \(\condition=1\), we might
ask ourselves whether this is possible in the general case as well. So let us
try. First we use the fact that a basis change to a different
orthonormal basis \(V=(v_1,\dots, v_\dimension)\) is an isometry\footnote{
	Proof: \(\|x\|^2 = x^Tx = x^TVV^Tx = (V^T x)^T V^Tx = \|V^T x\|^2\)
}
and thus
\begin{align*}
	\E\|\Weights_{n+1} - \minimum\|^2
	&= \E\|V^T(\Weights_{n+1} - \minimum)\|^2
	= \sum_{j=1}^\dimension \E\langle\Weights_{n+1}-\minimum, v_j\rangle^2.
\end{align*}
So if \(V\) are the eigenvectors with sorted eigenvalues
\(\hesseEV_1 \le \ldots\le\hesseEV_\dimension\) of our Hessian \(H\), we get
\begin{align*}
	\langle \Weights_{n+1}-\minimum, v_j \rangle
	&\xeq{(\ref{eq: recursive SGD formula in the quadratic case})} (1-\lr_n\lambda_j)\langle\Weights_n-\minimum, v_j\rangle
	+ \lr_n\langle\martIncr_{n+1},v_j\rangle.
\end{align*}
Using conditional independence of \(\Weights_n\) and \(\martIncr_{n+1}\) again,
we get
\begin{align}
	\label{eq: expected square error in quadratic case}
	\E\|\Weights_{n+1} - \minimum\|^2
	&= \sum_{j=1}^\dimension (1-\lr_n \hesseEV_j)^2
	\E\langle\Weights_n-\minimum, v_j\rangle^2
	+ \lr_n^2 \E\langle \martIncr_{n+1}, v_j\rangle^2.
\end{align}
Optimizing over \(\lr_n\) by taking the derivative of the term above with
respect to \(\lr_n\) would result in
\begin{align*}
	\lr_n^* &= \frac{
		\sum_{j=1}^\dimension \hesseEV_j \E\langle \Weights_n-\minimum, v_j\rangle^2
	}{
		\sum_{j=1}^\dimension\hesseEV_j^2\E\langle\Weights_n-\minimum, v_j\rangle^2
		+ \E\langle\martIncr_{n+1}, v_j\rangle^2
	}\\
	&= \frac{
		\E\langle \Weights_n - \minimum, \nabla\Loss(\Weights_n)\rangle
	}{
		\E\|\nabla\Loss(\Weights_n)\|^2 + \E\|\martIncr_{n+1}\|^2
	}
	= \frac{
		\E\langle \Weights_n - \minimum, \nabla\Loss(\Weights_n)\rangle
	}{
		\E\|\nabla\loss(\Weights_n)\|^2
	}.
\end{align*}
But while we can estimate the divisor, we do not know the scalar product between
the true gradient and the direction to the minimum. If we knew that, we could
probably get to the minimum much faster.

But we do not, which might be surprising since we got ``optimal'' rates with
a similar approach for non-stochastic gradient in Subsection~\ref{subsec:
necessary assumptions in the quadratic case}. But now, even if we assumed
\(\E\|\martIncr_{n+1}\|^2=0\), we get these weird optimal rates. What happened?

Well, in Subsection~\ref{subsec: necessary assumptions in the quadratic case}
we did not actually optimize over the euclidean norm. Instead we looked at the
reduction factor for every eigenspace and tried to minimize their maximum.
This does not take the actual size of the eigen-components
\(\E\langle\Weights_n-\minimum, v_j\rangle^2\) of our distance to the optimum
\(\minimum\) into account. If one of these components was very small, then
we could sacrifice its reduction factor for a smaller factor in the other
eigenspaces.

We did not do that in Subsection~\ref{subsec: necessary assumptions
in the quadratic case} because we used constant learning rates which would
sacrifice one eigenspace for another indefinitely. But no matter how big the
initial difference, if the rate of one eigenspace is smaller than another, then
eventually the error in the eigenspace with the smaller rate will be
negligible compared to the one with the larger rate. This means that
asymptotically only the maximum of these factors matter.

Now if we assume rotationally invariant noise, i.e.
\begin{align*}
	\E\langle\martIncr_{n+1},v_j\rangle^2 = \E\langle\martIncr_{n+1}, v_i\rangle^2
	\qquad \forall i,j
\end{align*}
then due to
\begin{align*}
	(1-\lr_n \hesseEV_j)^2
	\le \max \{ (1-\lr_n\hesseEV_1)^2, (1-\lr_n\hesseEV_\dimension)^2 \},
\end{align*}
the error of the eigenspaces in-between will eventually be negligible (cf.
(\ref{eq: expected square error in quadratic case})). So we only really have a
trade off between the smallest and largest eigenvalue again.
If we use the learning rates to favour one over the other, then this would
make up for any differences that might have existed after a finite time and
from then on both errors would be similar in size which would mean we should
not treat them differently anymore. This motivates why it is reasonable to
neglect differences between the size of eigen-components at least for the
asymptotic rate.

Now we need to translate this intuition into actual mathematics. For that, we
define a new norm
\begin{align}
	\|X\|_{V,\infty}
	:= \sqrt{\max_{i=1,\dots,\dimension} \E\langle X, v_i\rangle^2}.
\end{align}
Note that without the expectation this would just be the infinity norm with regard
to the basis \(v_1,\dots,v_\dimension\). Now positive definiteness and
scalability of the norm are trivial, and the triangle inequality follows from the
triangle inequality of the \(L^2\) norm \(\|X\|_{L^2} := \sqrt{\E\|X\|^2}\)
\begin{align*}
	\|X+Y\|_{V,\infty}
	= \max_{j=1,\dots,\dimension} \|\langle X, v_j\rangle + \langle Y, v_j\rangle\|_{L^2}
	\le \|X\|_{V,\infty} + \|Y\|_{V,\infty}.
\end{align*}
As we have used the \(L^2\) norm until now, let us consider its relation
to this new norm. As we work with a finite dimensional real vector space, the
norms are equivalent, more specifically we have
\begin{align*}
	\|X\|_{V,\infty}^2
	= \max_{i=1,\dots,\dimension} \E\langle X, v_i\rangle^2
	\le \sum_{i=1}^\dimension \E\langle X, v_i\rangle^2
	= \|X\|_{L^2}
	\le \dimension \|X\|_{V,\infty}^2.
\end{align*}
Though \(\dimension\) is potentially quite large. But as we have argued in the
limiting case all the eigenspaces except the one with the largest and smallest
eigenvalue vanish, so then the factor comes closer to \(2\) independent of the
dimension \(\dimension\).

Armed with this norm, we can now tackle our problem again.

\begin{theorem}\label{thm: optimal rates quadratic case}
	Let \(\Loss\) be a quadratic loss function with eigenvalues
	\(\hesseEV_1\le\ldots\le\hesseEV_\dimension\) of the Hessian and assume
	bounded variance of the martingale increments
	\begin{align*}
		\E\|\martIncr_n\|_{V,\infty}^2
		\le \tilde{\stdBound}^2 = \stdBound^2 \hesseEV_1\hesseEV_\dimension,
		\qquad \forall n\in\naturals,
	\end{align*}
	then we obtain optimal learning rates (w.r.t \(\|\cdot\|_{V,\infty}\)) of
	\begin{align*}
		\lr_n^*
		= \begin{cases}
			\frac{2}{\hesseEV_1+\hesseEV_\dimension}
			& \frac{\stdBound^2}{\|\Weights_n - \minimum\|_{V,\infty}^2}
			\le \frac{\condition-1}{2\condition}, \\
			\frac1{\hesseEV_\dimension(\condition^{-1} + \frac{\stdBound^2}{\|\Weights_n-\minimum\|_{V,\infty}^2})}
			& \frac{\stdBound^2}{\|\Weights_n - \minimum\|_{V,\infty}^2}
			\ge \frac{\condition-1}{2\condition}.
		\end{cases}
	\end{align*}
	We can interpret this result as follows: If the noise is negligible compared to the
	distance to the minimum, we want to use the same learning rates as in the
	classical case -- simply ignoring the existence of noise. After this ``transient
	phase'' we are in the ``asymptotic'' phase and want to select a learning rate
	which decreases with the distance to the minimum. For the asymptotic rate of
	convergence only the second case matters but the transient phase might be much
	more important for practical applications. Only for \(\condition=1\) we are
	in this asymptotic phase immediately.
	
	Assuming that \(n_0\) is the time we enter the ``asymptotic'' phase,
	i.e.\ the first index where \(\frac{\stdBound^2}{\|\Weights_n -
	\minimum\|_{V,\infty}^2} \ge
	\frac{\condition-1}{2\condition}\),
	then using the optimal learning rates our rates of convergence are
	\begin{align*}
		\|\Weights_n-\minimum\|_{V,\infty}^2
		\le \begin{cases}
			\left(1-\tfrac{2}{1+\condition}\right)^n\|\Weights_0-\minimum\|_{V,\infty}^2
			& n \le n_0,\\
			\frac{\stdBound^2\condition(\condition+1)}{(n+1-n_0)(\condition-1)}
			& n \ge n_0.
		\end{cases}
	\end{align*}
\end{theorem}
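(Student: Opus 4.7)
The plan is to start from the per-coordinate identity~(\ref{eq: expected square error in quadratic case}) already established above and to lift it to a single inequality in the norm $\|\cdot\|_{V,\infty}$. Taking the maximum over $j\in\{1,\dots,\dimension\}$ and using the trivial bound $\max_j(c_j a_j + d_j b_j) \le \max_j c_j \cdot \max_i a_i + \max_k d_k \cdot \max_\ell b_\ell$ yields
\begin{align*}
\|\Weights_{n+1}-\minimum\|_{V,\infty}^2 \le r(\lr_n)\,\|\Weights_n-\minimum\|_{V,\infty}^2 + \lr_n^2\,\stdBound^2\hesseEV_1\hesseEV_\dimension,
\end{align*}
with $r(\lr):=\max\{(1-\lr\hesseEV_1)^2,(1-\lr\hesseEV_\dimension)^2\}$, since $x\mapsto(1-\lr x)^2$ is convex and therefore attains its maximum over $[\hesseEV_1,\hesseEV_\dimension]$ at one of the endpoints. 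Writing $A_n:=\|\Weights_n-\minimum\|_{V,\infty}^2$, the remaining task is a one-dimensional, piecewise-smooth minimization in $\lr_n$.

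The two branches of $r$ meet at $\lr=2/(\hesseEV_1+\hesseEV_\dimension)$. On the upper branch both $(1-\lr\hesseEV_\dimension)^2$ and $\lr^2\stdBound^2\hesseEV_1\hesseEV_\dimension$ are increasing in $\lr$ (there $\lr\ge 1/\hesseEV_\dimension$), so no interior optimum exists and the minimum is attained on the lower branch. Solving $\tfrac{d}{d\lr}[(1-\lr\hesseEV_1)^2 A_n + \lr^2\stdBound^2\hesseEV_1\hesseEV_\dimension]=0$ produces the interior candidate $\lr^{\circ}=1/(\hesseEV_\dimension(\condition^{-1}+\stdBound^2/A_n))$. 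A direct comparison shows $\lr^{\circ}\le 2/(\hesseEV_1+\hesseEV_\dimension)$ holds exactly iff $\stdBound^2/A_n\ge(\condition-1)/(2\condition)$, which is precisely the case split stated in the theorem; when that inequality is violated the minimum on the closed lower branch is attained at the boundary $2/(\hesseEV_1+\hesseEV_\dimension)$, giving the first formula for $\lr_n^{\ast}$.

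For the transient phase I substitute $\lr_n^{\ast}=2/(\hesseEV_1+\hesseEV_\dimension)$ into the recursion. The contraction factor is $((\condition-1)/(\condition+1))^2$ and the noise term evaluates to $4\stdBound^2\condition/(\condition+1)^2$. The transient hypothesis $\stdBound^2\le A_n(\condition-1)/(2\condition)$ bounds the noise term by $2(\condition-1)/(\condition+1)^2\cdot A_n$, and a short algebraic simplification combines both contributions into the cleaner factor $(\condition-1)/(\condition+1)=1-2/(1+\condition)$; induction on $n\le n_0$ gives the first claim.

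For the asymptotic phase I substitute the equivalent form $\lr_n^{\ast}=A_n/(\hesseEV_1 A_n+\hesseEV_\dimension\stdBound^2)$. After cancellation the bound collapses to
\begin{align*}
A_{n+1} \le \frac{\hesseEV_\dimension\stdBound^2\,A_n}{\hesseEV_1 A_n+\hesseEV_\dimension\stdBound^2},
\end{align*}
which upon taking reciprocals telescopes into $1/A_{n+1} \ge 1/A_n + 1/(\condition\stdBound^2)$. Iterating from $n_0$ and inserting the bound $A_{n_0}\le 2\condition\stdBound^2/(\condition-1)$ (which holds the moment we first enter the asymptotic regime, and a quick monotonicity argument shows we never leave it) yields $A_n\le 2\condition\stdBound^2/((\condition-1)+2(n-n_0))$, which can be loosened to the form stated in the theorem. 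The real obstacle is the piecewise optimization in the second paragraph; once the optimal learning rate is identified the two rate computations reduce to routine algebra, the key recognition being that the interior optimum is engineered precisely so that the two terms of the recursion recombine into the telescoping form displayed above.
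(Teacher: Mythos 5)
Your proof is correct and follows essentially the same route as the paper's: you lift the per-coordinate identity to the $\|\cdot\|_{V,\infty}$ norm, carry out the same piecewise optimization of the resulting parabola in $\lr_n$ (with the case split at the kink $2/(\hesseEV_1+\hesseEV_\dimension)$), and divide into the same transient/asymptotic regimes with matching algebra in the transient branch. The only deviation is at the end of the asymptotic phase, where you telescope reciprocals of the exact recursion $A_{n+1}\le\hesseEV_\dimension\stdBound^2 A_n/(\hesseEV_1 A_n+\hesseEV_\dimension\stdBound^2)$ directly instead of first replacing the contraction coefficient by a constant and invoking the paper's diminishing-contraction lemma --- the same underlying technique, which in fact gives the marginally sharper bound $\|\Weights_n-\minimum\|_{V,\infty}^2\le 2\condition\stdBound^2/\bigl((\condition-1)+2(n-n_0)\bigr)$, of which the stated bound is a loosening.
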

\begin{proof}
	Using the same
	transformations (in particular conditional independence) as in (\ref{eq:
	expected square error in quadratic case}), we get
	\begin{align}
		\nonumber
		\|\Weights_{n+1} -\minimum\|_{V,\infty}^2
		&= \max_{j=1,\dots,\dimension} \E\langle\Weights_{n+1} -\minimum, v_j\rangle^2\\
		\nonumber
		&= \max_{j=1,\dots,\dimension} (1-\lr_n\hesseEV_j)^2\E\langle\Weights_n -\minimum, v_j\rangle^2
		+ \lr_n^2 \E\langle \martIncr_{n+1}, v_j \rangle^2\\
		\nonumber
		&\le \max_{j=1,\dots,\dimension} (1-\lr_n\hesseEV_j)^2\|\Weights_n -\minimum\|_{V,\infty}^2
		+ \lr_n^2 \|\martIncr_{n+1}\|_{V,\infty}^2\\
		\label{eq: upper bound weird norm weight distance}
		&= \max_{j=1,\dimension} (1-\lr_n\hesseEV_j)^2\|\Weights_n -\minimum\|_{V,\infty}^2
		+ \lr_n^2 \underbrace{\|\martIncr_{n+1}\|_{V,\infty}^2}_{\le \tilde{\stdBound}^2}
	\end{align}
	if we assume bounded variance. So let us optimize over this bound. Since
	we are looking at a maximum over parabolas in \(\lr_n\), the first order
	condition would be enough as this is a convex function. But while the maximum is
	continuous, we have a discontinuity in the derivative 
	\begin{align*}
		\frac{d}{d\lr_n}
		= \begin{cases}
		(2\lr_n \hesseEV_1^2 - 2\hesseEV_1)\|\Weights_n-\minimum\|_{V,\infty}^2
		+ 2 \lr_n \tilde{\stdBound}^2
		& \lr_n \le \frac{2}{\hesseEV_1+\hesseEV_\dimension}\\
		(2\lr_n \hesseEV_\dimension^2 - 2\hesseEV_\dimension)\|\Weights_n-\minimum\|_{V,\infty}^2
		+ 2 \lr_n \tilde{\stdBound}^2
		& \lr_n \ge \frac{2}{\hesseEV_1+\hesseEV_\dimension}
		\end{cases},
	\end{align*}
	which might prevent the monotonously increasing derivative from ever being equal
	to zero. In fact we need either
	\begin{align*}
		\frac{2}{\hesseEV_1+\hesseEV_\dimension} \xge{!} \lr_n
		&= \frac{
			\hesseEV_1 \|\Weights_n-\minimum\|_{V,\infty}^2
		}{
			\hesseEV_1^2 \|\Weights_n-\minimum\|_{V,\infty}^2 + \tilde{\stdBound}^2
		}\\
		&= \frac{1}{
			\hesseEV_1 + \frac{\tilde{\stdBound}^2}{
				\hesseEV_1\|\Weights_n-\minimum\|_{V,\infty}^2
			}
		}
	\end{align*}
	or flipping enumerator and denominator on both sides, subtracting and
	multiplying by \(\hesseEV_1\), we need equivalently
	\begin{align*}
		\frac{\tilde{\stdBound}^2}{
			\|\Weights_n-\minimum\|_{V,\infty}^2
		}
		\ge\frac{\hesseEV_1^2(\condition-1)}{2}.
	\end{align*}
	This also covers the case \(\condition=1\), allowing us to assume
	\(\hesseEV_1 < \hesseEV_\dimension\) without loss of generality in the next
	case. Here we want 
	\begin{align*}
		\frac{2}{\hesseEV_1+\hesseEV_\dimension} \xle{!} \lr_n
		&= \frac{1}{
			\hesseEV_\dimension
			+ \frac{\tilde{\stdBound}^2}{
				\hesseEV_\dimension\|\Weights_n-\minimum\|_{V,\infty}^2
			}
		},
	\end{align*}
	which similarly simplifies to
	\begin{align*}
		\frac{\hesseEV_1+\hesseEV_\dimension}{2}
		\xge{!} \hesseEV_\dimension
		+ \underbrace{\frac{\tilde{\stdBound}^2}{
			\hesseEV_\dimension\|\Weights_n-\minimum\|_{V,\infty}^2
		}}_{\ge 0}
		\ge \hesseEV_\dimension,
	\end{align*}
	which is a contradiction for \(\hesseEV_1 < \hesseEV_\dimension\). So this
	case can be discarded.

	If the first condition is not satisfied either, then the optimal learning
	rate is at the discontinuity (the same as in the classical case). So in
	summary we have
	\begin{align*}
		\lr_n^* = \begin{cases}
			\frac{2}{\hesseEV_1+\hesseEV_\dimension}
			&\frac{\tilde{\stdBound}^2}{
				\|\Weights_n-\minimum\|_{V,\infty}^2
			}
			\le \frac{\hesseEV_1^2(\condition-1)}{2}\\
			\left(
				\hesseEV_1 + \frac{\tilde{\stdBound}^2}{
					\hesseEV_1\|\Weights_n-\minimum\|_{V,\infty}^2
				}
			\right)^{-1}
			&\frac{\tilde{\stdBound}^2}{
				\|\Weights_n-\minimum\|_{V,\infty}^2
			}
			\ge \frac{\hesseEV_1^2(\condition-1)}{2}
		\end{cases}.
	\end{align*}
	Plugging in the definition of \(\tilde{\stdBound}\) results in the version
	of our theorem.

	Before we plug our learning rate into our upper bound (\ref{eq: upper bound
	weird norm weight distance}), let us do some preparation 
	\begin{align}
		\nonumber
		&\|\Weights_{n+1} -\minimum\|_{V,\infty}^2\\
		\nonumber
		&\le \max_{j=1,\dimension} (1-\lr_n\hesseEV_j)^2\|\Weights_n -\minimum\|_{V,\infty}^2
		+ \lr_n^2 \tilde{\stdBound}^2\\
		\label{eq: upper bound useful for rate of convergence}
		&= \max_{j=1,\dimension}\left(1- 2\hesseEV_j\lr_n + \lr_n^2\hesseEV_j^2
			+\lr_n^2\tfrac{\tilde{\stdBound}^2}{\|\Weights_n-\minimum\|_{V,\infty}^2}
		\right)\|\Weights_n-\minimum\|_{V,\infty}^2\\
		\nonumber
		&= \max_{j=1,\dimension}\Bigg(1- \lr_n \hesseEV_j\Big[2 - \lr_n \left(
			\hesseEV_j + \tfrac{\tilde{\stdBound}^2}{\hesseEV_j\|\Weights_n-\minimum\|_{V,\infty}^2}
		\right)\Big] \Bigg)\|\Weights_n-\minimum\|_{V,\infty}^2.
	\end{align}
	Since the maximum is attained by the eigenspace with the smallest eigenvalue \(\hesseEV_1\)
	in the asymptotic case, our bound reduces to
	\begin{align*}
		\|\Weights_{n+1} -\minimum\|_{V,\infty}^2
		&\le (1- \lr_n^* \hesseEV_1[2-\lr_n^*(\lr_n^*)^{-1}])\|\Weights_n-\minimum\|_{V,\infty}^2\\
		&= \left(1- \tfrac{1}{1+ \tfrac{\tilde{\stdBound}^2}{\hesseEV_1^2\|\Weights_n-\minimum\|^2}}\right)
		\|\Weights_n-\minimum\|_{V,\infty}^2\\
		&= \left(1- \tfrac{
			\hesseEV_1^2 \|\Weights_n-\minimum\|_{V,\infty}^2
		}{
			\tilde{\stdBound}^2
			\left(\tfrac{\hesseEV_1^2\|\Weights_n-\minimum\|^2}{\tilde{\stdBound}^2}+1\right)
		}\right)
		\|\Weights_n-\minimum\|_{V,\infty}^2\\
		&\le \Bigg(
		1- \underbrace{\tfrac{
			\hesseEV_1^2
		}{
			\tilde{\stdBound}^2
			\left(\tfrac{2}{\condition -1}+1\right)
		}}_{
			=\tfrac{\hesseEV_1^2(\condition-1)}{\tilde{\stdBound}^2(\condition+1)}
			\mathrlap{=\tfrac{\condition-1}{\stdBound^2\condition(\condition+1)}}
		}
		\|\Weights_n-\minimum\|_{V,\infty}^2
		\Bigg)
		\|\Weights_n-\minimum\|_{V,\infty}^2,
	\end{align*}
	where we have used our lower bound on the variance-distance fraction
	\begin{align*}
		\frac{\tilde{\stdBound}^2}{
			\hesseEV_1^2\|\Weights_n-\minimum\|_{V,\infty}^2
		}
		\ge \frac{\condition-1}{2}
	\end{align*}
	characterizing the asymptotic phase. Using Lemma~\ref{lem: upper bound on diminishing contraction}
	this implies our claim.

	In the transient phase we can apply
	\begin{align*}
		\frac{\tilde{\stdBound}^2}{
			\|\Weights_n-\minimum\|_{V,\infty}^2
		}
		\le \frac{\hesseEV_1^2(\condition-1)}{2}
	\end{align*}
	directly to (\ref{eq: upper bound useful for rate of convergence}) to get
	\begin{align*}
		\|\Weights_{n+1}-\minimum\|_{V,\infty}^2
		&= \max_{j=1,\dimension}\left((1- \hesseEV_j\lr_n)^2
			+\lr_n^2\tfrac{\tilde{\stdBound}^2}{\|\Weights_n-\minimum\|_{V,\infty}^2}
		\right)\|\Weights_n-\minimum\|_{V,\infty}^2\\
		&\le \max_{j=1,\dimension}\left((1- \hesseEV_j\lr_n)^2
			+\lr_n^2\tfrac{\hesseEV_1^2(\condition-1)}{2}
		\right)\|\Weights_n-\minimum\|_{V,\infty}^2.
	\end{align*}
	And for the learning rate
	\(\lr_n^*=\tfrac{2}{\hesseEV_1+\hesseEV_\dimension}
	=\tfrac{2}{\hesseEV_1(1+\condition)}\),
	we have
	\begin{align*}
		(1-\hesseEV_\dimension\lr_n^*)^2
		= (1-\hesseEV_1\lr_n^*)^2
		= \left(1-\tfrac{2}{1+\condition}\right)^2
	\end{align*}
	and therefore
	\begin{align*}
		\|\Weights_{n+1}-\minimum\|_{V,\infty}^2
		&\le \left(\left(1-\tfrac{2}{1+\condition}\right)^2
			+\left(\tfrac{2}{\hesseEV_1(1+\condition)}\right)^2\tfrac{\hesseEV_1^2(\condition-1)}{2}
		\right)\|\Weights_n-\minimum\|_{V,\infty}^2\\
		&=\left(
			\left(\tfrac{\condition-1}{\condition+1}\right)^2 + 2 \tfrac{\condition-1}{(\condition+1)^2}
		\right)\|\Weights_n-\minimum\|_{V,\infty}^2\\
		&= \tfrac{\condition-1}{(\condition+1)^2}(\condition-1 + 2)\|\Weights_n-\minimum\|_{V,\infty}^2\\
		&= \left(1-\tfrac{2}{1+\condition}\right)\|\Weights_n - \minimum\|_{V,\infty}^2.
		\qedhere
	\end{align*}
\end{proof}
\begin{remark}
	The condition
	\begin{align*}
		\E\|\martIncr\|_{V,\infty}^2
		\le \tilde{\stdBound}^2 = \stdBound^2 \hesseEV_1\hesseEV_\dimension
	\end{align*}
	might seem strange. But this definition allows \(\stdBound\) to be scale
	invariant. I.e.\ when we scale \(\loss(\weights, Z)\) we scale \(\hesseEV_j\)
	linearly, as well as \(\E\|\martIncr_n\|^2\) quadratically. So the
	bound \(\stdBound\) is scale invariant while \(\tilde{\stdBound}\) is not.
	And with scale invariance we get convergence rates dependent only on the
	condition number.
\end{remark}
\begin{remark}
	For \(\condition=1\) we enter the asymptotic phase immediately as GD would
	converge in one step. This is the case in Subsection~\ref{subsec:
	variance reduction}.
\end{remark}

As we argued leading up to this theorem, the bounds
we used are quite tight. But guessing the correct learning rates is difficult.
Before we try to address this guessing problem, let us see if we can even
generalize this to non-quadratic functions.

\section{Strongly Convex Functions}

\begin{lemma}\label{lem: SGD bound with noise}
	Let \(\Loss\in\strongConvex{\lbound}{\ubound}\), then using SGD with learning
	rates \(\lr_n\le\tfrac2{\ubound+\lbound}\) results in
	\begin{align}\label{eq: SGD bound with noise}
		\E\|\Weights_{n+1} - \minimum\|^2
		\le \left(1-2\lr_n\tfrac{\ubound\lbound}{\ubound+\lbound}\right)
		\E\|\Weights_n - \minimum\|^2 + \lr_n^2 \underbrace{\E\|\martIncr_{n+1}\|^2}_{\le\stdBound^2\ubound\lbound}.
	\end{align}
\end{lemma}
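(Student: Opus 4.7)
The plan is to mimic the deterministic strong-convexity proof of Theorem~\ref{thm: gd strong convexity convergence rate}, but carry the noise term through and use the tower property to eliminate the martingale cross terms.

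First I would expand $\|\Weights_{n+1}-\minimum\|^2$ from the SGD update
\begin{align*}
\Weights_{n+1}-\minimum = (\Weights_n-\minimum) - \lr_n\nabla\Loss(\Weights_n) + \lr_n\martIncr_{n+1},
\end{align*}
obtaining three squared terms together with three cross products. Conditioning on $\filtration_n$, the two cross products involving $\martIncr_{n+1}$ vanish by (\ref{eq: conditional independence of martingale increments}), since both $\Weights_n$ and $\nabla\Loss(\Weights_n)$ are $\filtration_n$-measurable. What remains is
\begin{align*}
\E[\|\Weights_{n+1}-\minimum\|^2\mid\filtration_n]
&= \|\Weights_n-\minimum\|^2
- 2\lr_n\langle\nabla\Loss(\Weights_n),\Weights_n-\minimum\rangle \\
&\quad + \lr_n^2\|\nabla\Loss(\Weights_n)\|^2
+ \lr_n^2\,\E[\|\martIncr_{n+1}\|^2\mid\filtration_n].
\end{align*}

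Next I would apply the sharpened Bregman lower bound from Lemma~\ref{lem: bregmanDiv lower bound (strongly convex)} with $\nabla\Loss(\minimum)=0$, namely
\begin{align*}
\langle\nabla\Loss(\Weights_n),\Weights_n-\minimum\rangle
\;\ge\; \tfrac{\ubound\lbound}{\ubound+\lbound}\|\Weights_n-\minimum\|^2
+ \tfrac{1}{\ubound+\lbound}\|\nabla\Loss(\Weights_n)\|^2.
\end{align*}
Substituting this in reduces the conditional expectation to
\begin{align*}
\left(1-2\lr_n\tfrac{\ubound\lbound}{\ubound+\lbound}\right)\|\Weights_n-\minimum\|^2
+ \lr_n\!\left(\lr_n-\tfrac{2}{\ubound+\lbound}\right)\|\nabla\Loss(\Weights_n)\|^2
+ \lr_n^2\,\E[\|\martIncr_{n+1}\|^2\mid\filtration_n].
\end{align*}
Under the assumption $\lr_n\le\tfrac{2}{\ubound+\lbound}$ the middle term is nonpositive and may be dropped. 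Taking unconditional expectations via the tower property then yields the stated inequality.

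The side claim $\E\|\martIncr_{n+1}\|^2\le\stdBound^2\ubound\lbound$ I would treat as a bounded-variance assumption on the stochastic gradient, normalized in the same scale-invariant fashion as the $\tilde\stdBound^2=\stdBound^2\hesseEV_1\hesseEV_\dimension$ appearing in Theorem~\ref{thm: optimal rates quadratic case}; here $\lbound$ and $\ubound$ simply take over the role of the extreme eigenvalues in the general strongly convex setting, so that $\stdBound$ is scale invariant. This is an input rather than something to derive. There is no genuine obstacle in the argument — the only point needing care is to perform the conditioning on $\filtration_n$ \emph{before} invoking Lemma~\ref{lem: bregmanDiv lower bound (strongly convex)}, so that the martingale cross terms are killed cleanly and the deterministic strong-convexity machinery can be recycled verbatim on the remaining terms.
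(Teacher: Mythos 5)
Your proof is correct and follows essentially the same route as the paper: expand the squared distance, kill the noise cross terms by conditional independence, apply Lemma~\ref{lem: bregmanDiv lower bound (strongly convex)} with $\nabla\Loss(\minimum)=0$, and drop the nonpositive $\|\nabla\Loss(\Weights_n)\|^2$ term under the step-size restriction. The only cosmetic difference is that you expand into all six terms at once and condition on $\filtration_n$ before applying the Bregman lemma, while the paper first groups the deterministic part as $\|\Weights_n-\minimum-\lr_n\nabla\Loss(\Weights_n)\|^2$ and reuses the manipulation from (\ref{eq: getting rid of the scalar product (SGD)}); the content is identical.
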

\begin{proof}
	While we will not tether ourselves to \ref{eq: gradient descent} \(\weights_n\) in
	this section, we still split off the noise
	\begin{align*}
		\|\Weights_{n+1} - \minimum\|^2
		&= \|\Weights_n -\minimum - \lr_n\nabla\Loss(\weights_n) +\lr_n \martIncr_{n+1}\|^2\\
		&= \begin{aligned}[t]
			&\|\Weights_n - \minimum - \lr_n\nabla\Loss(\Weights_n)\|^2\\
			&+ 2\langle \Weights_n - \minimum - \lr_n\nabla\Loss(\Weights_n), \lr_n \martIncr_{n+1}\rangle\\
			&+ \lr_n^2 \|\martIncr_{n+1}\|^2.
		\end{aligned}
	\end{align*}
	The scalar product will disappear once we apply expectation due to conditional
	independence again. For the first part we do the same as in the
	classical case in Theorem~\ref{thm: gd strong convexity convergence rate}:
	\begin{align}
		\nonumber
		&\|\Weights_n - \minimum - \lr_n\nabla\Loss(\Weights_n)\|^2\\
		\label{eq: getting rid of the scalar product (SGD)}
		&= \|\Weights_n - \minimum\|^2
		- 2\lr_n\underbrace{\langle\nabla\Loss(\Weights_n), \Weights_n-\minimum\rangle}_{
			\xge{\text{Lem.~\ref{lem: bregmanDiv lower bound (strongly convex)}}}
			\tfrac{\ubound\lbound}{\ubound+\lbound}\|\Weights_n - \minimum\|^2
			\mathrlap{+ \tfrac{1}{\ubound+\lbound}\|\nabla\Loss(\Weights_n)\|^2}
		}
		+ \lr_n^2 \|\nabla\Loss(\Weights_n)\|^2\\
		\nonumber
		&\le \left(1-2\lr_n\tfrac{\ubound\lbound}{\ubound+\lbound}\right)
		\|\Weights_n - \minimum\|^2
		+ \lr_n(\lr_n - \tfrac{2}{\ubound+\lbound})
		\|\nabla\Loss(\Weights_n)\|^2.
	\end{align}
	For \(\lr_n\le\tfrac{2}{\ubound+\lbound}\) we can drop the second summand
	as in Theorem~\ref{thm: gd strong convexity convergence rate} to obtain our
	claim once we apply expectation.
\end{proof}

Now we can use this recursion to obtain a similar theorem as in the quadratic
case. This theorem was inspired by
\textcite{nemirovskiRobustStochasticApproximation2009} but introduces the split
in transient and asymptotic phase and improves the rate of convergence with the
application of Lemma~\ref{lem: bregmanDiv lower bound (strongly convex)}.

\begin{theorem}[Optimal Rates]\label{thm: optimal rates SGD}
	For a \(\Loss\in\strongConvex{\lbound}{\ubound}\) with
	\begin{align*}
		\E\|\martIncr_n\|^2 \le \stdBound^2\ubound\lbound,
	\end{align*}
	optimal learning rates according to the bounds of Lemma~\ref{lem: SGD bound with noise} are
	\begin{align*}
		\lr_n
		= \begin{cases}
			\frac{2}{\ubound+\lbound}
			& \stdBound^2 \le \frac{\E\|\Weights_n - \minimum\|}{2}
			\qquad \text{``transient phase''},\\
			\frac{\E\|\Weights_n -\minimum\|^2}{\stdBound^2(\ubound+\lbound)}
			& \stdBound^2 \ge \frac{\E\|\Weights_n - \minimum\|}{2}
			\qquad \text{ ``asymptotic phase''}.
		\end{cases}
	\end{align*}
	If \(n_0\) is the start of the asymptotic phase, then we can bound the
	convergence rates of SGD by
	\begin{align*}
		\E\|\Weights_n - \minimum\|^2
		\le \begin{cases}
			\left(1-\frac{2}{(1+\condition^{-1})(1+\condition)}\right)^n\E\|\weights_0 -\minimum\|^2
			& n < n_0,\\
			\frac{\stdBound^2(1+\condition^{-1})(1+\condition)}{n+1-n_0} & n \ge n_0.
		\end{cases}
	\end{align*}
\end{theorem}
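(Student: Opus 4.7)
The plan is to directly optimize the one-step bound
\[
	\E\|\Weights_{n+1}-\minimum\|^2
	\le \left(1-2\lr_n\tfrac{\ubound\lbound}{\ubound+\lbound}\right) D_n + \lr_n^2 \stdBound^2 \ubound\lbound
\]
from Lemma~\ref{lem: SGD bound with noise}, where I write $D_n := \E\|\Weights_n-\minimum\|^2$. The right-hand side is a convex quadratic in $\lr_n$, so the unconstrained minimizer is obtained from setting the derivative to zero, giving $\lr_n^{**} = \tfrac{D_n}{\stdBound^2(\ubound+\lbound)}$. Since Lemma~\ref{lem: SGD bound with noise} requires $\lr_n\le\tfrac{2}{\ubound+\lbound}$, I split into two cases according to whether $\lr_n^{**}$ lies in the feasible interval. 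The condition $\lr_n^{**}\le\tfrac{2}{\ubound+\lbound}$ simplifies to $D_n\le 2\stdBound^2$, which is precisely the asymptotic-phase criterion in the theorem; otherwise the constrained optimum is at the boundary $\tfrac{2}{\ubound+\lbound}$, which is the classical strongly-convex learning rate of Theorem~\ref{thm: gd strong convexity convergence rate}.

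In the transient phase I plug $\lr_n=\tfrac{2}{\ubound+\lbound}$ into the one-step bound and then use $\stdBound^2\le D_n/2$ to absorb the additive noise term into the contraction:
\[
	D_{n+1}\le \left(1-\tfrac{4\ubound\lbound}{(\ubound+\lbound)^2}\right)D_n + \tfrac{4\stdBound^2\ubound\lbound}{(\ubound+\lbound)^2}
	\le \left(1-\tfrac{2\ubound\lbound}{(\ubound+\lbound)^2}\right) D_n.
\]
Using the identity $(1+\condition^{-1})(1+\condition)=2+\condition+\condition^{-1}=\tfrac{(\ubound+\lbound)^2}{\ubound\lbound}$, the factor becomes $1-\tfrac{2}{(1+\condition^{-1})(1+\condition)}$, and iteration yields the geometric bound claimed for $n<n_0$. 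Since this factor is strictly less than one, $D_n$ is monotonically decreasing on the transient phase, so once the asymptotic condition is met at time $n_0$ it remains in force afterwards.

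In the asymptotic phase I substitute the interior optimum $\lr_n^{**}=\tfrac{D_n}{\stdBound^2(\ubound+\lbound)}$. A direct expansion collapses the two $D_n^2$-contributions to
\[
	D_{n+1}\le D_n\left(1- \tfrac{\ubound\lbound}{\stdBound^2(\ubound+\lbound)^2}D_n\right)
	= D_n\bigl(1-q D_n\bigr),\qquad q:=\tfrac{1}{\stdBound^2(1+\condition^{-1})(1+\condition)}.
\]
This is exactly the diminishing-contraction recursion of Lemma~\ref{lem: upper bound on diminishing contraction}, applied to the shifted sequence $(D_{n_0+k})_{k\ge 0}$. Its hypothesis $D_{n_0}\le 1/q$ follows because the asymptotic-phase condition gives $D_{n_0}\le 2\stdBound^2$, and AM--GM yields $(1+\condition^{-1})(1+\condition)=2+\condition+\condition^{-1}\ge 4$, so $1/q\ge 4\stdBound^2\ge D_{n_0}$. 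The lemma then delivers $D_n\le \tfrac{1}{(n-n_0+1)q}=\tfrac{\stdBound^2(1+\condition^{-1})(1+\condition)}{n+1-n_0}$, which is the stated bound.

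The only subtle step is handling the boundary between the two phases and verifying the initial condition of the diminishing-contraction lemma; the rest is a convex quadratic optimization in one variable and a direct reuse of previously established results. No other tools are needed beyond Lemma~\ref{lem: SGD bound with noise} and Lemma~\ref{lem: upper bound on diminishing contraction}.
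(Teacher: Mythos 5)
Your proof is correct and follows essentially the same route as the paper: optimize the convex quadratic one-step bound of Lemma~\ref{lem: SGD bound with noise} over $\lr_n$, split into cases according to whether the unconstrained optimizer lies inside the feasible interval $(0,\tfrac{2}{\ubound+\lbound}]$, and in the asymptotic phase feed the resulting quadratic contraction into Lemma~\ref{lem: upper bound on diminishing contraction}. You go a bit beyond the paper in explicitly verifying the hypothesis $D_{n_0}\le 1/q$ of the diminishing-contraction lemma via $(1+\condition^{-1})(1+\condition)\ge 4$, and in noting that monotone decrease of $D_n$ keeps the iterates in the asymptotic phase once entered — both points the paper leaves implicit.
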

\begin{remark}
	In the asymptotic phase the learning rate is thus bounded by
	\begin{align*}
		\lr_n \le \frac{(1+\condition^{-1})(1+\condition)}{(n+1-n_0)(\ubound + \lbound)},
	\end{align*}
	which is independent of the variance \(\stdBound^2\).
\end{remark}
\begin{remark}\label{rem: expected loss is a special case of L2 weight convergence}
	Note that using Theorem~\ref{thm: convergence chain} we get
	\begin{align*}
		\E[\Loss(\Weights_n)]- \Loss(\minimum)
		&\le \tfrac{\ubound}{2}\E\|\Weights_n-\minimum\|^2
	\end{align*}
	and thus also convergence of the loss in expectation.
\end{remark}

\begin{proof}
	In the classical case we just wanted to select \(\lr_n\) as large as possible,
	i.e.\ \(\lr_n=\tfrac{2}{\ubound+\lbound}\). But in the stochastic setting there
	are adverse effects of large learning rates due to noise. Now as our upper bound
	is a convex parabola in \(\lr_n\), we can just use the first order condition
	\begin{align*}
		\frac{d}{d\lr_n}
		= -2\tfrac{\ubound\lbound}{\ubound+\lbound}
		\E\|\Weights_n - \minimum\|^2 + 2\lr_n \stdBound^2\ubound\lbound
		\xeq{!} 0,
	\end{align*}
	to find the best upper bound
	\begin{align}\label{eq: optimal learning rate SGD}
		\lr_n
		= \min\left\{
			\frac{\E\|\Weights_n - \minimum\|^2}{\stdBound^2(\ubound+\lbound)},
			\frac{2}{\ubound+\lbound}
		\right\}.
	\end{align}
	So we still want to ``max out our learning rate'' and use
	\(\lr_n=\tfrac{2}{\ubound+\lbound}\) if
	\begin{align*}
		2\stdBound^2 \le \E\|\Weights_n - \minimum\|^2.
	\end{align*}
	We find again: the stochastic variance only becomes a problem once we are
	close to the minimum. Before, in the transient phase, we basically want to
	treat our problem like the classical one. Using Lemma~\ref{lem: SGD bound
	with noise} here, we get a linear convergence rate
	\begin{align*}
		\E\|\Weights_{n+1} - \minimum\|^2
		&\le \left(1-2\tfrac{2}{\ubound+\lbound}\tfrac{\ubound\lbound}{\ubound+\lbound}\right)
		\E\|\Weights_n - \minimum\|^2 + \left(\tfrac{2}{\ubound+\lbound}\right)^2
		\underbrace{\stdBound^2\ubound\lbound}_{
			\le\tfrac{\ubound\lbound}{2} \E\|\Weights_n - \minimum\|^2
		}\\
		&\le \underbrace{
			\left(1-\tfrac{2\ubound\lbound}{(\ubound+\lbound)^2}\right)
		}_{
			=1-\frac{2}{(1+\condition^{-1})(1+\condition)}
		}
		\E\|\Weights_n - \minimum\|^2.
	\end{align*}
	Plugging our asymptotic learning rate into inequality (\ref{eq: SGD bound
	with noise}) from Lemma~\ref{lem: SGD bound with noise} on the other hand, we
	get
	\begin{align*}
		&\E\|\Weights_{n+1} - \minimum\|^2\\
		&\le \left(1-2
			\tfrac{\E\|\Weights_n - \minimum\|^2}{\stdBound^2(\ubound+\lbound)}
			\tfrac{\ubound\lbound}{\ubound+\lbound}
		\right)
		\E\|\Weights_n - \minimum\|^2
		+ \left(
			\tfrac{\E\|\Weights_n - \minimum\|^2}{\stdBound^2(\ubound+\lbound)}
		\right)^2
		\stdBound^2\\
		&\le \left(1-
			\tfrac{1}{\stdBound^2(1+\condition^{-1})(1+\condition)}
			\E\|\Weights_n - \minimum\|^2
		\right)
		\E\|\Weights_n - \minimum\|^2.
	\end{align*}
	Using diminishing contraction Lemma~\ref{lem: upper bound on diminishing
	contraction} the recursion above finally implies
	\begin{align*}
		\E\|\Weights_n - \minimum\|^2
		&\le \frac{\stdBound^2(1+\condition^{-1})(1+\condition)}{n+1-n_0}.
		\qedhere
	\end{align*}
\end{proof}

Another way to interpret Lemma~\ref{lem: SGD bound with noise} is to show
convergence to an area for constant learning rates.
\begin{theorem}[{\cite[Theorem 4.6]{bottouOptimizationMethodsLargeScale2018}}]
	\label{thm: SGD converges to area}
	Let \(\Loss\in\strongConvex{\lbound}{\ubound}\) with
	\begin{align*}
		\E\|\martIncr_n\|^2 \le \stdBound \ubound\lbound,
	\end{align*}
	then using SGD with learning rate \(\lr\le\frac{2}{\ubound+\lbound}\) results in
	\begin{align*}
		\limsup_n\E\|\Weights_n-\minimum\|^2
		\le \stdBound^2 \lr\frac{(\ubound+\lbound)}{2},
	\end{align*}
	more specifically we have
	\begin{align*}
		&\E\|\Weights_n-\minimum\|^2\\
		&\le \stdBound^2 \lr\frac{(\ubound+\lbound)}{2}
		+ \underbrace{\left(1-2\lr\frac{\ubound\lbound}{\ubound+\lbound}\right)^n}_{\to 0}
		\left[
			\|\weights_0-\minimum\|^2-\stdBound^2 \lr\frac{(\ubound+\lbound)}{2}
		\right].
	\end{align*}
\end{theorem}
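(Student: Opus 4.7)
The plan is to apply Lemma~\ref{lem: SGD bound with noise} with the constant learning rate $\lr$, which under the assumption $\E\|\martIncr_n\|^2 \le \stdBound^2 \ubound\lbound$ reduces to the scalar affine recursion
\begin{align*}
	a_{n+1} \le \rho\, a_n + c,
	\qquad a_n := \E\|\Weights_n - \minimum\|^2,
\end{align*}
with contraction factor $\rho := 1 - 2\lr\tfrac{\ubound\lbound}{\ubound+\lbound}$ and additive noise term $c := \lr^2 \stdBound^2 \ubound\lbound$. The hypothesis $\lr \le \tfrac{2}{\ubound+\lbound}$ together with the AM--GM bound $(\ubound+\lbound)^2 \ge 4\ubound\lbound$ will give $\rho \in [0,1)$, so this is a genuine contraction and the claim reduces to solving a geometric series.

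Next I would identify the fixed point $F$ of the affine map $x \mapsto \rho x + c$, which is
\begin{align*}
	F = \frac{c}{1-\rho}
	= \frac{\lr^2 \stdBound^2 \ubound\lbound}{2\lr \frac{\ubound\lbound}{\ubound+\lbound}}
	= \stdBound^2 \lr\frac{\ubound+\lbound}{2}.
\end{align*}
This is exactly the asymptotic quantity appearing in the theorem, which makes it the natural pivot. Subtracting $F$ from both sides of the recursion gives the clean contraction $a_{n+1} - F \le \rho (a_n - F)$, where one uses the algebraic identity $\rho F + c = F$. Iterating this inequality $n$ times yields
\begin{align*}
	a_n - F \le \rho^n (a_0 - F),
\end{align*}
which, after plugging in the definitions of $\rho$, $F$ and $a_0 = \|\weights_0 - \minimum\|^2$, is precisely the explicit bound claimed in the theorem.

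The $\limsup$ statement then follows immediately: since $\rho \in [0,1)$, the factor $\rho^n$ vanishes as $n \to \infty$, so regardless of the sign of the bracket $\|\weights_0 - \minimum\|^2 - F$ the upper bound tends to $F$. There is no real obstacle here; the only subtle point is checking that $\rho \ge 0$ (so that iterating the one-step inequality preserves direction), which is handled by the AM--GM remark above, and noting that the bound is correct even when $a_0 < F$, since it was derived by unconditional induction from the lemma and hence remains valid (possibly loose) in that regime.
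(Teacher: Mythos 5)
Your proposal is correct and follows essentially the same route as the paper: both start from Lemma~\ref{lem: SGD bound with noise} with constant learning rate, subtract the fixed point \(\stdBound^2\lr\tfrac{\ubound+\lbound}{2}\) of the affine recursion from both sides to obtain the clean contraction \(a_{n+1}-F\le\rho(a_n-F)\), and iterate. Your explicit verification that \(\rho=1-2\lr\tfrac{\ubound\lbound}{\ubound+\lbound}\ge 0\) via \((\ubound+\lbound)^2\ge 4\ubound\lbound\) is a detail the paper leaves implicit but which is indeed needed to iterate the one-step inequality.
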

\begin{proof}
	Subtracting \(\stdBound^2 \lr\frac{(\ubound+\lbound)}{2}\) from both sides
	of Lemma~\ref{lem: SGD bound with noise} we get
	\begin{align*}
		&\E\|\Weights_{n+1}-\minimum\|^2 - \stdBound^2 \lr\frac{(\ubound+\lbound)}{2}\\
		&\le \left(1-2\lr\frac{\ubound\lbound}{\ubound+\lbound}\right)
		\E\|\Weights_n-\minimum\|^2
		-\stdBound^2 \lr\frac{(\ubound+\lbound)}{2} + \lr^2\stdBound^2\ubound\lbound\\
		&\le \left(1-2\lr\frac{\ubound\lbound}{\ubound+\lbound}\right)
		\left[
			\E\|\Weights_n-\minimum\|^2-\stdBound^2 \lr\frac{(\ubound+\lbound)}{2}
		\right].
		\qedhere
	\end{align*}
\end{proof}

\section{Simplified Learning Rate Schedules}\label{sec: simplified learning rate schedules}

Selecting a learning rate
\begin{align*}
	\lr_n = \frac{\E\|\Weights_n - \minimum\|^2}{\stdBound^2(\ubound+\lbound)}
\end{align*}
is difficult without very good knowledge of the problem which would likely mean
we do not need to use SGD for optimization. But since we know that the learning
rate should behave like \(O(\tfrac1n)\) we could try to guess the learning rate
and see what happens.

\subsection{Diminishing Learning Rate Schedules}

\begin{theorem}\label{thm: convergence rates for diminishing lr schedules}
	For a quadratic and convex loss \(\Loss\) selecting the learning rate
	schedule
	\begin{align*}
		\lr_n = \frac{a}{n+b}	
	\end{align*}
	for some \(a>0\), \(b\ge a\hesseEV_\dimension\) means SGD behaves like
	\begin{align*}
		\E\|\Weights_n - \minimum\|^2 
		&\in O(n^{-2\hesseEV_1a})\|\weights_0-\minimum\|^2
		+ \tilde{\stdBound}^2 O(n^{-1} + n^{-2\hesseEV_1 a})
	\end{align*}
	In particular \(b\) does not matter. Its lower bound is only required so that
	the learning rate starts below \(\tfrac1{\hesseEV_\dimension}\). Eventually
	that would happen anyway, having no bearing on the asymptotic behavior.
	Selecting \(a\ge\tfrac{1}{2\hesseEV_1}\), ensures a convergence rate of
	\(O(\tfrac1n)\). A possible selection to
	achieve \(O(\tfrac1n)\) convergence is thus for example \(a=1/\hesseEV_1\),
	\(b=a\hesseEV_\dimension\) resulting in
	\begin{align*}
		\lr_n = \frac{1}{\hesseEV_1(n+\condition)}.
	\end{align*}
	For \(\hesseEV_1=\hesseEV_\dimension=1\), this results in the mean, which we
	found is optimal in our expected value search in Subsection~\ref{subsec:
	variance reduction}
\end{theorem}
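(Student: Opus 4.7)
The plan is to work component-wise in the eigenbasis $V=(v_1,\dots,v_\dimension)$ of the Hessian $H$ and analyze each one-dimensional recursion separately. Recall from equation (\ref{eq: expected square error in quadratic case}) that $a_n^{(j)} := \E\langle\Weights_n-\minimum, v_j\rangle^2$ satisfies
\begin{align*}
	a_{n+1}^{(j)} = (1-\lr_n\hesseEV_j)^2 a_n^{(j)} + \lr_n^2 \E\langle\martIncr_{n+1},v_j\rangle^2.
\end{align*}
The condition $b\ge a\hesseEV_\dimension$ ensures $\lr_n\hesseEV_j \le 1$ for every $j$, so each factor $(1-\lr_n\hesseEV_j)$ is non-negative and the inequality $1-x\le e^{-x}$ can be applied meaningfully. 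Unrolling the recursion yields a ``bias'' term (contraction on $a_0^{(j)}$) and a ``variance'' term (weighted sum of noise contributions); the total $\E\|\Weights_n-\minimum\|^2$ is the sum over $j$ of these.

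First I would bound the bias term. Using $\prod_{k=0}^{n-1}(1-\tfrac{a\hesseEV_j}{k+b}) \le \exp(-a\hesseEV_j\sum_{k=0}^{n-1}\tfrac{1}{k+b})$ together with $\sum_{k=0}^{n-1}\tfrac{1}{k+b} \ge \int_b^{n+b}\tfrac{1}{s}ds = \log(\tfrac{n+b}{b})$ gives
\begin{align*}
	\prod_{k=0}^{n-1}(1-\lr_k\hesseEV_j)^2 \le \left(\tfrac{b}{n+b}\right)^{2a\hesseEV_j} \in O(n^{-2a\hesseEV_j}) \subseteq O(n^{-2a\hesseEV_1}),
\end{align*}
which handles $O(n^{-2a\hesseEV_1})\|\weights_0-\minimum\|^2$ after summing over $j$.

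The variance term requires the delicate estimate. I would use the same exponential-integral bound to get $\prod_{i=k+1}^{n-1}(1-\lr_i\hesseEV_j)^2 \le \bigl(\tfrac{k+b}{n+b}\bigr)^{2a\hesseEV_j}$, so that (pulling $\tilde{\stdBound}^2$ out using the uniform bound on the noise coordinate)
\begin{align*}
	\sum_{k=0}^{n-1}\lr_k^2\prod_{i=k+1}^{n-1}(1-\lr_i\hesseEV_j)^2
	\le \frac{a^2}{(n+b)^{2a\hesseEV_j}}\sum_{k=0}^{n-1}(k+b)^{2a\hesseEV_j-2}.
\end{align*}
Comparing the remaining sum with the integral $\int_b^{n+b} s^{2a\hesseEV_j-2}\,ds$ splits the analysis into three regimes: if $2a\hesseEV_j>1$ the sum behaves like $(n+b)^{2a\hesseEV_j-1}$ giving an $O(n^{-1})$ contribution; if $2a\hesseEV_j<1$ the sum stays bounded giving $O(n^{-2a\hesseEV_j})$; if $2a\hesseEV_j=1$ one picks up an extra $\log n$ factor, still dominated by $O(n^{-1}+n^{-2a\hesseEV_1})$. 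Taking the worst eigenvalue yields exactly the stated rate $O(n^{-1}+n^{-2a\hesseEV_1})$.

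Finally I would verify the explicit choices: $a\ge \tfrac{1}{2\hesseEV_1}$ pushes us into the regime $2a\hesseEV_1\ge 1$ where the variance term is $O(1/n)$, and the bias term decays at least as fast. For $a=1/\hesseEV_1$, $b=a\hesseEV_\dimension$ one recovers $\lr_n=1/(\hesseEV_1(n+\condition))$; for $\hesseEV_1=\hesseEV_\dimension=1$ this reduces to $\lr_n=1/(n+1)$, which by the telescoping computation in equations (\ref{eq: getting rid of w_0 first step})--(\ref{eq: telescoping product}) realizes the sample mean. The main obstacle will be the case split in the variance sum and keeping track of constants hidden in the $O(\cdot)$ notation across the three regimes of $2a\hesseEV_j$; the rest is bookkeeping on top of the recursion already established in Section~\ref{sec: quadratic loss SGD}.
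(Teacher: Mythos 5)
Your proposal is correct and follows essentially the same route as the paper's proof: diagonalize the Hessian, split the per-eigenspace recursion into a bias term and a variance term, convert the products into sums via \(1-x\le e^{-x}\) (which is exactly why \(b\ge a\hesseEV_\dimension\) is needed), lower-bound \(\sum_k \lr_k\) by the logarithmic integral, and compare the resulting variance sum \(\sum_k (k+b)^{2a\hesseEV_j-2}\) with an integral to obtain \(O(n^{-1}+n^{-2a\hesseEV_1})\). Your remark about a possible \(\log n\) factor at the exact boundary \(2a\hesseEV_j=1\) is a real (minor) boundary issue, but the paper's own estimate glosses over it in the same way, so it does not distinguish your argument from the original.
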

\begin{proof}
	First recall that for quadratic loss functions we derived the explicit
	representation
	\begin{align*}
	&\Weights_{n+1}-\minimum\\
	&\lxeq{(\ref{eq: unrolled SGD weights (general quadratic loss case)})}
	\left(\prod_{k=0}^n(1-\lr_kH)\right)(\weights_0-\minimum)
	+ \sum_{k=0}^n\lr_k\left(\prod_{i=k+1}^n(1-\lr_iH)\right)\martIncr_{k+1}.
	\end{align*}
	By diagonalizing \(H\) and using the fact that
	\begin{align*}
		&\prod_{k=0}^{n-1} (1-\lr_k V\diag[\hesseEV_1, \dots, \hesseEV_\dimension]V^T)\\
		&= V \diag\left[
			\prod_{k=0}^{n-1}(1-\lr_k\hesseEV_1),
			\dots, \prod_{k=0}^{n-1}(1-\lr_k\hesseEV_\dimension)
		\right]V^T,
	\end{align*}
	we can consider all the eigenspaces separately again
	\begin{align*}
		\langle \Weights_{n+1}-\minimum, v_j \rangle
		= \begin{aligned}[t]
			&\left(\prod_{k=0}^n(1-\lr_k\hesseEV_j)\right) \langle \weights_0-\minimum, v_j\rangle\\
			&+ \sum_{k=0}^n \lr_k \left(\prod_{i=k+1}^n(1-\lr_i\hesseEV_j)\right)\langle\martIncr_{k+1},v_j\rangle.
		\end{aligned}
	\end{align*}
	As the first summand is deterministic and the martingale increments are
	pairwise independent, we can use our usual conditional independence argument
	to get
	\begin{align*}
		\E\|\Weights_n-\minimum\|^2
		&= \E[(V^T [\Weights_n-\minimum])^T(V^T[\Weights_n-\minimum])]\\
		&= \sum_{j=1}^\dimension \E\langle\Weights_n-\minimum, v_j \rangle^2\\
		&= \sum_{j=1}^\dimension
		\begin{aligned}[t]
			&\Bigg[\left(\prod_{k=0}^{n-1}(1-\lr_k\hesseEV_j)\right)^2 \langle \weights_0-\minimum, v_j\rangle^2\\
			&+ \sum_{k=0}^{n-1} \lr_k^2 \left(\prod_{i=k+1}^{n-1}(1-\lr_i\hesseEV_j)\right)^2\E[\langle\martIncr_{k+1},v_j\rangle^2]\Bigg].
		\end{aligned}
	\end{align*}
	Note that since the eigenvalues are all different, we cannot select \(\lr_n=\frac{1}{\hesseEV(n+1)}\)
	to achieve
	\begin{align*}
		1-\lr_n\hesseEV = \frac{n}{n+1},
	\end{align*}
	which would be the product equivalent of a telescoping sum as we have used in
	(\ref{eq: telescoping product}). To make the product go away, we could use
	\(1+x\le \exp(x)\) inside the square, to convert it into a sum. But since \(x\to
	x^2\) is only monotonous for positive \(x\), we need
	\begin{align*}
		1-\lr_k \hesseEV_j \ge 0 \qquad \forall k\ge 0, \forall j \in\{1,\dots,\dimension\}.
	\end{align*}
	This is why we need the requirement \(b\ge a\hesseEV_\dimension\).

	Enacting this idea with the exponential function bound \(1-x\le\exp(-x)\), we
	get
	\begin{align}
		\nonumber
		\left(\prod_{i=k}^{n-1}(1-\lr_i\hesseEV_j)\right)^2
		&\le\exp\left(-\sum_{i=k}^{n-1}\lr_i\hesseEV_j\right)^2\\
		\nonumber
		&\le \exp\left(-2\hesseEV_j a[\log(n+b)-\log(k+b)]\right)\\
		\label{eq: detour over exponential bound}
		&= \left(\frac{n+b}{k+b}\right)^{-2\hesseEV_j a},
	\end{align}
	where we have used
	\begin{align*}
		\sum_{i=k}^{n-1} \lr_i
		= \int_{k}^{n} \frac{a}{\lfloor x\rfloor+b} dx
		\ge \int_{k}^{n} \frac{a}{x+b} dx
		= a(\log(n+b) - \log(k+b))
	\end{align*}
	in the second inequality. Using this detour over the exponential function,
	we can upper bound our deterministic part by
	\begin{align}
		\nonumber
		\sum_{j=1}^\dimension\left(\prod_{k=0}^{n-1}(1-\lr_k\hesseEV_j)\right)^2
		\langle \weights_0-\minimum, v_j\rangle^2
		&\le \sum_{j=1}^\dimension \left(\frac{n+b}{b}\right)^{-2\hesseEV_j a}
		\langle\weights_0-\minimum, v_j\rangle^2\\
		\label{eq: upper bound on convergence rate using smallest eigenvalue 1}
		&\le \left(\frac{n+b}{b}\right)^{-2\hesseEV_1 a}\|\weights_0-\minimum\|^2\\
		\nonumber
		&\in O(n^{-2\hesseEV_1a})\|\weights_0-\minimum\|^2.
	\end{align}
	And bound our noise by
	\begin{align}
		\nonumber
		&\sum_{j=1}^\dimension\sum_{k=0}^{n-1} \lr_k^2
		\left(\prod_{i=k+1}^{n-1}(1-\lr_i\hesseEV_j)\right)^2
		\E[\langle\martIncr_{k+1},v_j\rangle^2]\\
		\nonumber
		&\le \sum_{j=1}^d
			\sum_{k=0}^{n-1} \lr_k^2 \left(\tfrac{n+b}{k+1+b}\right)^{-2\hesseEV_j a}
			\E[\langle\martIncr_{k+1},v_j\rangle^2]
		\\
		\label{eq: upper bound on convergence rate using smallest eigenvalue 2}
		&\le \sum_{k=0}^{n-1} \lr_k^2 \left(\tfrac{n+b}{k+b+1}\right)^{-2\hesseEV_1 a}
		\underbrace{\E[\|\martIncr_{k+1}\|^2]}_{\le \tilde{\stdBound}^2}
		\\
		\nonumber
		&\le
		\tilde{\stdBound}^2a^2 (n+b)^{-2\hesseEV_1 a}
		\underbrace{\sum_{k=0}^{n-1} \frac{(k+b+1)^{2\hesseEV_1 a}}{(k+b)^2}}_{
			\sim \int_b^{n+b} \frac{(x+1)^{2\hesseEV_1 a}}{x^2}dx} \\
		\nonumber
		&\in \tilde{\stdBound}^2 O(n^{-2\hesseEV_1 a})  O(n^{-1+2\hesseEV_1a} + 1)
		= \tilde{\stdBound}^2 O(n^{-1} + n^{-2\hesseEV_1 a}).
	\end{align}
	Putting those two bounds together we obtain our claim.
\end{proof}
\begin{remark}\label{rem: rate is retained for larger eigenvalues}
	In equations (\ref{eq: upper bound on convergence rate using smallest eigenvalue 1})
	and (\ref{eq: upper bound on convergence rate using smallest eigenvalue 2})
	we bounded the convergence rate of all eigenspaces, with the convergence rate
	of the eigenspace with the smallest eigenvalue \(\hesseEV_1\). If we select
	our learning rate schedule wrong, the convergence rate in this eigenspace
	is smaller. But the other eigenspaces might still converge at rate \(O(\tfrac1n)\).
\end{remark}
\begin{remark}
	The integral lower bounds on the sum of learning rates could also be upper bounds
	up to some constant shift, so concerning the asymptotic rate this bound is
	tight. The only other estimation method we used (except replacing all
	eigenvalues with the worst) is bounding \(1+x\) by \(\exp(x)\) which is
	also tight for small \(x\). Since the learning rates are decreasing, this
	bound becomes better and better and should thus asymptotically be tight.
\end{remark}

The next theorem provides a more general statement with uglier requirements on
the learning rates but with more explicit bounds.

\begin{theorem}[{\cite[cf. Theorem 4.7]{bottouOptimizationMethodsLargeScale2018}}]
	For \(\Loss\in\strongConvex{\lbound}{\ubound}\) with
	\begin{align*}
		\E\|\martIncr_n\|^2 \le \stdBound^2 \ubound\lbound,
	\end{align*}
	SGD with step sizes
	\begin{align*}
		\lr_n = \frac{\tilde{a}}{n+b},
		\qquad \tilde{a} =  a\frac{\ubound+\lbound}{2\ubound\lbound},
		\quad a > 1,
		\quad b \ge a\frac{(1+\condition)(1+\condition^{-1})}{4}
	\end{align*}
	converges at rate
	\begin{align*}
		\E\|\Weights_n - \minimum\|^2
		\le \frac{\nu}{n+b},
	\end{align*}
	where
	\begin{align*}
		\nu = \max\Bigg\{
			\frac{\stdBound^2a^2(\condition+1)(1+\condition^{-1})}{a-1},
			\underbrace{
				b\|\weights_0 - \minimum\|^2
			}_{\text{induction start}}
		\Bigg\}.
	\end{align*}
\end{theorem}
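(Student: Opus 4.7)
The plan is a standard induction on $n$ showing $\E\|\Weights_n-\minimum\|^2 \le \nu/(n+b)$, set up so that the two pieces of the $\max$ defining $\nu$ each do exactly one job: the $b\|\weights_0-\minimum\|^2$ term absorbs the base case, while the $\stdBound^2 a^2(\condition+1)(1+\condition^{-1})/(a-1)$ term is engineered to dominate the noise contribution in the inductive step.

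First I would check that the choice of $b$ guarantees $\lr_n \le 2/(\ubound+\lbound)$ for all $n \ge 0$, so that Lemma~\ref{lem: SGD bound with noise} actually applies: since $\lr_n$ is decreasing in $n$, it suffices to verify this at $n=0$, and
\[
\lr_0 = \frac{\tilde a}{b} = \frac{a(\ubound+\lbound)}{2\ubound\lbound\, b} \le \frac{2}{\ubound+\lbound}
\]
is exactly the hypothesis $b \ge a(1+\condition)(1+\condition^{-1})/4$. Base case $n=0$ is immediate from $\nu \ge b\|\weights_0-\minimum\|^2$. For the inductive step, assume $\E\|\Weights_n-\minimum\|^2 \le \nu/(n+b)$. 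Apply Lemma~\ref{lem: SGD bound with noise}, noting that with our choice of $\tilde a$ the contraction factor simplifies to
\[
1-2\lr_n\frac{\ubound\lbound}{\ubound+\lbound} = 1-\frac{a}{n+b},
\]
and the noise term to
\[
\lr_n^2 \stdBound^2\ubound\lbound = \frac{\tilde a^2 \stdBound^2 \ubound\lbound}{(n+b)^2} = \frac{a^2\stdBound^2(1+\condition)(1+\condition^{-1})}{4(n+b)^2},
\]
where I used $(\ubound+\lbound)^2/(\ubound\lbound) = 2+\condition+\condition^{-1} = (1+\condition)(1+\condition^{-1})$.

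Next I use the definition of $\nu$ to convert the noise term into something proportional to $\nu$: the inequality $\nu(a-1) \ge \stdBound^2 a^2(1+\condition)(1+\condition^{-1})$ yields $\lr_n^2 \stdBound^2\ubound\lbound \le (a-1)\nu/(4(n+b)^2) \le (a-1)\nu/(n+b)^2$. Combining,
\[
\E\|\Weights_{n+1}-\minimum\|^2 \le \frac{(n+b-a)\nu}{(n+b)^2} + \frac{(a-1)\nu}{(n+b)^2} = \frac{(n+b-1)\nu}{(n+b)^2}.
\]
The induction closes by the elementary inequality $(n+b-1)(n+b+1) = (n+b)^2 - 1 \le (n+b)^2$, which rearranges to $(n+b-1)/(n+b)^2 \le 1/(n+b+1)$, giving exactly $\E\|\Weights_{n+1}-\minimum\|^2 \le \nu/(n+b+1)$.

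There is no genuinely hard step here; everything is a bookkeeping calculation once Lemma~\ref{lem: SGD bound with noise} is in place. The only place that requires mild care is recognizing why the constants in $\tilde a$ and in the lower bound on $b$ are chosen as they are — the former precisely cancels the $\ubound\lbound/(\ubound+\lbound)$ factor to produce the clean recursion $(1-a/(n+b))$, and the latter is exactly what is needed for Lemma~\ref{lem: SGD bound with noise} to be applicable. The bound on $\nu$ is then the smallest value that makes the induction go through given these normalizations.
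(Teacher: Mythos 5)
Your proof is correct and follows the same inductive approach as the paper: invoke Lemma~\ref{lem: SGD bound with noise} (after checking the learning-rate constraint via the lower bound on $b$), observe that $\tilde{a}$ is chosen so the contraction factor simplifies to $1-a/(n+b)$, bound the noise by $(a-1)\nu/(n+b)^2$ using the definition of $\nu$, and close the induction with $(n+b-1)(n+b+1)\le(n+b)^2$. If anything, your arithmetic is cleaner than the paper's: you correctly retain the factor $\tfrac14$ in $\lr_n^2\stdBound^2\ubound\lbound = \tfrac{a^2\stdBound^2(1+\condition)(1+\condition^{-1})}{4(n+b)^2}$ (the paper's displayed computation drops it) and then visibly discard the slack, which makes clear that the stated $\nu$ is four times larger than the induction actually requires.
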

\begin{proof}
	We will prove the statement by induction. The induction start is covered by
	definition of \(\nu\). For the induction step we use Lemma~\ref{lem: SGD
	bound with noise} for which we need \(\lr \ge \tfrac{2}{\ubound+\lbound}\)
	(which is ensured by our requirement on \(b\)) and define \(\tilde{n}:=n+b\)
	for
	\begin{align*}
		\E\|\Weights_{n+1}-\minimum\|^2
		&\le \left(1-2\lr_n \frac{\ubound\lbound}{\ubound+\lbound}\right)\frac{\nu}{\tilde{n}}
		+ \lr_n^2 \stdBound^2 \ubound\lbound\\
		&= \frac{\tilde{n} - a}{\tilde{n}}\frac{\nu}{\tilde{n}}
		+ \frac{\stdBound^2}{\tilde{n}^2}\frac{a^2(\ubound+\lbound)^2}{\ubound\lbound}\\
		&= \frac{\tilde{n} - 1}{\tilde{n}^2}\nu
		- \underbrace{\frac{(a -1)\nu + \stdBound^2 a^2(\condition+1)(1+\condition^{-1})}{\tilde{n}^2}}_{
			\ge 0\qquad \text{definition of \(\nu\) and } a>1
		}\\
		&\le \frac{\tilde{n}-1}{(\tilde{n}+1)(\tilde{n}-1)}\nu = \frac{\nu}{n+1+b}.
		\qedhere
	\end{align*}
\end{proof}

\subsection{Piecewise Constant Learning Rates}

As we have seen in Theorem~\ref{thm: optimal rates SGD}, we can achieve
much better results if we keep the learning rate high during the transient
phase. If we use the results above we miss out on the exponential reduction that
constant learning rates provide. So the first idea is to use constant learning
rates until we ``stop making progress''. i.e.\ we have converged to an area like
in Theorem~\ref{thm: SGD converges to area}. But convergence in this theorem is
exponential, meaning we do not actually converge in finite time. And due to
stochasticity it is difficult to tell when we actually stop making progress.
Especially because a plateau in the loss could also be due to a saddle point
region (recall Figure~\ref{fig: visualize saddle point gd}).

Detecting convergence is thus its own research field \parencite[for a recent
approach including an overview of previous work see
e.g.][]{pesmeConvergenceDiagnosticBasedStep2020}. One prominent conjecture is,
that the dot product of the weight updates becomes more random when we have
converged. Since then, we do not follow a straight gradient path anymore, with
gradients pointing in roughly the same direction.

Another problem with diminishing learning rates is: If we reduce learning rates
too fast (e.g. \(\tfrac{1}{n+1}\), when \(\hesseEV_1=\tfrac14\) requires something
like \(\tfrac{2}{n+b}\)), then we have seen in Theorem~\ref{thm: convergence
rates for diminishing lr schedules} that our convergence rate is reduced below
\(O(\tfrac1n)\). Since \(\hesseEV_1\) can be arbitrarily small our convergence rate
\(O(n^{-2\hesseEV_1 a})\) could be arbitrarily slow in fact.

So we want to avoid reducing the learning rate too fast. One possibility is
choosing a learning rate schedule with \(O(n^{-\alpha})\) where \(\tfrac12<\alpha<1\),
which sacrifices convergence speed on purpose to avoid it becoming arbitrarily
bad. Another idea is to recycle the ``convergence detection'' algorithm we
need anyway, to transition from the transient phase into the asymptotic phase.
In other words: Use piecewise constant learning rates.

\textcite{smithDonDecayLearning2018} argue that there is another benefit to
piecewise constant learning rates: ``we note that it is well known in the
physical sciences that slowly annealing the temperature (noise scale) helps the
system to converge to the global minimum, which may be sharp. Meanwhile
annealing the temperature in a series of discrete steps can trap the system in a
`robust' minimum whose cost may be higher but whose curvature is lower. We
suspect a similar intuition may hold in deep learning.''
In other words: for a constant learning rate, the area to which SGD can converge
has a minimal size. So it will disregard all holes (local minima) which are
smaller than this size. Sharply decreasing the learning rate while over a larger
crater will keep SGD in this general area.

We will now follow a proof by \textcite[pp. 27-28]{bottouOptimizationMethodsLargeScale2018}
which proves that piecewise constant learning rates can achieve the optimal
convergence rate of \(O(\tfrac1n)\). As a small improvement, we will 
show it for the \(L^2\) distance of the weights and not the expected loss, which
is a special case (cf.  Remark~\ref{rem: expected loss is a special case of L2
weight convergence}).

Recall from Theorem~\ref{thm: SGD converges to area} that for learning rate
\begin{align*}
	\lr = \frac{2^{-r}}{\ubound+\lbound},
\end{align*}
SGD converges to
\begin{align*}
	\limsup_{n}\E\|\Weights_n - \minimum\|^2
	\le \stdBound^2 \lr \frac{\ubound+\lbound}{2} = \frac{\stdBound^2}{2^{r+1}}.
\end{align*}
As convergence takes an infinite amount of time we only wait until
\begin{align*}
	\E\|\Weights_n -\minimum\|^2 \le \frac{\stdBound^2}{2^r}
\end{align*}
to half the learning rate. More precisely we select 
\begin{align}\label{eq: piecewise constant learning rates}
	\lr_n := \frac{2^{-r_n}}{\ubound+\lbound},
	\qquad	
	r_n := \max\left\{k\in\naturals : \E\|\Weights_n-\minimum\|^2 \le 2^{1-k}\stdBound^2\right\}.
\end{align}
Let the points where we half the learning rate
\(2^{-r}/(\ubound+\lbound)\) be 
\begin{align*}
	N(r) := \min\{k\in\naturals : r_k \ge r\}.
\end{align*}

\begin{theorem}[Piecewise Constant Learning Rates]
	If we select learning rates of SGD according to (\ref{eq: piecewise constant
	learning rates}) for \(\Loss\in\strongConvex{\lbound}{\ubound}\) with
	\begin{align*}
		\E\|\martIncr_n\|^2\le \stdBound^2\ubound\lbound,
	\end{align*}
	then we get a convergence rate at the ``halving points'' of
	\begin{align*}
		\E\|\Weights_{N(r)} - \minimum\|^2 \le \frac{c(\condition)\stdBound^2}{N(r)},
	\end{align*}
	where \(c\) is a constant roughly linear in the condition \(\condition\).
\end{theorem}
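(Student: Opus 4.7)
The plan is to bound the length $N(r+1)-N(r)$ of each constant-learning-rate phase using Theorem~\ref{thm: SGD converges to area}, then sum these lengths as a geometric series and combine with the definition of $N(r)$. By construction $\E\|\Weights_{N(r)}-\minimum\|^2 \le 2^{1-r}\stdBound^2$, so it suffices to show $N(r) \le C(\condition)\,2^r$; this immediately yields $\E\|\Weights_{N(r)}-\minimum\|^2 \le 2^{1-r}\stdBound^2 \le 2C(\condition)\stdBound^2/N(r)$.

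On the interval $[N(r), N(r+1))$ the learning rate is constantly $\lr_r = 2^{-r}/(\ubound+\lbound)$, so Theorem~\ref{thm: SGD converges to area} applies with limit area $\stdBound^2\lr_r(\ubound+\lbound)/2 = \stdBound^2/2^{r+1}$ and contraction factor $\rho_r = 1 - 2^{1-r}\ubound\lbound/(\ubound+\lbound)^2$. The initial gap above the area is at most $\stdBound^2/2^{r-1} - \stdBound^2/2^{r+1} = 3\stdBound^2/2^{r+1}$, whereas to leave phase $r$ the distance merely needs to drop to $\stdBound^2/2^r$, corresponding to gap $\stdBound^2/2^{r+1}$. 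A factor-$3$ contraction is therefore enough, and using $\log(1/\rho_r) \ge 1-\rho_r$ (since $-\log(1-x)\ge x$ for $x\in(0,1)$) I obtain
\begin{equation*}
N(r+1) - N(r) \le \left\lceil \frac{\log 3}{1-\rho_r} \right\rceil = \left\lceil 2^{r-1}\log 3 \cdot \frac{(\ubound+\lbound)^2}{\ubound\lbound} \right\rceil.
\end{equation*}
Since $(\ubound+\lbound)^2/(\ubound\lbound) = (1+\condition)(1+\condition^{-1})$ is linear in $\condition$ for large $\condition$, this is $O(\condition\,2^r)$.

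Summing over phases gives the geometric bound $N(r) \le N(1) + \sum_{s=1}^{r-1} O(\condition\,2^s) \le c(\condition)\,2^r$, which combined with $\E\|\Weights_{N(r)}-\minimum\|^2 \le 2^{1-r}\stdBound^2$ proves the claim. The subtlety I expect to be the main obstacle is verifying that $N(r)$ is actually finite, i.e.\ that the process really enters phase $r$. Because $\E\|\Weights_n-\minimum\|^2$ is not in general monotonically decreasing, one cannot simply watch the distance cross a threshold; however, the right-hand side of Theorem~\ref{thm: SGD converges to area} is a genuine deterministic upper bound converging to the area $\stdBound^2/2^{r+1}$, which lies strictly below the threshold $\stdBound^2/2^r$, and therefore this upper bound falls below the threshold in finitely many steps, forcing $r_n \ge r+1$. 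A minor bookkeeping issue is that $N(1)$ and the ceiling function introduce additive constants which need to be absorbed into $c(\condition)$, but this does not affect the asymptotic dependence on $\condition$.
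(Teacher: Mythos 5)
Your proof is correct and follows the same strategy as the paper: apply Theorem~\ref{thm: SGD converges to area} to each constant-learning-rate phase to bound the phase length $N(r+1)-N(r)$ by a constant times $2^r$, sum the resulting geometric series to get $N(r)\le c(\condition)2^r$, and combine with the defining inequality $\E\|\Weights_{N(r)}-\minimum\|^2\le 2^{1-r}\stdBound^2$. The only cosmetic difference is in the per-phase contraction target: you carefully measure the gap relative to the true asymptotic level $\stdBound^2/2^{r+1}$ of the theorem and arrive at a factor-$3$ requirement, whereas the paper's display references $\stdBound^2/2^r$ inside the bracket and gets a factor $2$ — your version is the more literal application of the theorem, but the constants are absorbed into $c(\condition)$ either way and the $O(\condition\cdot 2^r)$ conclusion is identical.
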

\begin{proof}
	Applying the recursion from Theorem~\ref{thm: SGD converges to area} we get
	\begin{align*}
		\frac{\stdBound^2}{2^r} - \frac{\stdBound^2}{2^{r+1}}
		&\le\E\|\Weights_{N(r+1)-1} - \minimum\|^2 - \frac{\stdBound^2}{2^{r+1}}\\
		&\le \left(1-2^{1-r}\frac{\ubound\lbound}{(\ubound+\lbound)^2}\right)^{N(r+1)-1-N(r)}
		\Bigg[
			\underbrace{\E\|\Weights_{N(r)}-\minimum\|^2}_{\le 2^{1-r}\stdBound^2} - \frac{\stdBound^2}{2^r}
		\Bigg].
	\end{align*}
	Dividing by \(\tfrac{\stdBound^2}{2^r}\) we get
	\begin{align*}
		\frac{1}{2}
		&\le \left(1-2^{1-r}\frac{\ubound\lbound}{(\ubound+\lbound)^2}\right)^{N(r+1)-1-N(r)}
	\end{align*}
	and therefore
	\begin{align*}
		\log(\tfrac12)
		&\le (N(r+1)-N(r)-1)\log\left(1-2^{1-r}\frac{\ubound\lbound}{(\ubound+\lbound)^2}\right)\\
		&\le (N(r+1)-N(r)-1)\left(-2^{1-r}\frac{\ubound\lbound}{(\ubound+\lbound)^2}\right),
	\end{align*}
	due to \(\log(1+x)\le x\). Dividing by the negative multiple on the right we
	finally get
	\begin{align*}
		N(r+1)-N(r)-1
		\le \frac{\log(\tfrac12)}{-2^{1-r}\frac{\ubound\lbound}{(\ubound+\lbound)^2}}
		= \frac{\log(2)(\ubound+\lbound)^2}{\ubound\lbound}2^{r-1}.
	\end{align*}
	Therefore we have
	\begin{align*}
		N(r) &= \sum_{k=0}^{r-1} N(k+1)-N(k)
		\le r + \frac{\log(2)(\ubound+\lbound)^2}{\ubound\lbound}
		\underbrace{\sum_{k=0}^{r-1} 2^{k-1}}_{=2^r-1}\\
		&=r+\log(2)(\condition+1)(1+\condition^{-1})(2^r-1) \in O(2^r).
	\end{align*}
	I.e.\ halving the error (and step size) takes double the time. Now with
	\begin{align*}
		N(r) \le c2^r \implies \frac{\log(N(r)) - \log(c)}{\log(2)} \le r
	\end{align*}
	and \(r_{N(r)} = r\), we get
	\begin{align*}
		\E\|\Weights_{N(r)}-\minimum\|^2 
		&\le 2^{1-r_{N(r)}}\stdBound^2\\
		&= \exp\left((1-r_{N(r)})\log(2)\right)\stdBound^2\\
		&\le \exp[\log(2)- (\log(N(r)) - \log(c))]\stdBound^2\\
		&=  \frac{2c(\condition)\stdBound^2}{N(r)},
	\end{align*}
	where \(c(\condition)\) highlights the dependence of \(c\) on the condition
	\(\condition\).
\end{proof}

This approach retains the convergence rate of \(O(\tfrac1n)\) even if we wait too long.
E.g. if we do not wait until we are at twice the asymptotic bound, but \(1.5\)
times the asymptotic bound, then this represents a halving of the distance to
the asymptotic bound. Due to exponential convergence to this bound, this should
take roughly double the amount of time. If we spend double the amount of time at
every learning rate step, then the amount of time to reach some level \(r\)
doubles. This does not change the fact it is \(O(\tfrac1n)\),
which reinforces the notion that it is better to keep learning rates high,
rather than reducing them too quickly. As that would slow progress down below
\(O(\tfrac1n)\).

\section{General Convex Case -- Averaging}\label{sec: SGD with Averaging}

So far, we only discussed \emph{strongly} convex functions. In the classical
convex loss function setting (Section~\ref{sec: convex convergence theorems}),
we used Lemma~\ref{lem: bregmanDiv lower bound} in place of Lemma~\ref{lem:
bregmanDiv lower bound (strongly convex)} to ensure a decreasing distance from
the optimum. But the noise prevents us from doing the same here, like we did in
Lemma~\ref{lem: SGD bound with noise} for strongly convex functions.

\textcite{nemirovskiRobustStochasticApproximation2009} showed that the approach
developed to deal with non Lipschitz continuous gradients and subgradients (cf.
Subsection~\ref{subsec: subgradient method}) can be applied directly to the
stochastic setting. The price for this generality is a greatly reduced
convergence speed.

In this setting we assume \(\Loss \in \lipGradientSet[0,0]{\lipConst}\).
And instead of the Lemmas mentioned above, we simply use the
(subgradient) definition of convexity
\begin{align*}
	\Loss(\minimum)
	\ge \Loss(\Weights_n) + 
	\langle\nabla\Loss(\Weights_n), -(\Weights_n - \minimum)\rangle
\end{align*}
in (\ref{eq: getting rid of the scalar product (SGD)}), i.e.
\begin{align*}
	&\|\Weights_n - \minimum - \lr_n\nabla\Loss(\Weights_n)\|^2\\
	&\le \|\Weights_n - \minimum\|^2
	- 2\lr_n\underbrace{\langle\nabla\Loss(\Weights_n), \Weights_n-\minimum\rangle}_{
		\ge \Loss(\Weights_n) - \Loss(\minimum)
	}
	+ \lr_n^2 \|\nabla\Loss(\Weights_n)\|^2,
\end{align*}
to obtain
\begin{align*}
	\E\|\Weights_{n+1}-\minimum\|^2
	&\le
	\begin{aligned}[t]
		&\E\|\Weights_n -\minimum\|^2
		- 2\lr_n \E[\Loss(\Weights_n) - \Loss(\minimum)]\\
		&+ \lr_n^2 [
			\underbrace{\E\|\nabla\Loss(\Weights_n)\|^2}_{\le \lipConst^2}
			+ \underbrace{\E\|\martIncr_{n+1}\|^2}_{\le \stdBound^2}
		],
	\end{aligned}
\end{align*}
as \(\Loss\in\lipGradientSet[0,0]{\lipConst}\) implies Lipschitz continuity and thus bounded
gradients. Note that splitting \(\nabla\loss\) into \(\nabla\Loss\) and \(\martIncr\)
was of no use here as we end up summing them anyway. \textcite{nemirovskiRobustStochasticApproximation2009}
uses bounded \(\E\|\nabla\loss\|^2\) from the start to get the same result.
Reordering the previous equation leads to
\begin{align}
	\nonumber
	&2\sum_{n=0}^{N-1} \lr_n\E[\Loss(\Weights_n)-\Loss(\minimum)]\\
	\nonumber
	&\le \sum_{n=0}^{N-1} \E\|\Weights_n -\minimum\|^2 - \E\|\Weights_{n+1}-\minimum\|^2
	+ \lr_n^2 [\lipConst^2 + \stdBound^2]\\
	\label{eq: sgd averaging}
	&\le \E\|\weights_0 - \minimum\|^2 + [\lipConst^2+\stdBound^2]\sum_{n=0}^{N-1} \lr_n^2.
\end{align}
Dividing both sides by \(2\sum_{n=0}^{N-1}\lr_n=2T\) turns the term on the left
into a convex combination allowing us to finally get 
\begin{align*}
	\E\left[\Loss\left(\sum_{n=0}^{N-1} \frac{\lr_n}{T}\Weights_n\right)\right]-\Loss(\minimum)
	&\xle{\text{conv.}} \sum_{n=0}^{N-1} \frac{\lr_n}{T}\E[\Loss(\Weights_n)]-\Loss(\minimum)\\
	&\lxle{(\ref{eq: sgd averaging})} \frac{
		\|\weights_0 - \minimum\|^2 + [\lipConst^2+\stdBound^2]\sum_{n=0}^N \lr_n^2
	}{
		2T
	}.
\end{align*}
Now if there was no stochasticity, one could instead use the minimum of
\(\Loss(\Weights_n)\) instead, which is how one obtains the result in
Subsection~\ref{subsec: subgradient method}. But since we do have randomness,
it appears to make sense to average the weights \(\Weights_n\) we collect over
time.
Now we can ask ourselves which learning rates \(\lr_n\) we should pick to
minimize our bound. Due to convexity of the square we have
\begin{align*}
	1 = \left(\frac{1}{N}\sum_{n=0}^{N-1}\frac{N}{T}\lr_n\right)^2
	\xle{\text{conv.}} \frac{1}{N}\sum_{n=0}^{N-1}\left(\frac{N}{T}\lr_n\right)^2
	\le \frac{N}{T^2}\sum_{n=0}^{N-1}\lr_n^2,
\end{align*}
which means that constant learning rates \(\lr=\tfrac{T}N\) are optimal, since
\begin{align*}
	\sum_{n=0}^{N-1}\lr^2 = \frac{T^2}{N} \le \sum_{n=0}^{N-1}\lr_n^2.
\end{align*}
This leads to
\begin{align*}
	\E\Big[
		\Loss\Big(\underbrace{
			\frac{1}{N}\sum_{n=0}^{N-1}\Weights_n
		}_{=:\overline{\Weights}_N}\Big)
	\Big]-\Loss(\minimum)
	&\le \frac{
		\|\weights_0 - \minimum\|^2 + [\lipConst^2+\stdBound^2]N\lr^2
	}{2\lr N}.
\end{align*}
Minimizing over \(\lr\) for a constant number of steps \(N\) tunes the
ODE time \(T\). An increase of \(T\) entails larger discretizations but also
allows for more time following the ODE.
This minimization results in the following theorem.
\begin{theorem}[Optimal Rates]\label{thm: optimal averaging rates}
	If we use SGD with learning rate
	\begin{align*}
		\lr_* = \frac{\|\weights_0-\minimum\|}{\sqrt{\lipConst^2+\stdBound^2}\sqrt{N}}
	\end{align*}
	on \(\Loss\in\lipGradientSet[0,0]{\lipConst}\) with
	\begin{align*}
		\E\|\martIncr_n\|^2 \le \stdBound^2,
	\end{align*}
	then we get a convergence rate of
	\begin{align*}
		\E[\Loss(\overline{\Weights}_N)] -\Loss(\minimum)
		\le \frac{\|\weights_0-\minimum\|\sqrt{\lipConst^2+\stdBound^2}}{\sqrt{N}}
	\end{align*}
	for the weight average \(\overline{\Weights}_N\).
\end{theorem}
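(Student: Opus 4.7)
The plan is to observe that the substantive analytic work has already been carried out in the paragraphs preceding the theorem. Specifically, the derivation established, for any \emph{constant} learning rate $\lr>0$, the bound
\begin{align*}
	\E[\Loss(\overline{\Weights}_N)] - \Loss(\minimum)
	\le \frac{\|\weights_0 - \minimum\|^2 + [\lipConst^2+\stdBound^2]N\lr^2}{2\lr N}.
\end{align*}
Thus the proof of the theorem reduces to two purely algebraic steps: show that the advertised $\lr_*$ is the minimizer of the right-hand side (viewed as a function of $\lr$), and compute the resulting minimal value to match the claimed rate.

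For the minimization step, I would rewrite the upper bound in the form $\tfrac{A}{\lr} + B\lr$ with $A := \tfrac{\|\weights_0-\minimum\|^2}{2N}$ and $B := \tfrac{\lipConst^2+\stdBound^2}{2}$. Either the first-order condition $-A/\lr^2 + B = 0$ or, more elegantly, the AM-GM inequality $\tfrac{A}{\lr}+B\lr \ge 2\sqrt{AB}$ (with equality iff $\lr=\sqrt{A/B}$) identifies the optimizer
\begin{align*}
	\lr_* = \sqrt{A/B} = \frac{\|\weights_0-\minimum\|}{\sqrt{N(\lipConst^2+\stdBound^2)}},
\end{align*}
exactly as stated. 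Plugging this back in gives $2\sqrt{AB} = \tfrac{\|\weights_0-\minimum\|\sqrt{\lipConst^2+\stdBound^2}}{\sqrt{N}}$, which is the claimed convergence rate.

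There is no real obstacle here: the genuine analytical content, namely the use of the subgradient form of convexity to handle the scalar product term, the vanishing of the martingale cross terms under conditional expectation, the use of $\E\|\nabla\Loss(\Weights_n)\|^2\le\lipConst^2$ and $\E\|\martIncr_n\|^2\le\stdBound^2$, the Jensen step pulling $\Loss$ inside the convex combination, and the demonstration that constant learning rates minimize $\sum \lr_n^2$ subject to $\sum\lr_n = T$, has already been completed before the statement. What remains is bookkeeping: balancing the bias-like term $A/\lr$ against the variance-like term $B\lr$ by the standard $\sqrt{A/B}$ trade-off. The only subtlety worth pointing out in the write-up is that one must verify the theorem takes $\lr_*$ as a \emph{constant} schedule (not iteration-dependent), which is consistent since the preceding convexity argument already showed that, for fixed horizon $N$, constant step sizes are optimal among all choices with the same $T=N\lr$.
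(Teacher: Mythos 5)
Your proposal is correct and mirrors the paper's approach exactly: the paper leaves the proof implicit, stating only that ``[m]inimizing over $\lr$ \ldots results in the following theorem,'' and your explicit $A/\lr + B\lr$ decomposition with the AM--GM (or first-order-condition) optimizer $\lr_* = \sqrt{A/B}$ and minimal value $2\sqrt{AB}$ is precisely that omitted calculation. Your closing caveat about constant learning rates being optimal is also well placed, since the paper established that via Jensen on $\sum \lr_n^2$ just above.
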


While this approach helps us lose the strong convexity assumption, which might
ensure decent convergence rates along eigenspaces of the Hessian with
small or zero eigenvalues, it is in a sense much more dependent on the convexity
of the loss function \(\Loss\) as our averaging of weights is only a
guaranteed improvement because of convexity. While \ref{eq: gradient descent} should work
intuitively on non-convex functions as well, with worse rates of convergence in
eigenspaces with small eigenvalues (cf. Remark~\ref{rem: rate is retained for
larger eigenvalues}), this averaging technique will generally not play nicely
with non-convex functions.

Sure, if we moved into a convex area relatively quickly and stayed there for
most of our learning time, then the first few weights outside this convex
area will be negligible for the average.
But it might still make sense to wait with the averaging, until the time it is
actually needed to resume convergence. Especially since the rate of convergence
is so much worse\footnote{
	At least as far as we know -- \textcite{bachNonstronglyconvexSmoothStochastic2013}
	recover the convergence rate \(O(\tfrac1N)\) only for a very specific model.
}.

\section{Batch Learning}\label{sec: batch learning}

Instead of using \(\nabla\loss(\Weights_n, X_{n+1}, Y_{n+1})\) at time \(n\) as an
estimator for \(\nabla\Loss(\Weights_n)\) we could use the average of a
``batch'' of data \((X^{(i)}_{n+1}, Y^{(i)}_{n+1})_{i=1,\dots,m}\) (independently
\(\dist\) distributed) instead, i.e.
\begin{align*}
	\Weights_{n+1} = \Weights_n
	-\lr_n \underbrace{\frac1{m}\sum_{i=1}^m\nabla\loss(\Weights_n,X^{(i)}_{n+1}, Y^{(i)}_{n+1})}_{
		=:\nabla\loss_{n+1}^m(\Weights_n)
	}.
\end{align*}
leading to the modified martingale increment
\begin{align*}
	\martIncr_n^{(m)}
	:= \nabla\Loss(\Weights_{n-1})
	- \nabla\loss_n^m(\Weights_{n-1})
\end{align*}
with reduced variance
\begin{align*}
	\E\|\martIncr_n^{(m)}\|^2 = \tfrac1m\E\|\martIncr_n^{(1)}\|^2 \le \tfrac1m \tilde{\stdBound}^2.
\end{align*}
While this variance reduction does reduce our upper bound on the distance
between SGD and GD (cf. Theorem~\ref{thm: distance SGD vs GD}), this reduction
comes at a cost: We now have to do \(m\) gradient evaluations per iteration!
We would incur an equivalent cost, if we reduced our discretization size \(\lr\)
to
 \begin{align*}
	\tilde{\lr}:=\frac{\lr}{m}=\frac{T}{Nm}.
\end{align*}
This increases the number of iterations to reach \(T\) to \(Nm\), which implies
\(Nm\) gradient evaluations using SGD without batches.

So which one is better? Increasing the batch size, or reducing the size of the
learning rate? If we have a look at the term (\ref{eq: mean of
martingale increments}) again which is in some sense the difference between SGD
and GD, then we can see that these actions have quite a similar effect
\begin{align*}
	\lr\sum_{k=1}^N\martIncr_k^{(m)}
	&=\frac{T}{N}\sum_{k=1}^N
	[\nabla\Loss(\Weights_{k-1})- \nabla\loss_n^m(\Weights_{k-1})]\\
	&=\frac{T}{Nm}\sum_{k=1}^N\sum_{i=1}^m
	[\nabla\Loss(\Weights_{k-1})-\nabla\loss(\Weights_{k-1},X^{(i)}_k, Y^{(i)}_k)]\\
	&\approx \tilde{\lr} \sum_{k=1}^{Nm}
	\underbrace{
		[\nabla\Loss(\tilde{\Weights}_{k-1})
		-\nabla\loss(\tilde{\Weights}_{k-1}, \tilde{X}_k, \tilde{Y}_k)]
	}_{\widetilde{\martIncr}_k}.
\end{align*}
The difference is, that batch learning stays on some \(\Weights_k\) and collects
\(m\) pieces of information before it makes a big step based on this information,
while SGD just makes \(m\) small steps.

This notion is reflected in the upper bound in Theorem~\ref{thm: distance SGD vs
GD}, where these two actions have the same effect on our distance bound between SGD and
GD\footnote{
	Especially if \(\Loss\) is convex, since we can get rid of the exponential
	term then. As hinted at in the proof of the theorem.
}. But as we have seen in
Subsection \ref{subsec: variance reduction} for constant learning rates, halving
them is ever so slightly worse than simply reducing the variance
with batches as GD converges faster than Gradient Flow. But as GD converges
to Gradient Flow, this effect is less pronounced for smaller learning rates.

In the same section we also see, that diminishing learning rates result in an
equivalent improvement on the variance. Instead of running GD for the same
``time'' in Gradient Flow, and reducing the discretization size, we would
actually increase this ``time'' here. In other words: do more steps with
diminishing learning rates \(O(\tfrac1n)\) resulting in a logarithmic time
increase 
 \[
	\sum_{k=0}^n\tfrac{1}{k}\approx\log(n)
\]
of \ref{eq: gradient flow}.
This effect can best be seen in Theorem~\ref{thm: convergence rates for
diminishing lr schedules}, where increasing the number of steps improves the ``GD
part'' as well as the variance reduction part, while increasing the batch size
has this effect only on the variance. In other words, increasing the number of
steps continues optimization as well as reduces variance, while batches
\emph{only} reduces variance.

Even worse: in the transient case, we found in Theorem~\ref{thm:
optimal rates quadratic case} and \ref{thm: optimal rates SGD} that the variance is
completely irrelevant and increasing the batch size has no positive effect at
all. And while the second of the two phases of SGD \parencite[first observed
by][]{darkenFasterStochasticGradient1991} determines the asymptotic properties
of SGD,  ``in practice it seems that for deeper networks in particular, the first
phase dominates overall computation time as long as the second phase is cut off
before the remaining potential gains become either insignificant or entirely
dominated by overfitting (or both)''  
\parencite{sutskeverImportanceInitializationMomentum2013}.

So in this transient phase, a larger batch size only causes more computation
time (which might be hidden by parallelization as batch learning is easier to
parallelize than SGD) and requires more samples. If we assume independent
identically distributed data, this means we have to stop training much earlier, as
we have used up our sample set, or risk overfitting.

In the asymptotic phase the situation is less clear. With (optimal) diminishing
learning rates the same claim would apply. But correctly selecting those rates
is difficult. Increasing the batch size is no miracle cure though, as we still
have to select (different) learning rates here. In theory they would have to
be \(m\) times larger than the learning rate without batches for a batch
size of \(m\). If the selected learning rates during batch learning are too
small, then this mirrors reducing the learning rate too quickly in the diminishing
case. Since then, we are using up our resources more quickly without making full use of
them. In both cases we would therefore get a reduced convergence rate.

Our solution in Section~\ref{sec: simplified learning rate schedules} to the
learning rate selection problem was piecewise constant learning rates. But
as we have found in Subsection~\ref{subsec: variance reduction}, halving the
learning rate is ever so slightly worse than doubling the batch size. So the
idea by \textcite{smithDonDecayLearning2018}, of doubling the batch size
(instead of halving the learning rate) whenever progress stalls, is probably
optimal.

In the general convex case, where we apply averaging on the other hand, SGD
without batches is the clear winner. Only when \(\lipConst^2\) is negligible
compared to \(\stdBound^2\), is reducing \(\stdBound^2\) roughly equivalent to
increasing the number of steps \(N\) in Theorem~\ref{thm: optimal averaging rates}.
Without optimal rates not only \(\lipConst^2\) has to be negligible, but
\(\|\weights_0 -\minimum\|^2\) as well, as we can see in the statements leading
up to this theorem.

Lastly, batch learning is always a good idea, if we have a virtually infinite
amount of supply of data (i.e.\ we do not have to worry about overfitting), and
we do not feel the cost of batches due to parallelization. In that case the
computational speed stays the same whether we use batches or not, and we do not
have to worry running out of data, so we might as well use them. But in general
there should be better uses for parallelization. One could for example train
the same model with different weight initializations (or entirely different
models) to generate ensemble models with (e.g. bagging, boosting, etc.).

\section{SDE View}

While we got bounds on \(\E\|\Weights_n - \minimum\|^2\) which entails a
distributional bound using something like Chebyshev's inequality, we only
have convergence of the expected value of the loss. It might therefore be
interesting to further develop our understanding of the distribution of \(\Weights_n\).

A more recent approach is to model SGD not as an approximation of an ODE (i.e.
\ref{eq: gradient flow}) but as a stochastic differential equation (SDE), also
known in the literature as ``stochastic modified equation''
\begin{align}
	\tag{SME}
	d\mathcal{\Weights}_t^{\lr}
	= -\nabla\Loss(\mathcal{\Weights}_t^{\lr})dt
	+ \sqrt{\lr \Sigma(\mathcal{\Weights}_t^{\lr})}dB_t
\end{align}
where \(B_t\) is a standard Brownian motion and \(\Sigma\) is a covariance
matrix
\begin{align*}
	\Sigma(\weights)
	&:= \E[(\nabla\loss(w,Z) - \nabla\Loss(w))(\nabla\loss(w,Z) - \nabla\Loss(w))^T]\\
	&= \E[(\nabla\loss(w,Z) - \nabla\Loss(w))^{\otimes 2}]\\
	&= \Cov(\nabla\loss(w,Z) - \nabla\Loss(w)),
\end{align*}
where we introduced \(v^{\otimes 2}:=vv^T\) for later use. It might seem
a bit weird to have our learning rate \(\lr\) appear in an equation which is
supposed to be a \emph{limiting} equation for \(\lr\to 0\). But notice how that
is quite similar to
\begin{align*}
	\frac{1}n\sum_{k=0}^n X_k - \E[X_1] \to \mathcal{N}(0, \tfrac{\sigma^2}{n}),
\end{align*}
for independent identically distributed random variables \(X_k\).
Just that we can multiply this equation by \(\sqrt{n}\) to obtain a static 
target as a limit. But we cannot simply multiply some factor to SGD to get
something similar. So we have to figure out how to deal with this moving target.
Now as we want to prove convergence in distribution, we can just consider the
distributional difference
\begin{align*}
	\E g(\mathcal{\Weights}_T^{\lr}) - \E g(\Weights^{\lr}_{T/\lr}),
\end{align*}
where \(g\) is from a sufficiently large test function set (e.g. all uniformly continuous functions)
to test distributional convergence with, and we pick \(\lr\) from
\begin{align*}
	\mathcal{H} = \{\lr : \tfrac{T}\lr \in \naturals\}
\end{align*}
and let \(\Weights_n^{\lr}\) be SGD with constant learning rate \(\lr\). Now
since we know that SGD converges to \ref{eq: gradient flow} (i.e.
\(\mathcal{\Weights}^{0}_T\)) as \(\lr\to 0\), we know that 
\begin{align*}
	\E g(\mathcal{\Weights}_T^{\lr}) - \E g(\Weights^{\lr}_{T/\lr})
	= \underbrace{
		\E g(\mathcal{\Weights}_T^{\lr}) - \E g(\mathcal{\Weights}_T^0)
	}_{\to 0 \quad (\lr\to 0)}
	+ \underbrace{
		\E g(\mathcal{\Weights}_T^0) - \E g(\Weights^{\lr}_{T/\lr})
	}_{\to 0 \quad (\lr\to 0)}.
\end{align*}
So the question is not \emph{whether} we have convergence, but rather how fast.
The following result is the latest result on this error estimation problem.

\begin{theorem}[{\cite[Theorem 1.1, 1.2]{ankirchnerApproximatingStochasticGradient2021}}]
	Assume there exists some \(C>0\) such that
	\begin{align*}
		|\nabla\loss(\weights, z)| \le C(1+|\weights|)
		\qquad \forall \weights, z,
	\end{align*}
	and that the functions \(\nabla\Loss\) and \(\sqrt{\Sigma}\) are Lipschitz
	continuous in \(C^{\infty}\) such that all their derivatives are bounded.
	Then for all test functions \(g\) with at most polynomial growth, i.e.\ there
	exists \(C>0\), \(m\in\naturals\) s.t. 
	\begin{align*}
		|g(x)| \le C(1+|x|^m),
	\end{align*}
	we have
	\begin{align}
		\tag{GF error}
		\E g(\mathcal{\Weights}^0_T) - \E g(\Weights_{T/\lr}^\lr)
		= \lr \int_0^T \phi_t^g(\mathcal{\Weights}^0_t)dt + O(\lr^2)
	\end{align}
	with
	\begin{align*}
		\phi_t^g(\weights)
		:= \tfrac12 \trace[
			\nabla^2 y_t(\weights)
			(\nabla\Loss(\weights)^{\otimes 2} + { \color{purple} \Sigma(\weights)})
		]
		+ \partial_t\nabla y_t(\weights)^T \nabla\Loss(\weights)
		+ \tfrac12 \partial_t^2 y_t(\weights),
	\end{align*}
	where
	\begin{align*}
		y_t(\weights) := \nabla\Loss(\mathcal{\Weights}_{T-t}^0)
		\qquad \text{for } \mathcal{\Weights}^0_0 = \weights.
	\end{align*}
	And we also have
	\begin{align}
		\tag{SME error}
		\E g(\mathcal{\Weights}^h_T) - \E g(\Weights_{T/\lr}^\lr)
		= \lr \int_0^T \varphi_t^g(\mathcal{\Weights}^{\color{purple}0}_t)dt + O(\lr^2)
	\end{align}
	with
	\begin{align*}
		\varphi_t^g(\weights)
		:= \tfrac12 \trace[
			\nabla^2 y_t(\weights)\nabla\Loss(\weights)^{\otimes 2}
		]
		+ \partial_t\nabla y_t(\weights)^T \nabla\Loss(\weights)
		+ \tfrac12 \partial_t^2 y_t(\weights).
	\end{align*}
\end{theorem}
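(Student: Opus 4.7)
\medskip

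\noindent\textbf{Proof plan.} The standard tool for such weak approximation results is the \emph{backward Kolmogorov equation}. For a fixed test function $g$, define
\[
	u(t,\weights) := \E[g(\mathcal{Y}_T) \mid \mathcal{Y}_t = \weights],
\]
where $\mathcal{Y}$ is the limiting process in question (gradient flow for the first claim, SME for the second). Then $u$ solves $\partial_t u + \mathcal{L} u = 0$ with terminal condition $u(T,\weights)=g(\weights)$, where $\mathcal{L}$ is the generator: $\mathcal{L}_{\mathrm{GF}} u = -\langle\nabla\Loss,\nabla u\rangle$ for gradient flow, and $\mathcal{L}_{\mathrm{SME}} u = \mathcal{L}_{\mathrm{GF}} u + \tfrac{\lr}{2}\trace(\Sigma \nabla^2 u)$ for the SME. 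Both $\phi^g$ and $\varphi^g$ can then be identified as the coefficients of $\lr$ in the local one-step weak error, with the function $y_t$ in the statement playing the role of $u(T-t,\cdot)$ (or a spatial derivative thereof), which is why $\partial_t$ and $\nabla$ act on it.

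The proof proceeds by telescoping along the grid $t_k = k\lr$, $k=0,\dots,N=T/\lr$:
\begin{align*}
	\E g(\Weights_N^\lr) - \E g(\mathcal{Y}_T)
	= \sum_{k=0}^{N-1}\E\bigl[u(t_{k+1},\Weights_{k+1}^\lr) - u(t_k,\Weights_k^\lr)\bigr].
\end{align*}
For each summand I would Taylor expand $u$ in both time and space around $(t_k,\Weights_k^\lr)$, substituting the SGD increment $\Weights_{k+1}^\lr - \Weights_k^\lr = -\lr\nabla\Loss(\Weights_k^\lr) + \lr\martIncr_{k+1}$ and the time step $\lr$. Taking conditional expectations, the martingale identity $\E[\martIncr_{k+1}\mid\filtration_k]=0$ kills all odd noise moments, while $\E[\martIncr_{k+1}^{\otimes 2}\mid\filtration_k] = \Sigma(\Weights_k^\lr)$ produces the trace term with $\Sigma$. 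The backward Kolmogorov equation then cancels the $O(\lr)$ parts coming from the drift-only expansion, leaving a local residual of order $\lr^2$ whose leading coefficient is precisely $\phi^g$ (for GF, where $\Sigma$ remains unmatched in $\mathcal{L}_{\mathrm{GF}}$) or $\varphi^g$ (for SME, where the $\Sigma$ contribution is absorbed into $\mathcal{L}_{\mathrm{SME}}$). Summing $N = T/\lr$ local errors of order $\lr^2$ gives the overall $O(\lr)$ leading term displayed in the theorem, and the next-order terms collect into the $O(\lr^2)$ remainder once a Riemann-sum convergence argument is applied to replace the sum by an integral against the law of $\mathcal{\Weights}_t^0$.

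The remaining $O(\lr^2)$ in the displayed expansions requires two ingredients that are the genuinely technical part. First, \emph{uniform moment bounds}: the assumed linear growth $|\nabla\loss(\weights,z)|\le C(1+|\weights|)$ gives $\sup_{k\lr\le T,\, \lr\in\mathcal{H}} \E|\Weights_k^\lr|^m < \infty$ for every $m$ by a discrete Grönwall argument, and similarly for $\mathcal{\Weights}_t^\lr$ by standard SDE moment estimates, so that polynomial-growth test functions and higher moments of $\martIncr_{k+1}$ are all under control. Second, \emph{regularity of $u$}: one needs $u$ to be $C^{3,2}$ with derivatives of polynomial growth, which follows by differentiating through the stochastic flow using that $\nabla\Loss$ and $\sqrt{\Sigma}$ are smooth with bounded derivatives of all orders. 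With these two ingredients, all Taylor remainders are bounded by $C\lr^3(1+\E|\Weights_k^\lr|^p)$ for some $p$, so that summing $N = T/\lr$ of them yields the $O(\lr^2)$ correction.

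The hardest step is propagating smoothness and polynomial growth through the backward flow $y_t$ (equivalently, through $u$) together with the identification of the leading $\lr$-coefficient: one must be careful to expand far enough in $\lr$ so that the next-to-leading term in the local error is already $O(\lr^3)$ (since taking conditional expectations eliminates the $O(\lr^{5/2})$ odd-noise contribution), and then verify that the resulting sum over $k$ is a quadrature of the integral $\int_0^T \phi_t^g(\mathcal{\Weights}_t^0)\,dt$ (resp.\ $\varphi_t^g$) with discretization error $O(\lr^2)$ rather than merely $O(\lr)$. This last step is where the replacement of $\mathcal{\Weights}_t^\lr$ by $\mathcal{\Weights}_t^0$ in both $\phi$ and $\varphi$ becomes admissible: the difference between evaluating the coefficient along SGD, along SME, or along GF is itself $O(\lr)$ by a first-order weak error bound, which is enough to be absorbed once multiplied by the prefactor $\lr$.
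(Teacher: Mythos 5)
The thesis states this theorem purely as a citation of Ankirchner and Perkowski and offers no proof of its own, so there is no in-paper argument to compare your sketch against. What you propose --- a backward-Kolmogorov / Talay--Tubaro expansion: set up the value function \(u(t,\weights)=\E[g(\mathcal{Y}_T)\mid \mathcal{Y}_t=\weights]\), telescope \(\E g(\Weights_{N}^{\lr})-\E g(\mathcal{Y}_T)\) along the SGD grid, Taylor-expand each increment, use the martingale property to kill odd noise moments and extract \(\Sigma\) from the quadratic term, then sum the \(N=T/\lr\) local \(O(\lr^2)\) errors into the displayed \(\lr\)-expansion --- is the standard and, to my knowledge, the architecture the cited paper uses. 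Your closing observation that replacing the process along which the leading coefficient is evaluated (gradient flow vs.\ SME vs.\ SGD) changes the integrand only by \(O(\lr)\), which is harmless after the prefactor \(\lr\), is exactly the right justification for the appearance of \(\mathcal{\Weights}^{0}_t\) in both integral formulas.

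One thing worth flagging: as printed in the thesis, \(y_t(\weights):=\nabla\Loss(\mathcal{\Weights}^{0}_{T-t})\) does not depend on \(g\), which would make \(\phi_t^g\) and \(\varphi_t^g\) independent of the test function and contradict their superscripts and their role in the expansion. The intended object --- the one your sketch implicitly uses when you identify \(y_t\) with \(u(T-t,\cdot)\) --- is \(y_t(\weights)=g(\mathcal{\Weights}^{0}_{T-t})\), i.e.\ the backward value function; the version in the thesis is almost certainly a transcription slip. With that reading, the two genuinely technical ingredients you isolate (propagating \(C^\infty\) regularity with polynomially bounded derivatives through the backward flow, and uniform-in-\(\lr\) moment bounds for the SGD iterates via a discrete Gr\"onwall argument) are exactly the lemma-level work separating your sketch from a complete proof. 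The strategy is sound; a full write-up would need to supply those two lemmas explicitly.
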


While this results provides more explicit bounds, there is unfortunately no
proof for
\begin{align*}
	\left| \int_0^T \varphi_t^g(\mathcal{W}^0_t)dt \right|
	\le \left| \int_0^T \phi_t^g(\mathcal{W}^0_t)dt \right|
\end{align*}
yet, which would prove that the SME approximates SGD better than \ref{eq:
gradient flow}.

Also note that the convergence towards a Brownian motion driven SDE requires
bounded variances similar to the central limit theorem. Once variances are not
bounded anymore, the limiting SDE would have to be driven by a heavy tailed
\(\alpha\) stable distribution, becoming a Lévy motion. \textcite{simsekliTailIndexAnalysisStochastic2019}
argue that these heavier tailed distributions fit empirical distributions
generated from common datasets better and also argue that Lévy motions would
converge to flatter minima than Brownian motions. In other words, they argue
that the approximation with a Brownian motion is neither accurate nor does it
have desireable properties.

In general this approach is very young and much more research is needed. But as
one obtains distributional properties of SGD this way, it is plausible that
this approach could result in better distributional statements and an
understanding of the behavior of SGD on non-convex loss functions. In particular
one could try to apply metastability results
\parencite[e.g.][]{bovierMetastabilityPotentialTheoreticApproach2015} to the
limiting stochastic processes to bound exit times from meta stable areas (local
minimas, saddle points, etc.).


}
	{

\chapter{Momentum}\label{chap: momentum}

To understand why the convergence rate is poor when the condition
number is high, we can visualize a high ratio of the lowest to the highest
eigenvalue as a narrow ravine. The gradient points in the direction of the
strongest descent, which points mostly to the opposite of the ravine and only slightly
along its length. This causes our iterate to bounce back and forth between
the walls of the ravine.
\begin{figure}[h]
	\centering
	\def\svgwidth{1\textwidth}
	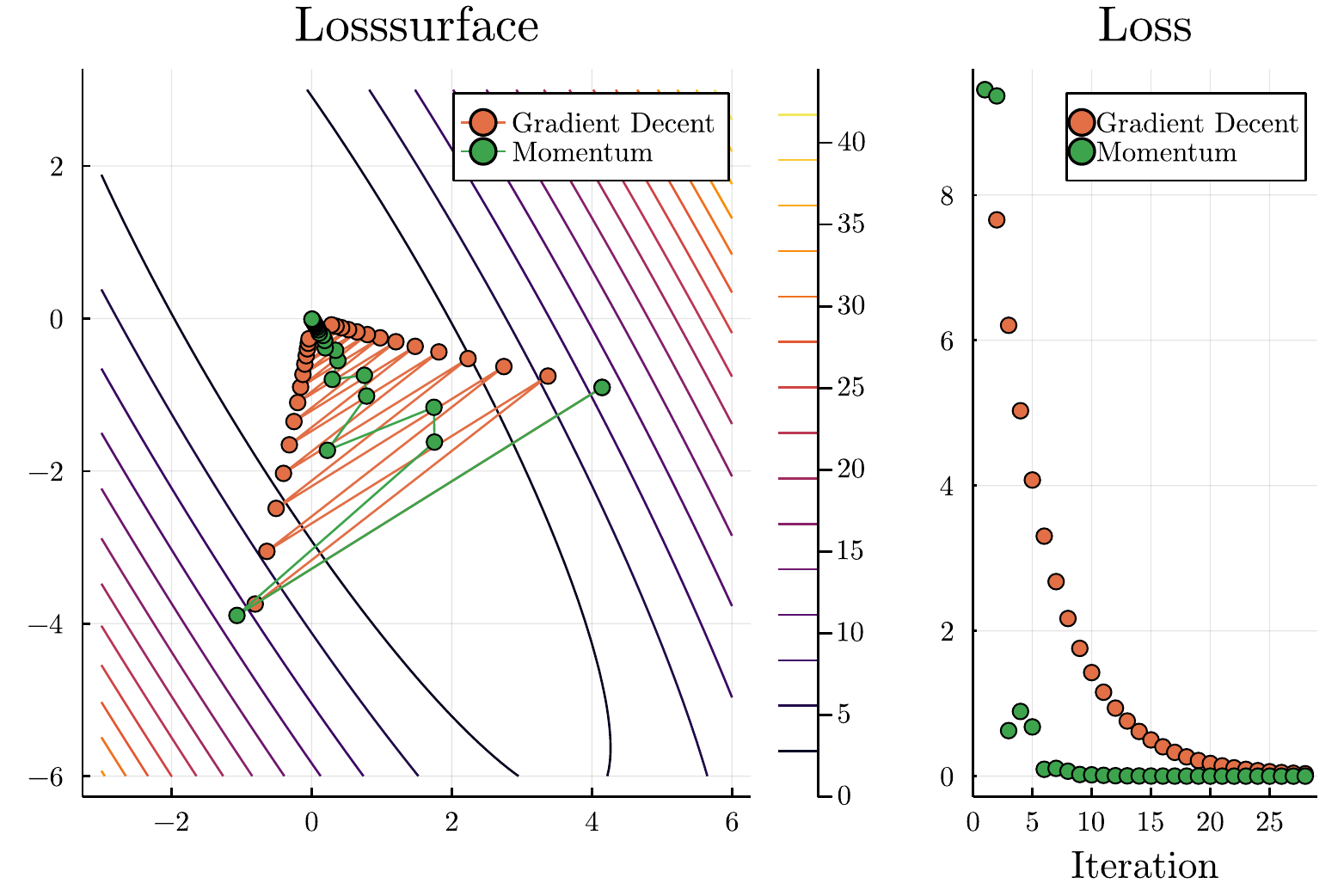
	\caption{Momentum reduces fluctuations and converges faster.}
	\label{fig: visualize bad conditioning}
\end{figure}

As a fix it seems appropriate to average the gradients in some sense to
cancel out the opposing jumps and go straight down the ravine. In other words
we want to build momentum. Now if we move according to the sum (integral) of
the gradients, then our velocity stops being equal to the gradient but instead
becomes the antiderivative of the gradient
\begin{align*}
	\dot{\weights} = -\int\nabla\Loss(\weights)dt.
\end{align*}
So instead of setting our gradient equal to the velocity like in (\ref{eq:
gradient flow}), we want to set the acceleration equal to our gradient
\begin{align*}
	\ddot{\weights} = -\nabla \Loss(\weights).
\end{align*}
But without friction, any potential (height) energy we have, would be conserved.
In other words: when we reach the bottom we have so much velocity, that we would
massively overshoot the minimum. So to dissipate this energy, we also
add a ``friction force'' inversely proportional to our current velocity
\begin{align}\label{eq: acceleration is gradient + friction}
	\ddot{\weights} = -\nabla \Loss(\weights) - \friction \dot{\weights}.
\end{align}
Note that we can only lose as much energy as we dissipate with friction, so in
some sense we want to make it as high as possible to quicken convergence. But
if it is too high, we are not moving fast enough on shallow slopes.

If our friction \(\friction\) is low enough\footnote{
	our momentum parameter \(\momCoeff\) (we will introduce later) high enough
}, then we will find (for heavy ball momentum), that the convergence rate in
every eigenspace is only determined by the friction, i.e.\ the rate we dissipate
energy with. If our friction is higher than some
critical value dependent on learning rate and eigenvalues of the Hessian, then
the eigenvalues matter again. And it will turn out that the optimal momentum
parameter is right at that critical point \parencite[see also ``critical
dampening'', e.g.][]{gohWhyMomentumReally2017}.

The standard way to discretize a second order ODE is to convert it into a first
order ODE
\begin{align*}
	\dot{y} := \begin{pmatrix}
		\dot{\weights}\\
		\ddot{\weights}
	\end{pmatrix}
	= \begin{pmatrix}
		\dot{\weights} \\
		-\nabla \Loss(\weights) - \friction \dot{\weights}
	\end{pmatrix}
	=: g\Big(\begin{pmatrix}
		\weights \\
		\dot{\weights}
	\end{pmatrix}\Big)
	= g(y).
\end{align*}
\begin{subequations}
This allows us to naively discretize our ODE with the Euler discretization
\begin{align}
	\weights_{n+1} &= \weights_n + \lr \momentum_n, \label{eq: naive momentum move}\\
	\momentum_{n+1} &= \momentum_n + \lr [-\nabla \Loss(\weights_n) - \friction \momentum_n]
	\label{eq: naive momentum}\\ \nonumber
	&= (1-\lr\friction)\momentum_n - \lr\nabla \Loss(\weights_n).
\end{align}
\end{subequations}
Here we use \(\momentum\) to denote the velocity \(\dot{\weights}\), or momentum
assuming unit mass.
If we plug the second equation (\ref{eq: naive momentum}) into the first
equation (\ref{eq: naive momentum move}) we get
\begin{align*}
	\weights_{n+1}
	&= \weights_n + \lr [(1-\lr\friction)
	\underbrace{\momentum_{n-1}}_{=\mathrlap{\frac{\weights_n-\weights_{n-1}}{\lr}}}
	- \lr\nabla \Loss(\weights_{n-1})].
\end{align*}
This means we are using gradient information from \(\weights_{n-1}\) to update
\(\weights_{n+1}\). If we instead use the most up to date information
\(\momentum_{n+1}\) instead of \(\momentum_n\) for the \(\weights_{n+1}\) update,
we get the well known ``heavy ball method'' (also known as momentum method) first proposed
by \textcite{polyakMethodsSpeedingConvergence1964} and wonderfully illustrated
by \textcite{gohWhyMomentumReally2017}.

\begin{definition}[Heavy Ball Method]
	\begin{subequations}
	\begin{align}
		\weights_{n+1} &= \weights_n + \lr \momentum_{n+1}, \label{eq: momentum move}\\
		\momentum_{n+1} &= (1-\lr\friction)\momentum_n - \lr\nabla \Loss(\weights_n).
		\label{eq: momentum}
	\end{align}
	\end{subequations}
	An equivalent formulation obtained by plugging (\ref{eq: momentum}) into
	(\ref{eq: momentum move}) but using (\ref{eq: momentum move}) for
	\(\momentum_n\) is
	\begin{align}\label{eq: flat momentum}
		\weights_{n+1}
		&= \weights_n
		+ \underbrace{(1-\lr\friction)}_{
			=:\momCoeff
		}(\weights_n - \weights_{n-1})
		- \underbrace{\lr^2}_{=:\lrSq}\nabla \Loss(\weights_n).
	\end{align}
	In particular we can set ``the momentum coefficient'' \(\momCoeff\) to zero to
	obtain gradient descent again. This is of course an artifact of our
	discretization since \(\lr\to0\) would never allow \(\momCoeff\) to be zero
	in the limit. But actual implementations often use this
	(\(\momCoeff,\lrSq\))-parametrization and thus treat gradient descent as a
	special case.
\end{definition}
Nesterov's momentum is even more aggressive: Instead of using \(p_{n+1}\) for
the \(\weights_{n+1}\) update, it considers the certain ``momentum move''
to calculate an intermediate position \(y_{n+1}\) (cf. Figure~\ref{fig: hinton nesterov momentum})
\begin{align*}
	\weights_{n+1}
	&= \underbrace{\weights_n + \overbrace{\lr [(1-\lr\friction)\momentum_n}^{\text{``momentum move''}}}_{=:y_{n+1}}
	- \lr\nabla \Loss(\weights_n)] \\
	&= y_{n+1} - \lrSq \nabla \Loss(\weights_n).
\end{align*}
It then uses that intermediate position to calculate the gradient instead of the
previous position \(\weights_n\).
\begin{figure}[h]
	\centering
	\includegraphics[scale=0.5]{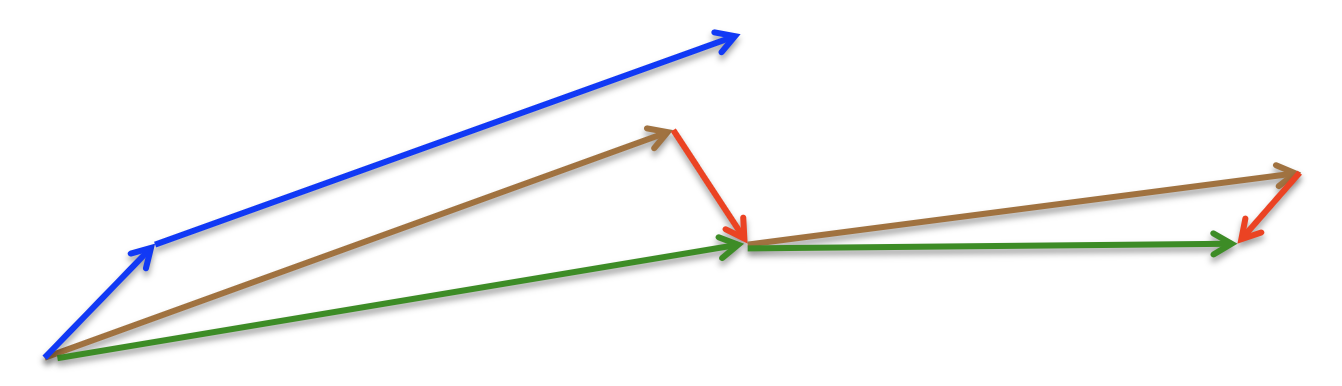}
	\caption{
		\parencite[lecture 6c]{hintonNeuralNetworksMachine2012} The blue arrow
		illustrates the movement of heavy ball momentum, i.e.\ first the small
		``gradient move'', then the large ``momentum move''. Meanwhile the brown
		arrow represents the ``momentum move'' and the red arrow the ``gradient
		move'' for Nesterov's momentum in green. The gradient turns around as we
		move towards the right, and we see that Nesterov's momentum adjusts
		quicker than heavy ball momentum, which is flung outwards.
	}
	\label{fig: hinton nesterov momentum}
\end{figure}
\begin{definition}[Nesterov's Momentum]
	\label{def: nesterov's momentum}
	\begin{subequations}
	\begin{align}
		\weights_{n+1} &= \weights_n + \lr \momentum_{n+1},
		\label{eq: nesterov momentum move}\\
		\momentum_{n+1}
		&= (1-\lr\friction)\momentum_n
		- \lr\nabla L[\weights_n + \lr(1-\lr\friction)\momentum_n]
		\label{eq: nesterov momentum}
		\\ \nonumber
		&= \momCoeff\momentum_n
		- \lr\nabla L[\underbrace{
			\weights_n + \momCoeff(\weights_n - \weights_{n-1})
		}_{= y_{n+1}}]
	\end{align}
	\end{subequations}
	Discarding the momentum term and solely using the intermediate position
	results in the simplified version
	\begin{subequations} \label{eq: nesterov intermediate position version}
	\begin{align}
		\weights_{n+1} &= y_{n+1} - \lrSq \nabla \Loss(y_{n+1}),\\
		y_{n+1}&= \weights_n + \momCoeff(\weights_n - \weights_{n-1}).
	\end{align}
	\end{subequations}
	Dropping the intermediate position as well, results in the analog to (\ref{eq:
	flat momentum})
	\begin{align}
		\weights_{n+1} &= \weights_n + \momCoeff(\weights_n - \weights_{n-1})
		- \lrSq \nabla \Loss(\weights_n + \momCoeff(\weights_n - \weights_{n-1}))
	\end{align}
\end{definition}
\begin{remark}\fxwarning{too informal?}
	This method also known as ``Nesterov's Accelerated Gradient'' (NAG) is said
	to date back to \textcite[in Russian]{nesterovMethodSolvingConvex1983}.
	Fortunately, \citeauthor{nesterovMethodSolvingConvex1983} wrote textbooks
	in English, \citetitle{nesterovLecturesConvexOptimization2018}
	(\citeyear{nesterovLecturesConvexOptimization2018}) being the most recent.
	Unfortunately Chapter 2.2 on optimal methods provides barely any
	intuition at all, and it is advisable to consult other sources (e.g.
	\textcite{dontlooWhatDifferenceMomentum2016}). Section~\ref{sec: nesterov
	momentum convergence} is an attempt to make these proofs somewhat intuitive.
 \end{remark}

\section{Heavy Ball Convergence}\label{sec: heavy ball convergence}

Following the arguments from Section~\ref{sec: visualize gd} in particular
(\ref{eq: hesse representation of gradient}) we can rewrite the momentum
method as
\begin{align*}
	\begin{pmatrix}
		\weights_n - \hat{\weights}_n \\
		\weights_{n+1} - \hat{\weights}_n
	\end{pmatrix}
	&=
	\begin{pmatrix}
		\weights_n - \hat{\weights}_n \\
		\weights_n - \hat{\weights}_n + \momCoeff (\weights_n - \weights_{n-1})
		- \lrSq \nabla\Loss^2(\weights_n)(\weights_n -\hat{\weights}_n)
	\end{pmatrix}\\
	&=
	\begin{pmatrix}
		0\identity_\dimension & \identity_\dimension \\
		-\momCoeff\identity_\dimension
		& (1+\momCoeff)\identity_\dimension -\lrSq \nabla^2\Loss(\weights_n)
	\end{pmatrix}
	\begin{pmatrix}
		\weights_{n-1} - \hat{\weights}_n \\
		\weights_n - \hat{\weights}_n
	\end{pmatrix}
\end{align*}
For readability we will now omit the identity matrix \(\identity\) from the
block matrices which are just a constant multiplied by an identity matrix.
Using the digitalization of the Hessian (\ref{eq: diagnalization of the
Hesse matrix}) again, we get
\begin{align*}
	&\begin{pmatrix}
		0 & 1 \\
		-\momCoeff & 1+\momCoeff -\lrSq \nabla^2\Loss(\weights_n)
	\end{pmatrix}\\
	&=
	\begin{pmatrix}
		V & 0 \\
		0 & V
	\end{pmatrix}	
	\begin{pmatrix}
		0 & 1 \\
		-\momCoeff &
		\diag(1+\momCoeff -\lrSq\hesseEV_1, \dots, 
		1+\momCoeff -\lrSq\hesseEV_\dimension)
	\end{pmatrix}
	\begin{pmatrix}
		V & 0 \\
		0 & V
	\end{pmatrix}^T.
\end{align*}
Reordering the eigenvalues from
\begin{align*}
	\begin{pmatrix}
		v_1 & \cdots & v_\dimension & 0 & \cdots & 0\\
		0 & \cdots & 0 & v_1 & \cdots & v_\dimension
	\end{pmatrix}
\end{align*}
to
\begin{align*}
	\begin{pmatrix}
		v_1 & 0 & \cdots & v_\dimension & 0 \\
		0 & v_1 & \cdots & 0 & v_\dimension
	\end{pmatrix},
\end{align*}
reorders the transformation matrix of the eigenspace into a block diagonal matrix
\begin{align}\label{eq: eigenspace momentum transformation}
	\momMatrix := \begin{pmatrix}
		0 & 1 \\
		-\momCoeff & 1+\momCoeff - \lrSq\hesseEV_1 & \\
		& & \ddots & \\
		& & & 0 & 1 \\
		& & & -\momCoeff & 1+\momCoeff - \lrSq\hesseEV_\dimension \\
	\end{pmatrix}
	=: \begin{pmatrix}
		\momMatrix_1 \\
		& \ddots\\
		&& \momMatrix_\dimension
	\end{pmatrix}
\end{align}
So in contrast to Section~\ref{sec: visualize gd}, we do not get a diagonal
matrix immediately. To achieve a similar eigenvalue analysis as in
Section~\ref{sec: visualize gd}, we first have to determine the eigenvalues
\(\momEV_{i1},\momEV_{i2}\) of each \(\momMatrix_i\) and then ensure that
\begin{align}
	\max_{i=1,\dots, \dimension} \max\{|\momEV_{i1}|,|\momEV_{i2}|\} < 1.
\end{align}
Using the p-q formula on the characteristic polynomial of \(\momMatrix_i\)
results in 
\begin{align*}
	\momEV_{i1/2}
	= \tfrac12 \left(
		1+\momCoeff-\lrSq\hesseEV_i
		\pm \sqrt{(1+\momCoeff-\lrSq\hesseEV_i)^2 - 4\momCoeff}
	\right).
\end{align*}
The analysis of these eigenvalues is very technical and can be found in the
appendix. We will only cover the result here.

\begin{figure}[h]
	\centering
	\def\svgwidth{1\textwidth}
	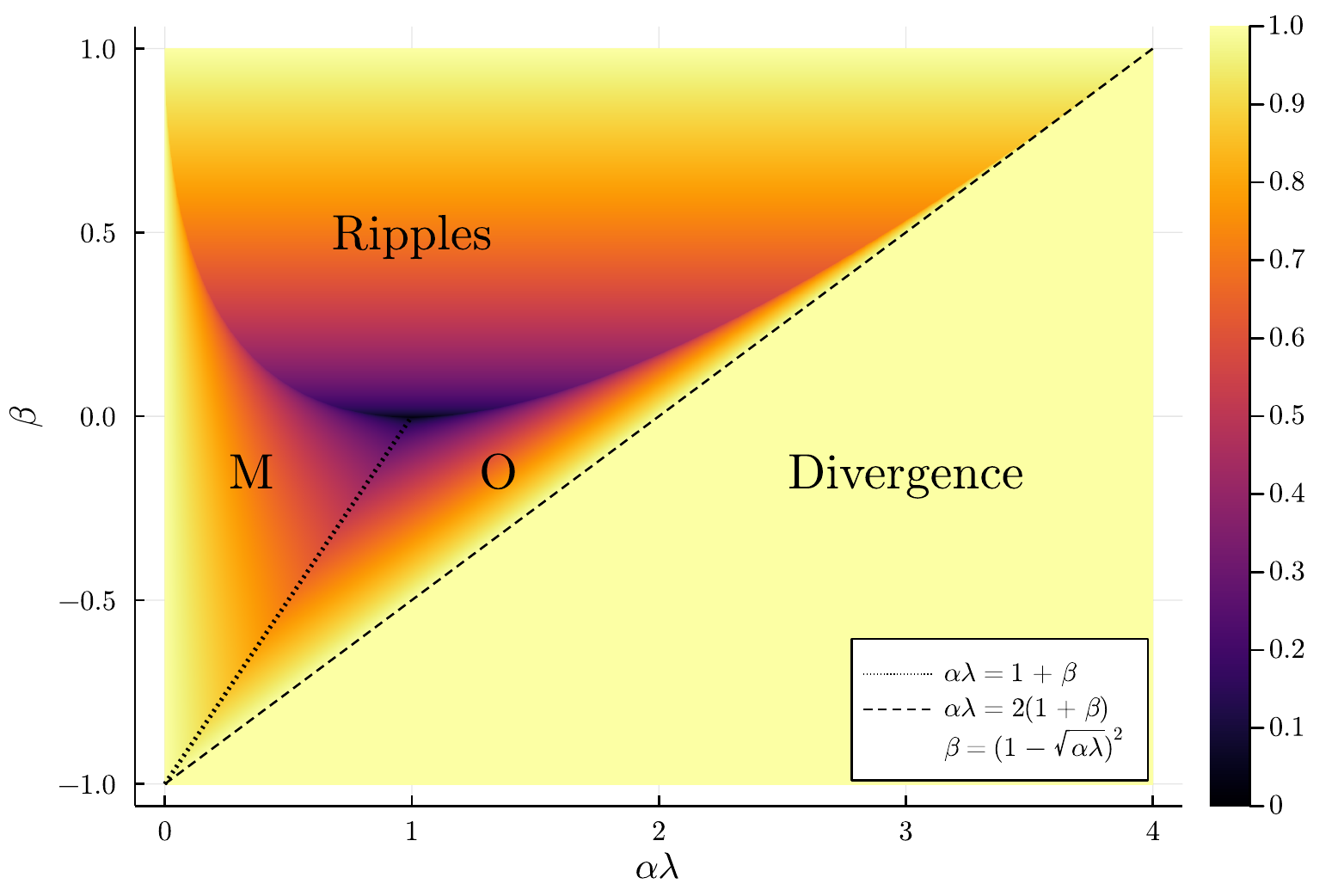
	\caption{
		This is a Heat plot of \(\max\{|\momEV_1|,|\momEV_2|\}\) for the Heavy
		Ball Momentum.  Strictly speaking the ``Monotonic'' (M) and
		``Oscillation'' (O) areas should not extend into negative \(\momCoeff\),
		since \(\momEV_{1/2}\) have opposite signs there. But it still describes
		the behavior of the dominating eigenvalue. The empty third label in the
		legend corresponds to the remarkably sharp border of the ``Ripples'' area.
		Drawing an actual line would have occluded the fact that it is visible by
		itself. Also recall that \(\momCoeff=0\) represents SGD without momentum.
	}
	\label{fig: annotated heavy ball rates}
\end{figure}

\begin{theorem}[\cite{qianMomentumTermGradient1999}]
	\label{thm: momentum - stable set of parameters}
	Let
	\begin{align*}
		\momEV_{1/2}
		= \tfrac12 \left(
			1+\momCoeff-\lrSq\hesseEV \pm \sqrt{(1+\momCoeff-\lrSq\hesseEV)^2 - 4\momCoeff}
		\right),
	\end{align*}
	then 
	\begin{enumerate}
		\item \(\max\{|\momEV_1|,|\momEV_2|\}<1\) if and only if
		\begin{align*}
			0<\lrSq\hesseEV < 2(1+\momCoeff) \qquad \text{and} \qquad |\momCoeff|<1.
		\end{align*}
		\item The complex case can be characterized by either
		\begin{align*}
			0<(1-\sqrt{\lrSq\hesseEV})^2 < \momCoeff < 1
		\end{align*}		
		or alternatively \(\momCoeff>0\) and
		\begin{align*}
			(1-\sqrt{\momCoeff})^2 < \lrSq\hesseEV < (1+\sqrt{\momCoeff})^2,
		\end{align*}
		for which we have \(|\momEV_1|=|\momEV_2|=\sqrt{\momCoeff}\).
		
		As complex
		eigenvalues imply a rotation, this can be viewed as rotating the distance
		to the minimum of the quadratic function into the momentum and back like 
		a pendulum where the friction causes it to eventually end up in the
		minimum. Looking at the distance to the minimum only, we will therefore
		observe a sinus wave (``Ripples'') as the potential energy is transferred
		into velocity and back. In this high momentum case convergence depends
		only on the amount of energy we dissipate through friction and the rate is
		thus equal to the square root of the momentum coefficient \(\beta\)
		regardless of the curvature of the eigenspace.
		\item In the real case we have \(\momEV_1>\momEV_2\) and
		\begin{align}
			\max\{|\momEV_1|, |\momEV_2|\} = \begin{cases}
				|\momEV_1|=\momEV_1 & \lrSq\hesseEV < 1+\momCoeff \\
				|\momEV_2|=-\momEV_2 & \lrSq\hesseEV \ge 1+\momCoeff.
			\end{cases}
		\end{align}
		Restricted to \(1>\momCoeff>0\) this results in two different	
		behaviors. For
		\begin{align*}
			0<\lrSq\hesseEV \le (1-\sqrt{\momCoeff})^2 < 1+\momCoeff,
		\end{align*}
		we have \(1>\momEV_1 > \momEV_2 > 0\) which results in a monotonic
		linear convergence (``Monotonic'' case). For
		\begin{align*}
			1+\momCoeff < (1+\sqrt{\momCoeff})^2\le \lrSq\hesseEV < 2(1+\momCoeff),
		\end{align*}
		on the other hand, we get \(-1 < \momEV_2 < \momEV_1 < 0\) which implies 
		an oscillating convergence (``Oscillation''). In contrast to the ``Ripples''
		we switch the side of the distance to the minimum in every iteration.
	\end{enumerate}
\end{theorem}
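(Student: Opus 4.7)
The plan is to analyze the two-by-two block $\momMatrix_i$ through the lens of its characteristic polynomial
\begin{align*}
p(\momEV) = \momEV^2 - (1+\momCoeff-\lrSq\hesseEV)\momEV + \momCoeff,
\end{align*}
whose roots are precisely $\momEV_{1/2}$. Vieta's formulas then give the compact identities
$\momEV_1 + \momEV_2 = 1+\momCoeff-\lrSq\hesseEV$ and $\momEV_1 \momEV_2 = \momCoeff$, and the discriminant is $\Delta := (1+\momCoeff-\lrSq\hesseEV)^2 - 4\momCoeff$. Everything in the theorem is a direct consequence of these three quantities plus a split into the real ($\Delta\ge 0$) and complex ($\Delta<0$) cases, so I would just run through the three parts in sequence.

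For part 1 I would use the Schur/Jury stability criterion for degree-two polynomials: both roots of $\momEV^2 + a\momEV + b$ lie in the open unit disk if and only if $|b|<1$ and $|a|<1+b$. Applied to $p$ this gives $|\momCoeff|<1$ together with $|1+\momCoeff-\lrSq\hesseEV|<1+\momCoeff$, and the second inequality unpacks to $0<\lrSq\hesseEV<2(1+\momCoeff)$, which is exactly the claim. (If one distrusts invoking a named criterion, it can be reproved in two lines by noting that $p(1)>0$ and $p(-1)>0$ and $|\momEV_1\momEV_2|<1$ is necessary and sufficient when combined with real-versus-complex case analysis.)

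For part 2 the complex case is simply $\Delta<0$, i.e.\ $(1+\momCoeff-\lrSq\hesseEV)^2<4\momCoeff$, which forces $\momCoeff>0$ and then factorises symmetrically: taking square roots gives
\begin{align*}
(1-\sqrt{\momCoeff})^2 < \lrSq\hesseEV < (1+\sqrt{\momCoeff})^2,
\end{align*}
and swapping the roles of the two symbols yields the equivalent characterisation $(1-\sqrt{\lrSq\hesseEV})^2 < \momCoeff < 1$ (the upper bound $\momCoeff<1$ is inherited from part 1). In this case $\momEV_2 = \overline{\momEV_1}$, so $|\momEV_1|^2 = |\momEV_2|^2 = \momEV_1\momEV_2 = \momCoeff$, giving $|\momEV_{1/2}| = \sqrt{\momCoeff}$. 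The pendulum interpretation is then a matter of writing $\momEV_{1/2} = \sqrt{\momCoeff}e^{\pm i\varphi}$ and noting that iteration multiplies by $\sqrt{\momCoeff}$ per step while rotating by angle $\varphi$.

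For part 3 the roots are real with $\momEV_1>\momEV_2$ and $\momEV_1\momEV_2 = \momCoeff>0$ (so they share a sign), while the sign of $\momEV_1+\momEV_2 = 1+\momCoeff-\lrSq\hesseEV$ decides which root dominates in absolute value: if the sum is positive then both roots are positive and $\momEV_1>|\momEV_2|$, if negative then both are negative and $|\momEV_2|>\momEV_1$, which is exactly the case split at $\lrSq\hesseEV = 1+\momCoeff$. Restricting to $0<\momCoeff<1$ and using part 2 to locate the boundary of the complex regime, the monotonic subcase $0<\lrSq\hesseEV\le(1-\sqrt{\momCoeff})^2$ sits strictly below $1+\momCoeff$ (since $(1-\sqrt{\momCoeff})^2 = 1+\momCoeff-2\sqrt{\momCoeff}$), hence both roots are positive; the oscillation subcase $(1+\sqrt{\momCoeff})^2\le\lrSq\hesseEV<2(1+\momCoeff)$ sits strictly above $1+\momCoeff$, hence both roots are negative. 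The only genuine obstacle is bookkeeping — keeping track of which of the several equivalent inequalities is most useful at each step, and making sure the boundary points (equality cases, $\momCoeff=0$, $\lrSq\hesseEV = 1+\momCoeff$, $\hesseEV<0$) are handled consistently — but the algebra itself is routine once the Vieta-plus-discriminant organization is in place.
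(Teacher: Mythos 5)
Your proof is correct in substance but organized quite differently from the paper's. The paper works directly with the explicit root formula: it defines \(\Delta(\momCoeff)=(1+\momCoeff-\lrSq\hesseEV)^2-4\momCoeff\), locates its roots \((1\pm\sqrt{\lrSq\hesseEV})^2\), and then pushes the inequalities \(\momEV_1<1\) and \(-1<\momEV_2\) around by hand in a lengthy case analysis, proving necessity of \(|\momCoeff|<1\) and \(\lrSq\hesseEV<2(1+\momCoeff)\) separately in the real and complex regimes. You instead package everything through Vieta (\(\momEV_1+\momEV_2=1+\momCoeff-\lrSq\hesseEV\), \(\momEV_1\momEV_2=\momCoeff\)) and the Schur--Cohn/Jury test, which buys a much shorter part 1: indeed \(p(1)=\lrSq\hesseEV\) and \(p(-1)=2(1+\momCoeff)-\lrSq\hesseEV\), so the stability conditions \(p(\pm1)>0\), \(|\momCoeff|<1\) are literally the stated ones, and your fallback proof of the criterion (sign of \(p(\pm1)\) plus \(|\momEV_1\momEV_2|<1\), split by the discriminant) is sound. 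Your \(|\momEV_{1/2}|^2=\momEV_1\momEV_2=\momCoeff\) in the complex case is also cleaner than the paper's explicit real/imaginary computation, and the symmetry of the discriminant in \(\momCoeff\leftrightarrow\lrSq\hesseEV\) that you use for the two equivalent characterizations is a nice observation the paper does not exploit.

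One caveat in part 3: you justify the displayed case split by arguing that \(\momEV_1\momEV_2=\momCoeff>0\) forces both roots to share a sign, but the first display of part 3 is asserted for the whole real case, which within the stable set includes \(\momCoeff\in(-1,0]\), where the roots have opposite signs and your argument does not apply. The conclusion still holds there, since with sum \(s\) and discriminant \(\Delta\ge0\) one has \(\max\{|\momEV_1|,|\momEV_2|\}=\tfrac12(|s|+\sqrt{\Delta})\), which equals \(\momEV_1\) when \(s\ge0\) and \(-\momEV_2\) when \(s<0\) regardless of the sign of the product; this is exactly the paper's observation about \(\max|a\pm b|\). With that one-line repair, your restriction to \(0<\momCoeff<1\) for the Monotonic/Oscillation subcases goes through as you describe (positivity or negativity of both roots from sum and product, the bounds \(\momEV_1<1\) resp.\ \(\momEV_2>-1\) inherited from part 1), and the rest is indeed routine bookkeeping.
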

\begin{proof}
	Appendix Theorem~\ref{thm-appdx: momentum - stable set of parameters}
\end{proof}

In particular, (positive) momentum extends the available range for
learning rate and eigenvalue products \(\lrSq\hesseEV\), which results in
convergence. As can be seen very well in Figure~\ref{fig: annotated heavy
ball rates}, the wider attraction area allows for moving out the largest
eigenvalues to the right, with larger learning rates shifting the smaller
eigenvalues towards the right as well, into more acceptable convergence rates. If
the smallest and largest eigenvalues are far apart (i.e.\ the condition number
\(\condition\) is high), this should be very useful.

So how much do we gain? In other words, what are the optimal rates of
convergence? 

\begin{lemma}[Optimal Parameter Selection]
	\label{lem: optimal hb parameter selection}
	For a given learning rate \(\lr\), we have
	\begin{align*}
		\min_{\momCoeff}\max_{i=1,\dots, \dimension}\{|\momEV_{i1}|,|\momEV_{i2}|\}
		= \max\{(1-\sqrt{\lrSq\hesseEV_1}), (1-\sqrt{\lrSq\hesseEV_\dimension})\}
	\end{align*}
	with 
	\begin{align*}
		\momCoeff^*=\arg\min_{\momCoeff}\max_{i=1,\dots,\dimension}\{|\momEV_{i1}|,|\momEV_{i2}|\}
		= \max\{(1-\sqrt{\lrSq\hesseEV_1})^2, (1-\sqrt{\lrSq\hesseEV_\dimension})^2\}.
	\end{align*}
	Optimizing over the learning rate \(\lrSq\) as well, we get the optimal rate
	\begin{align*}
		\sqrt{\momCoeff_*} = 1-\frac{2}{1+\sqrt{\kappa}}.
	\end{align*}
	for parameters
	\begin{align*}
		\lr_* = \sqrt{\lrSq_*} = \frac{2}{\sqrt{\hesseEV_1}+\sqrt{\hesseEV_\dimension}},
		\qquad
		\momCoeff_*=\left(1-\frac{2}{1+\sqrt{\kappa}}\right)^2.
	\end{align*}
\end{lemma}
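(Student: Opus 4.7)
The plan is to build on Theorem~\ref{thm: momentum - stable set of parameters} by exploiting the regime-classification it provides: for each eigenspace $i$, the pair $(\momEV_{i1},\momEV_{i2})$ is either complex conjugate with common modulus $\sqrt{\momCoeff}$ (when $\momCoeff \ge (1-\sqrt{\lrSq\hesseEV_i})^2$), or real with $\max\{|\momEV_{i1}|,|\momEV_{i2}|\}$ strictly larger than $\sqrt{\momCoeff}$ (when $\momCoeff < (1-\sqrt{\lrSq\hesseEV_i})^2$). Fixing $\lr$, the inner maximum over $i$ therefore equals $\sqrt{\momCoeff}$ precisely when every eigenspace has been pushed into the complex regime, i.e.\ when
\[
	\momCoeff \ge \max_{i=1,\dots,\dimension} (1-\sqrt{\lrSq\hesseEV_i})^2
	= \max\{(1-\sqrt{\lrSq\hesseEV_1})^2,\,(1-\sqrt{\lrSq\hesseEV_\dimension})^2\},
\]
where the second equality uses convexity of $x\mapsto(1-\sqrt{x})^2$ on $[0,\infty)$, so the maximum over $[\hesseEV_1,\hesseEV_\dimension]$ is attained at an endpoint.

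For $\momCoeff$ above this threshold, the rate $\sqrt{\momCoeff}$ is strictly increasing, so we want to decrease $\momCoeff$. For $\momCoeff$ below it, at least one eigenspace $j$ lies in the real regime and the expected $\momEV_{j1}$ (or $-\momEV_{j2}$) strictly exceeds $\sqrt{\momCoeff^*}$; this has to be extracted from the closed form of $\momEV_{1/2}$ by checking that the dominant real eigenvalue is a monotone function of $\momCoeff$ across the real regime, collapsing to $\sqrt{\momCoeff}$ only at the boundary of the complex regime. This pinches the optimum at the threshold, giving the first claim with optimal value $\sqrt{\momCoeff^*}=\max\{|1-\sqrt{\lrSq\hesseEV_1}|,\,|1-\sqrt{\lrSq\hesseEV_\dimension}|\}$ (the absolute values are implicit once $\lrSq\hesseEV_\dimension > 1$).

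For the joint optimum over $\lr$, this reduces to minimizing $\max\{|1-\sqrt{\lrSq\hesseEV_1}|,|1-\sqrt{\lrSq\hesseEV_\dimension}|\}$, structurally the same problem as in Subsection~\ref{subsec: necessary assumptions in the quadratic case} but with the substitution $\lr\hesseEV \leadsto \sqrt{\lrSq\hesseEV}$. Since $|1-\sqrt{\lrSq\hesseEV_1}|$ is decreasing and $|1-\sqrt{\lrSq\hesseEV_\dimension}|$ eventually increasing in $\lr$, the min-max is characterized by the equal-with-opposite-sign condition
\[
	1-\sqrt{\lrSq_*\hesseEV_1} \;=\; -(1-\sqrt{\lrSq_*\hesseEV_\dimension}),
\]
which solves to $\sqrt{\lrSq_*} = \tfrac{2}{\sqrt{\hesseEV_1}+\sqrt{\hesseEV_\dimension}}$. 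Back-substitution then yields
\[
	\sqrt{\momCoeff_*} \;=\; 1-\sqrt{\lrSq_*\hesseEV_1}
	\;=\; \frac{\sqrt{\hesseEV_\dimension}-\sqrt{\hesseEV_1}}{\sqrt{\hesseEV_\dimension}+\sqrt{\hesseEV_1}}
	\;=\; 1-\frac{2}{1+\sqrt{\condition}},
\]
after dividing numerator and denominator by $\sqrt{\hesseEV_1}$.

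The main obstacle is the regime-switching argument for the first claim: verifying that lowering $\momCoeff$ below $\max_i(1-\sqrt{\lrSq\hesseEV_i})^2$ strictly worsens the rate. This requires a careful monotonicity computation on $\momEV_1(\momCoeff) = \tfrac12(1+\momCoeff-\lrSq\hesseEV + \sqrt{(1+\momCoeff-\lrSq\hesseEV)^2-4\momCoeff})$ (and analogously $-\momEV_2$) within the real regime — routine but finicky, since the sign of $1+\momCoeff-\lrSq\hesseEV$ determines which eigenvalue dominates in modulus (cf.\ Theorem~\ref{thm: momentum - stable set of parameters}(3)). The second-stage optimization over $\lr$ is just the symmetric min-max problem, so it poses no real difficulty once the substitution is made.
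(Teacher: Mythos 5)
Your proposal follows essentially the same route as the paper's proof: reduce to the extreme eigenvalues, observe that at the threshold $\momCoeff^* = \max\{(1-\sqrt{\lrSq\hesseEV_1})^2,(1-\sqrt{\lrSq\hesseEV_\dimension})^2\}$ every eigenspace is in the complex regime with common modulus $\sqrt{\momCoeff^*}$, rule out larger $\momCoeff$ because $\sqrt{\momCoeff}$ increases, rule out smaller $\momCoeff$ via monotonicity of the dominant real eigenvalue in $\momCoeff$, and then balance the two endpoints to pick $\lrSq_*$. The ``routine but finicky'' monotonicity step you flag is precisely what the paper outsources to Appendix Lemma~\ref{lem-appdx: just at the border of complex case is best beta}; your use of convexity of $x\mapsto(1-\sqrt{x})^2$ to pin the $\max_i$ to the endpoints is a minor cosmetic tidying of the paper's lower-bound-then-equality reduction, not a different method.
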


\begin{proof}
	Since the plot in Figure~\ref{fig: annotated heavy ball rates} looks quite
	symmetric, it is relatively intuitive that we only have to consider the largest and
	smallest eigenvalue. For now we can consider it as a lower bound:
	\begin{align}\label{eq: lower bound best heavy ball rates}
		\min_{\momCoeff}\max_{i=1,\dots,\dimension}\{|\momEV_{i1}|,|\momEV_{i2}|\}
		\ge \min_{\momCoeff}
		\max_{i=1,\dimension}\{|\momEV_{i1}|,|\momEV_{i2}|\}
	\end{align}
	Let us inspect the claim
	\begin{align}\label{eq: optimal heavy ball momentum - ev representation}
		\momCoeff^*=\arg\min_{\momCoeff}\max_{i=1,\dimension}\{|\momEV_{i1}|,|\momEV_{i2}|\}
		= \max\{(1-\sqrt{\lrSq\hesseEV_1})^2, (1-\sqrt{\lrSq\hesseEV_\dimension})^2\}.
	\end{align}
	If we do select \(\momCoeff\) like this, we are still (barely) in the complex
	case, therefore the rate of convergence is equal to \(\sqrt{\momCoeff}\),
	i.e.
	\begin{align*}
		\max\{(1-\sqrt{\lrSq\hesseEV_1}), (1-\sqrt{\lrSq\hesseEV_\dimension})\}.
	\end{align*}
	Now let us wiggle around \(\momCoeff\) a bit. If we would increase
	\(\momCoeff\), then we are obviously increasing our rate since it is then
	still equal to \(\sqrt{\momCoeff}\) as we just expanded the size of the
	complex case. Now the question is what happens if we decrease it. By our
	selection of \(\momCoeff\) either the smallest or largest eigenvalue sits
	right at the edge of the real case. If we can show that our absolute
	eigenvalue maximum is increasing if we move further into the real case
	(decrease \(\momCoeff\)), we would have proven that we can only become worse
	off doing so. This is annoyingly technical, so it is covered in
	Lemma~\ref{lem-appdx: just at the border of complex case is best beta} in the
	appendix.

	Now since both the largest and smallest eigenvalue are included in the complex
	case, we know that all eigenvalues in between must also be in the complex case
	which means that (\ref{eq: lower bound best heavy ball rates}) is an equality
	proving the first two claims.

	Now we have to optimize over the learning rate \(\lrSq\). Doing our balancing
	act again, only with \(\sqrt{\hesseEV_1},\sqrt{\hesseEV_\dimension}\) instead
	of \(\hesseEV_1,\hesseEV_\dimension\), we get the same result as in
	(\ref{eq: optimal SGD lr eigenvalue representation}), that is 
	\begin{align*}
		\lr_* = \sqrt{\lrSq_*} = \frac{2}{\sqrt{\hesseEV_1}+\sqrt{\hesseEV_\dimension}},
	\end{align*}
	as this makes both of them equally bad. We can plug this learning rate into
	either equation from inside the maximum of (\ref{eq: optimal heavy ball
	momentum - ev representation}) to get the remaining claims about the optimal
	momentum \(\momCoeff_*\) and optimal convergence rate \(\sqrt{\momCoeff_*}\).
\end{proof}

This convergence rate is in fact \emph{equal} to our lower bound  in
Theorem~\ref{thm: strong convexity complexity bound} (not even up to a
constant!). So on quadratic functions we cannot do any better without violating
our assumptions about possible optimizing algorithms (Assumption~ \ref{assmpt:
parameter in generalized linear hull of gradients}).

\section{From Spectral Radius to Operator Norm}

As the Hessian is symmetric the matrix,
\begin{align*}
	1-\lr \nabla^2\Loss
\end{align*}
is symmetric as well. And since we can find an orthonormal basis \((v_1,\dots,
v_\dimension)\) which diagonalizes the matrix, the operator norm is equal
to the spectral radius
\begin{align*}
	\rho(A)
	:= \max_{i=1,\dots,\dimension} |\hesseEV_i|,
\end{align*}
for eigenvalues \(\hesseEV_i\) of matrix \(A\), because of
\begin{align*}
	\|A\| = \sup_{\|x\| =1} \|Ax\|
	= \sup_{\|x\| =1} \sqrt{\langle Ax, Ax\rangle}
	= \sup_{\|x\| =1} \sqrt{\sum_{i=1}^\dimension \hesseEV_i^2 x_i^2},
	= \rho(A)
\end{align*}
where \(x=\sum_{k=1}^\dimension x_i v_i\). The eigenvalue analysis was therefore
sufficient for the gradient descent case
\begin{align*}
	\|\weights_n-\minimum\|
	\le \underbrace{\|1-\lr\nabla^2\Loss \|^n}_{
		=\rho(1-\lr\nabla^2\Loss)^n
	} \|\weights_0 - \minimum\|.
\end{align*}
In the momentum case on the other
hand, our transformation matrix \(R\) (\ref{eq: eigenspace momentum transformation})
is not symmetric. But the eigenvalue analysis is still useful due to Gelfand's Formula
\parencite{gelfandNormierteRinge1941} i.e.
\begin{align*}
	\rho(R) = \lim_{n\to\infty}\sqrt[n]{\|R^n\|}
\end{align*}
This formula is used by \textcite[p. 38, Lemma 1]{polyakIntroductionOptimization1987}
to argue, that we have for all \(\epsilon>0\)
\begin{align*}
	\|R^n\|\le \text{const } (\rho(R) + \epsilon)^n.
\end{align*}
This still provides exponential convergence to zero for any spectral radius
\(\rho(R)\) smaller one. Therefore the eigenvalue analysis is still useful.

\textcite{kozyakinAccuracyApproximationSpectral2009} already provides much tighter
bounds with more explicit constants, but they still depend on the unknown quantity
\begin{align*}
	\max_{i=1,\dots,\dimension}\frac{\|R_i^2\|}{\|R_i\|^2}.
\end{align*}
Where we can use their bounds for two dimensional matrices as the orthogonality
argument still applies between blocks, so we can consider the operator norm of
one block and take the maximum over all blocks
\begin{align*}
	\|R\| = \max_{i=1,\dots,\dimension}\|R_i\|
	= \max_{i=1,\dots,\dimension}\left\|
	\begin{pmatrix}
		0 & 1 \\
		-\momCoeff & 1+\momCoeff-\lrSq\hesseEV_i
	\end{pmatrix}
	\right\|.
\end{align*}

\subsection{Towards Explicit Bounds}

To build some intuition, notice how \(\lrSq\hesseEV_i=1\) and \(\momCoeff=1\)
lead to
\begin{align*}
	R_i
	=\begin{pmatrix}
		0 & 1 \\
		0 & 0
	\end{pmatrix}
\end{align*}
of which both eigenvalues are zero, but the operator norm is one as the vector
\((0, 1)^T\) does not change in length. Notice how we simply move the more
recent weight up into the older weight slot. If we would apply the matrix again
our weights would fully disappear. In other words: The problem are the cyclical
eigenspaces in the jordan normal form. More precisely, we can find some
basis change \(U\) such that
\begin{align*}
	R_i^n
	= \left(U 
		\begin{pmatrix}
			r_{i1} & \delta\\
			0 & r_{i2}
		\end{pmatrix}
	U^{-1}\right)^n
	= U 
	\begin{pmatrix}
		r_{i1} & \delta\\
		0 & r_{i2}
	\end{pmatrix}^n
	 U^{-1}
\end{align*}
where \(\delta\) is zero when \(r_{i1}\neq r_{i2}\) and can be zero or one if
they are equal. Now if they are both equal to some value \(r_i\), then we
can write
\begin{align*}
	\begin{pmatrix}
		r_i & \delta\\
		0 & r_i
	\end{pmatrix}^n
	&= \left(r_i\identity + 
	\begin{pmatrix}
		0 & \delta\\
		0 & 0 
	\end{pmatrix}\right)^n
	= \sum_{k=0}^n \binom{n}{k} r_i^k \begin{pmatrix}
		0 & \delta \\
		0 & 0
	\end{pmatrix}^{n-k}\\
	&= r_i^n\identity + n r_i^{n-1}\begin{pmatrix}
		0 & \delta \\
		0 & 0
	\end{pmatrix}
	= \begin{pmatrix}
		r_i^n & \delta n r_i^{n-1} \\
		0 & r_i^n
	\end{pmatrix}.
\end{align*}
So together with the case \(\delta=0\), we get
\begin{align*}
	R_i^n = U\begin{pmatrix}
		r_{i1}^n & \delta n r_{i1}^n\\
		0 & r_{i2}^n
	\end{pmatrix}U^{-1},
\end{align*}
Which results in
\begin{align*}
	\|R_i^n\|
	&\le \|U\|
	\left\| \begin{pmatrix}
		r_{i1}^n & \delta n r_{i1}^{n-1}\\
		0 & r_{i2}^n
	\end{pmatrix} \right\|
	\|U^{-1}\|\\
	&\le \|U\|\|U^{-1}\|
	\left(
	\left\| \begin{pmatrix}
		r_{i1}^n & 0\\
		0 & r_{i2}^n
	\end{pmatrix} \right\|
	+ \delta
	\left\| \begin{pmatrix}
		0 &  n r_{i1}^{n-1}\\
		0 & 0
	\end{pmatrix} \right\|
	\right)\\
	&= \|U\|\|U^{-1}\| (\rho(R_i)^n+ \delta n |r_{i1}|^{n-1})\\
	&= \|U\|\|U^{-1}\| (\rho(R_i)+ \delta n)\rho(R_i)^{n-1}\\
	&\in O(n\rho(R_i)^{n}) \subset O((\rho(R_i)+\epsilon)^n).
\end{align*}
Of course we do not have any bounds for \(\|U\|\|U^{-1}\|\) so this is still
not explicit enough. And the fact that \(\delta\) jumps from zero to one (once the
eigenvalues are not equal anymore) does not inspire confidence in the continuity
of \(U\). In fact \textcite[Section 7.1.5]{golubMatrixComputations2013} claim
that ``any matrix [...] that diagonalizes
\begin{align*}
	\begin{pmatrix}
		1+\epsilon & 1 \\
		0 & 1-\epsilon
	\end{pmatrix},
	\qquad \epsilon \ll 1,
\end{align*}
has a 2-norm condition of order \(1/\epsilon\)''. While this is not the same matrix,
this example shows that we have to be careful. Especially since we try to select
our learning rate \(\lrSq\) and momentum coefficient \(\momCoeff\) in such a way
that we are right on the complex border (Lemma~\ref{lem: optimal hb parameter
selection}). So we do in fact strive for the term inside the square root to be zero
\begin{align*}
	\momEV_{i1/2}
	= \tfrac12 \left(
		1+\momCoeff-\lrSq\hesseEV_i
		\pm \sqrt{(1+\momCoeff-\lrSq\hesseEV_i)^2 - 4\momCoeff}
	\right),
\end{align*}
and should therefore assume to be very close to \(r_{i1}=r_{i2}\).

Instead of the Jordan Normal form, we therefore want to use a numerically more
stable transformation, the ``Schur decomposition''.

\subsection{Schur Decomposition}

\begin{theorem}[Schur decomposition, {\cite[Theorem 7.1.3]{golubMatrixComputations2013}}]
	If \(A\in\complex^{\dimension\times \dimension}\), then there exists a unitary
	\(Q\in\complex^{\dimension\times \dimension}\) such that
	\begin{align*}
		Q^* A Q = \diag(\hesseEV_1, \dots, \hesseEV_\dimension) + N
	\end{align*}
	where \(N\) is strictly upper triangular and \(Q^*\) is the conjugate
	transpose of \(Q\).
\end{theorem}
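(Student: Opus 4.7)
The plan is to prove this by induction on the dimension $\dimension$, using the fact that over $\complex$ every matrix has at least one eigenvalue (by the fundamental theorem of algebra applied to the characteristic polynomial). The base case $\dimension = 1$ is trivial since any $1 \times 1$ matrix is already in the required form with $Q = 1$ and $N = 0$.

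For the induction step, the main construction goes as follows. Given $A \in \complex^{\dimension \times \dimension}$, pick an eigenvalue $\hesseEV_1$ of $A$ together with a unit eigenvector $q_1$ (the existence of $\hesseEV_1$ is the key non-trivial input). Extend $q_1$ to an orthonormal basis $\{q_1, \dots, q_\dimension\}$ of $\complex^\dimension$ via Gram--Schmidt, and assemble the unitary matrix $Q_1 := (q_1 \mid q_2 \mid \cdots \mid q_\dimension)$. Since $A q_1 = \hesseEV_1 q_1$, the first column of $Q_1^* A Q_1$ is $\hesseEV_1 \stdBasis_1$, so
\[
Q_1^* A Q_1 = \begin{pmatrix} \hesseEV_1 & w^* \\ 0 & \tilde{A} \end{pmatrix},
\]
for some $w \in \complex^{\dimension - 1}$ and some $\tilde{A} \in \complex^{(\dimension-1) \times (\dimension - 1)}$.

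Now apply the induction hypothesis to $\tilde{A}$: there exists a unitary $\tilde{Q} \in \complex^{(\dimension-1)\times(\dimension-1)}$ such that $\tilde{Q}^* \tilde{A} \tilde{Q} = \diag(\hesseEV_2, \dots, \hesseEV_\dimension) + \tilde{N}$ with $\tilde{N}$ strictly upper triangular. Define
\[
Q := Q_1 \begin{pmatrix} 1 & 0 \\ 0 & \tilde{Q} \end{pmatrix},
\]
which is unitary as a product of unitaries. A direct block computation then gives
\[
Q^* A Q = \begin{pmatrix} \hesseEV_1 & w^* \tilde{Q} \\ 0 & \diag(\hesseEV_2, \dots, \hesseEV_\dimension) + \tilde{N} \end{pmatrix},
\]
which is exactly of the form $\diag(\hesseEV_1, \dots, \hesseEV_\dimension) + N$ with $N$ strictly upper triangular, as the off-diagonal block $w^* \tilde{Q}$ contributes only above the diagonal.

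There is no real obstacle beyond bookkeeping; the one genuinely non-algebraic ingredient is the use of the fundamental theorem of algebra to guarantee the existence of $\hesseEV_1$, which is precisely why the statement is restricted to $\complex$ rather than $\reals$. (Over $\reals$ one obtains only a block-triangular analogue with $2 \times 2$ blocks for complex-conjugate eigenvalue pairs.)
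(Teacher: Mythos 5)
Your proof is correct and is the standard inductive argument for Schur decomposition — it is in fact essentially the proof given in the cited reference (Golub and Van Loan, Theorem 7.1.3). The paper itself does not prove this theorem, only states it with a citation, so there is nothing further to compare; your argument correctly identifies the one non-trivial ingredient (existence of an eigenvalue over \(\complex\)) and carries out the block computation without gaps.
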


The neat thing about this result is, that unitary matrices are isometries for the
euclidean norm.

\begin{lemma}[Unitary Matrices are Isometries]
	\label{lem: unitary matrices are isometries}
	Unitary matrices \(Q\) are isometries with regard to the euclidean norm
	\begin{align*}
		\|Qx\|_2 = \|x\|_2 \qquad \forall x\in\complex^\dimension
	\end{align*}	
	and also with regard to the induced operator norm
	\begin{align*}
		\|QA\| = \|A\| \quad \text{and} \quad \|AQ\| = \|A\|
		\qquad \forall A \in\complex^{\dimension\times\dimension}
	\end{align*}
\end{lemma}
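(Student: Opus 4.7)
The plan is to reduce all three claims to the defining identity $Q^*Q = I$ of a unitary matrix, together with the fact that the Euclidean norm on $\complex^\dimension$ is induced by the inner product $\langle x, y\rangle = y^* x$. All three statements then fall out of short direct calculations, so the ``plan'' here is mostly about the order of reductions.

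First I would prove the vector-norm isometry by the one-line expansion
\[
	\|Qx\|_2^2 = (Qx)^*(Qx) = x^* Q^* Q x = x^* x = \|x\|_2^2,
\]
and taking square roots. This is the core identity on which both operator-norm claims rest.

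For $\|QA\| = \|A\|$ I would apply this identity pointwise inside the supremum defining the operator norm, so that $Q$ disappears from every term: $\sup_{\|x\|_2=1}\|QAx\|_2 = \sup_{\|x\|_2=1}\|Ax\|_2$. For $\|AQ\| = \|A\|$ the argument is slightly different: I would substitute $y := Qx$ inside $\sup_{\|x\|_2=1}\|AQx\|_2$, so that the supremum is reparametrized over $y = Qx$ rather than $x$.

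The only mildly subtle point, and the only place where any care is needed, is that this last substitution requires $Q$ to map the unit sphere \emph{onto} itself, not merely into it. I would handle this by invoking the standard finite-dimensional fact that $Q^* Q = I$ already forces $Q Q^* = I$, so $Q$ is bijective on $\complex^\dimension$ and maps unit vectors to unit vectors surjectively via the inverse $Q^*$. Without this surjectivity the argument would only yield the inequality $\|AQ\| \le \|A\|$. There is no deeper obstacle beyond flagging this point explicitly.
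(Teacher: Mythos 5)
Your proposal is correct and takes essentially the same approach as the paper: verify $\|Qx\|^2 = x^*Q^*Qx = \|x\|^2$, then push this through the operator-norm supremum for $\|QA\|$, and reparametrize the supremum via $y = Qx$ for $\|AQ\|$. The paper writes the latter supremum in quotient form $\sup_x \|AQx\|/\|x\| = \sup_x \|AQx\|/\|Qx\|$, which makes the substitution read slightly more smoothly, but the surjectivity of $Q$ that you explicitly flag is equally needed there and is a worthwhile point to make explicit.
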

\begin{proof}
	It is straightforward to show the isometry on euclidean norms
	\begin{align*}
		\|Qx\|^2 = \langle Qx, Qx\rangle = x^* Q^* Q x = x^* x = \|x\|^2.
	\end{align*}
	This then translates to them being isometries for the induced operator norm
	on matrices, since
	\begin{align*}
		\|QA\| = \sup_{\|x\|=1} \|QAx\| = \sup_{\|x\|=1} \|Ax\|
	\end{align*}
	and
	\begin{align*}
		\|AQ\|
		&= \sup_{x} \frac{\|AQx\|}{\|x\|} = \sup_{x} \frac{\|AQx\|}{\|Qx\|}
		= \sup_{y} \frac{\|Ay\|}{\|y\|} = \|A\|.
		\qedhere
	\end{align*}
\end{proof}

This lemma allows us to completely disregard the basis change matrices  in our
problem of estimating the operator norm of \(R\). And as we are only concerned
with \(2\times 2\) matrices, ``strictly upper triangular'' still means only one
entry. So we can deal with it very similarly as with the jordan normal form. The
only issue is, we need to actually know the entry in the upper right corner to
create an upper bound. But as it turns out, calculating the Schur decomposition
for \(2\times 2\) matrices is very simple. We just need one normalized
eigenvector and (any of the two) normalized vectors in its orthogonal
complement.

\begin{lemma}[Explicit Schur Decomposition for Specific Matrix]
	\label{lem: explicit schur decomposition}
	For a real matrix of the form
	\begin{align*}
		R = \begin{pmatrix}
			0 & 1 \\
			-\momCoeff & \xi
		\end{pmatrix},
	\end{align*}
	the unitary operator	
	\begin{align*}
		Q:=\frac{1}{\sqrt{1+|r_1|^2}} \begin{pmatrix}
		1 & \conjugate{r}_1 \\
		r_1 & -1
	\end{pmatrix}
	\end{align*}
	where
	\(
		r_{1/2}
		= \tfrac12 \left(
			\xi \pm \sqrt{\xi^2 - 4\momCoeff}
		\right)
	\)
	are the eigenvalues of \(R\), results in the Schur Decomposition
	\begin{align*}
		Q^* R Q
		&=\begin{pmatrix}
			r_1
			& -\frac{(1+\momCoeff)(1+\conjugate{r}_1^2)}{1+|r_1|^2} \\
			0 &  \frac{\momCoeff\conjugate{r}_1 + r_2}{1+|r_1|^2}
		\end{pmatrix}.
	\end{align*}
	For complex eigenvalues, more specifically \(4\momCoeff\ge\xi^2\), we have
	\begin{align*}
		Q^* R Q
		&=\begin{pmatrix}
			r_1
			& -(1+\conjugate{r}_1^2) \\
			0 & \conjugate{r}_1 
		\end{pmatrix}.
	\end{align*}
	Note that \(r_1\) and \(r_2\) could always be swapped to achieve a more favorable result.
\end{lemma}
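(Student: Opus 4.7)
The plan is a direct verification using the structure built into~$Q$: its first column is (a normalized) eigenvector of $R$ to the eigenvalue $r_1$, and its second column spans the orthogonal complement. Once that is established, the block form of $Q^*RQ$ is forced and only the $(1,2)$- and $(2,2)$-entries require honest computation.

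\textbf{Step 1: $Q$ is unitary.} Denote the two columns of $Q$ by $q_1 = \tfrac{1}{\sqrt{1+|r_1|^2}}(1,r_1)^T$ and $q_2 = \tfrac{1}{\sqrt{1+|r_1|^2}}(\bar r_1,-1)^T$. A short computation gives $q_1^*q_1 = q_2^*q_2 = \tfrac{1+|r_1|^2}{1+|r_1|^2}=1$ and $q_1^*q_2 = \tfrac{\bar r_1-\bar r_1}{1+|r_1|^2}=0$, so $Q^*Q=\identity$.

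\textbf{Step 2: First column is an eigenvector.} By the explicit action of $R$,
\[
R\begin{pmatrix}1\\r_1\end{pmatrix}
=\begin{pmatrix}r_1\\ -\momCoeff+\xi r_1\end{pmatrix}
=r_1\begin{pmatrix}1\\r_1\end{pmatrix}
\]
iff $r_1^2-\xi r_1+\momCoeff=0$, which is precisely the characteristic equation satisfied by $r_1$. Hence $Rq_1=r_1q_1$, which forces $(Q^*RQ)_{11}=r_1$ and $(Q^*RQ)_{21}=0$ by unitarity.

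\textbf{Step 3: The remaining two entries.} I would compute $Rq_2=\tfrac{1}{\sqrt{1+|r_1|^2}}(-1,-\momCoeff\bar r_1-\xi)^T$ and then pair it with $q_1^*=\tfrac{1}{\sqrt{1+|r_1|^2}}(1,\bar r_1)$ and $q_2^*=\tfrac{1}{\sqrt{1+|r_1|^2}}(r_1,-1)$. This yields
\[
q_1^*Rq_2=\frac{-1-\xi\bar r_1-\momCoeff\bar r_1^{\,2}}{1+|r_1|^2},\qquad
q_2^*Rq_2=\frac{-r_1+\xi+\momCoeff\bar r_1}{1+|r_1|^2}.
\]
Taking the conjugate of the characteristic polynomial gives $\bar r_1^{\,2}-\xi\bar r_1+\momCoeff=0$, so $-\xi\bar r_1=-\bar r_1^{\,2}-\momCoeff$; substituting turns the numerator of the $(1,2)$-entry into $-(1+\bar r_1^{\,2})(1+\momCoeff)$. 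Vieta's formula $r_1+r_2=\xi$ turns the numerator of the $(2,2)$-entry into $\momCoeff\bar r_1+r_2$. This is exactly the matrix claimed.

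\textbf{Step 4: The complex case.} When $4\momCoeff\ge\xi^2$, the eigenvalues are complex conjugates, so $r_2=\bar r_1$ and Vieta's product formula gives $|r_1|^2=r_1\bar r_1=r_1r_2=\momCoeff$. Plugging $1+|r_1|^2=1+\momCoeff$ into the general formula collapses the $(1,2)$-entry to $-(1+\bar r_1^{\,2})$ and the $(2,2)$-entry to $\tfrac{\momCoeff\bar r_1+\bar r_1}{1+\momCoeff}=\bar r_1$, as claimed. There is no real obstacle; the only mildly non-automatic step is recognizing that the two algebraic simplifications in Step~3 are precisely the characteristic equation (conjugated) and the trace identity.
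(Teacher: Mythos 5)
Your proposal is correct and follows essentially the same route as the paper: both proofs observe that the first column of $Q$ is a normalized eigenvector of $R$ (forcing the first column of $Q^*RQ$), then compute the remaining two entries by direct multiplication, simplifying with the characteristic equation $\bar r_1^2 - \xi\bar r_1 + \beta = 0$ and the trace identity $\xi - r_1 = r_2$, and finally specializing to the complex case via $|r_1|^2 = r_1 r_2 = \beta$. The only cosmetic difference is that you compute the two nontrivial entries as inner products $q_i^* R q_2$, while the paper carries out the full $2\times 2$ matrix product; the algebraic identities invoked are identical.
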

\begin{proof}
	Since
	\begin{align*}
		\frac{1}{\sqrt{1+|r_1|^2}} \begin{pmatrix}
			1 \\
			r_1
		\end{pmatrix}
	\end{align*}
	is a normalized eigenvector for eigenvalue \(r_1\) of \(R\), there are
	only two options of opposite direction for an orthonormal second vector
	\begin{align*}
		\pm \frac{1}{\sqrt{1+|r_1|^2}} \begin{pmatrix}
			-\conjugate{r}_1 \\
			1
		\end{pmatrix},
	\end{align*}
	which results in \(Q\).
	
	Recall that any eigenvalue of \(R\), in particular \(r_1\) and
	\(\conjugate{r}_1\in\{r_1, r_2\}\), is a root of the characteristic polynomial
	\begin{align}\label{eq: property of eigenvalues of R}
		\det (r\identity - R) = r^2 - r\xi + \momCoeff = 0.
	\end{align}
	With this we can calculate the Schur decomposition of \(R\):
	\begin{align*}
		Q^* R Q
		&= \frac{1}{1+|r_1|^2} \begin{pmatrix}
			1 & \conjugate{r}_1 \\
			r_{1} & -1
		\end{pmatrix}
		\begin{pmatrix}
			0 & 1 \\
			-\momCoeff & \xi
		\end{pmatrix}
		\begin{pmatrix}
			1 & \conjugate{r}_1 \\
			r_1 & -1
		\end{pmatrix}\\
		&= \frac{1}{1+|r_{1}|^2} \begin{pmatrix}
			1 & \conjugate{r}_1 \\
			r_1 & -1
		\end{pmatrix}
		\begin{pmatrix}
			r_1 & -1 \\
			\smash{\underbrace{-\momCoeff + \xi r_1}_{
				=r_1^2 \ (\ref{eq: property of eigenvalues of R})
			}}
			& -\momCoeff \conjugate{r}_1 - \xi
		\end{pmatrix}
		\vphantom{\underbrace{\begin{pmatrix}1\\1\end{pmatrix}}_{=2}}
		\\
		&=\frac{1}{1+|r_{1}|^2}\begin{pmatrix}
			r_1(1+|r_1|^2)
			& -[\momCoeff \conjugate{r}_1^2  + (1+\xi \conjugate{r}_1)] \\
			0 &  \momCoeff\conjugate{r}_1 + \xi - r_1
		\end{pmatrix}\\
		&\lxeq{(\ref{eq: property of eigenvalues of R})}\begin{pmatrix}
			r_1
			& -\frac{\momCoeff \conjugate{r}_1^2 + 1+ (\conjugate{r}_1^2 + \momCoeff)}{1+|r_1|^2} \\
			0 &  \frac{\momCoeff\conjugate{r}_1 + \xi - r_1}{1+|r_1|^2}
		\end{pmatrix}\\
		&=\begin{pmatrix}
			r_1
			& -\frac{(1+\momCoeff)(1+\conjugate{r}_1^2)}{1+|r_1|^2} \\
			0 &  \frac{\momCoeff\conjugate{r}_1 + r_2}{1+|r_1|^2}
		\end{pmatrix},
	\end{align*}
	where we have used \(\xi - r_1 = r_2\) in the last equation. Now in the complex
	case (and the case \(4\momCoeff = \xi^2\)), we have
	\(|r_1|=|r_2|=\sqrt{\momCoeff}\) and \(\conjugate{r}_1 = r_2\) which results
	in
	\begin{align*}
		Q^* R Q
		&=\begin{pmatrix}
			r_1
			& -(1+\conjugate{r}_1^2) \\
			0 & \conjugate{r}_1 
		\end{pmatrix}.
		\qedhere
	\end{align*}
\end{proof}

\begin{theorem}[Upper Bound on Operator Norm]
	Assuming our momentum coefficient is large enough but smaller one, i.e.
	\begin{align*}
		1>\momCoeff
		\ge \max\{(1-\sqrt{\lrSq\hesseEV_1})^2, (1-\sqrt{\lrSq\hesseEV_\dimension})^2\},
	\end{align*}
	we are in the complex case, i.e.\ \(\rho(R) = \sqrt{\momCoeff}\) and
	\begin{align*}
		\|R^n\|
		&\le \rho(R)^{n-1}(\rho(R)+n[1+\rho(R)^2])\\
		&\le (2n+1)\rho(R)^{n-1}
	\end{align*}
	For all \(\epsilon>0\) we therefore have
	\begin{align*}
		\|R^n\|
		&\le \underbrace{\frac{3\exp(1)/\rho(R)}{\log(\rho(R)+\epsilon)-\log(\rho(R))}}_{
			\approx 3\exp(1)\epsilon^{-1}
		}
		[\rho(R)+\epsilon]^n
		\in O([\rho(R)+\epsilon]^n), \quad \forall n\ge 1
	\end{align*}
	using the first taylor approximation
	\begin{align*}
		\log(\rho(R)+\epsilon) \approx \log(\rho(R)) + \frac{\epsilon}{\log(\rho(R))}.
	\end{align*}
\end{theorem}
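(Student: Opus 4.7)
The plan is to leverage the block-diagonality of $R$ together with the explicit Schur decomposition from the preceding lemma. Since $R$ is block-diagonal with $2\times 2$ blocks $R_i$ as in (\ref{eq: eigenspace momentum transformation}), the operator norm of $R^n$ is the maximum of the operator norms of $R_i^n$, so it suffices to bound each block uniformly. The assumption $\momCoeff \ge \max\{(1-\sqrt{\lrSq\hesseEV_1})^2, (1-\sqrt{\lrSq\hesseEV_\dimension})^2\}$ combined with $\momCoeff < 1$ puts every block into the complex case of Theorem~\ref{thm: momentum - stable set of parameters}, so for each $i$ we have $|r_{i1}|=|r_{i2}|=\sqrt{\momCoeff}=\rho(R)$ and $r_{i2}=\bar r_{i1}$. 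Lemma~\ref{lem: explicit schur decomposition} then gives a unitary $Q_i$ so that $T_i := Q_i^* R_i Q_i$ is upper triangular with diagonal $(r_{i1},\bar r_{i1})$ and upper-right entry $-(1+\bar r_{i1}^{\,2})$. By Lemma~\ref{lem: unitary matrices are isometries}, $\|R_i^n\|=\|T_i^n\|$, so the problem reduces to bounding the operator norm of powers of an explicit upper triangular $2\times 2$ matrix.

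\textbf{Powers of the upper triangular block.} I would then prove, by a one-line induction, the uniform identity
\begin{equation*}
\begin{pmatrix} a & b \\ 0 & c \end{pmatrix}^{n} \;=\; \begin{pmatrix} a^n & b\sum_{k=0}^{n-1} a^k c^{n-1-k} \\ 0 & c^n \end{pmatrix}.
\end{equation*}
The symmetric-sum form is the crux: it is valid whether or not $a=c$, so the degenerate subcase $r_{i1}=\bar r_{i1}$ (which occurs exactly at the boundary $\momCoeff=(1\pm\sqrt{\lrSq\hesseEV_i})^2$) requires no separate Jordan-block analysis. Applied to $T_i$, the modulus of the upper-right entry is at most $|1+\bar r_{i1}^{\,2}|\cdot n\rho(R)^{n-1}\le (1+\rho(R)^2)\,n\,\rho(R)^{n-1}$ by the triangle inequality. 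Splitting $T_i^n$ into its diagonal and strictly upper-triangular parts and applying the operator-norm triangle inequality yields
\begin{equation*}
\|T_i^n\| \le \rho(R)^n + (1+\rho(R)^2)\,n\,\rho(R)^{n-1} = \rho(R)^{n-1}\bigl(\rho(R)+n[1+\rho(R)^2]\bigr),
\end{equation*}
which is the first stated bound. Since $\rho(R)=\sqrt{\momCoeff}<1$, we have $\rho(R)\le 1$ and $1+\rho(R)^2\le 2$, so the bracket is at most $2n+1$, giving the second bound $(2n+1)\rho(R)^{n-1}$.

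\textbf{Asymptotic bound.} The final step is a calculus optimization to trade the polynomial prefactor $(2n+1)$ for a slightly worse exponential rate $\rho(R)+\epsilon$. Writing $\mu := \log(\rho(R)+\epsilon)-\log\rho(R)=\log(1+\epsilon/\rho(R))>0$ and $q:=e^\mu$, the target inequality $(2n+1)\rho(R)^{n-1}\le C[\rho(R)+\epsilon]^n$ becomes, after dividing by $\rho(R)^n$, the requirement $(2n+1)/\rho(R)\le C\,q^n$ for every $n\ge 1$. Using $2n+1\le 3n$ for $n\ge 1$ and the elementary fact that $n\mapsto n\, e^{-n\mu}$ attains its maximum $1/(e\mu)$ at $n=1/\mu$, a constant of the order $1/(\rho(R)\mu)$ suffices; a deliberately loose rearrangement (e.g.\ $2x+\mu \le 2e^x+e^\mu\le 3e^x$ for $x\ge\mu\ge 0$) reproduces the prefactor $3e/(\rho(R)\mu)$ stated in the theorem. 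The first-order Taylor expansion $\mu\approx\epsilon/\rho(R)$ then furnishes the heuristic $C\approx 3e/\epsilon$.

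\textbf{Main obstacle.} The hard part is conceptual rather than computational: one must recognize that the Schur decomposition, in contrast to diagonalization, is numerically stable across the entire complex region and in particular at its boundary, so there is no blow-up in any change-of-basis constant when $r_{i1}$ and $\bar r_{i1}$ collide. The symmetric sum $\sum_{k=0}^{n-1} r_{i1}^k \bar r_{i1}^{n-1-k}$ then encodes, uniformly, both the generic bound $n\rho(R)^{n-1}$ and the Jordan-like factor $n r^{n-1}$ that would otherwise have to be produced by hand. Once that observation is in place the rest is bookkeeping, and the only minor nuisance is the calculus step for the asymptotic constant.
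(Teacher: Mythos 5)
Your proof is correct and arrives at exactly the bounds stated in the theorem, but the route through the middle step is slightly different from the paper's. Both you and the paper begin by reducing to the Schur triangular blocks $T_i = Q_i^* R_i Q_i$ via Lemma~\ref{lem: unitary matrices are isometries} and Lemma~\ref{lem: explicit schur decomposition}, and both end with the same calculus optimization for the asymptotic constant (which the paper itself states loosely — the true maximum of $n e^{-n\mu}$ gives a constant of order $1/(e\mu)$, not $e/\mu$, but the stated bound is still valid; you noticed this). Where you diverge is in how you bound $\|T_i^n\|$. The paper first invokes Lemma~\ref{lem-appdx: absolute value inside operator norms} to replace every entry of $T_i$ by its absolute value; this makes both diagonal entries equal to $\rho(R)$, so the diagonal part then commutes with the nilpotent part and the binomial theorem can be applied term by term, with all but the $k\in\{0,1\}$ terms vanishing. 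You instead compute $T_i^n$ explicitly using the identity
\begin{equation*}
\begin{pmatrix} a & b \\ 0 & c \end{pmatrix}^{n} = \begin{pmatrix} a^n & b\sum_{k=0}^{n-1} a^k c^{n-1-k} \\ 0 & c^n \end{pmatrix},
\end{equation*}
bound the corner entry by the triangle inequality using $|a|=|c|=\rho(R)$, and then split off the strictly upper-triangular part. This sidesteps the absolute-value lemma entirely and handles the confluent-eigenvalue boundary case uniformly through the symmetric sum (which degenerates to $n a^{n-1}$ when $a=c$), which is a modest but real simplification. Both approaches produce the identical intermediate bound $\rho(R)^{n-1}(\rho(R)+n[1+\rho(R)^2])$, so the difference is one of bookkeeping rather than substance; your version trades the reuse of a general auxiliary lemma for a short explicit computation.
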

\begin{proof}
	First we apply our Lemmas to get
	\begin{align*}
		\|R^n\|
		&=\max_{i=1,\dots,\dimension}	\|R_i^n\|
		\xeq{\ref{lem: unitary matrices are isometries},\ \ref{lem: explicit schur decomposition}}
		\max_{i=1,\dots,\dimension}\left\|\begin{pmatrix}
			r_{i1} & -(1+\conjugate{r}_{i1}^2)\\
			0 & \conjugate{r}_{i1}
		\end{pmatrix}^n\right\|.
	\end{align*}
	Then we use Lemma~\ref{lem-appdx: absolute value inside operator norms} to
	turn the entries into their absolute values which makes the diagonal entries
	equal. As the diagonal now commutes with the nilpotent matrix we can apply the
	binomial theorem 
	\begin{align*}
		\|R^n\|
		&\le \max_{i=1,\dots,\dimension}
		\Bigg\|\left(
		\begin{pmatrix}
			|r_{i1}| & 0\\
			0 & |r_{i1}|
		\end{pmatrix}
		+ \begin{pmatrix}
			0 & |1+\conjugate{r}_{i1}^2|\\
			0 & 0
		\end{pmatrix}
		\right)^n\Bigg\|\\
		&\le \max_{i=1,\dots,\dimension}
		\Bigg\|
		\sum_{k=0}^n \binom{n}{k}
				|r_{i1}|^{n-k}
			\underbrace{
				\left(
				|1+\conjugate{r}_{i1}^2|
				\begin{pmatrix}
					0 & 1\\
					0 & 0
				\end{pmatrix}
				\right)^k
			}_{=0 \ \forall k>1}
		\Bigg\|\\
		&\le \max_{i=1,\dots,\dimension}
		|r_{i1}|^n \|\identity\| + n |r_{i1}|^{n-1}(|1|+|r_{i1}^2|)
		\underbrace{
			\Bigg\|
				\begin{pmatrix}
					0 & 1\\
					0 & 0
				\end{pmatrix}
			\Bigg\|
		}_{=1}
		\\
		&\le \max_{i=1,\dots,\dimension}
		\rho(R_i)^n + n \rho(R_i)^{n-1}(1+\rho(R_i)^2)\\
		&=	\rho(R)^{n-1}(\rho(R) + n [1+\rho(R)^2])\\
		&\le \rho(R)^{n-1}(1 + 2n)\\
		&\le 3n\rho(R)^{n-1} \qquad \forall n\ge 1.
	\end{align*}
	For the second claim let \(\epsilon>0\), then we have
	\begin{align*}
		\frac{\|R^n\|}{[\rho(R)+\epsilon]^n}
		&\le \frac{3n\rho(R)^{n-1}}{[\rho(R)+\epsilon]^n}\\
		&= \frac{3}{\rho(R)}\exp(\log(n) + n[\log(\rho(R))-\log(\rho(R)+\epsilon)])\\
		&\lxle{\max_n} \frac{3}{\rho(R)}\exp
		\left(\log\left(\frac{1}{[\log(\rho(R)+\epsilon)-\log(\rho(R))]}\right) + 1\right)\\
		&= \frac{3\exp(1)/\rho(R)}{\log(\rho(R)+\epsilon)-\log(\rho(R))}.
	\end{align*}
	Here we simply maximized over \(n\) to obtain the last inequality.
\end{proof}

\section{Why Consider Alternatives?}

While Nesterov's Momentum breaks Assumption~\ref{assmpt: parameter in
generalized linear hull of gradients} as it uses gradients at a
different point than the iterates, it does not break it fundamentally. The
assumption applies to the intermediate states \(y_n\) which are not too far
from our actual iterates and would thus allow us to make similar statements
about them with a bit of work.

And since heavy ball momentum can already achieve the optimal convergence
rate of our complexity bounds up to some epsilon due to issues with the operator
norm, why should we consider a different approach which can not be better than
an already optimal method?

\subsubsection{Weakest Link Contraction Factor}

The first issue you
might have noticed is: Since \emph{all eigenspaces} are in the complex case,
all of them have the worst contraction factor \(\sqrt{\momCoeff_*}\). In stochastic
gradient descent on the other hand the contraction factor is \(|1-\lr_*\hesseEV_i|\)
for the eigenspace \(i\), which can be considerably better than the worst
contraction factor 
\[|1-\lr_*\hesseEV_\dimension|=|1-\lr_*\hesseEV_1|.\]
To improve the convergence rates of the worst eigenspaces we have therefore
sacrificed the (possibly excellent) convergence rates of all other eigenspaces.

\subsubsection{No General Convergence at these Rates}

While
\textcite[pp. 65-67]{polyakIntroductionOptimization1987} used the eigenvalue
analysis of \(\nabla\Loss^2(\weights_*)\) to extend this rate of convergence
to a local area around \(\weights_*\), \textcite[pp. 78-79]{lessardAnalysisDesignOptimization2016}
gave an example of an \(\strongConvex{\lbound}{\ubound}\) function where
the heavy ball method with these ``optimal'' parameters is trapped in an attractive
cycle instead of converging. 
\begin{figure}[h]
	\centering
	\def\svgwidth{1\textwidth}
	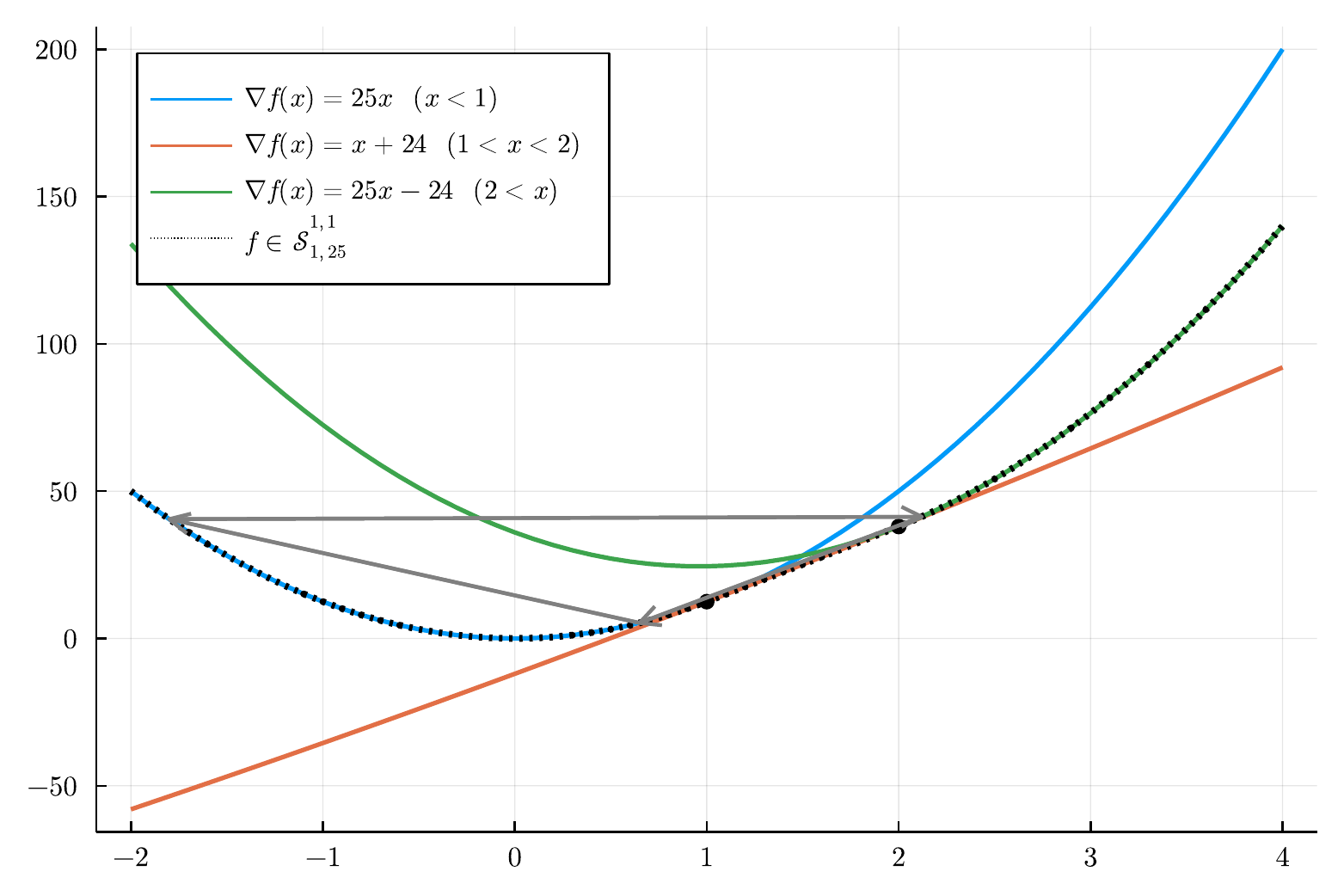
	\caption{
		\citeauthor{lessardAnalysisDesignOptimization2016}'s counterexample.
		Notice that the y-axis is much coarser than the x-axis and gradients are
		therefore much steeper than apparent. At point \(-1.8\) the gradient is
		quite steep and the next parameter is flung across the minimum to
		\(2.12\). Due to the less convex segment on this side, the gradient is
		not quite as steep here, so the iterate does not make it to the other
		side again which would then cause a bouncing back and forth to the
		minimum. Instead it stays on this side, which causes the next gradient
		at \(0.65\) together with the built momentum in this direction to move the
		iterate back out to our first point.
	}
	\label{fig: heavy ball counterexample}
\end{figure}

In their counterexample from Figure~\ref{fig: heavy ball counterexample}, we
know the parameters \(\lbound\) and \(\ubound\) and therefore \(\lrSq_*\)
and \(\momCoeff_*\). They use this to explicitly write out the momentum
recursion with \(\weights_{n+1}, \weights_{n}, \weights_{n-1}\). Assuming a
three-cycle this left them with just three linear equations which results in
the three-cycle displayed in the Figure. Since we start with zero momentum, we
just have to find a point to enter this cycle. Additional to solving this
initialization problem, \textcite[pp.
 93-94]{lessardAnalysisDesignOptimization2016} show that this cycle is
stable under noise (attractive).

\textcite{ghadimiGlobalConvergenceHeavyball2015} salvage some of this wreckage
by proving general convergence results with tighter restrictions on hyper
parameters. In particular for \(\Loss\in\strongConvex{\lbound}{\ubound}\) and
\begin{align*}
	\momCoeff\in[0,1),
	\qquad \lrSq\in \left(0, \frac{2(1-\momCoeff)}{\ubound+\lbound}\right)
\end{align*}
they can guarantee linear convergence with an explicit, but complicated
\fxnote{take a closer look?}{contraction factor}. Notice that this is
considerably weaker than the condition
\begin{align*}
	0<\lrSq\lbound \le \dots \le \lrSq\ubound < 2(1+\momCoeff)
\end{align*}
from Theorem~\ref{thm: momentum - stable set of parameters}. In particular
momentum stops increasing the selection space for learning rates. In fact it
shrinks it!
This rules out rules out our optimal parameter immediately. 

Similarly they prove that for \(\Loss\in\lipGradientSet{\ubound}\) and
\begin{align*}
	\momCoeff\in[0,1),
	\qquad \lrSq\in \left(0, \frac{(1-\momCoeff)}{\ubound}\right]
\end{align*}
that the running minimum of loss evaluation converges to the minimal loss,
with their best guarantee achieved for \(\lrSq =(1-\momCoeff)/\ubound\)
resulting in
\begin{align*}
	\min_{k=0,\dots,n}\Loss(\weights_k) -\Loss(\weights_*)
	\le \frac{1}{1-\momCoeff}\frac{\ubound\|\weights_0-\weights_*\|^2}{2(n+1)}
\end{align*}
which provides a tighter convergence bound than 
Theorem~\ref{thm: convex function GD loss upper bound} for gradient descent 
without momentum \(\momCoeff=0\), but is actually harmed by momentum. While
this bound might not be the best and further improvements could be made
\parencite[see e.g.][]{sunNonErgodicConvergenceAnalysis2019}, it appears that we
always end up with \(O(1/n)\) convergence rates in the general case, which is not
at all optimal, and we
therefore turn towards Nesterov's Momentum.

\section{Nesterov's Momentum Convergence}\label{sec: nesterov momentum convergence}

The Eigenvalue analysis of Nesterov's Momentum starts out very similar, and we
easily obtain the analog eigenspace transformation to (\ref{eq: eigenspace
momentum transformation})
\begin{align*}
	\momMatrix_i = \begin{pmatrix}
		0 & 1\\
		-\momCoeff(1-\lrSq\hesseEV_i) & (1+\momCoeff)(1-\lrSq\hesseEV_i).
	\end{pmatrix}	
\end{align*}
which curiously reduces to the heavy ball version if we use
\begin{align}\label{eq: tilde momentum}
	\tilde{\momCoeff}_i = \momCoeff(1-\lrSq\hesseEV_i)
\end{align}
We therefore immediately get from Theorem~\ref{thm: momentum - stable set of
parameters} that \(\max\{|\momEV_{i1}|,|\momEV_{i2}|\}<1\) if and only if
\begin{align*}
	0<\lrSq\hesseEV_i < 2(1+\tilde{\momCoeff}_i)
	\quad \text{and} \quad
	|\tilde{\momCoeff}_i|<1
\end{align*}
which after some transformations is itself equivalent to
\begin{align*}
	0<\lrSq\hesseEV_i < 1+\frac{1}{1+2\momCoeff}
	\quad \text{and} \quad
	|1-\lrSq\hesseEV_i|<\frac{1}{|\momCoeff|}.
\end{align*}
\begin{figure}[h]
	\centering
	\def\svgwidth{1\textwidth}
	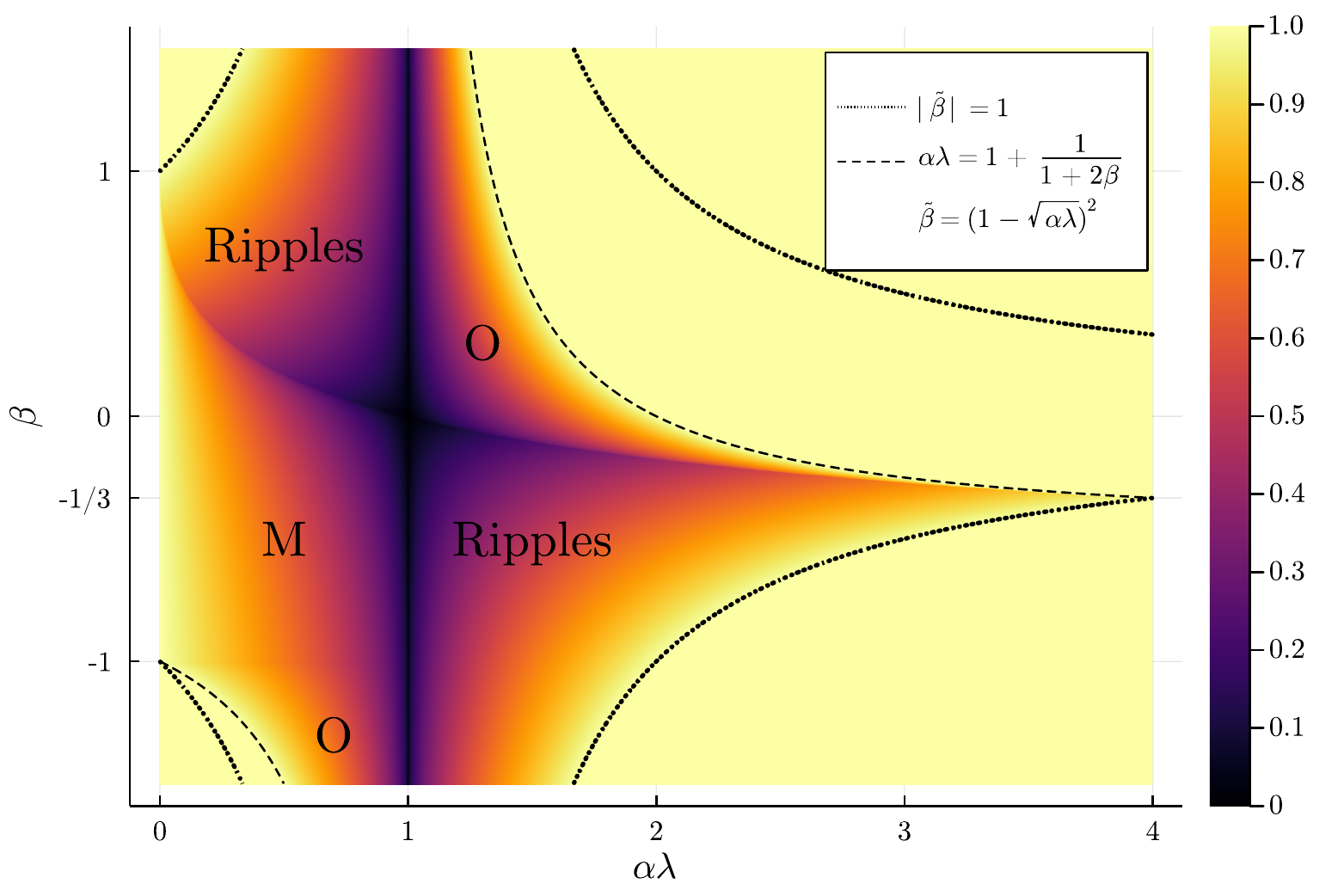
	\caption{
		Heat plot of \(\max\{|\momEV_1|,|\momEV_2|\}\) for the Nesterov Momentum.
		``M'' and ``O'' stand for the ``Monotonic'' and ``Oscillating'' types of 
		real eigenvalues. Note that the separating line of these cases
		\(\lrSq\hesseEV=1+\tilde{\momCoeff}=1+\momCoeff(1-\lrSq\hesseEV)\)
		reduces to \(\lrSq\hesseEV=1\) but since we divide by \(1+\momCoeff\)
		the sides flip at \(\momCoeff=-1\). Again the third label is omitted to
		keep the remarkably sharp border unobstructed.
	}
	\label{fig: annotated nesterov rates}
\end{figure}

In a sense Nesterov's Momentum provides every eigenspace with its own
heavy ball momentum \(\tilde{\momCoeff}_i\) which is a good sign if we want
to avoid the ``Weakest Link Contraction Factor'' issue we had in heavy ball
momentum. And if we are in the complex case the convergence rate is
\(\sqrt{\momCoeff(1-\lrSq\hesseEV_i)}\) which looks like the root of the
gradient descent convergence rate multiplied by a factor we could make smaller
than one.

As we have reduced our problem to the heavy ball problem again, we can also use
the same arguments to bound the operator norm. We will therefore ignore that
problem henceforth.

To ensure all eigenspaces are in the complex case, we first need to ensure that
all \(\tilde{\momCoeff}_i\) are positive which requires
\begin{align*}
	1-\lrSq\hesseEV_i >= 0
\end{align*}
So let us select \(\lrSq= 1/\hesseEV_\dimension\). We just need to ensure
that the smallest eigenvalue is in the complex case. For that, we set it to be
right on the border \(\tilde{\momCoeff}_1=(1-\sqrt{\lrSq\hesseEV_1})^2\),
which results in
\begin{align*}
	\momCoeff = \frac{(1-\sqrt{\lrSq\hesseEV_1})^2}{1-\lrSq\hesseEV_1}
	=\frac{1-\sqrt{\lrSq\hesseEV_1}}{1+\sqrt{\lrSq\hesseEV_1}}
	=\frac{1-\sqrt{1/\condition}}{1+\sqrt{1/\condition}} (< 1).
\end{align*}
Since we have
\begin{align*}
	\tilde{\momCoeff}_1 >= \dots >=\tilde{\momCoeff}_\dimension=0,
\end{align*}
the worst rate of convergence is exhibited by the first eigenspace and is equal to
\begin{align}\label{eq: nesterov convegence rate eigenvalue derivation}
	\sqrt{\tilde{\momCoeff}_1} = \sqrt{(1-\sqrt{\lrSq\hesseEV_1})^2}
	= 1-\sqrt{1/\condition} = 1- \frac{1}{\sqrt{\condition}}.
\end{align}
This rate of convergence is a tiny bit worse than the optimal heavy ball
convergence rate \(1-2/(1+\sqrt{\condition})\), which is also equal to the
complexity bound, but it is quite similar.

Now if you had a look at Figure~\ref{fig: annotated nesterov rates} you might
have noticed that for a fixed learning rate \(\lrSq\), the \(\lrSq\hesseEV_i\)
are \(\dimension\) vertical lines. We made sure
that the rightmost line was right on top of the black line and then moved up
our momentum term until all our rates where in the ripples area. From the
plot you might get the intuition that we can improve convergence if we
increase the learning rate to move the largest eigenvalue into the oscillating
range just like for \ref{eq: gradient descent}. And this is in fact not the optimal
selection of learning rates. \textcite{lessardAnalysisDesignOptimization2016}
claim\footnote{The proof is left to the reader.}, that optimal tuning results in
\begin{align*}
	\lrSq = \frac{4}{3\ubound + \lbound}
	\quad \text{and} \quad
	\momCoeff = \frac{\sqrt{3\condition+1}-2}{\sqrt{3\condition+1}+2}
\end{align*}
with convergence rate
\begin{align*}
	1-\frac{2}{\sqrt{3\condition+1}}.
\end{align*}
But as we have learned with heavy ball momentum, optimal convergence rates are 
of no use if we cannot generalize them.

\subsection{Estimating Sequences}

To prove convergence for \ref{eq: gradient descent}, we proved that gradient descent
minimizes a ``local'' upper bound (Lemma~\ref{lem: smallest upper bound}). Local
in the sense that we only use the gradient information at our current position
and discard all previous gradient information. To prove better convergence rates,
we need to utilize the previous gradients too.

Let us start with an upper bound of the form
\begin{align}\label{eq: Phi_0 (upper bound - kinda)}
	\Phi_0(\weights) = \underline{\Phi}_0 + \tfrac{\lbound_0}{2}\|\weights-z_0\|^2.
\end{align}
One example for such an upper bound is
\begin{align*}
	\weights \mapsto
	\Loss(y_0)
	+ \langle\nabla\Loss(y_0), \weights - y_0\rangle
	+ \tfrac{\ubound}{2}\|\weights -y_0\|^2.
\end{align*}
This is the local upper bound we minimized in Lemma~\ref{lem: smallest upper
bound} to obtain gradient descent. We can write this upper bound as in
(\ref{eq: Phi_0 (upper bound - kinda)}) with \(\lbound_0=\ubound\), by selecting
\begin{align*}
	z_0 = y_0 - \ubound^{-1}\nabla\Loss(y_0).
\end{align*}
Note this is the same technique as we used in (\ref{eq: newton minimum approx}).
Also keep in mind that our upper bound is a quadratic function, which is therefore
equal to its second taylor derivative.

Now since we started with an upper bound, it is trivial to find \(\weights_0\)
such that
\begin{align*}
	\Loss(\weights_0) \le \min_{\weights}\Phi_0(\weights),
\end{align*}
as we can simply select \(\weights_0=z_0\) to get
\begin{align*}
	\Loss(\weights_0) =\Loss(z_0) \le \Phi_0(z_0) = \min_{\weights}\Phi_0(\weights).
\end{align*}
The idea is now, to morph our upper bound \(\Phi_0\) via \(\Phi_n\) slowly into
a lower bound while keeping 
\begin{align}\label{eq: staying ahead of the estimates}
	\Loss(\weights_n) \le \min_{\weights}\Phi_n(\weights)=:\underline{\Phi}_n.
\end{align}
This would force \(\weights_n\) towards the minimum as it needs to stay smaller
than \(\Phi_n\) which changes into a lower bound of \(\Loss\). By controlling
the speed of morphing we will be able to bound the convergence speed. But we
cannot morph too fast otherwise we will not be able to stay ahead of it, i.e.
fulfill (\ref{eq: staying ahead of the estimates}). 

So how do we make this concrete?
The obvious lower bounds we want to utilize, are made up of all the gradients we
collect along the way
\begin{align*}
	\phi_n(\weights)
	:= \Loss(y_n) + \langle\nabla\Loss(y_n), \weights-y_n\rangle
	+ \tfrac{\lbound}{2}\|\weights-y_n\|^2.
\end{align*}
where this includes the non strongly convex case with \(\lbound=0\). So let
us consider
\begin{align*}
	\Phi_1 = \gamma \phi_1 + (1-\gamma)\Phi_0
\end{align*}
For \(\gamma=1\) we would have a lower bound. But if \(\nabla\Loss(y_1)\)
does not happen to be zero, the tangent point \(y_1\) is sitting on a slope and
is therefore not smaller than the minimum of \(\phi_1=\Phi_1\). And since
\(\phi_1\) is a lower bound of \(\Loss\), it will generally be impossible to
satisfy (\ref{eq: staying ahead of the estimates}) with \(\weights_1\) in that
case.
So the first take-away is, that we can generally not select \(\gamma=1\)
unless we are already in the minimum. But this also hints at another issue:
A convex combination of lower bounds
\begin{align*}
	\sum_{k=0}^n \hat{\gamma}_k \phi_k(\weights)
\end{align*}
is also a lower bound. But since they generally only have one tangent point
which is equal and are otherwise strictly lower, it is difficult to find
a \(\weights\) such that \(\Loss(\weights)\) is smaller than the minimum of this
lower bound. So the \(y_k\) already need to be equal to the minimum
of \(\Loss\) such that the \(\phi_k\) start out with no slope and their
minimum is also equal to \(\weights_*\). If we have
\begin{align*}
	\sum_{k=1}^n \hat{\gamma}_k \phi_k(\weights)
	+ \hat{\gamma}_0 \Phi_0(\weights)
\end{align*}
we get a bit more wiggle room in the form of \(\hat{\gamma}_0\). But if we want
to slowly remove our upper bound \(\Phi_0\), we run into a similar problem. Therefore we
also need to slowly remove the old
\(\phi_k\) upper bounds, constructed with \(y_k\) far away from the minimum
\(\weights_*\). This motivates why we might wish to select
\begin{align}\label{eq: estimating sequence recursion}
	\Phi_n(\weights) := \gamma_n \phi_n(\weights) + (1-\gamma_n)\Phi_{n-1}(\weights)
\end{align}
causing an exponential decay not only for the initial upper bound, but also for
our previous lower bounds. For
\begin{align*}
	\Gamma_k^n := \prod_{i=k}^n (1-\gamma_i)
\end{align*}
one can inductively convert the recursion (\ref{eq: estimating sequence
recursion}) into
\begin{align}
	\nonumber
	\Phi_n(\weights)
	&= \sum_{k=1}^n \Gamma_{k+1}^n\gamma_k \phi_k(\weights)
	+ \Gamma_1^n\Phi_0(\weights) \\
	\label{eq: estimating sequence explicit}
	&\le (1-\Gamma_1^n)\Loss(\weights) + \Gamma_1^n\Phi_0(\weights)
\end{align}

\begin{lemma}
	\label{lem: estimating sequence convergence speed}
	If the \(\weights_n\) satisfy (\ref{eq: staying ahead of the estimates}),
	then we have
	\begin{align*}
		\Loss(\weights_n) - \Loss(\weights_*)
		\le \Gamma_1^n (\Phi_0(\weights_*)-\Loss(\weights_*)).
	\end{align*}
	This motivates why we are interested in \(\Gamma_1^n\to 0\). A sufficient
	condition is
	\begin{align*}
		\sum_{k=0}^\infty \gamma_k = \infty
	\end{align*}
\end{lemma}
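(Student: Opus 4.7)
The proof is essentially a two-step chaining of inequalities together with a standard $\log$-estimate, so I will keep it brief.

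First I would bound $\Loss(\weights_n)$ by evaluating the estimating function at the minimum $\weights_*$ rather than at its global minimizer:
\begin{align*}
\Loss(\weights_n) \;\le\; \underline{\Phi}_n \;=\; \min_{\weights} \Phi_n(\weights) \;\le\; \Phi_n(\weights_*),
\end{align*}
where the first inequality is exactly the assumption (\ref{eq: staying ahead of the estimates}) and the second is just the definition of a minimum. Then I would plug in the already-derived explicit form (\ref{eq: estimating sequence explicit}) at the point $\weights_*$:
\begin{align*}
\Phi_n(\weights_*) \;\le\; (1-\Gamma_1^n)\Loss(\weights_*) + \Gamma_1^n \Phi_0(\weights_*).
\end{align*}
Subtracting $\Loss(\weights_*)$ on both sides collapses the first summand and yields the claim
\begin{align*}
\Loss(\weights_n) - \Loss(\weights_*) \;\le\; \Gamma_1^n\bigl(\Phi_0(\weights_*) - \Loss(\weights_*)\bigr).
\end{align*}
Note that (\ref{eq: estimating sequence explicit}) itself uses that each $\phi_k$ is a lower bound on $\Loss$, which in turn relies on $\lbound \ge 0$ (convexity), and implicitly that $\gamma_k \in [0,1]$ so that the $\Gamma_{k+1}^n \gamma_k$ are nonnegative weights summing to $1-\Gamma_1^n$.

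For the sufficient condition on divergence of $\sum_k \gamma_k$, I would take logarithms of $\Gamma_1^n = \prod_{k=1}^n (1-\gamma_k)$ (assuming $\gamma_k \in [0,1)$ so the factors are positive) and use the standard estimate $\log(1-x) \le -x$ to obtain
\begin{align*}
\log \Gamma_1^n \;=\; \sum_{k=1}^n \log(1-\gamma_k) \;\le\; -\sum_{k=1}^n \gamma_k \;\xrightarrow{n\to\infty}\; -\infty,
\end{align*}
hence $\Gamma_1^n \to 0$.

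There is no real obstacle here, the whole thing is bookkeeping; the only thing to double-check is that the explicit representation (\ref{eq: estimating sequence explicit}) has indeed already been established before this lemma, and that nothing in its derivation requires $\weights$ to differ from $\weights_*$. The main conceptual content of the lemma is not the proof itself but the reframing it provides: the rate at which $\Loss(\weights_n)$ approaches $\Loss(\weights_*)$ is now entirely controlled by how aggressively we are allowed to push the $\gamma_k$ up while still being able to construct iterates $\weights_n$ satisfying (\ref{eq: staying ahead of the estimates}). The hard work will therefore be deferred to the subsequent construction of such iterates for Nesterov's method.
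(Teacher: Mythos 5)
Your proof is correct and follows essentially the same route as the paper: chain the assumption (\ref{eq: staying ahead of the estimates}) with the explicit representation (\ref{eq: estimating sequence explicit}) evaluated at \(\weights_*\), subtract \(\Loss(\weights_*)\), and then use \(1-x\le \exp(-x)\) (your \(\log(1-x)\le -x\) is the same estimate) to get \(\Gamma_1^n\to 0\). The only cosmetic difference is that you bound \(\min_\weights\Phi_n(\weights)\le\Phi_n(\weights_*)\) before invoking (\ref{eq: estimating sequence explicit}), whereas the paper applies the bound inside the minimum first; the arguments are equivalent.
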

\begin{proof}
	Both statements are fairly easy to prove:
	\begin{align*}
		\Loss(\weights_n)
		\xle{(\ref{eq: staying ahead of the estimates})}
		\min_{\weights}\Phi_n(\weights)
		&\lxle{(\ref{eq: estimating sequence explicit})} \min_{\weights} \{
		(1-\Gamma_1^n) \Loss(\weights) + \Gamma_1^n\Phi_n(\weights) \}\\ 
		&\le (1-\Gamma_1^n) \Loss(\weights_*) + \Gamma_1^n\Phi_n(\weights_*)
	\end{align*}
	Subtracting \(\Loss(\weights_*)\) from both sides lets us move on to the
	second statement:
	\begin{align*}
		\Gamma_1^n &= \prod_{i=1}^n (1-\gamma_i)
		\le\prod_{i=1}^n \exp(-\gamma_i)
		= \exp\left(-\sum_{i=1}^n\gamma_i\right) \to 0
		\qedhere
	\end{align*}
\end{proof}

Now we just need to make sure that we can actually satisfy (\ref{eq: staying
ahead of the estimates}) while making \(\gamma_k\) as large as possible. For
gradient descent we just minimized the quadratic upper bound \(\Phi_0\). Now
we need to minimized the \(\Phi_n\). For that we want an explicit
representation of \(\Phi_n\).
Fortunately we are only adding up quadratic functions, therefore the function
\(\Phi_{n}\) is quadratic too, and can thus be written in the form
\begin{align}\label{eq: centered Phi representation}
	\Phi_{n}(\weights) = \underline{\Phi}_{n} + \tfrac{\lbound_{n}}{2}\|\weights - z_n\|^2,
\end{align}
where \(z_n\) is the minimum, and our convexity parameter \(\lbound_n\) morphs
from \(\lbound_0\) to \(\lbound\)
\begin{align}\label{eq: lbound recursion}
	\lbound_n =(1-\Gamma_1^n)\lbound + \Gamma_1^n \lbound_0
	= \gamma_n \lbound + (1-\gamma_n)\lbound_{n-1},
\end{align}
because \(\lbound_{n-1}\identity\) is by induction the Hessian of \(\Phi_n\).
Assuming (\ref{eq: staying ahead of the estimates}) holds for \(n-1\), we can
write our minimal value as
\begin{align*}
	\underline{\Phi}_n
	&= \Phi_n(y_n) - \tfrac{\lbound_n}{2}\|y_n - z_n\|^2 \\
	&\lxeq{(\ref{eq: estimating sequence recursion})}
	\gamma_n \underbrace{\phi_n(y_n)}_{
		=\Loss(y_n)
	}
	+ (1-\gamma_n)\underbrace{\Phi_{n-1}(y_n)}_{
		= \underbrace{\underline{\Phi}_{n-1}}_{
			\ge \Loss(\weights_{n-1})
			\mathrlap{\ge \Loss(y_n) + \langle \Loss(y_n), \weights_{n-1} - y_n\rangle}
		} +\mathrlap{\tfrac{\lbound_{n-1}}{2}\|y_n - z_{n-1}\|^2}
	} - \tfrac{\lbound_n}{2}\|y_n - z_n\|^2 \\
	&\ge \Loss(y_n) + \text{``junk''}
	\xge{?} \Loss(y_n) - \tfrac{1}{2\ubound}\|\nabla\Loss(y_n)\|^2
\end{align*}
Now you might recall that \(y_n\) are our intermediate ``information
collection points'', which then define the actual iterate. And
since Lemma~\ref{lem: smallest upper bound}, we know that we can achieve
\begin{align*}
	\Loss(y_n) - \tfrac{1}{2\ubound}\|\nabla\Loss(y_n)\|^2
	\ge \Loss(\weights_n)
\end{align*}
by selecting the learning rate \(\lrSq=1/\ubound\) in
\begin{align*}
	\weights_n = y_n - \lrSq \nabla\Loss(y_n).
\end{align*}
To be able to satisfy (\ref{eq: staying ahead of the estimates}), we therefore
only need to lower bound our ``junk''
\begin{align*}
	J := (1-\gamma_n)\left(
		\langle \Loss(y_n), \weights_{n-1} - y_n\rangle
		+ \tfrac{\lbound_{n-1}}{2}\|y_n - z_{n-1}\|^2
	\right)
	- \tfrac{\lbound_n}{2}\|y_n - z_n\|^2
\end{align*}
by \(- \tfrac{1}{2\ubound}\|\nabla\Loss(y_n)\|^2\). By having a closer look at
\(z_n\) and expressing it in terms of \(z_{n-1}\), the necessary terms fall out
and we obtain the following sufficient conditions
\begin{lemma}[Dumpster Dive]
	The conditions
	\begin{subequations}
	\begin{align}
		\label{eq: morphing speed equation}
		\ubound\gamma_n^2 \le \lbound_n \xeq{\text{def.}} \gamma_n \lbound +  (1-\gamma_n)\lbound_{n-1}\qquad \forall n\ge 1
	\end{align}
	or equivalently for \(\condition_n := \ubound/\lbound_n\) 
	\begin{align}
		\gamma_n^2 &\le \gamma_n \condition^{-1} + (1-\gamma_n)\condition_{n-1}^{-1}
		\qquad \forall n\ge 1
		\label{eq: morphing speed equation using condition}
	\end{align}
	\end{subequations}
	and
	\begin{align}
		y_n
		&= \frac{\lbound_n\weights_{n-1} + \lbound_{n-1}\gamma_n z_{n-1}}{\gamma_n\lbound + \lbound_{n-1}}
	\end{align}
	are sufficient for \(J \ge - \tfrac{1}{2\ubound}\|\nabla\Loss(y_n)\|^2\).
\end{lemma}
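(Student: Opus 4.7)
The plan is to make the implicit appeal to the minimum $z_n$ of $\Phi_n$ completely explicit, expand $J$ into a quadratic form in $\nabla\mathcal{L}(y_n)$, and then see that the two conditions are precisely what is needed to kill the linear cross-terms (via the $y_n$ formula) and to absorb the remaining $\|\nabla\mathcal{L}(y_n)\|^2$ contribution (via $L\gamma_n^2 \le \mu_n$).

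First I would differentiate the recursion (\ref{eq: estimating sequence recursion}) and use that $z_n$ is characterised by $\nabla\Phi_n(z_n)=0$, together with $\nabla\Phi_{n-1}(w) = \mu_{n-1}(w-z_{n-1})$ and $\nabla\phi_n(w) = \nabla\mathcal{L}(y_n) + \mu(w-y_n)$. This yields
\begin{equation*}
    \mu_n z_n \;=\; \gamma_n \mu\, y_n + (1-\gamma_n)\mu_{n-1} z_{n-1} - \gamma_n \nabla\mathcal{L}(y_n),
\end{equation*}
and after subtracting $\mu_n y_n$ and using $\mu_n - (1-\gamma_n)\mu_{n-1} = \gamma_n \mu$ from (\ref{eq: lbound recursion}),
\begin{equation*}
    \mu_n(z_n - y_n) \;=\; (1-\gamma_n)\mu_{n-1}(z_{n-1}-y_n) - \gamma_n \nabla\mathcal{L}(y_n).
\end{equation*}
Squaring this identity gives an explicit expansion of $\tfrac{\mu_n}{2}\|y_n-z_n\|^2$ into three terms: a multiple of $\|y_n-z_{n-1}\|^2$, a cross-term $\langle \nabla\mathcal{L}(y_n), z_{n-1}-y_n\rangle$, and $\tfrac{\gamma_n^2}{2\mu_n}\|\nabla\mathcal{L}(y_n)\|^2$.

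Plugging this expansion into the definition of $J$ and collecting, the coefficient of $\|y_n-z_{n-1}\|^2$ becomes $\tfrac{(1-\gamma_n)\mu_{n-1}}{2\mu_n}[\mu_n - (1-\gamma_n)\mu_{n-1}] = \tfrac{(1-\gamma_n)\gamma_n \mu\,\mu_{n-1}}{2\mu_n}$, which is non-negative, so that term can be discarded for a lower bound. The remaining linear-in-$\nabla\mathcal{L}(y_n)$ part reads
\begin{equation*}
    \bigl\langle \nabla\mathcal{L}(y_n),\; (1-\gamma_n)(w_{n-1}-y_n) - \tfrac{\gamma_n(1-\gamma_n)\mu_{n-1}}{\mu_n}(y_n-z_{n-1})\bigr\rangle.
\end{equation*}
Forcing this bracket to vanish is a single vector equation in $y_n$; solving it and again using $\mu_n + \gamma_n\mu_{n-1} = \gamma_n\mu + \mu_{n-1}$ recovers exactly the displayed formula for $y_n$.

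With the cross-term eliminated, the only surviving contribution to $J$ is $-\tfrac{\gamma_n^2}{2\mu_n}\|\nabla\mathcal{L}(y_n)\|^2$, so $J \ge -\tfrac{1}{2L}\|\nabla\mathcal{L}(y_n)\|^2$ reduces to $\tfrac{\gamma_n^2}{\mu_n}\le \tfrac{1}{L}$, i.e.\ (\ref{eq: morphing speed equation}); dividing by $\mu_n$ and using $\kappa_n = L/\mu_n$ rewrites it as (\ref{eq: morphing speed equation using condition}). The main obstacle is purely mechanical: matching the two equivalent-looking denominators $\mu_n+\gamma_n\mu_{n-1}$ and $\gamma_n\mu+\mu_{n-1}$ at the end, and keeping the signs straight in the expansion of $\|z_n-y_n\|^2$; once one commits to treating $z_n$ as the argmin of the already-quadratic $\Phi_n$, everything else is linear algebra.
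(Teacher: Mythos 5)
Your proposal is correct and follows essentially the same route as the paper's own proof: compute $z_n$ from the first-order condition of the quadratic $\Phi_n$, rewrite $\lbound_n(z_n-y_n)$ in terms of $(z_{n-1}-y_n)$ and $\nabla\Loss(y_n)$, expand the square, discard the non-negative $\|y_n-z_{n-1}\|^2$ term, choose $y_n$ to annihilate the linear term, and read off $\ubound\gamma_n^2\le\lbound_n$ from the remaining $-\tfrac{\gamma_n^2}{2\lbound_n}\|\nabla\Loss(y_n)\|^2$. Only a cosmetic slip at the end: the reformulation (\ref{eq: morphing speed equation using condition}) comes from dividing by $\ubound$ (not by $\lbound_n$), but this does not affect the argument.
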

\begin{proof}
	See Appendix Lemma~\ref{lem-appendix: dumpster dive}.
\end{proof}
As we are allowed to select \(\gamma_n\) and \(y_n\) in iteration \(n\) we
can ensure that these equations are in fact fulfilled.

\subsection{Convergence Results}

Now while \(\Phi_0\) being an upper bound was nice to build intuition, we do
not actually need it to be an upper bound. The only thing we really need is
for equation (\ref{eq: staying ahead of the estimates}) and (\ref{eq: centered
Phi representation}) to hold for \(n=0\) so that we can do our induction. In
particular picking any \(\weights_0\) and selecting
\begin{align*}
	\Phi_0(\weights):= \Loss(\weights_0) + \tfrac{\lbound}{2}\|\weights_0 - \weights\|^2
\end{align*}
works as well. And in that case we have \(\lbound_n = \lbound\) for all \(n\),
which simplifies (\ref{eq: morphing speed equation using condition}) to
\begin{align}\label{eq: convergence rate for lbound_0=lbound}
	\gamma_{n+1} = \frac1{\sqrt{\condition}} \implies \Gamma_1^n = (1-\tfrac{1}{\sqrt{\condition}})^n.
\end{align}
With Lemma~\ref{lem: estimating sequence convergence speed} in mind this hints
at how we get our convergence rate (\ref{eq: nesterov convegence rate eigenvalue derivation})
we derived with eigenvalues back!
But since \(\lbound=0\) implies \(\condition=\infty\),
which would set our rate of convergence to one, we will continue
to entertain the general case with general \(\lbound_0\) summarized in
Algorithm~\ref{algo: general schema of optimal methods}. 
\begin{algorithm}
	\KwIn{starting point \(\weights_0\), and some \(\lbound_0>0\)}
	\(z_0\leftarrow \weights_0\);
	\(n\leftarrow 1\)\;
	\While{not converged}{
		\label{line: pick gamma}
		Select \(\gamma_n\in(0,1)\) such that
		\begin{algomathdisplay}
			\ubound\gamma_n^2 = \gamma_n\lbound+(1-\gamma_n)\lbound_{n-1}
		\end{algomathdisplay}
		\(\lbound_n\leftarrow \gamma_n\lbound+(1-\gamma_n)\lbound_{n-1}\)\;

		\(y_n\leftarrow 
		\displaystyle
		\frac{\lbound_n\weights_{n-1} + \lbound_{n-1}\gamma_n z_{n-1}}{\gamma_n\lbound + \lbound_{n-1}}\)\;
		\label{line: definition of y_n}

		Select \(\weights_n\) such that 
		\tcp*[f]{e.g. \(\weights_n=y_n-\tfrac1\ubound\nabla\Loss(y_n)\) (Lem~\ref{lem: smallest upper bound})}
		\begin{algomathdisplay}
			\Loss(\weights_n) \le \Loss(y_n) - \tfrac{1}{2\ubound}\|\nabla\Loss(y_n)\|^2
		\end{algomathdisplay}\label{line: selecting next position}
		\(z_n\leftarrow \tfrac{1}{\lbound_n}
			\left[\gamma_n\lbound y_n + (1-\gamma_n)\lbound_n z_{n-1}
			- \gamma_n\nabla\Loss(y_n)\right]
		\)
		\tcp*{(\ref{eq: center of Phi recursion})}

		\(n\leftarrow n+1\)\;
	}
	\caption{
		General Schema of Optimal Methods (by \citeauthor{nesterovLecturesConvexOptimization2018})%
		\label{algo: general schema of optimal methods}
	}
\end{algorithm}
Figure~\ref{fig: gamma path}
visualizes how larger \(\lbound_0\) than \(\lbound\) increase the \(\gamma_n\)
to be larger than zero and thus decrease the \(\Gamma_1^n\) below one. It also
explains why we should make (\ref{eq: morphing speed equation}) an equality.
\begin{figure}[h]
	\centering
	\def\svgwidth{1\textwidth}
	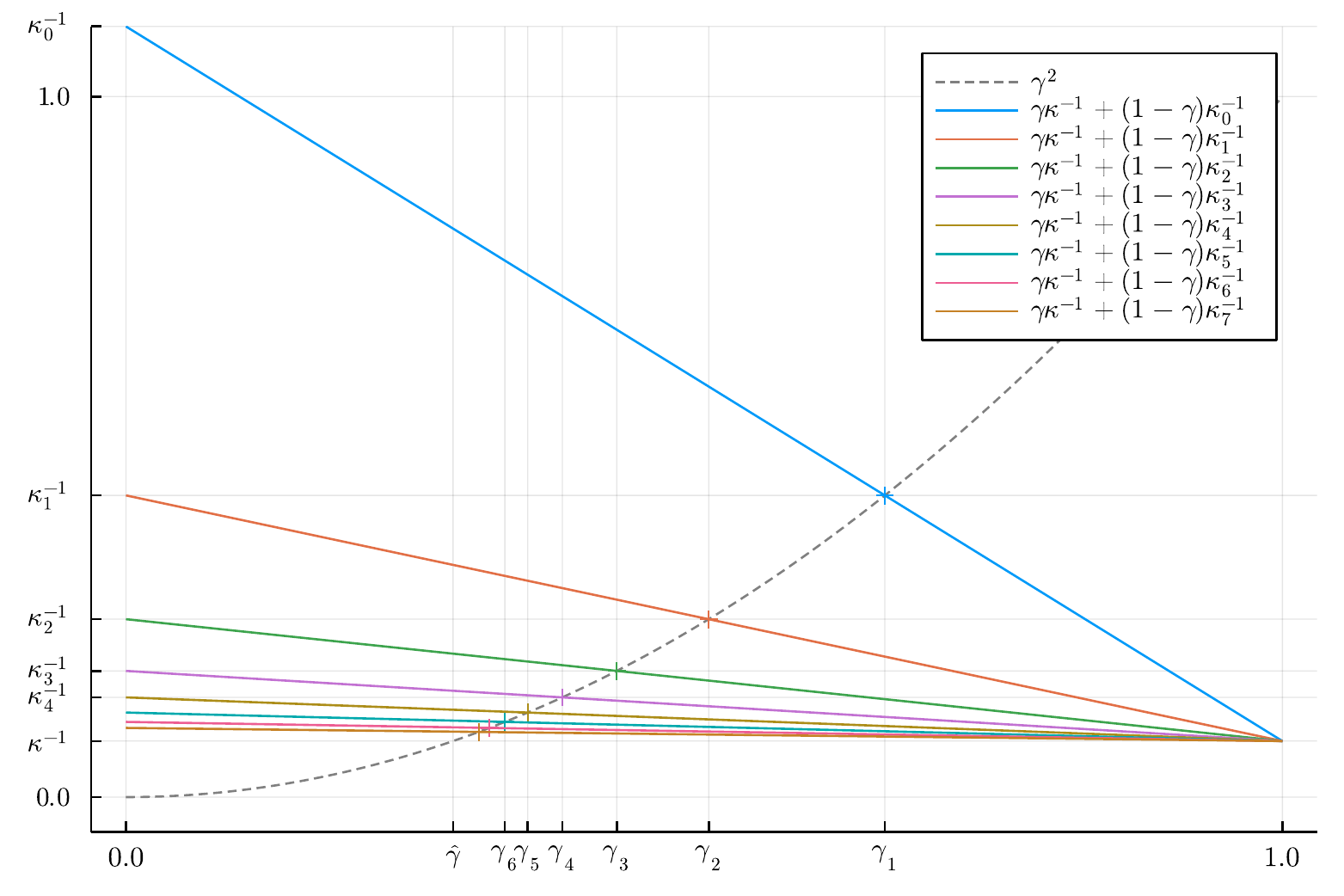
	\caption{
		Picking the largest possible \(\gamma_n\) which satisfy (\ref{eq:
		morphing speed equation using condition}) makes the inequality an
		equality \(\ubound \gamma_n^2 = \lbound_n\),
		and therefore \(\condition_n^{-1} = \gamma_n^2\).
		If we start out with \(\condition_0=\condition\), we therefore have
		\(\gamma_n=\hat{\gamma}=\sqrt{\condition^{-1}}\) for all \(n\).
		In all other cases \(\hat{\gamma}\) still acts as an attraction point.
		%
		For \(\condition^{-1}=0\) i.e.\ \(\lbound=0\)
		we see how it might be helpful to select a different \(\condition_0\)
		even though increasing \(\lbound_0\) (i.e.\ \(\condition_0^{-1}\))
		increases \(\Phi_0\) which is part of the bound on convergence.
	}
	\label{fig: gamma path}
\end{figure}

Before we address convergence rates for general \(\lbound_0\) let us take a
closer look at the strange looking \(y_{n+1}\), which were much more simple in
Definition~\ref{def: nesterov's momentum} and try to get rid of 
additional baggage like \(z_n\) and \(\lbound_n\).

\begin{lemma}\label{lem: streamline general schema of optimal methods}
	If we use the recursion
	\begin{align}\label{eq-lem-assmpt: weigth recursion}
		\weights_n = y_n - \tfrac1\ubound\Loss(y_n)
	\end{align}
	in Algorithm~\ref{algo: general schema of optimal methods} line~\ref{line:
	selecting next position}, then we can simplify \(z_n\) and \(y_n\)
	to
	\begin{align}
		z_n
		&= \weights_{n-1} + \tfrac1{\gamma_n}(\weights_n-\weights_{n-1})
		\qquad \forall n\ge 1\\
		y_{n+1}
		&=\weights_n + \momCoeff_n(\weights_n-\weights_{n-1}) \qquad \forall n\ge 0
	\end{align}
	for \(\weights_{-1}:=\weights_0=z_0\) and
	\begin{align}
		\momCoeff_n
		= \frac{\lbound_n \gamma_{n+1}(1-\gamma_n)}{\gamma_n(\gamma_{n+1}\lbound +\lbound_n)}
		= \frac{\gamma_n(1-\gamma_n)}{\gamma_n^2 + \gamma_{n+1}}
	\end{align}
\end{lemma}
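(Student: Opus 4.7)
The plan is to prove the $z_n$ identity by induction and then derive the $y_{n+1}$ formula as a direct consequence by plugging into the expression for $y_{n+1}$ from line~\ref{line: definition of y_n} of Algorithm~\ref{algo: general schema of optimal methods}. The base case $n=0$ for the $z_n$ identity is immediate since $z_0 = \weights_0 = \weights_{-1}$ by convention, so the right-hand side $\weights_{-1} + \tfrac{1}{\gamma_0}(\weights_0 - \weights_{-1})$ trivially equals $z_0$ (and for $n=1$ we can check directly from the algorithm's $z$-recursion).

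For the inductive step on $z_n$, I would start from the algorithm's update
\begin{align*}
  z_n = \tfrac{1}{\lbound_n}\left[\gamma_n\lbound\, y_n + (1-\gamma_n)\lbound_{n-1} z_{n-1} - \gamma_n\nabla\Loss(y_n)\right]
\end{align*}
and apply two key substitutions: first, $\nabla\Loss(y_n) = \ubound(y_n - \weights_n)$, which is just the hypothesis (\ref{eq-lem-assmpt: weigth recursion}) rearranged; and second, $\ubound\gamma_n^2 = \lbound_n$ from the $\gamma_n$-selection rule, so that $\gamma_n\ubound = \lbound_n/\gamma_n$. Using the $y_n$-formula on line~\ref{line: definition of y_n} to eliminate $y_n$ in favour of $\weights_{n-1}$ and $z_{n-1}$, and combining the two surviving $\weights_{n-1}$-terms via $\lbound_n = \gamma_n\lbound + (1-\gamma_n)\lbound_{n-1}$, I expect the coefficients to collapse to exactly $\weights_{n-1} + \tfrac{1}{\gamma_n}(\weights_n - \weights_{n-1})$. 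A cleaner way to see the bookkeeping is to target the equivalent identity $\weights_n = \gamma_n z_n + (1-\gamma_n)\weights_{n-1}$, which says $z_n$ is the ``extrapolation through $\weights_{n-1}$ to $\weights_n$ by factor $1/\gamma_n$''.

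Once the $z_n$-formula is established, I would substitute it into
\begin{align*}
  y_{n+1} = \frac{\lbound_{n+1}\weights_n + \lbound_n\gamma_{n+1}z_n}{\gamma_{n+1}\lbound + \lbound_n},
\end{align*}
expand $\lbound_n\gamma_{n+1}z_n$ using the just-proved identity, and collect the coefficients of $\weights_n$ and $(\weights_n - \weights_{n-1})$. The coefficient on $\weights_n$ should turn out to be exactly $1$, and the coefficient on $\weights_n-\weights_{n-1}$ is then read off as
$\momCoeff_n = \tfrac{\lbound_n\gamma_{n+1}(1-\gamma_n)}{\gamma_n(\gamma_{n+1}\lbound + \lbound_n)}$,
which is the first claimed form. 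For the second, alternative expression I would use $\lbound_n = \ubound\gamma_n^2$ and $\lbound_{n+1} = \ubound\gamma_{n+1}^2$ together with the recursion $\lbound_{n+1} = \gamma_{n+1}\lbound + (1-\gamma_{n+1})\lbound_n$ to rewrite $\gamma_{n+1}\lbound + \lbound_n = \ubound\gamma_{n+1}(\gamma_{n+1}+\gamma_n^2)$; the factors of $\ubound$ and $\lbound_n = \ubound\gamma_n^2$ then cancel and leave $\momCoeff_n = \tfrac{\gamma_n(1-\gamma_n)}{\gamma_n^2+\gamma_{n+1}}$.

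The main obstacle will not be conceptual but bookkeeping: several different constants ($\lbound,\ubound,\lbound_{n-1},\lbound_n,\gamma_n$) appear multiplicatively in numerators and denominators, and one must use the two independent relations $\lbound_n = \gamma_n\lbound + (1-\gamma_n)\lbound_{n-1}$ and $\ubound\gamma_n^2 = \lbound_n$ in just the right places to get the collapses above. The cleanest route is probably to (i) clear denominators throughout, (ii) eliminate $\lbound$ in favour of $\lbound_n,\lbound_{n-1},\gamma_n$ using the convexity-recursion, and only at the very end (iii) use $\ubound\gamma_n^2=\lbound_n$ to turn the gradient-step term into a $1/\gamma_n$ factor.
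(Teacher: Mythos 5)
Your plan is essentially the paper's proof with the substitution order reversed, plus a proposed induction that turns out to be unnecessary. The paper first solves line~\ref{line: definition of y_n} for $\lbound_{n-1}z_{n-1}$ and substitutes that into the $z_n$-update (\ref{eq: center of Phi recursion}); this way $z_{n-1}$ never reappears, the coefficients collapse to $\weights_{n-1} + \tfrac1{\gamma_n}\bigl(y_n - \tfrac{1}{\ubound}\nabla\Loss(y_n) - \weights_{n-1}\bigr)$, and (\ref{eq-lem-assmpt: weigth recursion}) is invoked once at the very end. You instead substitute $\nabla\Loss(y_n) = \ubound(y_n-\weights_n)$ first and then eliminate $y_n$ via line~\ref{line: definition of y_n}, which temporarily reintroduces $z_{n-1}$; when you carry out the bookkeeping you will find that the $z_{n-1}$ coefficient vanishes identically, because $\gamma_n^2\lbound - \lbound_n = -(1-\gamma_n)(\gamma_n\lbound+\lbound_{n-1})$, so no inductive hypothesis is ever actually used and the induction framing should simply be dropped. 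The paper's ordering is a little cleaner precisely because it avoids having to observe that cancellation. Your plan for the $y_{n+1}$ identity (plug the $z_n$-formula into line~\ref{line: definition of y_n} shifted by one, collect coefficients) and your algebraic route to both expressions for $\momCoeff_n$ via $\gamma_{n+1}\lbound + \lbound_n = \ubound\gamma_{n+1}(\gamma_{n+1}+\gamma_n^2)$ match the paper's.
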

\begin{proof}
	See Appendix Lemma~\ref{lem-appendix: streamline general schema of optimal methods}
\end{proof}

With the streamlined recursion from Lemma~\ref{lem: streamline general schema of optimal methods}
we only need \(\lbound_n\) for the recursion of \(\gamma_{n+1}\) itself at this
point. So we can now use the equivalent recursion (\ref{eq: morphing speed
equation using condition}) with \(\condition_n:=\ubound/\lbound_n\).
By defining \(\gamma_0:=1/\sqrt{\condition_0}\) we can ensure
\begin{align*}
	\gamma_n^2 = \condition_n^{-1} \qquad  \text{for all } n\ge0.
\end{align*}
This in turn allows us to get rid of the \(\condition_n\). Applying the p-q
formula and discarding the negative option, results in the explicit recursion
\begin{align*}
	\gamma_{n+1}
	= \tfrac12 \left[
		-(\gamma_n^2 - \condition^{-1})
		+ \sqrt{(\gamma_n^2 - \condition^{-1})^2 + 4 \gamma_n^2}
	\right].
\end{align*}

Since we get a fixed \(\hat{\gamma}=1/\sqrt{\condition}\) for
\(\lbound_0=\lbound\), the momentum coefficient is fixed in this case and equal to 
\begin{align*}
	\hat{\momCoeff} = \frac{1-1/\sqrt{\condition}}{1+1/\sqrt{\condition}}
\end{align*}
which we recognize from our eigenvalue analysis.

\begin{algorithm}
	\KwIn{starting point \(\weights_0\), and some \(\condition_0>0\)}
	\(\gamma_0 \leftarrow 1/\sqrt{\condition_0}\)
	\tcp*{
		\(\gamma_n^2=\condition_n^{-1}\) in (\ref{eq: morphing speed equation
		using condition}) makes \(\condition_n\) obsolete
	}
	\(\weights_{-1} \leftarrow \weights_0\)\;
	\(n\leftarrow 0\)\;
	\While{not converged}{
		\(\gamma_{n+1} \leftarrow \tfrac12 \left[
		(\condition^{-1} - \gamma_n^2)
		+ \sqrt{(\condition^{-1} - \gamma_n^2)^2 + 4 \gamma_n^2}
		\right]\)\;
		\tcp{\(=1/\sqrt{\condition}\) if \(\condition_0=\condition\)}

		\(\momCoeff_n \leftarrow
		\displaystyle
		\frac{\gamma_n(1-\gamma_n)}{\gamma_{n+1} + \gamma_n^2}\)
		\tcp*{\(= \frac{1-1/\sqrt{\condition}}{1+1/\sqrt{\condition}}\) if \(\condition_0=\condition\)}

		\(y_{n+1} \leftarrow \weights_n + \momCoeff_n(\weights_n-\weights_{n-1})\)
		\tcp*{\(\beta_0\) does not matter!}

		\(\weights_{n+1} \leftarrow y_{n+1} - \tfrac{1}{\ubound}\nabla\Loss(y_{n+1})\)\;

		\(n\leftarrow n+1\)\;
	}
	\caption{Dynamic Nesterov Momentum\label{algo: dynamic nesterov momentum}}
\end{algorithm}

What is left to do is an analysis of convergence, in particular for the case
where \(\condition_0\neq\condition\), which is especially applicable for \(\lbound=0\).
Figure~\ref{fig: gamma path} provides some intuition why it might also be
useful to have \(\condition_0>\condition\) when \(\lbound\neq0\).

\begin{theorem}\label{thm: nesterov momentum convergence rates}
	Let \(\Loss \in \strongConvex{\lbound}{\ubound}\) (includes
	\(\Loss\in\lipGradientSet{\ubound}\) with \(\lbound=0\)). For starting point
	\(\weights_0\) and some \(\condition_0\) (equivalently \(\lbound_0\) or
	\(\gamma_0\)) let \(\weights_n\) be selected via Algorithm~\ref{algo: general schema of optimal methods}
	or \ref{algo: dynamic nesterov momentum}, then for \(\Gamma^n := \Gamma_1^n =
	\prod_{k=1}^n(1-\gamma_k)\) we have
	\begin{align*}
		\Loss(\weights_n) - \Loss(\weights_*)
		&\le \Gamma^n\left[
			\Loss(\weights_0) -\Loss(\weights_*)
			+ \tfrac{\lbound_0}2\|\weights_0-\weights_*\|^2
		\right] \\
		&\le \Gamma^n \tfrac{\ubound + \lbound_0}{2}\|\weights_0-\weights_*\|^2
	\end{align*}
	where we have for \(\condition_0^{-1}\in (\condition^{-1}, 3 + \condition^{-1}]\)
	\begin{align*}
		\Gamma^n
		\le \frac{4\condition^{-1}}{(\condition_0^{-1}-\condition^{-1})\left[
			\exp\left(\frac{n+1}{2\sqrt{\condition}}\right)
			-\exp\left(-\frac{n+1}{2\sqrt{\condition}}\right)
		\right]^2}
		\le \frac{4}{(\condition_0^{-1}-\condition^{-1})(n+1)^2}
	\end{align*}
	and for \(\condition_0 = \condition\)
	\begin{align*}
		\Gamma^n = (1-\sqrt{\condition^{-1}})^n
		\le \exp\left(-\frac{n}{\sqrt{\condition}}\right)
	\end{align*}
\end{theorem}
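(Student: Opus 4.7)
My plan is to combine the estimating-sequence machinery developed just above the theorem with an explicit analysis of the scalar recursion for \(\gamma_n\). First I would initialize the algorithm so that \(z_0 = \weights_0\), giving \(\Phi_0(\weights) = \Loss(\weights_0) + (\lbound_0/2)\|\weights_0 - \weights\|^2\), which trivially satisfies the ``staying ahead'' condition (\ref{eq: staying ahead of the estimates}) at \(n = 0\) because \(\weights_0\) is the minimizer of \(\Phi_0\). By the dumpster-dive lemma the lines in Algorithm~\ref{algo: general schema of optimal methods} were chosen precisely so that (\ref{eq: staying ahead of the estimates}) propagates, so Lemma~\ref{lem: estimating sequence convergence speed} applies and yields
\[
\Loss(\weights_n) - \Loss(\weights_*) \le \Gamma^n\bigl[\Phi_0(\weights_*) - \Loss(\weights_*)\bigr] = \Gamma^n\bigl[\Loss(\weights_0) - \Loss(\weights_*) + \tfrac{\lbound_0}2\|\weights_0-\weights_*\|^2\bigr].
\]
The second inequality of the theorem is then immediate from (\ref{eq: loss implies gradient convergence}), which gives \(\Loss(\weights_0) - \Loss(\weights_*) \le (\ubound/2)\|\weights_0-\weights_*\|^2\).

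Next I would bound \(\Gamma^n\). The key observation is that, writing (\ref{eq: morphing speed equation using condition}) at equality as \(\gamma_n^2 = \gamma_n/\condition + (1-\gamma_n)\gamma_{n-1}^2\) and subtracting \(1/\condition\) on both sides, one gets the telescoping identity
\[
\gamma_n^2 - 1/\condition = (1-\gamma_n)\bigl(\gamma_{n-1}^2 - 1/\condition\bigr),
\]
so \((\gamma_n^2 - 1/\condition)/\Gamma^n\) is independent of \(n\) and equals \(\condition_0^{-1} - \condition^{-1}\). When \(\condition_0 = \condition\) this ratio is zero, forcing \(\gamma_n \equiv 1/\sqrt{\condition}\), and therefore \(\Gamma^n = (1-1/\sqrt{\condition})^n \le \exp(-n/\sqrt{\condition})\) by \(1-x \le e^{-x}\), proving the exponential rate.

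For the general case \(\condition_0^{-1} > \condition^{-1}\) I would set \(t_n := 1/\sqrt{\Gamma^n}\). The elementary inequality \(1/\sqrt{1-x} \ge 1 + x/2\) turns \(\Gamma^n = \Gamma^{n-1}(1-\gamma_n)\) into \(t_n - t_{n-1} \ge \gamma_n t_n /2\), and substituting \(\gamma_n^2 = 1/\condition + (\condition_0^{-1}-\condition^{-1})/t_n^2\) yields the difference inequality
\[
2(t_n - t_{n-1}) \ge \sqrt{t_n^2/\condition + (\condition_0^{-1}-\condition^{-1})}.
\]
Dropping the \(t_n^2/\condition\) term already gives \(t_n \ge 1 + n\sqrt{\condition_0^{-1}-\condition^{-1}}/2\), and the assumption \(\condition_0^{-1} - \condition^{-1}\le 3\) lets me absorb the constant ``\(+1\)'' coming from \(t_0 = 1\) into the factor \((n+1)\), delivering \(\Gamma^n \le 4/[(\condition_0^{-1}-\condition^{-1})(n+1)^2]\).

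The main obstacle will be the sharper \(\sinh^{-2}\) bound, which requires keeping the \(t_n^2/\condition\) term in the difference inequality above. This amounts to comparing the discrete recursion to the ODE \(2t' = \sqrt{t^2/\condition + a}\) (with \(a := \condition_0^{-1}-\condition^{-1}\)), whose solution is \(t(n) = \sqrt{a\condition}\,\sinh(n/(2\sqrt{\condition}) + C)\). The plan is to show by induction that \(t_n \ge \sqrt{a\condition}\,\sinh((n+1)/(2\sqrt{\condition}))\) using concavity of \(x\mapsto \sqrt{x^2/\condition + a}\) and the addition formula \(\sinh(x+y) - \sinh(x) = 2\sinh(y/2)\cosh(x+y/2)\); the fiddly part is verifying that the choice of offset ``\(+1\)'' is consistent with the base case \(t_0 = 1\) under the restriction \(a\le 3\), which I expect to reduce to the easily checked inequality \(\sqrt{a\condition}\sinh(1/(2\sqrt{\condition})) \le 1\).
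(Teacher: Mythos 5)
Your proposal is correct in its completed parts and takes essentially the same route as the paper: the two loss bounds come from Lemma~\ref{lem: estimating sequence convergence speed} exactly as in the paper's proof, your telescoping identity is the paper's \(\gamma_k^2=\condition^{-1}+\Gamma^k(\condition_0^{-1}-\condition^{-1})\), your difference inequality \(2(t_n-t_{n-1})\ge\sqrt{t_n^2/\condition+a}\) for \(t_n=1/\sqrt{\Gamma^n}\), \(a:=\condition_0^{-1}-\condition^{-1}\), is what the paper derives via the ``third binomial formula'' in Lemma~\ref{lem-appendix: convergence rate bounds for estimating sequences}, and your comparison function \(\sqrt{a\condition}\,\sinh\bigl(\tfrac{n+1}{2\sqrt{\condition}}\bigr)\) is literally the paper's \(f(n)\), including the base-case reduction \(\sqrt{a\condition}\,\sinh(1/(2\sqrt{\condition}))\le 1\) under \(a\le 3\). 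One slip in the sketched final step: \(x\mapsto\sqrt{x^2/\condition+a}\) is convex, not concave (its second derivative is \(a\condition^{-1}(x^2/\condition+a)^{-3/2}>0\)), but no concavity is needed — the induction closes with monotonicity of \(g(x):=x-\tfrac12\sqrt{x^2/\condition+a}\) (so \(g(s_n)\le s_{n-1}\le t_{n-1}\le g(t_n)\) forces \(s_n\le t_n\)) plus the tangent-line bound \(s_n-s_{n-1}\le s'(n)=\tfrac12\sqrt{s_n^2/\condition+a}\), which is just convexity of \(\sinh\) in \(n\); this is exactly how the paper finishes (\(g(f(n))=f(n)-f'(n)\le f(n-1)\)), whereas your addition-formula variant would still need a small extra estimate comparing \(\cosh((n+1)\delta)\) with \(\cosh((n+\tfrac12)\delta)\).
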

\begin{proof}
	We can use
	\begin{align*}
		\Phi_0(\weights) := \Loss(\weights_0) + \tfrac{\lbound_0}{2}\|\weights -\weights_0\|^2
	\end{align*}
	to ensure \(\weights_0=z_0\) for Lemma~\ref{lem: streamline general
	schema of optimal methods} needed for Algorithm~\ref{algo: dynamic nesterov
	momentum}. We can then apply Lemma~\ref{lem: estimating sequence
	convergence speed} to obtain
	\begin{align*}
		\Loss(\weights_n)-\Loss(\weights_*)
		&\le \Gamma_1^n(\Phi_0(\weights_*)-\Loss(\weights_*))\\
		&= \Gamma^n\left[
			\Loss(\weights_0) -\Loss(\weights_*)
			+ \tfrac{\lbound_0}2\|\weights_0-\weights_*\|^2
		\right].
	\end{align*}	
	We can bound the original loss with the weight difference
	(Lemma~\ref{lem: Lipschitz Gradient implies taylor inequality})
	\begin{align*}
		\Loss(\weights_0)-\Loss(\weights_*)\le \tfrac{\ubound}{2}\|\weights_0-\weights\|^2
	\end{align*}
	to get the second upper bound.
	Now to the upper bound on \(\Gamma^n\). We have already covered the case
	\(\condition_0=\condition\) in (\ref{eq: convergence rate for lbound_0=lbound}),
	(with \(1+x\le\exp(x)\) in mind). The general case is covered in the
	Lemma~\ref{lem-appendix: convergence rate bounds for estimating sequences}
\end{proof}

\section{Hyperparameter Selection}

Knowing the condition number, one might wish to optimize the convergence rate
of Theorem~\ref{thm: nesterov momentum convergence rates} over \(\condition_0\).
For \(\condition_0^{-1}\in(\condition^{-1}, 3+\condition^{-1}]\) this entails
minimizing
\begin{align}\label{eq: convergence rate optimization}
	\frac{\ubound+\lbound_0}{\condition_0^{-1}-\condition^{-1}}
	= \ubound \frac{1+\condition_0^{-1}}{\condition_0^{-1}-\condition^{-1}}
	= \ubound \frac{1+\frac{1}{\condition_0^{-1}}}{1-\frac{\condition^{-1}}{\condition_0^{-1}}}
\end{align}
Since both, the enumerator decreases with \(\condition_0^{-1}\), and the
denominator increases with \(\condition_0^{-1}\), we want to select
\(\condition_0^{-1}\) as large as possible.

Looking at Figure~\ref{fig: gamma path} this should make us suspicious. There
is nothing special about \(\condition_0^{-1}=3+\condition^{-1}\) when it
comes to \(\Gamma^n\). The \(\gamma_n\) are still increasing afterwards. And
from (\ref{eq: convergence rate optimization}) it seems that the trade-off of
higher \(\lbound_0\) seems to be worth it. So maybe we should have another go
at bounding \(\Gamma^n\). But instead of diving head first into even more
technical calculations, we could also consider the fact that
\begin{align*}
	\Gamma^n
	= \prod_{k=1}^n (1-\gamma_k)
	= (1-\gamma_1)\Gamma_2^n 
\end{align*}
and simply use Lemma~\ref{lem-appendix: convergence rate bounds for estimating sequences}
with \(\tilde{\gamma}_k = \gamma_{k+1}\) to bound \(\Gamma_2^n=\tilde{\Gamma}^{n-1}\)
using the fact that
\begin{align*}
	\tilde{\condition}_0^{-1} = \condition_1^{-1} = \gamma_1^2 \le 1 \le 3+\condition^{-1}.
\end{align*}
With this trick we get\footnote{Once could potentially iterate on this trick to obtain
tighter bounds on \(\Gamma^n\) since we are using our imprecise upper bound only on the
remaining product after factoring out parts of it.}
\begin{align*}
	\Gamma^n
	&\le \frac{4\condition^{-1}(1-\gamma_1)}{(\gamma_1^2-\condition^{-1})\left[
		\exp\left(\frac{n}{2\sqrt{\condition}}\right)
		-\exp\left(-\frac{n}{2\sqrt{\condition}}\right)
	\right]^2}
	\le \frac{4(1-\gamma_1)}{(\gamma_1^2-\condition^{-1})n^2}.
\end{align*}
With Theorem~\ref{thm: nesterov momentum convergence rates} the relevant
constant to optimize is
\begin{align*}
	\frac{4(1-\gamma_1)}{\gamma_1^2-\condition^{-1}}\frac{\ubound+\lbound_0}{2}
	= \frac{2(1-\gamma_1)}{\gamma_1^2-\condition^{-1}}(1+\condition_0^{-1})\ubound
\end{align*}
and we can use the \(\gamma_n\) recursion (\ref{eq: morphing speed equation using condition})
to rewrite \(\condition_0^{-1}\) as
\begin{align*}
	\condition_0^{-1} = \frac{\gamma_1^2 - \gamma_1\condition^{-1}}{1-\gamma_1}.
\end{align*}
Using this representation we want to minimize
\begin{align*}
	c(\gamma_1)
	:=2\ubound\frac{(1-\gamma_1) + (\gamma_1^2 - \gamma_1\condition^{-1})}{\gamma_1^2 -\condition^{-1}}
	\qquad \gamma_1\in(\sqrt{\condition^{-1}}, 1).
\end{align*}
By taking the derivative of \(c\) one can show that it is monotonously
decreasing (Lemma~\ref{lem-appendix: derivative loss difference constant in
gamma}). Taking \(\gamma_1=1\) would imply an infinite
\(\kappa_0^{-1}\) (and \(\lbound_0\)) and therefore seems to prevent us from
doing so at first glance, but all relevant things like the loss difference upper
bound
\begin{align}\label{eq: gamma_1 dependent loss difference}
	\Loss(\weights_n)-\Loss(\weights_*)
	\le c(\gamma_1)
	\frac{\condition^{-1}\|\weights_0-\weights_*\|^2}{\left[
		\exp\left(\frac{n}{2\sqrt{\condition}}\right)
		-\exp\left(-\frac{n}{2\sqrt{\condition}}\right)
	\right]^2}
	\le c(\gamma_1)\frac{\|\weights_0-\weights_*\|}{n^2},
\end{align}
the momentum \(\momCoeff_n\) (except for \(\momCoeff_0\) which does not
matter) and our weight iterates \(\weights_n\) are continuous in \(\gamma_1\) at
one. The limit
\begin{align*}
	c(1)=2\ubound
\end{align*}
is therefore reasonable.

\begin{algorithm}
	\KwIn{
		starting point \(\weights_0\) (condition number \(\condition\),
		lipschitz constant \(\ubound\))
	}
	\(y_1 \leftarrow \weights_0\)\;
	\(\weights_1 \leftarrow y_1 - \tfrac{1}{\ubound}\nabla\Loss(y_1)\)\;

	\(\gamma_1 \leftarrow 1\); \(n\leftarrow 1\)\;
	\While{not converged}{
		\(\gamma_{n+1} \leftarrow \tfrac12 \left[
		(\condition^{-1} - \gamma_n^2)
		+ \sqrt{(\condition^{-1} - \gamma_n^2)^2 + 4 \gamma_n^2}
		\right]\)\;

		\(\momCoeff_n \leftarrow
		\displaystyle
		\frac{\gamma_n(1-\gamma_n)}{\gamma_{n+1} + \gamma_n^2}\)\;

		\(y_{n+1} \leftarrow \weights_n + \momCoeff_n(\weights_n-\weights_{n-1})\)\;

		\(\weights_{n+1} \leftarrow y_{n+1} - \tfrac{1}{\ubound}\nabla\Loss(y_{n+1})\)\;

		\(n\leftarrow n+1\)\;
	}
	\caption{Optimal Nesterov Momentum\label{algo: optimal nesterov momentum}}
\end{algorithm}
\begin{corollary}
	For \(\Loss \in \strongConvex{\lbound}{\ubound}\) (includes
	\(\Loss\in\lipGradientSet{\ubound}\) with \(\lbound=0\)), \(\weights_n\)
	selected by Algorithm~\ref{algo: optimal nesterov momentum}
	we get
	\begin{align*}
		\Loss(\weights_n)-\Loss(\weights_*)
		\le
		\frac{2\lbound\|\weights_0-\weights_*\|^2}{\left[
			\exp\left(\frac{n}{2\sqrt{\condition}}\right)
			-\exp\left(-\frac{n}{2\sqrt{\condition}}\right)
		\right]^2}
		\le\frac{2\ubound\|\weights_0-\weights_*\|}{n^2},
	\end{align*}
	where the term in the middle should be ignored for \(\lbound=0\).
\end{corollary}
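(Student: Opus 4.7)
The plan is to recognise Algorithm~\ref{algo: optimal nesterov momentum} as Algorithm~\ref{algo: dynamic nesterov momentum} specialised to \(\gamma_1=1\), and then to read off the two claims directly from the bound (\ref{eq: gamma_1 dependent loss difference}). The only subtlety is that \(\gamma_1=1\) corresponds formally to \(\condition_0^{-1}=\infty\), i.e.\ \(\lbound_0=\infty\), which cannot be fed into Algorithm~\ref{algo: dynamic nesterov momentum} directly, so a brief continuity argument is needed.

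First I would verify that setting \(\gamma_1=1\) reproduces the initialisation \(y_1=\weights_0\), \(\weights_1=y_1-\tfrac{1}{\ubound}\nabla\Loss(y_1)\) of Algorithm~\ref{algo: optimal nesterov momentum}: this is immediate from the streamlined recursions in Lemma~\ref{lem: streamline general schema of optimal methods}, since with \(\weights_{-1}=\weights_0\) the momentum correction on \(y_1\) vanishes regardless of \(\momCoeff_0\). For every \(n\ge 1\) the recursions for \(\gamma_{n+1}\), \(\momCoeff_n\), \(y_{n+1}\) and \(\weights_{n+1}\) are rational expressions in \(\gamma_n\) with denominators bounded away from zero in a neighbourhood of \(\gamma_1=1\), so the iterates produced by Algorithm~\ref{algo: optimal nesterov momentum} are the pointwise limit of those produced by Algorithm~\ref{algo: dynamic nesterov momentum} as \(\gamma_1\nearrow 1\).

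With this continuity, the left-hand side of (\ref{eq: gamma_1 dependent loss difference}) passes to the limit, while the right-hand side depends on \(\gamma_1\) only through the constant \(c(\gamma_1)\), whose limit \(c(1)=2\ubound\) was computed in the preceding discussion. Substituting and using \(\ubound\cdot\condition^{-1}=\lbound\) gives
\[
\Loss(\weights_n)-\Loss(\weights_*)\le\frac{2\lbound\,\|\weights_0-\weights_*\|^2}{\bigl[\exp(\tfrac{n}{2\sqrt{\condition}})-\exp(-\tfrac{n}{2\sqrt{\condition}})\bigr]^2},
\]
which is the middle bound. When \(\lbound=0\) the product \(\ubound\cdot\condition^{-1}\) is the indeterminate \(0\cdot\infty\) and the middle expression is vacuous, matching the caveat in the statement. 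The coarser \(O(1/n^2)\) estimate then follows from \(\exp(x)-\exp(-x)=2\sinh(x)\ge 2x\) for \(x\ge 0\) applied with \(x=\tfrac{n}{2\sqrt{\condition}}\): the bracket squared is at least \(n^2/\condition\), so the fraction is bounded by \(2\lbound\,\condition\,\|\weights_0-\weights_*\|^2/n^2=2\ubound\,\|\weights_0-\weights_*\|^2/n^2\).

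The main obstacle is the limit argument itself, because (\ref{eq: gamma_1 dependent loss difference}) was obtained via the initial estimate function \(\Phi_0(\weights)=\Loss(\weights_0)+\tfrac{\lbound_0}{2}\|\weights-\weights_0\|^2\), whose quadratic term diverges as \(\lbound_0\to\infty\); the whole point of packaging the dependence through \(c(\gamma_1)\) is that this divergence is cancelled by the compensating factor \(\Gamma^n\to 0\) and a finite limit remains. A cleaner, limit-free alternative would be to redo the estimating-sequence construction starting at index \(n=1\) with \(\Phi_1(\weights):=\Loss(\weights_1)+\tfrac{\lbound}{2}\|\weights-z_1\|^2\) for a \(z_1\) chosen so that (\ref{eq: staying ahead of the estimates}) holds at \(n=1\), and then invoking the telescoping argument of Lemma~\ref{lem: estimating sequence convergence speed} from \(\Phi_1\) onward; this avoids the limit entirely but otherwise uses the same estimates.
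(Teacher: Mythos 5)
Your proof is correct and takes the same approach as the paper: it plugs $c(1)=2\ubound$ into (\ref{eq: gamma_1 dependent loss difference}), justified by the continuity argument that the paper itself sketches in the text immediately preceding the corollary, and your derivation of the final $O(1/n^2)$ bound via $\sinh(x)\ge x$ is exactly right. One small imprecision: for $\lbound=0$ the middle expression degenerates to $0/0$ rather than $0\cdot\infty$ (since $\condition^{-1}\to 0$ in the numerator and the bracket $\exp(\tfrac{n}{2\sqrt{\condition}})-\exp(-\tfrac{n}{2\sqrt{\condition}})\to 0$ in the denominator), but your conclusion that it is vacuous stands.
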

\begin{proof}
	Plugging \(c(1)=2\ubound\) into (\ref{eq: gamma_1 dependent loss difference}).
\end{proof}

\subsection{Practical Hyperparameter Selection}

While Algorithm~\ref{algo: optimal nesterov momentum} eliminates another
hyperparameter, we still need to somehow know the Lipschitz constant \(\ubound\)
of the gradient and the condition number \(\condition\).

The Lipschitz constant can be estimated with Backtracking (Section~\ref{sec:
backtracking}). With the condition number we could just be conservative and set
\(\condition^{-1}=0\). This sacrifices linear convergence, but in view of the
asymptotic convergence rates stochasticity causes, this might not be so bad. But
in situations where the ``transient phase'' (before stochasticity kills fast
progression) is significant, this might not be a satisfactory answer.

\begin{figure}[h]
	\centering
	\def\svgwidth{1\textwidth}
	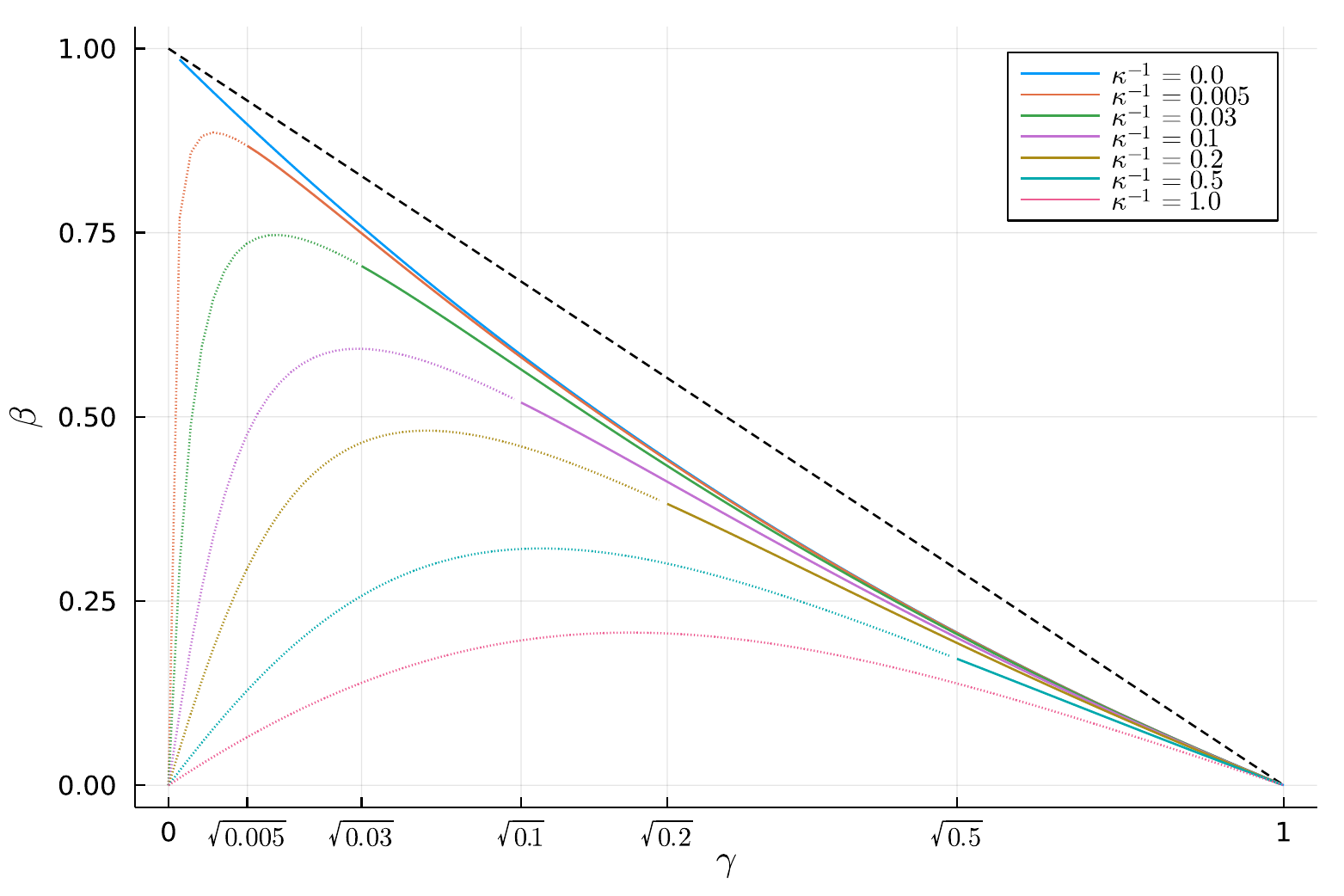
	\caption{
		Since momentum \(\momCoeff_n\) is a function of \((\gamma_n,
		\gamma_{n+1})\), and \(\gamma_{n+1}\) is a function of
		\((\gamma_n,\condition^{-1})\), momentum can be viewed as a function of
		\((\gamma, \condition^{-1})\). This function
		\(\momCoeff(\gamma,\condition^{-1})\) is plotted here over \(\gamma\) for
		different \(\condition^{-1}\). Since the sequence of \(\gamma_n\) stays
		above \(\hat{\gamma}=\sqrt{\condition^{-1}}\) the line is dotted below
		this threshold.
	}
	\label{fig: momentum gamma plot}
\end{figure}

From our eigenvalue analysis, we know that smaller eigenvalues will generally
still converge (even if we neglect them) although at a slower rate. So we
could just guess a condition and see how it goes. And if we have a look at
Figure~\ref{fig: momentum gamma plot} a condition guess will simply stop the
progression of \(\gamma_n\) towards zero at this point and consequently stop the
progression of \(\momCoeff_n\) towards one. If we dislike the resulting
convergence speed, we can probably just increase the condition number on the
fly which causes the
\(\gamma_n\) to fall further and momentum to continue its move towards one.
In Figure~\ref{fig: momentum progression} we can see that it only takes very
few iterations until we converge to
\(\hat{\momCoeff}=\frac{1-\sqrt{\condition^{-1}}}{1+\sqrt{\condition^{-1}}}\)
if \(\condition^{-1}>0\).

\begin{figure}[h]
	\centering
	\def\svgwidth{1\textwidth}
	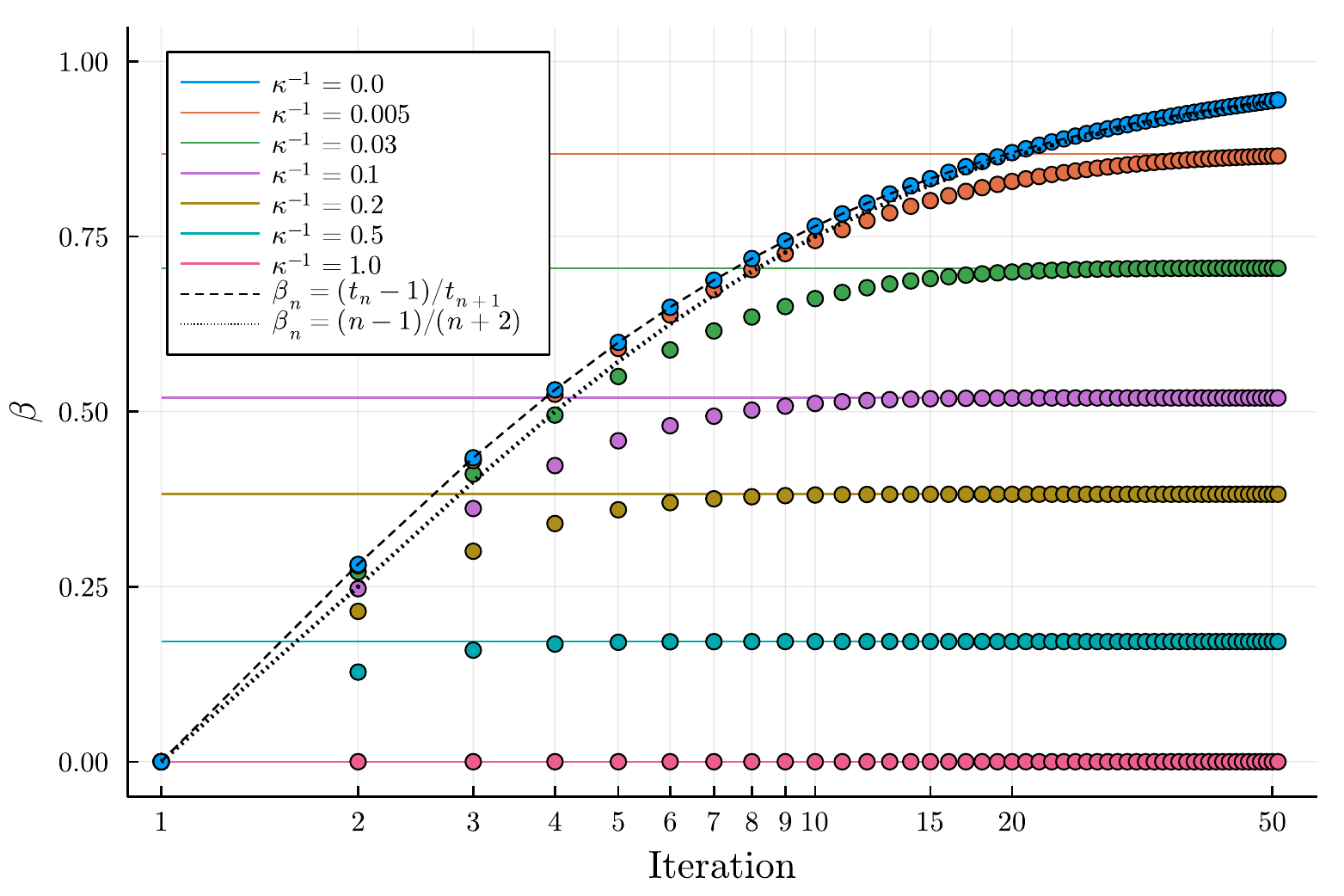
	\caption{
		Starting from \(\gamma_1=1\) the progression of \(\momCoeff_n\) for
		different condition numbers \(\condition\). The heuristics
		\(\momCoeff_n=\frac{t_n-1}{t_{n+1}}\) where \(t_1=1\) and
		\(t_n=\tfrac12(1+\sqrt{1+4t_n^2})\) and \(\momCoeff_n=\frac{n-1}{n+2}\)
		seem to be popular in practice \parencite{rechtOptimization2013}.
	}
	\label{fig: momentum progression}
\end{figure}

What we can also see in Figure~\ref{fig: momentum progression} is that
\begin{align}\label{eq: simplified dynamic nesterov momentum}
	\momCoeff_n = \frac{n-1}{n+2} = 1-\frac{3}{n+2}
\end{align}
is a decent approximation for the \(\momCoeff_n\) progression when \(\condition^{-1}=0\).
If we recall our original motivation, \(\momCoeff\) was \(1-\lr\friction\) where
\(\friction\) was a friction parameter. \textcite{suDifferentialEquationModeling2015}
show that in the limit of \(\lr\to0\), nesterov momentum with momentum
coefficient (\ref{eq: simplified dynamic nesterov momentum}) approximates the
differential equation
\begin{align*}
	\ddot{\weights} + \frac{3}{t}\dot{\weights} + \nabla\Loss(\weights) = 0
\end{align*}
i.e.\ decreasing friction \(\friction=3/t\) over time. They then use this
continuous approximation to derive better convergence rates (\(O(1/n^3)\) instead of
\(O(1/n^2)\)) when this conservative (\(\condition^{-1}=0\)) assumption is used,
but the loss function is strongly convex. They also propose a scheme of
``restarting'' Nesterov's Momentum to achieve linear convergence in the
strongly convex case without knowing the condition number (a restart effectively
resets momentum back to a lower level).

Last but not least recall that \(\lrSq=1/\ubound\) is the \emph{squared}
discretization \(\lr^2\) of the ODE which can deceive you into thinking that the
discretization is similar given that \ref{eq: gradient descent} converges for \(\lr \in
(0,2/\ubound)\). But the implied discretization \(\lr=1/\sqrt{\ubound}\) of
Nesterov's Momentum is much larger in general. This provides some intuition
for where the speedup comes from.

\fxwarning{deal with floats}

\section{Stochasticity}

It has not yet fully crystalized whether or not Momentum is beneficial in a
stochastic setting. Since movement is determined by the integral (sum) of
gradients in this setting, one can argue intuitively, that we might get a sort
of averaging effect, which should reduce variance and be beneficial for convergence
\parencite[e.g.][p. 69]{bottouOptimizationMethodsLargeScale2018}. \textcite{gohWhyMomentumReally2017}
on the other hand claim that ``increasing momentum, contrary to popular
belief, causes the errors to compound'', most likely citing
\textcite{devolderFirstorderMethodsSmooth2014}.

Now as we can already achieve optimal performance (in the BLUE sense) for
variance reduction in SGD (cf. Subsection~\ref{subsec: variance reduction}),
it is also intuitive why momentum might not be beneficial in stochastic settings.
But since it simply introduces additional parameters (\(\momCoeff\) could be zero
to get GD again), it provides a bit more freedom to play with.
\textcite{flammarionAveragingAccelerationThere2015} show that this freedom in
parameters results in a close relation of momentum to the averaging GD, which
we have discussed in Section~\ref{sec: SGD with Averaging}.

}
	{

\chapter{Other Methods}\label{chap: other methods}

\section{Heuristics for Adaptive Learning Rates}
\label{sec: heuristics for adpative learning rates}

There are a number of widely used heuristics which are provided in the
default optimizer selection of machine learning frameworks such as Tensorflow.

As we have only covered \ref{eq: gradient descent} and momentum so far, we have
quite a bit of ground to cover. But at the same time most of the remaining
common optimizers \parencite[as reviewed by e.g.][]{ruderOverviewGradientDescent2017}
can be motivated in a similar way, which might or might not be new.

Recall that the \ref{eq: newton minimum approx}
\begin{align*}
	\weights_{n+1}	= \weights_n - (\nabla^2\Loss(\weights_n))^{-1}\nabla\Loss(\weights_n)
\end{align*}
finds the vertex of a quadratic loss function \(\Loss\) immediately, because we
have
\begin{align*}
	\nabla\Loss(\weights_n) = \nabla^2\Loss(\weights_n)[\weights_n - \hat{\weights}_n],
\end{align*}
where \(\hat{\weights}_n\) was the vertex of the second Taylor approximation.
To better understand how that works, note that for
\begin{align*}
	\nabla^2\Loss(\weights_n) = V \diag(\hesseEV_1,\dots,\hesseEV_\dimension)V^T
\end{align*}
we have
\begin{align*}
	\nabla^2\Loss(\weights_n)^{-1}
	= V \diag(\tfrac1\hesseEV_1,\dots,\tfrac1\hesseEV_\dimension)V^T.
\end{align*}
To simplify things, let us assume a quadratic loss function
\(H=\nabla^2\Loss(\weights_n)\) now.  In this case we found that every
eigenspace scales independently
\begin{align*}
	\langle \weights_{n+1} - \minimum, v_i \rangle
	\le (1-\lr \hesseEV_i)\langle \weights_n - \minimum, v_i \rangle.
\end{align*}
The \ref{eq: newton minimum approx} rescales these steps by dividing every
eigenspace through its respective eigenvalue. For positive eigenvalues this is
all nice and dandy. But for negative eigenvalues, which represent a maximum in
their eigenspace, the \ref{eq: newton minimum approx} multiplies the gradient by
a negative value \(\tfrac1{\hesseEV_i}\) and therefore causes movement towards
this eigenvalue.

Okay, now that we remembered these crucial problems, let us make one egregious
assumption. Let us assume that the standard basis are already the eigenvectors,
i.e.\ \(V=\identity\). Then the components of the gradient of our loss function
are proportional to their respective eigenvalues
\begin{align*}
	\nabla\Loss(\weights_n)^{(i)}
	= \hesseEV_i [\weights_n - \minimum]^{(i)}.
\end{align*}
If we define
\begin{align*}
	G_n^{(i)} = \sum_{k=0}^n [\nabla\Loss(\weights_n)^{(i)}]^2 \qquad i=1,\dots,\dimension,
\end{align*}
then the algorithm
\begin{align}
	\label{eq: almost adagrad}
	\weights_{n+1}^{(i)}
	&= \weights_n^{(i)} - \frac{\lr_n}{\sqrt{G_n^{(i)}}}\nabla\Loss(\weights_n)^{(i)}\\
	\nonumber
	&= \weights_n^{(i)} - \frac{\lr_n}{
		\sqrt{\cancel{\hesseEV_i^2}\sum_{k=0}^n([\weights_k - \minimum]^{(i)})^2}
	}
	\cancel{\hesseEV_i}[\weights_n - \minimum]^{(i)}
\end{align}
also cancels out the eigenvalues. But as we divide through the absolute value of
the eigenvalues in contrast to the \ref{eq: newton minimum approx}, we get to
keep the sign
\begin{align}\label{eq: adagrad motivation contraction}
	[\weights_{n+1}-\minimum]^{(i)} = \left(1-\frac{\lr_n\sign(\hesseEV_i)}{
			\sqrt{\sum_{k=0}^n([\weights_k - \minimum]^{(i)})^2}
		}\right)[\weights_n - \minimum]^{(i)}.
\end{align}
Of course the notion that the standard basis vectors are already the eigenvectors
is pretty ridiculous. But for some reason the following algorithms seem to
work. And they are based on this notion of reweighting the components
by dividing them through the squared past gradients in some sense. The intuition
provided by \textcite{ruderOverviewGradientDescent2017} is, that
``It adapts the learning rate to the parameters, performing smaller updates
(i.e.\ low learning rates) for parameters associated with frequently occurring
features, and larger updates (i.e.\ high learning rates) for parameters
associated with infrequent features.'' But even this intuition seems to assume
that these ``features'' somehow correspond to the coordinates vectors.

\begin{description}
	\item[AdaGrad] \parencite{duchiAdaptiveSubgradientMethods2011}
	We have already motivated most of AdaGrad in (\ref{eq: almost adagrad}).
	But we will use the stochastic gradients
	\begin{align*}
		G_n^{(i)} := \sum_{k=0}^n [\nabla\loss(\weights_n, Z_n)^{(i)}]^2 \qquad i=1,\dots,\dimension,
	\end{align*}
	and introduce a small term \(\epsilon\) \parencite[\(\approx 10^{-8}\)][]{ruderOverviewGradientDescent2017}
	to avoid numerical instability (see also Levenberg-Marquard regularization,
	Subsection~\ref{subsec: levenberg-marquard regularization}). This results in
	the ``AdaGrad'' algorithm
 	\begin{align*}
		\Weights_{n+1}^{(i)}
		&= \Weights_n^{(i)}
		- \frac{\lr_n}{\sqrt{G_n^{(i)}+\epsilon}}\nabla\loss(\Weights_n, Z_n)^{(i)}
		\qquad i=1,\dots,\dimension.
	\end{align*}
	If we were to divide the sum of squared gradients \(G_n^{(i)}\) by the number
	of summands, we would get an estimator for the second moment. This second
	moment consists not only of the average squares of the actual loss function,
	but also of a variance estimator. This means that high variance can also
	reduce the learning rate. This is an additional benefit, as we have found
	when looking at SGD, that we need to reduce the learning rate when the
	variance starts to dominate.

	\item[RMSProp]\parencite[lecture 6e]{hintonNeuralNetworksMachine2012} As the sum of squares increases monotonically, we can see 
	in (\ref{eq: adagrad motivation contraction}) that this quickly reduces
	the learning rate of AdaGrad in every step. So RMSProp actually uses an
	estimator for the second moments, i.e.\ divides \(G_n^{(i)}\) by \(n\). Well,
	that is a bit of a lie. To avoid calculations it uses a recursive exponential
	average
	\begin{align*}
		\hat{\E}[(\nabla\loss(\Weights_n)^{(i)})^2]
		:= \gamma \hat{\E}[(\nabla\loss(\Weights_{n-1})^{(i)})^2]
		+ (1-\gamma)(\nabla\loss(\Weights_n, Z_n)^{(i)})^2.
	\end{align*}
	Another benefit of this exponential average is, that we weigh more recent
	gradients higher. The Second Taylor Approximation is only locally
	accurate after all, so it is probably a good idea to discount older gradients
	further away. Anyway, the algorithm ``RMSProp'' is defined as
	\begin{align*}
		\Weights_{n+1}^{(i)}
		&= \Weights_n^{(i)}
		- \frac{\lr_n}{\text{RMS}(\nabla\loss)_n}\nabla\loss(\Weights_n, Z_n)^{(i)}
		\qquad i=1,\dots,\dimension,
	\end{align*}
	where we denote the root mean square as
	\begin{align*}
		\text{RMS}(\nabla\loss)_n := \sqrt{\hat{\E}[\nabla\loss(\Weights_n)^{(i)}]^2+\epsilon}.
	\end{align*}

	\item[AdaDelta] \parencite{zeilerADADELTAAdaptiveLearning2012} The motivation
	for the independently developed AdaDelta starts exactly like RMSProp, but
	there is one additional step.
	``When considering the parameter updates, [\(\Delta \weights\)], being applied to
	[\(\weights\)], the units should match. That is, if the parameter had some
	hypothetical units, the changes to the parameter should be changes in those
	units as well. When considering SGD, Momentum, or ADAGRAD, we can see that
	this is not the case'' \parencite[p. 3]{zeilerADADELTAAdaptiveLearning2012}.
	They also note that the \ref{eq: newton minimum approx} does have matching
	units. So they consider the case where the Hessian is diagonal (the case we
	have been considering where the eigenvectors are the standard basis), and
	somehow deduce that one should use a similar estimator
	\(\text{RMS}(\Delta\Weights)\) for \(\sqrt{\E[\Delta\Weights^2]}\) as for
	\(\sqrt{\E[\nabla\loss^2]}\) to obtain ``AdaDelta''
	\begin{align*}
		\Weights_{n+1}^{(i)} = \Weights_n^{(i)}
		-\lr_n\frac{\text{RMS}(\Delta\Weights)_{n-1}}{\text{RMS}(\nabla\loss)_n}
		\nabla\loss(\Weights_n, Z_n)^{(i)}.
	\end{align*}

	\item[Adam] \textcite{kingmaAdamMethodStochastic2017} apply the same idea
	as RMSProp to the momentum update, i.e.
	\begin{align*}
		\Weights_{n+1} = \Weights_n + \frac{\lr_n}{\text{RMS}(\nabla\loss)_n} \momentum_{n+1}
	\end{align*}
	where we recall that momentum was defined as
	\begin{align*}
		\momentum_{n+1} = \underbrace{(1-\friction \lr_n)}_{\momCoeff_n} \momentum_n - \lr_n\nabla\Loss(\Weights_n).
	\end{align*}
	Additional to this idea, the paper also introduces the idea of a correction term
	as we have to initialize our exponential averages of the momentum term \(\momentum_n\)
	and \(\text{RMS}(\nabla\loss)_n\). Since these initializations usually are zero
	we have a bias towards zero, which Adam tries to compensate by multiplying
	the term \(\momentum_n\) with \(\frac{1}{1-\momCoeff^n}\) assuming a constant
	momentum coefficient, and similarly for \(\text{RMS}(\nabla\loss)_n\). Then
	this modified momentum and second moment estimator of the gradients are
	used for the update instead, resulting in the \textbf{ada}ptive \textbf{m}omentum method.

	\item[NAdam] \parencite{dozatIncorporatingNesterovMomentum2016}
	uses the same idea as Adam just with Nesterov momentum instead of heavy ball
	momentum.

	\item[AdaMax] \parencite{kingmaAdamMethodStochastic2017} In the same paper
	as Adam, the authors also note that the Root Mean Squared
	\(\text{RMS}(\nabla\loss)\) are basically a \(2\)-norm and one could use a
	different norm for the gradients as well.  Recall that we essentially want
	the absolute eigenvalue of the Hessian in the end. For that, squaring
	the gradient components works just as well as taking the absolute value (i.e.
	using the maximum norm). This idea results in the ``AdaMax'' algorithm.

	\item[AMSGrad] \parencite{reddiConvergenceAdam2019} While we have argued that
	the exponential moving average used for second moment estimation is a good
	thing, \textcite{reddiConvergenceAdam2019} find, that in the cases where
	these methods do not converge it is due to some seldomly occurring directions
	getting discounted too fast. So they suggest using a running maximum instead.
\end{description}

Since all these optimizers make component-wise adjustments which could be written
as a diagonal matrix, Assumption~\ref{assmpt: parameter in generalized linear
hull of gradients} still applies. This means our complexity bounds from
Section~\ref{sec: complexity bounds} still apply. Therefore these algorithms
can not be an improvement over standard momentum in general as Nesterov's
momentum is already asymptotically optimal.

And the theoretical justification for these algorithms is quite thin. There
are some fantastic animations by \textcite{radfordVisualizingOptimizationAlgos2014},
which heavily imply that these ``adaptive algorithms'' are better at breaking
symmetry. But these toy problems are generally axis aligned which means their
eigenspaces will be the eigenvectors. And at that point we know that these
algorithms should behave like Newton's algorithm. So it is no surprise that they
perform really well.

On the other hand one can find examples where these adaptive methods become very
unstable
\parencite[e.g.][]{wilsonMarginalValueAdaptive2018,reddiConvergenceAdam2019}.
But these examples might be unrealistic as well.

In empirical benchmarks such as \textcite{schmidtDescendingCrowdedValley2021} no
optimizer seems to outperform the others consistently.

While this section was mostly concerned with commonly used algorithms with little
theoretical justification, the remaining chapter will walk through approaches
from the theoretical side which are not as widely used in practice as the
methods discussed so far. To be more precise: there are no reference implementations
for the following methods in machine learning frameworks such as Tensorflow.
This proxy for usage is likely anything but perfect, as most of these methods are
more difficult to implement and might simply be deemed too complex to maintain
reference implementations for. But beginners, especially from fields other than
mathematics, are still likely to stick to the default optimizers. So it seems
fair to say that the following methods are not as common in practice.

\section{Geometric Methods}\label{sec: geometric methods}

\subsection{Cutting Plane Methods}

Notice how the \(\dimension-1\) dimensional hyperplane
\begin{align*}
	\mathcal{H}
	:= \{\theta : \langle \nabla\Loss(\weights), \theta - \weights \rangle = 0\}
\end{align*}
separates the set in which direction the loss is increasing
\begin{align*}
	\mathcal{H}_+
	:= \{\theta : \langle \nabla\Loss(\weights), \theta - \weights \rangle > 0\},
\end{align*}
from the set in which direction the loss is decreasing \(\mathcal{H}_-\). While
this is only a local statement, convexity implies that \(\Loss(\weights)\) is
smaller than the loss at any \(\theta\in\mathcal{H}_+\) because
\begin{align*}
	\Loss(\theta)
	\ge \Loss(\weights) + \underbrace{\langle \nabla\Loss(\weights), \theta -\weights\rangle}_{>0}
	> \Loss(\weights)
\end{align*}
Therefore we know the minimum can not be in \(\mathcal{H}_+\), which allows us
to cut away this entire set from consideration.

Cutting plane methods use this fact to iteratively shrink the considered set
until the minimum is found. As convexity is vital here, it is unlikely they
might ever work for non-convex loss functions.

\subsubsection{Center of Gravity Method}

This method simply evaluates the gradient at the center of gravity of the
remaining set, in the hope that the cutting plane method cuts away roughly half
of the remaining set. Therefore convergence speed can be shown to be
linear\footnote{i.e.\ exponential in non numerical-analysis jargon} 
\parencite[e.g.][Theorem 2.1]{bubeckConvexOptimizationAlgorithms2015}.
Unfortunately this requires calculating the center of gravity of an arbitrary
set which is its own difficult computational problem. 

\subsubsection{Ellipsoid Method}

To work around the center of gravity problem, the ellipsoid methods starts out
with an ellipsoid set, makes one cut through the center, and finds a new
ellipsoid around the remaining set. This new ellipsoid is larger than the
set we would actually have to consider, but at least finding its center is
tractable. The convergence speed is still linear \parencite[e.g][Theorem
2.4]{bubeckConvexOptimizationAlgorithms2015}.

\subsubsection{Vaidya's Cutting Plane Method}

This method uses polytopes instead of ellipsoids as base objects to make the
center of gravity search tractable, and seems to be more efficient
computationally \parencite[e.g.][Section
2.3]{bubeckConvexOptimizationAlgorithms2015}.

\subsection{Geometric Descent}

\textcite{bubeckGeometricAlternativeNesterov2015} realized that strong convexity
\begin{align*}
	\Loss(\theta)
	\ge \Loss(\weights) + \langle \nabla\Loss(\weights), \theta-\weights\rangle
	+ \tfrac{\lbound}{2}\|\theta-\weights\|^2
\end{align*}
is equivalent to
\begin{align*}
	\tfrac{\lbound}{2}\|\theta - (\weights - \tfrac1\lbound \nabla\Loss(\weights))\|^2
	\le \frac{\|\nabla\Loss(\weights)\|^2}{2\lbound} - (\Loss(\weights)-\Loss(\theta)),
\end{align*}
which implies that for \(\theta=\minimum\) we know that the minimum
\(\minimum\) is inside the ball
\begin{align*}
	\minimum\in B\left(\weights- \tfrac1\lbound\nabla\Loss(\weights), 
		\frac{\|\nabla\Loss(\weights)\|^2}{\lbound^2}
		- \frac{ \Loss(\weights)-\Loss(\minimum) }{\lbound}
	\right).
\end{align*}
With Lipschitz continuity of the gradient \textcite{bubeckGeometricAlternativeNesterov2015}
further refine this ``confidence ball'' around the minimum and use it to create
an algorithm with the same rate of convergence as Nesterov's momentum.
Unfortunately it looks like this algorithm is much more dependent on (strong)
convexity than the momentum method.

\section{Conjugate Gradient Descent}\label{sec: conjugate gradient descent}

Similar to momentum, conjugate gradient descent is another attempt to solve
high condition problems. One of the most approachable introductions so far was
written by \textcite{shewchukIntroductionConjugateGradient1994}. This is an
attempt at a shorter version. 

Recall that we can write
our loss function as an approximate paraboloid (cf. (\ref{eq: paraboloid
approximation of L}))
\begin{align*}
	\Loss(\theta)
	= (\theta-\weights)^T \nabla^2\Loss(\weights)(\theta-\weights)
	+ c(\weights) + o(\|\theta-\weights\|^2)
\end{align*}
If the second taylor approximation is accurate and we have a constant
positive definite Hessian \(H:=\nabla^2\Loss(\weights)\), then our loss can also
be written as
\begin{align*}
	\Loss(\weights) = \Loss(\minimum) + 
	(\weights-\minimum)^T H(\weights-\minimum)
	= \Loss(\minimum) + \|\weights-\minimum\|_H^2,
\end{align*}
where the norm \(\|\cdot\|_H\) is induced by the scalar product
\begin{align*}
	\langle x, y \rangle_H := \langle x, Hy\rangle.
\end{align*}
In the space \((\reals^d, \|\cdot\|_H)\) our problem therefore has condition one
(\(\condition=1\))!

\subsection{Excursion: The Gradient in \((\reals^\dimension, \|\cdot\|_H)\)}

We have made our problem much simpler but the space we are considering much
more complicated. The question is now, what methods we can realistically use
in this new space?

Naively one might wish to take the gradient with respect to this space. As this
gradient would point straight towards the minimum because we are in the
\(\condition=1\) case. This means, that this gradient should be just as
powerful as the Newton-Raphson method, which would be great. So let us try to
obtain this gradient. From the definition of the gradient we have
\begin{align}
	\label{eq: gradient definition equation}
	0\xeq{!}&\lim_{v\to 0}\frac{
		|\Loss(\weights + v) - \Loss(\weights) - \langle \nabla_H\Loss(\weights), v\rangle_H |
	}{\|v\|_H}\\
	\nonumber
	&=\lim_{v\to 0}\frac{
		|\Loss(\weights + v) - \Loss(\weights) -  \nabla_H\Loss(\weights)^T H v |
	}{\|v\|}
	\frac{\|v\|}{\|v\|_H}\\
	\nonumber
	&=\lim_{v\to 0}\frac{
		|\Loss(\weights + v) - \Loss(\weights) -  \langle H\nabla_H\Loss(\weights), v\rangle |
	}{\|v\|}
	\frac{\|v\|}{\|v\|_H}
\end{align}
Where we have used symmetry of the Hessian \(H\) for the last equation. Now if
we had
\begin{align*}
	H \nabla_H \Loss(\weights) = \nabla\Loss(\weights),
\end{align*}
then the first fraction would go to zero by the definition of the gradient.
The second fraction would be bounded due to norm equivalence, since all norms
are equivalent in final dimensional real vector spaces. Therefore
\begin{align*}
	\nabla_H \Loss(\weights) := H^{-1}\nabla\Loss(\weights)
\end{align*}
is a (and due to uniqueness, the) solution of (\ref{eq: gradient definition
equation}). So we have found a method equivalent to Newton's method.
Unfortunately it \emph{is} Newtons's method. And since we still do not want
to invert the Hessian, we have to think of something else. Fortunately, that
something else only has to work for \(\condition=1\).

\subsection{Conjugate Directions}

Let us take any (normed) direction \(d^{(1)}\) and complete it to an H-orthogonal
basis \(d^{(1)},\dots,d^{(\dimension)}\). H-orthogonal is also known as
``conjugate''. Then we can represent any vector \(\theta\) as a linear
combination of this conjugate basis
\begin{align*}
	\theta = \sum_{k=1}^\dimension \theta^{(k)} d^{(k)}.
\end{align*}
Denoting \(\Delta\weights_n := \weights_n -\minimum\), let us now optimize over
the learning rate for a move in direction \(d^{(1)}\)
\begin{align*}
	\Delta\weights_{1} = \Delta\weights_0 + \lr_0 d^{(1)}
	= (\Delta\weights^{(1)} + \lr_0)d^{(1)}
	+ \sum_{k=2}^\dimension \Delta\weights^{(k)} d^{(k)}.
\end{align*}
Due to conjugacy we have
\begin{align*}
	\Loss(\weights_{1})
	= \Loss(\minimum) + \|\Delta\weights_{1}\|_H^2
	= \Loss(\minimum) + \sum_{k=1}^{\dimension} (\Delta\weights_{1}^{(k)})^2\|d^{(k)}\|^2.
\end{align*}
And since most of the \(\Delta\weights^{(k)}_{1}\) stay constant no matter the
learning rate \(\lr_n\), the only one we actually optimize over is
\((\Delta\weights^{(1)}_{1})^2\).
And we can make that coordinate zero with \(\lr_0 :=-\Delta\weights_0^{(1)}\).

So if we do a line search (optimize the loss over one direction, i.e.\ \(d^{(1)}\)),
then we know that we will never ever have to move in the direction \(d^{(1)}\)
again. The target \(\minimum\) lies H-orthogonal to \(d^{(1)}\).
\begin{figure}[h]
	\centering
	\def\svgwidth{1\textwidth}
	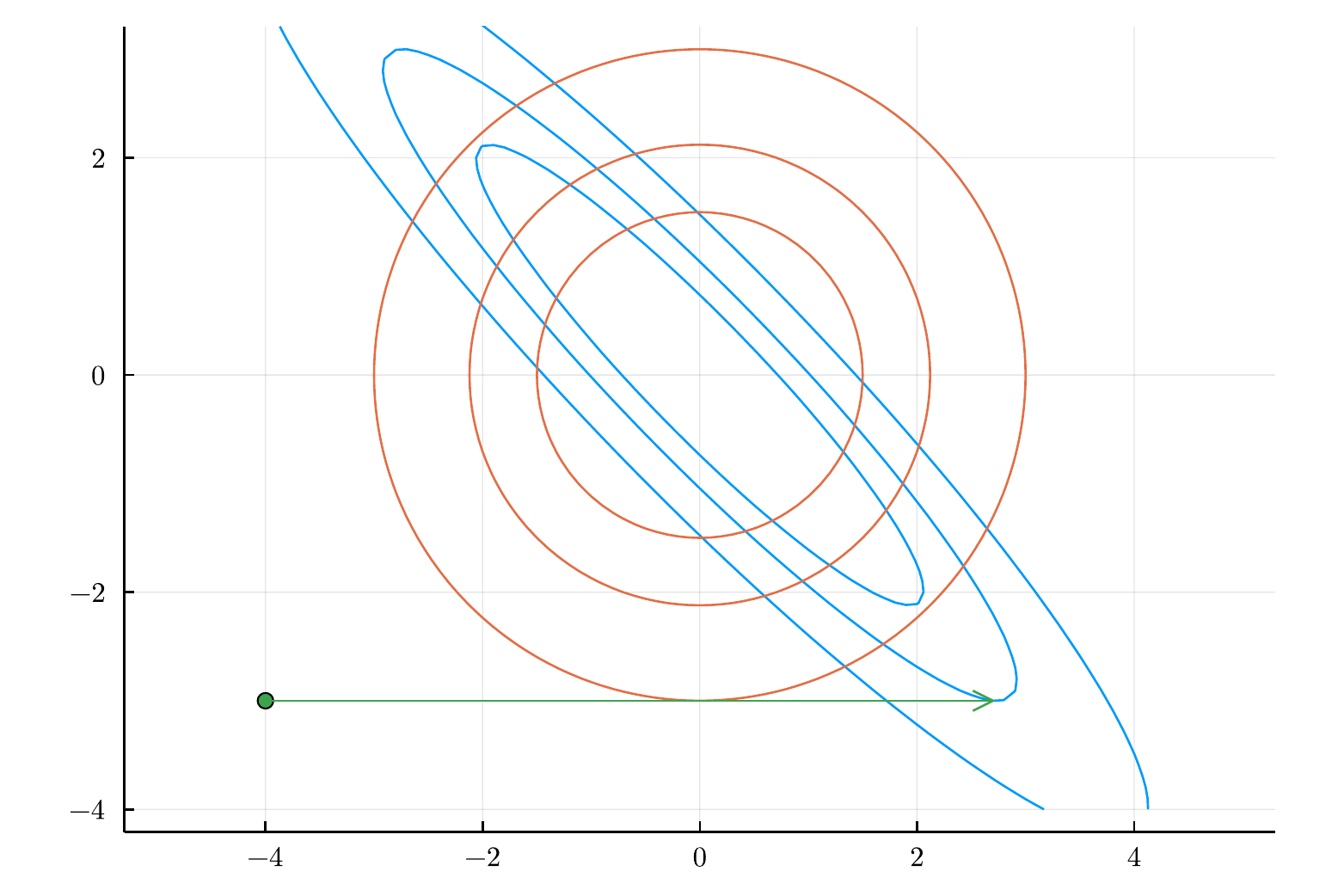
	\caption{
		The method of conjugate directions only works, because in the
		\(\condition=1\) case (brown contour) minimizing the loss in any direction
		eliminates that direction. This does not work for general condition numbers,
		as illustrated with the blue contour lines.
	}
	\label{fig: conjugate direction depends on condition one}
\end{figure}
If we optimize over the next direction the same holds true. In fact we get the
globally optimal solution in the span of search directions using this method 
\begin{align*}
	\Delta\weights_n &= \sum_{k=n+1}^\dimension\Delta\weights^{(k)} d^{(k)}\\
	&= \arg\min \{
		\Loss(\weights) : \weights \in \weights_0 + \linSpan[d^{(1)},\dots, d^{(n)}]
	\} - \minimum.
\end{align*}
In this light, recall that we provided complexity bounds for weights in the
linear span of gradients shifted by \(\weights_0\) in Section~\ref{sec: complexity bounds}
(cf. Assumption~\ref{assmpt: parameter in linear hull of gradients}). So if our
search directions were the gradients, we would obtain optimal convergence for
this class of methods.

\subsection{Simplifying Gram-Schmidt}

Unfortunately our gradients are not conjugate in general. But that is no
trouble, we can just use Gram-Schmidt conjugation. Right?

So here is the problem: We need knowledge of the hessian \(H\) for this
procedure and have to remove the components from all previous directions. This
blows up time complexity making this procedure not worthwhile. But as it turns
out, the gradients do have some additional structure we can exploit to remove
these issues.

First let us summarize what we have so far. We want to obtain the same conjugate
search directions \((d^{(1)},\dots,d^{(\dimension)})\) from
\((\nabla\Loss(\weights_0),\dots,\nabla\Loss(\weights_{\dimension-1}))\)
we would get, if we used Gram-Schmidt conjugation, which also implies
\begin{align*}
	\mathcal{K}_k := \linSpan\{\nabla\Loss(\weights_0),\dots,\nabla\Loss(\weights_k)\}
	= \linSpan\{d^{(1)},\dots,d^{(k+1)}\}.
\end{align*}
And we select our weights with line searches along the search directions
\begin{align*}
	\lr_n := \arg\min_\lr \Loss(\weights_n - \lr d^{(n+1)})
\end{align*}
resulting in
\begin{align*}
	\weights_{n+1} = \weights_n - \lr_n d^{(n+1)}
	= \arg\min\{ \Loss(\weights) : \weights\in\weights_0 + \mathcal{K}_n\}.
\end{align*}
This last condition implies that the gradient \(\nabla\Loss(\weights_n)\)
must necessarily be orthogonal to \(\mathcal{K}_{n-1}\) (unless we have already
converged to \(\minimum\)). Otherwise \(\weights_n\) would not be optimal. This
orthogonality allows us to argue that \(\mathcal{K}_k\) actually has rank
\(k+1\) and its respective generators are both a basis, which was necessary for the
conjugate directions argument.

So if we could somehow translate this euclidean orthogonality into a
H-orthogonality we would be done. And here comes in one crucial insight:
\(\mathcal{K}_n\) is a ``Krylov subspace'', i.e.\ can be generated by repeated
application of an operator. More specifically we have
\begin{lemma}[\(\mathcal{K}_n\) is a \(H\)-Krylov subspace]
	\begin{align*}
		\mathcal{K}_n
		= \{H^1 \Delta\weights_0, \dots, H^{n+1}\Delta\weights_0\}
		= \{H^0 \nabla\Loss(\weights_0), \dots, H^n\nabla\Loss(\weights_0)\}.
	\end{align*}
\end{lemma}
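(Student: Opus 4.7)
The plan is to proceed by induction on $n$, using the quadratic structure of $\Loss$ to translate the gradient recursion into a Krylov recursion. First I would dispose of the second equality: since we are assuming a constant Hessian and the paraboloid representation of $\Loss$, we have $\nabla\Loss(\weights) = H(\weights - \minimum) = H\Delta\weights$ for every $\weights$. Applied to $\weights_0$ this gives $\nabla\Loss(\weights_0) = H\Delta\weights_0$, so shifting exponents by one yields $H^k\nabla\Loss(\weights_0) = H^{k+1}\Delta\weights_0$ for all $k\ge 0$. Thus the two right-hand spans coincide, and it suffices to prove
\[
	\mathcal{K}_n
	= \linSpan\{H^k \nabla\Loss(\weights_0) : 0 \le k \le n\} =: \mathcal{L}_n.
\]

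The base case $n=0$ is immediate, since both sides equal $\linSpan\{\nabla\Loss(\weights_0)\}$. For the inductive step, assume $\mathcal{K}_{n-1} = \mathcal{L}_{n-1}$. The search direction $d^{(n)}$ is obtained from $\nabla\Loss(\weights_{n-1})$ by Gram-Schmidt H-conjugation against $d^{(1)},\dots,d^{(n-1)}$, hence $d^{(n)} \in \mathcal{K}_{n-1} = \mathcal{L}_{n-1}$ and therefore $Hd^{(n)} \in \linSpan\{H^k \nabla\Loss(\weights_0) : 1 \le k \le n\} \subseteq \mathcal{L}_n$. The quadratic identity applied to the update $\weights_n = \weights_{n-1} - \lr_{n-1} d^{(n)}$ gives
\[
	\nabla\Loss(\weights_n)
	= H\Delta\weights_{n-1} - \lr_{n-1} Hd^{(n)}
	= \nabla\Loss(\weights_{n-1}) - \lr_{n-1} Hd^{(n)},
\]
so $\nabla\Loss(\weights_n) \in \mathcal{L}_n$ and hence $\mathcal{K}_n \subseteq \mathcal{L}_n$.

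For the reverse inclusion, I plan to exploit the same recursion in the opposite direction. Assuming we have not yet converged, $\lr_{n-1}\ne 0$, and the equation above can be solved for
\[
	Hd^{(n)}
	= \tfrac{1}{\lr_{n-1}}\bigl(\nabla\Loss(\weights_{n-1}) - \nabla\Loss(\weights_n)\bigr)
	\in \mathcal{K}_n.
\]
Because Gram-Schmidt only subtracts elements of $\linSpan\{d^{(1)},\dots,d^{(n-1)}\} = \mathcal{L}_{n-2}$ from $\nabla\Loss(\weights_{n-1}) \in \mathcal{L}_{n-1}\setminus\mathcal{L}_{n-2}$ (the latter by the orthogonality of $\nabla\Loss(\weights_{n-1})$ to $\mathcal{K}_{n-2}$ noted in the text), the direction $d^{(n)}$ has a nonzero $H^{n-1}\nabla\Loss(\weights_0)$ component modulo $\mathcal{L}_{n-2}$. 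Applying $H$ pushes this up by one, so $Hd^{(n)}$ has a nonzero $H^n\nabla\Loss(\weights_0)$ component modulo $\mathcal{L}_{n-1} = \mathcal{K}_{n-1}$. Combined with $Hd^{(n)}\in\mathcal{K}_n$ and the inductive hypothesis, this yields $H^n\nabla\Loss(\weights_0)\in\mathcal{K}_n$, completing the inclusion $\mathcal{L}_n\subseteq\mathcal{K}_n$.

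The main obstacle I expect is bookkeeping the ``leading-coefficient'' claim about Gram-Schmidt cleanly — i.e.\ verifying that $d^{(n)}$ truly has a nonzero component along $H^{n-1}\nabla\Loss(\weights_0)$ modulo lower-order terms. An alternative (and perhaps cleaner) route to avoid this is a dimension count: the paper has already observed that the successive gradients are euclidean-orthogonal, so $\dim\mathcal{K}_n = n+1$, while $\dim\mathcal{L}_n \le n+1$ by construction, so the forward inclusion $\mathcal{K}_n\subseteq\mathcal{L}_n$ together with matching dimensions forces equality. I would present the inductive containment proof and then close with this dimension argument, which sidesteps the leading-coefficient subtlety entirely.
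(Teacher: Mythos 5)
Your proof is correct and follows essentially the same route as the paper: show $\mathcal{K}_n \subseteq \linSpan\{H^k\nabla\Loss(\weights_0) : 0\le k\le n\}$ inductively via the identity $\nabla\Loss(\weights_n) = \nabla\Loss(\weights_{n-1}) - \lr_{n-1}Hd^{(n)}$, then close with the dimension count ($\dim\mathcal{K}_n = n+1$ from gradient orthogonality, while the Krylov span has at most $n+1$ generators). The leading-coefficient Gram--Schmidt detour is unnecessary, as you yourself observe, and the paper omits it; you should present the forward containment plus dimension count directly.
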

\begin{proof}
	The induction start \(n=0\) is simply due to \(\nabla\Loss(\weights_0) =
	H\Delta\weights_0\) and the induction step \(n-1\to n\) starts with
	\begin{align}
		\nonumber
		\nabla\Loss(\weights_n)= H\Delta\weights_n
		&= H(\Delta\weights_{n-1} - \lr_{n-1} d^{(n)})\\
		\label{eq: represent nabla Loss(weights_n) with prev gradient and d(n)}
		&= \underbrace{\nabla\Loss(\weights_{n-1})}_{\in \mathcal{K}_{n-1}} - \lr_{n-1} H
		\underbrace{d^{(n)}}_{\in \mathcal{K}_{n-1}}.
	\end{align}
	To finish this step, we apply the induction assumption and notice that the
	second summand might now contain the vector \(H^{n+1}\Delta\weights_0\) if the
	coefficient of \(H^{n}\Delta\weights_0\) is not zero. Therefore
	\(\mathcal{K}_{n}\) is contained in the Krylov subspace. And due to the
	number of elements in the generator of the Krylov subspace, it can not have
	higher rank than \(n+1\), and thus cannot be larger.
\end{proof}
This is useful, because it implies that \(H\mathcal{K}_{n-2}\) is contained
in \(\mathcal{K}_{n-1}\) to which \(\nabla\Loss(\weights_n)\) is orthogonal.
Therefore \(\nabla\Loss(\weights_n)\) is already \(H\)-orthogonal to
\(\mathcal{K}_{n-2}\) from the get go. In other words we already have
\begin{align*}
	\langle \nabla\Loss(\weights_n), d^{(k)} \rangle_H = 0 
	\qquad \forall k\le n-1.
\end{align*}
To construct a new conjugate direction \(d^{(n+1)}\) we therefore only have to
remove the previous direction \(d^{(n)}\) component from our gradient! So we
want to select
\begin{align}\label{eq: next search direction}
	d^{(n+1)} = \nabla\Loss(\weights_n)
	- \frac{\langle \nabla\Loss(\weights_n), d^{(n)}\rangle_H}{\|d^{(n)}\|_H^2}d^{(n)}.
\end{align}

\subsection{Evaluating the \(H\)-Dot-Product}

Now, we ``just'' have to find a way to evaluate this factor without actually
knowing the Hessian \(H\). Using the representation (\ref{eq: represent nabla Loss(weights_n) with
prev gradient and d(n)}) of \(\nabla\Loss(\weights_n)\), we get
\begin{align*}
	\langle \nabla\Loss(\weights_n), \nabla\Loss(\weights_n) \rangle
	= \langle \nabla\Loss(\weights_n), \nabla\Loss(\weights_{n-1}) \rangle
	- \lr_{n-1}\langle \nabla\Loss(\weights_n), H d^{(n)}\rangle.
\end{align*}
Using the definition of \(\langle\cdot,\cdot\rangle_H\) and reordering, results in
\begin{align}
	\nonumber
	\langle \nabla\Loss(\weights_n), d^{(n)}\rangle_H
	&= \tfrac1{\lr_{n-1}}\left[
		\langle \nabla\Loss(\weights_n), \nabla\Loss(\weights_{n-1}) \rangle
		- \langle \nabla\Loss(\weights_n), \nabla\Loss(\weights_n) \rangle
	\right]\\
	\label{eq: evaluated H dot product}
	&= \frac{- \|\nabla\Loss(\weights_n)\|^2}{\lr_{n-1}},
\end{align}
where we have used the orthogonality of \(\nabla\Loss(\weights_n)\) to
\(\nabla\Loss(\weights_{n-1})\in\mathcal{K}_{n-1}\).
Using the optimality of the learning rate \(\lr_{n-1}\)
\begin{align*}
	0\xeq{!}\frac{d}{d\lr}\Loss(\weights_{n-1} - \lr d^{(n)})
	&= -\langle \nabla\Loss(\weights_{n-1} - \lr d^{(n)}), d^{(n)} \rangle\\
	&= -\langle H(\Delta\weights_{n-1} -\lr d^{(n)}), d^{(n)} \rangle\\
	&= \lr\|d^{(n)}\|_H^2 - \langle \nabla\Loss(\weights_{n-1}), d^{(n)}\rangle,
\end{align*}
we get
\begin{align}\label{eq: optimal learning rate CG}
	\lr_{n-1}
	= - \frac{\langle \nabla\Loss(\weights_{n-1}), d^{(n)}\rangle}{\|d^{(n)}\|_H^2}.
\end{align}
Using (\ref{eq: evaluated H dot product}) and (\ref{eq: optimal learning rate CG})
we can replace the Gram-Schmidt constant from (\ref{eq: next search direction})
with things we can actually calculate resulting in
\begin{align*}
	d^{(n+1)}
	&= \nabla\Loss(\weights_n)
	- \frac{
		\|\nabla\Loss(\weights_n)\|^2
	}{\langle \nabla\Loss(\weights_{n-1}), d^{(n)}\rangle}d^{(n)}\\
	&= \nabla\Loss(\weights_n)
	- \frac{\|\nabla\Loss(\weights_n)\|^2}{\|\nabla\Loss(\weights_{n-1})\|^2}
	d^{(n)},
\end{align*}
where the last equation is due to \(\nabla\Loss(\weights_{n-1})\) being orthogonal
to \(d^{(n-1)}\in\mathcal{K}_{n-2}\), i.e.
\begin{align*}
	\langle \nabla\Loss(\weights_{n-1}), d^{(n)} \rangle
	\xeq{(\ref{eq: next search direction})} 
	\langle \nabla\Loss(\weights_{n-1}),\nabla\Loss(\weights_{n-1}) - c d^{(n-1)}\rangle
	= \|\nabla\Loss(\weights_{n-1})\|^2.
\end{align*}

\subsection{Relation to Momentum and Discussion}

Unfortunately the assumption of a constant and positive definite Hessian
\(H=\nabla^2\Loss(\weights)\) is fairly difficult to remove here. To do so
we would somehow have to estimate the error we are making, working with the taylor
approximation. This would probably result in some form of estimating sequence
argument again. And we already found that Nesterov momentum is in some sense
optimal then.

In fact for specific selections of the momentum coefficient, the momentum method
\emph{is} the conjugate gradient method. For that consider that the movement
direction \(d^{(n+1)}\) consists of the gradient and the previous direction
\(d^{(n)}\) which would be our momentum. More specifically we have
\begin{align*}
	\lr_{n-1}d^{(n)} = \weights_{n-1} - \weights_n
\end{align*}
and therefore
\begin{align*}
	\weights_{n+1}
	&= \weights_n - \lr_n d^{(n+1)}\\
	&= \weights_n - \lr_n \left[
		\nabla\Loss(\weights_n)
		- \frac{\|\nabla\Loss(\weights_n)\|^2}{\|\nabla\Loss(\weights_{n-1})\|^2}
		d^{(n)},
	\right]\\
	&= \weights_n - \underbrace{\lr_n}_{=:\alpha_n} \nabla\Loss(\weights_n)
		+ \underbrace{\tfrac{\lr_n\|\nabla\Loss(\weights_n)\|^2}{\lr_{n-1}\|\nabla\Loss(\weights_{n-1})\|^2}}_{=:\beta_n}
		(\weights_n-\weights_{n-1}).
\end{align*}
Conjugate gradient can therefore be viewed as a special form of momentum with
a particular selection of parameters.

For a constant Hessian it has the nice property that it converges in
a finite number of steps. And quickly! It is actually a competitive method to
solve linear equations \parencite[Section 10]{shewchukIntroductionConjugateGradient1994},
e.g. by using CG on
\begin{align*}
	\Loss(\weights) = \|A\weights - b\|^2.
\end{align*}
Visualizing CG would be somewhat boring as it converges in two steps on two
dimensional loss surfaces.

\section{Secant Methods}

If we wish to estimate the Hessian with the change of our gradients over time,
we will soon notice that we do not get the evaluation of the Hessian at one point
but rather an average
\begin{align}\label{eq: secant equation}
	\underbrace{\nabla\Loss(\weights_n) - \nabla\Loss(\weights_{n-1})}_{=:\Delta g_n}
	= \underbrace{\int_0^1 \nabla^2\Loss(\weights_{n-1} + t(\weights_n-\weights_{n-1}))dt}_{=:H_n}
	\underbrace{(\weights_n - \weights_{n-1})}_{=:\Delta\weights_n}.
\end{align}
Using this ``average'' \(H_n\) in place of the Hessian \(\nabla^2\Loss(\weights_n)\)
to reshape the gradient, results in the ``secant method''
\begin{align*}
	\tag{secant method}
	\weights_{n+1}	= \weights_n - H_n^{-1}\nabla\Loss(\weights_n).
\end{align*}
While it might seem like this averaging is undesirable at first, it
can sometimes be desireable as it smooths out noisy or fluctuating Hessians
\parencite[e.g.][]{metzGradientsAreNot2021}. In one dimension this is already well
defined, as we can simply define
\begin{align*}
	H_n := \frac{\Loss'(\weights_n)-\Loss'(\weights_{n-1})}{\weights_n-\weights_{n-1}}.
\end{align*}
But there is no unique solution to the secant equation (\ref{eq: secant equation})
in \(\dimension\) dimensions. This is because both the weight difference
\(\Delta\weights_n\) and the gradient difference \(\Delta g_n\) are
\(\dimension\) dimensional vectors, (\ref{eq: secant
equation}) therefore represents \(d\) linear equations to find the
\(\frac{\dimension(\dimension+1)}{2}\) entries of the symmetric matrix \(H_n\).
And for \(\dimension>1\) we have
\begin{align*}
	\frac{\dimension(\dimension+1)}{2} > \dimension.
\end{align*}
So to make \(H_n\) unique we have to add additional requirements. 

\subsection{DFP}

If we already have an estimate of the Hessian \(\hat{H}_{n-1}\) from previous
iterations, it might be sensible to use ``as much as possible'' of
\(\hat{H}_{n-1}\), and change only as much as needed to guarantee \(\hat{H}_n\)
fulfills the secant condition (\ref{eq: secant equation}), i.e.
\begin{align}\label{eq: dfp optimization problem}
	\hat{H}_n &:=
	\arg\min_H \|H-\hat{H}_{n-1}\|
	& \text{subject to}\quad H=H^T,\quad  \Delta g_n= H\Delta\weights_n.
\end{align}
This still leaves us with the selection of the norm. A norm that turns out to 
be useful is the weighted Frobenius norm \(\|\cdot\|_W\) with
\begin{align*}
	\|A\|_W := \|W^{1/2}AW^{1/2}\|_F, \quad\text{where}\quad
	\|A\|_F^2 := \text{trace}(A^TA)=\sum_{i,j=1}^{\dimension}A_{i,j}^2.
\end{align*}
If we select \(W:=H_n^{-1}\), then according to
\textcite{nocedalQuasiNewtonMethods2006} the unique solution of (\ref{eq: dfp
optimization problem}) is
\begin{align}
	\tag{DFP}
	\hat{H}_n =
	\left(\identity-\frac{\Delta g_n \Delta\weights_n^T}{\langle \Delta g_n, \Delta\weights_n\rangle}\right)
	\hat{H}_{n-1}
	\left(\identity-\frac{\Delta g_n \Delta\weights_n^T}{\langle \Delta g_n, \Delta\weights_n\rangle}\right)
	+ \frac{\Delta g_n \Delta g_n^T}{\langle \Delta g_n, \Delta\weights_n\rangle}.
\end{align}
Since we really want to use \(\hat{H}_n^{-1}\) and matrix inversion is expensive,
\textcite{nocedalQuasiNewtonMethods2006} remark that there is also a recursion
for its inverse
\begin{align*}
	\tag{DFP'}
	\hat{H}_n^{-1}
	= \hat{H}_{n-1}^{-1}
	- \frac{
		\hat{H}_{n-1}^{-1}\Delta g_n \Delta g_n^T \hat{H}_{n-1}^{-1}
	}{
		\langle \Delta g_n, \hat{H}_{n-1}^{-1}\Delta g_n\rangle
	}
	+ \frac{
		\Delta\weights_n \Delta\weights_n^T
	}{
		\langle \Delta g_n, \Delta\weights_n\rangle
	}.
\end{align*}
This estimation \(\hat{H}_n\) of \(H_n\) was first proposed by Davidon in 1959
and further examined and developed by Fletcher and Powell. Hence its name.

\subsection{BFGS}

Since we only really want \(\hat{H}_n^{-1}\),
\textcite{broydenConvergenceClassDoublerank1970,fletcherNewApproachVariable1970,goldfarbFamilyVariablemetricMethods1970,shannoConditioningQuasiNewtonMethods1970}
all independently came up with the following improvement to DFP. First we
reorder (\ref{eq: secant equation}) into
\begin{align*}
	H_n^{-1} \Delta g_n = \Delta\weights_n
\end{align*}
and now we look for the inverse directly, instead of looking for \(H_n\) first.
In other words we define
\begin{align*}
	\hat{H}_n^{-1} &:= \arg\min_{H^{-1}} \| H^{-1} - \hat{H}_{n-1}^{-1}\|
	& \text{ subject to}\quad H=H^T,\quad H^{-1}\Delta g_n =\Delta \weights_n.
\end{align*}
For the weighted Frobenius norm with weights \(W=H_n\) this results in the BFGS
recursion
\begin{align}
	\label{eq: bfgs}
	\tag{BFGS}
	\hat{H}_n^{-1} :=
	\left(\identity-\tfrac{\Delta\weights_n \Delta g_n^T}{\langle \Delta g_n, \Delta\weights_n\rangle}\right)
	\hat{H}_{n-1}^{-1}
	\left(\identity-\tfrac{ \Delta\weights_n\Delta g_n^T}{\langle \Delta g_n, \Delta\weights_n\rangle}\right)
	+ \tfrac{\Delta\weights_n \Delta \weights_n^T}{\langle \Delta g_n, \Delta\weights_n\rangle}.
\end{align}
An interesting property of this update is, that it retains strict positive
definiteness of previous estimates as we have for a non-zero vector \(z\)
\begin{align*}
	z^T \hat{H}_n^{-1} z
	&= \underbrace{
		\left(z-\tfrac{
			\langle\Delta\weights_n, z\rangle \Delta g_n^T
		}{
			\langle \Delta g_n, \Delta\weights_n\rangle
		}\right)
		\hat{H}_{n-1}^{-1}
		\left(z-\tfrac{\langle \Delta\weights_n, z\rangle \Delta g_n^T}{\langle \Delta g_n, \Delta\weights_n\rangle}\right)
	}_{
	\ge \begin{cases}
		z^T \hat{H}_{n-1}^{-1} z > 0 & \langle\Delta\weights_n, z\rangle^2 =0 \\
		0 & \text{else}
	\end{cases}
	}
	+ \tfrac{\langle\Delta\weights_n, z\rangle^2}{\langle \Delta g_n, \Delta\weights_n\rangle}.
	\\
	&> 0.
\end{align*}

\subsection{L-BFGS}

Limited memory BFGS is essentially just an implementation detail of BFGS. As
we only want to apply our inverse Hessian \(\hat{H}_n^{-1}\) to some vector,
i.e. the gradient, we do not necessarily need to know or store this matrix if
we can achieve this goal without doing so. And if we use our recursion (\ref{eq:
bfgs}), we can apply our inverse Hessian to the vector \(v\) by calculating
\begin{align*}
	\hat{H}_n^{-1}v =
	\left(\identity-\tfrac{\Delta\weights_n \Delta g_n^T}{\langle \Delta g_n, \Delta\weights_n\rangle}\right)
	\hat{H}_{n-1}^{-1}
	\left(v-\tfrac{ \langle \Delta g_n, v\rangle}{\langle \Delta g_n, \Delta\weights_n\rangle}\Delta\weights_n\right)
	+ \tfrac{\langle \Delta \weights_n, v\rangle}{\langle \Delta g_n, \Delta\weights_n\rangle}\Delta\weights_n.
\end{align*}
Assuming \(\langle \Delta g_n, \Delta\weights_n\rangle\) is precalculated, we
only need to calculate a scalar product and subtract two vectors until we have
reached a point where we need to only apply \(\hat{H}_{n-1}^{-1}\) to a vector.
Going through this recursively and assuming \(\hat{H}_0=\identity\), we only
need \(O(n\dimension)\) operations to calculate \(\hat{H}_n^{-1}v\) this way.
For \(n\le\dimension\) this can be cheaper than matrix vector multiplication
requiring \(O(\dimension^2)\) operations. Unfortunately after some time it
becomes more and more expensive.

So limited memory BFGS simply throws away information older than \(k\) iterations,
effectively restarting the estimation of the inverse Hessian with
\(\hat{H}_{n-k}^{-1}=\identity\), guaranteeing a cost of \(O(k\dimension)=O(\dimension)\).

\section{Global Newton Methods}

So far we have only motivated the Newton-Raphson method on quadratic functions.
Assuming we are in some local area around the minimum, it can be shown that
successive \ref{eq: newton minimum approx}s result in convergence. But these
proofs rely on the fact that the Hessian is locally accurate enough. Therefore
even these proofs implicitly bound the rate of change of the curvature (Hessian).
For global Newton methods we just go about this more explicitly, like we did
with the rate of change of \ref{eq: gradient descent} (Section~\ref{sec:
lipschitz continuity of the Gradient}).

\subsection{Cubic Regularization of Newton's Method}

 In Section~\ref{sec: lipschitz continuity of the Gradient} we used Lipschitz
 continuity of the gradient to get an upper bound on our Loss (\ref{eq: lin
 approx + distance penalty notion})
\begin{align*}
	\Loss(\theta)
	\le \Loss(\weights) + \langle \nabla\Loss(\weights), \theta-\weights \rangle
	+ \tfrac{\ubound}2 \|\theta-\weights\|^2,
\end{align*}
which we then optimized over in Lemma~\ref{lem: smallest upper bound} to find
that gradient descent with learning rate \(\tfrac1\ubound\) is optimal with
regards to this bound. The Lipschitz constant \(\ubound\) bounds the second
derivative and thus the rate of change of the gradient. This allows us to
formulate for how long we can trust our gradient evaluation in some sense.

In the case of the Newton-Raphson method, we need a similar rate of change
bound on the Hessian. In this case we want to bound the rate of change of the
second derivative so we assume a Lipschitz continuous second derivative (or 
in other words bounded third derivative). This similarly results in the upper
bound
\begin{align*}
	\Loss(\theta)
	\le \Loss(\weights) + \langle \nabla\Loss(\weights), \theta-\weights \rangle
	+ \tfrac12\langle \nabla^2\Loss(\weights)(\theta-\weights), \theta-\weights \rangle
	+ \tfrac{M}6\|\theta-\weights\|^3.
\end{align*}
Minimizing this upper bound results in the cubic regularization of Newton's method
\parencite[Section 4.1]{nesterovLecturesConvexOptimization2018}.
The solution of which can unfortunately not be written explicitly.

Since this method is history agnostic again, Nesterov reuses estimating
sequences, to derive an ``accelerated cubic Newton scheme'' \parencite[Section
4.2]{nesterovLecturesConvexOptimization2018} similar to Nesterov momentum, i.e.
the accelerated gradient descent.

\subsection{Levenberg-Marquard Regularization}\label{subsec: levenberg-marquard regularization}

Instead of using a cubic bound motivated by a bounded third derivative, one
could still use a quadratic bound
\begin{align*}
	\Loss(\theta)
	&\le \Loss(\weights) + \langle \nabla\Loss(\weights), \theta-\weights \rangle
	+ \tfrac12\langle \nabla^2\Loss(\weights)(\theta-\weights), \theta-\weights \rangle
	+ \tfrac{M}2\|\theta-\weights\|^2\\
	&= \Loss(\weights) + \langle \nabla\Loss(\weights), \theta-\weights \rangle
	+ \tfrac12\langle [\nabla^2\Loss(\weights)+M\identity](\theta-\weights), \theta-\weights \rangle
\end{align*}
which might perhaps be motivated by a bounded second derivative
\begin{align*}
	\lbound\identity \precsim \nabla^2\Loss(\weights) \precsim \ubound\identity
\end{align*}
which would justify the bound \(M:=\ubound-\lbound\), as we have
\begin{align}
	\label{eq: levenberg-marquard justification}
	\Loss(\theta)
	&\le \Loss(\weights) + \langle \nabla\Loss(\weights), \theta-\weights \rangle
	+ \underbrace{\tfrac{\ubound}2\|\theta-\weights\|^2\mathrlap{.}}_{
		\le \tfrac12\langle
			[\nabla^2\Loss(\weights)+M\identity]
		\mathrlap{
			(\theta-\weights),
				\theta-\weights
			\rangle
		}
	}
\end{align}
Of course in practice one would probably use smaller \(M\), hoping that
the changes are not so drastic that the smallest eigenvalues become the
upper bound. While minimizing the cubic bound was difficult, minimizing this
bound results in
\begin{align}\label{eq: levenberg-marquard regularization}
	\weights_{n+1}
	= \weights_n - [\nabla^2\Loss(\weights_n)+M\identity]^{-1}\nabla\Loss(\weights_n).
\end{align}
For this result one simply replaces \(\nabla^2\Loss(\weights)\) with
\([\nabla^2\Loss(\weights)+M\identity]\) in the derivation of the \ref{eq:
newton minimum approx}.

For very small \(M\) this method behaves like the Newton-Raphson method, while
for large \(M\) the step in every eigenspace is roughly \(\tfrac1M\) and the
method therefore behaves like \ref{eq: gradient descent}. Levenberg-Marquard
regularization (\ref{eq: levenberg-marquard regularization}) can therefore also
be viewed as a way to interpolate between the faster Newton-Raphson method and
the more stable \ref{eq: gradient descent}. 

It moves small eigenvalues further away from zero, making the Hessian easier to invert (without numerical errors). And it can turn negative eigenvalues into
(small) positive eigenvalues, which results in large movements in these
eigenspaces in the correct direction (away from the vertex). On the other hand
it might also fail to make large negative eigenvalues positive, turning them into
small negative eigenvalues causing big jumps towards the vertex in this
eigenspace. So if we pick \(M\) too small, this method can make things worse.
If it is too big, it will just slow down convergence and behave roughly like
gradient descent with a small learning rate. 

\subsubsection{Theoretical Guarantees}

As we have used the Lipschitz continuity to justify its bound (\ref{eq:
levenberg-marquard justification}), we are already starting out with a worse
bound than the one we have used for the convergence of gradient descent. So
this approach does not result in better convergence guarantees. It is
therefore more of a heuristic than theoretically motivated. In his motivation
for the cubic regularization Nesterov states ``Unfortunately,
none of these approaches seems to be useful in addressing the global behavior of
second-order schemes.'' \parencite[p. 242]{nesterovLecturesConvexOptimization2018}.
This also includes trust bounds (Subsection~\ref{subsec: trust bounds}) and
damped Newton (Subsection~\ref{subsec: damped newton}).

\subsubsection{Hessian Free Optimization}

Since (\ref{eq: levenberg-marquard regularization}) still requires the inversion
of a matrix, which is infeasible in high dimension,
\textcite{martensDeepLearningHessianfree2010} suggests a Hessian free variant.
He notes that the bound
\begin{align}\label{eq: levenberg-marquard upper bound}
	B_{\Loss(\weights)}(\theta) := \Loss(\weights) + \langle \nabla\Loss(\weights), \theta-\weights \rangle
	+ \tfrac12\langle [\nabla^2\Loss(\weights)+M\identity](\theta-\weights), \theta-\weights \rangle
\end{align}
we are optimizing over is a quadratic, (and hopefully convex) function and
suggests using conjugate gradient descent (Section~\ref{sec: conjugate gradient
descent}) on this bound. Conjugate gradient descent does not require inversion
of the hessian and it does not even require the hessian itself. It only
requires the ability to evaluate \(\nabla^2\Loss(\weights)v\) for any vector
\(v\). For this evaluation \textcite{martensDeepLearningHessianfree2010} suggests
using either the finite differences approximation
\begin{align*}
	\nabla^2\Loss(\weights) v
	\approx \frac{\nabla\Loss(\weights + \epsilon v) - \nabla\Loss(\weights)}{\epsilon}.
\end{align*}
or adapting automatic differentiation frameworks to this task, as discussed in
\textcite{pearlmutterFastExactMultiplication1994}. This avoids storage cost
\(O(\dimension^2)\) of the Hessian. Stopping conjugate gradient descent early,
avoids most of the computational costs as well. This is justified with the fact
that the first few iterations of conjugate gradient achieve most of the
optimization.

\subsubsection{Krylov Subspace Descent}

Let us assume we are using Hesse Free Optimization and run conjugate gradient
descent for \(m\) steps on \(\weights_n\), then we know that conjugate gradient
will find the optimal \(\weights_{n+1}\) in the Krylov subspace \(\mathcal{K}_{m-1}\)
\begin{align}\label{eq: hessian free next}
	\tag{Hessian free}
	\weights_{n+1} = \arg\min \{ B_{\Loss(\weights_n)}(\weights) : \weights \in \weights_n + \mathcal{K}_{m-1} \},
\end{align}
where \(B_{\Loss(\weights_n)}\) is the upper bound defined in (\ref{eq:
levenberg-marquard upper bound}) and the Krylov subspace is accordingly
\begin{align*}
	\mathcal{K}_m
	&= \linSpan\{
		[\nabla^2\Loss(\weights_n) + M\identity]^0 \nabla\Loss(\weights_n), \dots,
		[\nabla^2\Loss(\weights_n) + M\identity]^m \nabla\Loss(\weights_n)
	\}	\\
	&= \linSpan\{
		[\nabla^2\Loss(\weights_n)]^0 \nabla\Loss(\weights_n), \dots,
		[\nabla^2\Loss(\weights_n)]^m \nabla\Loss(\weights_n)
	\},
\end{align*}
where we get the second equation as \(M\identity \nabla\Loss(\weights_n)\) is
just a multiple of the first element and therefore the linear hulls are the
same.

Noticing this fact, \textcite{vinyalsKrylovSubspaceDescent2012} argue that we
can avoid choosing the ``correct'' regularization parameter \(M\), if we
select
\begin{align}\label{eq: krylov subspace descent}
	\tag{Krylov subspace descent}
	\weights_{n+1}
	= \arg\min \{ \Loss(\weights) : \weights \in \weights_n + \mathcal{K}_{m-1} \}
\end{align}
instead of the (\ref{eq: hessian free next}) recursion, which requires \(B_{\Loss(\weights_n)}\)
and in turn the regularization parameter \(M\) for this upper bound. Of course
at this point we can not use conjugate gradient descent anymore. They then suggest
using L-BFGS to solve this optimization problem. Why not use L-BFGS in the first
place? Well, the problem
\begin{align*}
	\min \{ \Loss(\weights_n + \weights) : \weights \in \mathcal{K}_{m-1} \}
\end{align*}
can be reparametrized using an (orthogonal) basis \(d^{(1)}, \dots, d^{(m)}\)
of \(\mathcal{K}_{m-1}\) into
\begin{align*}
	\min_{\alpha} \Loss\left(\weights_n + \sum_{k=1}^m \alpha_k d^{(k)} \right)
\end{align*}
which is easier to solve as the dimensionality of the problem is greatly
reduced for \(m\ll \dimension\). So in some sense it is similar to
line search, but for \(m\) dimensions instead of one. Krylov subspace descent
essentially bets on the conjecture that the Krylov subspaces represent the
 ``important'' directions for optimization. Just
like the gradient direction is the optimal direction for line search.

\subsection{Trust Bounds}\label{subsec: trust bounds}

Another way to make sure that we can trust the Hessian is to have literal trust
bounds, i.e.
\begin{align*}
	\min_{ \theta\in B(\weights) }
	\underbrace{
		\Loss(\weights)
		+ \langle \nabla\Loss(\weights), \theta-\weights\rangle
		+ \langle \nabla^2\Loss(\weights)(\theta-\weights), \theta-\weights\rangle
	}_{= T_2\Loss(\theta)}
\end{align*}
where \(B(\weights)\) is some trust set around \(\weights\). For the euclidean
ball
\begin{align*}
	B(\weights) = \{ \theta : \|\theta - \weights\| \le \epsilon\}
\end{align*}
this restriction can be converted into the secondary condition
\begin{align*}
	g(\theta) := \frac{\|\theta - \weights\|^2 - \epsilon^2}{2} \le 0.
\end{align*}
The Lagrangian function to this constrained optimization problem is
\begin{align*}
	L(\theta, M) 
	&= T_2\Loss(\theta) + M g(\theta)\\
	&= \Loss(\weights) -\tfrac{\epsilon^2}{2}
	+ \langle \nabla\Loss(\weights), \theta-\weights\rangle
	+ \langle [\nabla^2\Loss(\weights) + M\identity](\theta-\weights), \theta-\weights\rangle.
\end{align*}
As the constant \(\frac{\epsilon}2\) does not change this optimization problem,
its solution is the same as in Levenberg-Marquard regularization. So trust
bounds are the more general concept.

In particular \textcite{dauphinIdentifyingAttackingSaddle2014} define the bound
\begin{align*}
	B(\weights) = \{ \theta : \|\theta-\weights\|_{|H|} \le \epsilon\}
\end{align*}
where \(|H|\) is the hessian with its eigenvalues replaced by their absolute values
and \(\|x\|_{|H|}:= x^T |H| x\). They show that this trust bound results in
\begin{align*}
	\tag{saddle-free Newton}
	\weights_{n+1}	= \weights_n - |\nabla^2\Loss(\weights_n)|^{-1}\nabla\Loss(\weights_n).
\end{align*}
This method avoids moving towards saddle points and maxima as it does not flip
the direction with negative eigenvalues in these eigenspaces. Since the
calculation of the eigenvalues of the Hessian is generally intractable for
high dimensional problems, \textcite{dauphinIdentifyingAttackingSaddle2014}
use the dimension reduction idea from Krylov subspace descent to calculate
and take the absolute value of the eigenvalues of the reduced Hessian.

\subsection{Damped Newton}\label{subsec: damped newton}

``Not moving too far away'' could also be phrased as a line search along the
direction given by the Newton-Raphson method, i.e.
\begin{align*}
	\lr_n := \arg\min_\lr \Loss\left(
		\weights_n - \lr [\nabla^2\Loss(\weights_n)]^{-1}\nabla\Loss(\weights_n)
	\right)	
\end{align*}
resulting in
\begin{align}
	\tag{damped Newton}
	\weights_{n+1}
	= \weights_n - \lr_n [\nabla^2\Loss(\weights_n)]^{-1}\nabla\Loss(\weights_n).
\end{align}

\section{Gauß-Newton Algorithm}

The Gauß-Newton Algorithm opens up the black box of our loss function, to create
a relatively cheap approximation of the Hessian \(\nabla^2\Loss\).
To motivate the (classical) Gauß-Newton algorithm, we are going to assume a
squared loss
\begin{align*}
	\loss(\weights, z) := \tfrac12(\model(x) - y)^2, \quad z=(x,y).
\end{align*}
Assuming differentiation and integration can be freely swapped, we have
\begin{align}
	\label{eq: derivative gauss newton}
	\nabla\Loss(\weights)
	&= \E[\nabla\tfrac12(\model[\weights](X)-Y)^2]
	= \E[(\model[\weights](X)-Y)\nabla\model[\weights](X)],\\
	\label{eq: hessian gauss newton}
	\nabla^2\Loss(\weights)
	&=\E[\nabla[(\model[\weights](X)-Y)\nabla\model[\weights](X)]]\\
	\nonumber
	&=\E[\nabla\model(X)\nabla\model(X)^T + (\model(X)-Y)\nabla^2\model(X)].
\end{align}
If our model is close to the truth, i.e. we are close to the minimum, then
\(\model(X)-Y\) should be small. Therefore we should approximately have
\begin{align*}
	\nabla^2\Loss(\weights) \approx \E[\nabla\model(X)\nabla\model(X)^T].
\end{align*}
A different way to motivate this approximation, is to use the first taylor
approximation of \(\model\) inside the loss function, i.e.
\begin{align*}
	\Loss(\theta)
	&= \E[\loss(\theta, Z)]
	\approx \E[\tfrac12(\model(X) + \langle \nabla\model(X), \theta-\weights\rangle - Y)^2]\\
	&= \begin{aligned}[t]
		&\E[\tfrac12(\model(X)-Y)^2]\\
		&+ \E[(\model(X)-Y)\langle \nabla\model(X), \theta-\weights\rangle]\\
		&+ \E[\tfrac12\langle \nabla\model(X), \theta-\weights\rangle^2]
	\end{aligned}\\
	&= \Loss(\weights) + \langle\nabla\Loss(\weights), \theta-\weights\rangle
	+ \tfrac12 (\theta-\weights)^T
	\underbrace{\E[\nabla\model(X)\nabla\model(X)^T]}_{\approx\nabla^2\Loss(\weights)}
	(\theta-\weights)
\end{align*}
The nice thing about this approximation is, that it is always positive definite
since we have
\begin{align*}
	 v^T \E[\nabla\model(X)\nabla\model(X)^T] v
	&= \E[ v^T\nabla\model(X) \nabla\model(X)^T v]\\
	&= \E[\langle \nabla\model(X), v\rangle^2]\ge 0.
\end{align*}

\subsection{Levenberg-Marquard Algorithm}

Unfortunately, it is not necessarily strictly positive definite. That is, if there
is a vector \(v\), which is orthogonal to \(\nabla\model(X)\) for all but a
zero probability set of \(X\), then this vector is in the eigenspace with
eigenvalue zero. For the expectation one might still be able to discuss that
away. But if one optimizes with regard to a finite sample
\(\sample=(Z_1,\dots,Z_\sampleSize)\), then we have
\begin{align*}
	\nabla^2\Loss_\sample(\weights)
	\approx \E_{Z\sim\dist_\sample}[\langle \nabla\model(X), v\rangle^2]
	= \frac1\sampleSize\sum_{k=1}^\sampleSize [\langle \nabla\model(X_k), v\rangle^2]
\end{align*}
and it is much easier to find a vector orthogonal to a finite set of gradients.
Especially if we are talking about small mini-batches and do not use full batch
gradient descent.

For this reason the Gauß-Newton algorithm is usually combined with
Levenberg-Marquard regularization (Subsection~\ref{subsec: levenberg-marquard
regularization}) to allow for inversion. With this regularization it is also
called Levenberg-Marquard algorithm going back to
\textcite{levenbergMethodSolutionCertain1944} and rediscovered by
\textcite{marquardtAlgorithmLeastSquaresEstimation1963}, since that
regularization technique was developed with the Gauß-Newton algorithm in
mind.

\subsection{Generalized Gauß-Newton}

While the classical Gauß-Newton algorithm seems to be tailored to squared losses
only, this approach can actually be generalized to more general loss functions
\parencite{schraudolphFastCurvatureMatrixVector2001}. Assuming the loss function
is of the form \(\loss(f,y)\), i.e with some abuse of notation
\begin{align*}
	\loss(\weights, z) := \loss(\model(x), y), \quad z=(x,y),
\end{align*}
then we obtain similarly to (\ref{eq: derivative gauss newton}) and (\ref{eq:
hessian gauss newton})
\begin{align}
	\label{eq: derivative generalized gauss newton}
	\tfrac{d}{d\weights}\Loss(\weights)
	&= \E[\tfrac{d}{d\weights}(\model(X), Y)]
	= \E[\tfrac{d}{df}\loss(\model(X),Y)\tfrac{d}{d\weights}\model(X)],\\
	\label{eq: hessian generalized gauss newton}
	\tfrac{d^2}{d\weights^2}\Loss(\weights)
	&=\E[\tfrac{d}{d\weights}[\tfrac{d}{df}\loss(\model(X),Y)\tfrac{d}{d\weights}\model(X)]]\\
	\nonumber
	&=\begin{aligned}[t]
		&\E[\tfrac{d}{d\weights}\model(X)^T\tfrac{d^2}{df^2}\loss(\model(X),Y)\tfrac{d}{d\weights}\model(X)]\\
		&+ \E[\tfrac{d}{df}\loss(\model(X),Y)\tfrac{d^2}{d\weights^2}\model(X)].
	\end{aligned}
\end{align}
Again we throw away the part with the Hessian of the model \(\model\),
to obtain the approximation
\begin{align*}
	\nabla^2\Loss(\weights)
	&\approx\E[\tfrac{d}{d\weights}\model(X)\tfrac{d^2}{df^2}^T\loss(\model(X),Y)\tfrac{d}{d\weights}\model(X)].
\end{align*}
The result is quite similar to the classical Gauß-Newton method, except for the
Hessian of the loss function in the middle. The calculation of which might be a
huge or tiny problem. If the model prediction is one dimensional it is likely
not an issue. Then it would only be a constant.

One should still come up with some justification, why the second part of the
Hessian part can be thrown away though, i.e. why is
\begin{align*}
	\E[\tfrac{d}{df}\loss(\model(X),Y)\tfrac{d^2}{d\weights^2}\model(X)]
\end{align*}
small? In the sqared loss case, we have used the fact that
\(\tfrac{d}{df}\loss(\model(X),Y)\) was small for good models.

\section{Natural Gradient Descent}

``Natural gradient descent'' would require a deep dive into ``information geometry''
for which we do not have time. But not mentioning this algorithm at all would do it
injustice. So to avoid playing a game of telephone, here is a direct quote of
the description by \textcite{bottouOptimizationMethodsLargeScale2018}:

``We have seen that Newton's method is invariant to linear transformations of
the parameter vector w. By contrast, the natural gradient method [5, 6] aims to
be invariant with respect to all differentiable and invertible transformations.
The essential idea consists of formulating the gradient descent algorithm in the
space of prediction functions rather than specific parameters. Of course, the
actual computation takes place with respect to the parameters, but accounts for
the anisotropic relation between the parameters and the decision function. That
is, in parameter space, the natural gradient algorithm will move the parameters
more quickly along directions that have a small impact on the decision function,
and more cautiously along directions that have a large impact on the decision
function.

We remark at the outset that many authors [119, 99] propose
quasi-natural-gradient methods that are strikingly similar to the quasi-Newton
methods described in §6.2. The natural gradient approach therefore offers a
different justification for these algorithms, one that involves qualitatively
different approximations. It should also be noted that research on the design of
methods inspired by the natural gradient is ongoing and may lead to markedly
different algorithms.''

\section{Coordinate Descent}\label{sec: coordinate descent}

Coordinate descent only ever updates one coordinate. ``[I]n situations in which
\(d\) coordinate updates can be performed at cost similar to the evaluation of
one full gradient, the method is competitive with a full gradient method both
theoretically and in practice.'' \parencite[7.3, p.
72]{bottouOptimizationMethodsLargeScale2018}. This might be interesting as the
Hessian is of course much easier to handle in one dimension. A line search might
not be the right thing to though, as
\textcite{powellSearchDirectionsMinimization1973} provides an example of a three
dimensional function on which coordinate descent jumps between the
corners of the cube, never converging. This behavior can be fixed
with a stochastic selection of the coordinates, instead of cycling through them.
In fact \textcite{bubeckConvexOptimizationAlgorithms2015} argues that coordinate
descent \emph{is} just SGD with a random filter, filtering out all
but one coordinate and disappearing in expectation. I.e. for a random index \(I\)
and standard basis vectors \(\stdBasis_1,\dots,\stdBasis_\dimension\) we can
define
\begin{align*}
	\nabla\loss(\weights, i)
	:= \tfrac1{P(I=i)} \langle \nabla\Loss(\weights), \stdBasis_i \rangle \stdBasis_i
\end{align*}
to get the original loss in expectation
\begin{align*}
	\E[\nabla\loss(\weights,I)] = \sum_{k=1}^\dimension P(I=k) \nabla\loss(\weights,k)
	= \sum_{k=1}^\dimension \langle \nabla\Loss(\weights), \stdBasis_k\rangle \stdBasis_k
	= \nabla\Loss(\weights).
\end{align*}
With conditional independence the same argument can be applied to already
stochastic losses. So we can apply our SGD convergence statements to prove
convergence of random coordinate descent.


}

	\clearpage
	\appendix

\chapter{Technical Proofs}

\section{Non-Trivial Basics}

\subsection{Convex Analysis}

\begin{lemma}\label{lem-appendix: lipschitz and bounded derivative}
	If \(f\) is differentiable, then the derivative \(\nabla f\) is
	bounded (w.r.t. the operator norm) by constant \(\lipConst\) if and only if the function
	\(f\) is \(\lipConst\)-Lipschitz continuous.
\end{lemma}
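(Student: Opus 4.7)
The plan is to split the biconditional and handle each direction with one standard tool from multivariable calculus; neither direction is deep, and the main care needed is simply being clean about the operator norm.

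For the easy direction, Lipschitz continuity \(\Rightarrow\) bounded derivative, I would argue pointwise. Fix \(x\) and a unit vector \(v\). Since \(f\) is differentiable, the directional derivative exists and equals
\[
	\nabla f(x) v = \lim_{t\to 0^+} \frac{f(x+tv) - f(x)}{t}.
\]
Lipschitz continuity gives \(\|f(x+tv)-f(x)\| \le \lipConst \|tv\| = \lipConst t\) for \(t>0\), so the difference quotient has norm at most \(\lipConst\), and this bound passes to the limit. Taking the supremum over unit \(v\) yields \(\|\nabla f(x)\| \le \lipConst\).

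For the converse, bounded derivative \(\Rightarrow\) Lipschitz, I would use the fundamental theorem of analysis along the line segment between two arbitrary points \(x,y\). Define \(g(t) := f(x + t(y-x))\), so that by the chain rule \(g'(t) = \nabla f(x+t(y-x))(y-x)\) and
\[
	f(y) - f(x) = g(1) - g(0) = \int_0^1 \nabla f(x+t(y-x))(y-x)\,dt.
\]
Pulling the norm inside the integral and using the operator-norm bound \(\|\nabla f(\cdot)\| \le \lipConst\) together with submultiplicativity gives
\[
	\|f(y) - f(x)\| \le \int_0^1 \lipConst \|y-x\|\,dt = \lipConst \|y-x\|.
\]

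There isn't really a hard step here; the only thing to be careful about is that "operator norm" and "\(\lipConst\)" refer to the same norm on the ambient space (so that the interior bound \(\|\nabla f(\cdot) v\|\le \lipConst\|v\|\) lines up with the Lipschitz bound \(\|f(y)-f(x)\|\le \lipConst\|y-x\|\)), and that if \(f\) is scalar-valued the operator norm of \(\nabla f\) is just the Euclidean norm of the gradient vector so no generality is lost in the statement.
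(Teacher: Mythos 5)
Your proof is correct and follows the same two-direction plan as the paper's. The only substantive divergence is in the (bounded derivative \(\Rightarrow\) Lipschitz) step: you integrate \(\nabla f\) along the segment via the fundamental theorem of calculus, whereas the paper invokes a pointwise mean-value bound \(\|f(x_1)-f(x_0)\|\le\|\nabla f(x_0+\xi(x_1-x_0))\|\,\|x_1-x_0\|\). Since the lemma is in fact applied to the vector-valued map \(f=\nabla\Loss\), your integral formulation is the cleaner choice: the classical mean value theorem does not furnish a single intermediate point \(\xi\) for vector outputs, and what is really needed is the mean value \emph{inequality}, which your integral bound delivers. The one technical caveat on your side is that the fundamental theorem requires \(t\mapsto\nabla f(x+t(y-x))\) to be integrable, which bare differentiability of \(f\) does not strictly guarantee; the fully rigorous route projects \(f(y)-f(x)\) onto its own direction and applies the scalar mean value theorem to that one-dimensional slice, but neither the paper nor typical practice worries about this level of generality. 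Your (Lipschitz \(\Rightarrow\) bounded derivative) argument is the same as the paper's in substance -- you bound the directional-derivative limit directly, while the paper does it via an explicit triangle-inequality split of the difference quotient into a vanishing derivative-approximation error plus the Lipschitz piece.
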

\begin{proof}
	Lipschitz continuity is implied by the mean value theorem
	\begin{align*}
		\|f(x_1) - f(x_0)\|
		&\le \|\nabla f(x_0 + \xi(x_1-x_0))\| \|x_1- x_0\|\\
		&\le \lipConst \|x_1-x_0\|.
	\end{align*}
	The opposite direction is implied by
	\begin{align*}
		\|\nabla f(x_0)\|
		&\equiv \sup_{v} \frac{\|\nabla f(x_0)v\|}{\|v\|}
		= \sup_{v} \lim_{\lambda\to 0}\frac{|\lambda|\|\nabla f(x_0) v\|}{|\lambda|\|v\|}\\
		&\lxle{\Delta}\sup_{v} \lim_{\lambda\to 0}
		(
			\underbrace{
				\tfrac{\|\nabla f(x_0)\lambda v + f(x_0) - f(x_0 +\lambda v)\|}{\|\lambda v\|}
			}_{
				\to 0 \text{ (derivative definition)}
			}
			+ \underbrace{
				\tfrac{\|f(x_0 + \lambda v) - f(x_0) \|}{\|\lambda v\|}
			}_{
				\le \lipConst 
			}
		)
	\end{align*}
	where we have used the scalability of the norm to multiply \(v\) with a
	decreasing factor both in the numerator and denominator in order to introduce
	the limit.
\end{proof}

\begin{lemma}
	\label{Appdx-lem: Lipschitz Gradient implies taylor inequality}
	If \(\nabla f\) is \(\ubound\)-Lipschitz continuous, then
	\begin{align*}
		|f(y) - f(x) - \langle \nabla f(x), y-x\rangle | \le \tfrac{\ubound}2 \|y-x\|^2
	\end{align*}
	If \(f\) is convex, then the opposite direction is also true.
\end{lemma}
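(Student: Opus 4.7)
The plan is to prove the two directions separately, using the fundamental theorem of calculus for the forward implication and the Bregman divergence trick (which was already used in Lemma~\ref{lem: bregmanDiv lower bound}) for the converse.

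For the forward direction, I would write the difference as a line integral:
\begin{align*}
f(y) - f(x) - \langle \nabla f(x), y-x\rangle
= \int_0^1 \langle \nabla f(x + t(y-x)) - \nabla f(x),\, y-x\rangle\, dt.
\end{align*}
Applying Cauchy--Schwarz inside the integral and then the $L$-Lipschitz bound $\|\nabla f(x + t(y-x)) - \nabla f(x)\| \le Lt\|y-x\|$ gives the upper bound $\int_0^1 Lt\|y-x\|^2\,dt = \tfrac{L}{2}\|y-x\|^2$. Taking absolute values on the outside of the integral is harmless since the absolute value only gets pulled inside. This part is routine.

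For the converse, which is the main obstacle, I would combine convexity with the quadratic upper bound via an auxiliary function. Fix $x$ and set $\phi(z) := f(z) - \langle \nabla f(x), z-x\rangle$. Then $\phi$ is still convex (linear perturbation), $\nabla\phi(z) = \nabla f(z) - \nabla f(x)$, and $\nabla\phi(x)=0$, so $x$ is a global minimizer of $\phi$. Moreover, $\phi$ inherits the quadratic upper bound from $f$, so the same argument as in Lemma~\ref{lem: smallest upper bound} applied to $\phi$ at the point $y$ yields
\begin{align*}
\phi(x) = \min_z \phi(z) \le \phi\!\left(y - \tfrac{1}{L}\nabla\phi(y)\right) \le \phi(y) - \tfrac{1}{2L}\|\nabla\phi(y)\|^2.
\end{align*}
Rewriting this in terms of $f$ gives
\begin{align*}
\tfrac{1}{2L}\|\nabla f(y) - \nabla f(x)\|^2 \le f(y) - f(x) - \langle \nabla f(x), y-x\rangle.
\end{align*}

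Finally, I would symmetrize by swapping $x$ and $y$, add the two inequalities to kill the function values, and obtain
\begin{align*}
\tfrac{1}{L}\|\nabla f(y)-\nabla f(x)\|^2 \le \langle \nabla f(y)-\nabla f(x),\, y-x\rangle \le \|\nabla f(y)-\nabla f(x)\|\,\|y-x\|,
\end{align*}
where the second step is Cauchy--Schwarz. Dividing by $\|\nabla f(y)-\nabla f(x)\|$ (trivial case aside) gives the $L$-Lipschitz bound on $\nabla f$. The subtle point to flag is that the argument for the converse genuinely uses convexity twice: once to ensure $\phi$ has its minimum at $x$, and implicitly again through Cauchy--Schwarz in the final step; dropping convexity, the quadratic upper bound alone does not force Lipschitz continuity of the gradient.
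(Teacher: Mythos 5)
Your proof is correct and follows the same two-step strategy as the paper: for the forward direction, the fundamental theorem of calculus plus Cauchy--Schwarz and the Lipschitz bound; for the converse, re-deriving inline the argument of Lemma~\ref{lem: bregmanDiv lower bound} via the auxiliary function \(\phi\) and then symmetrizing (the paper instead invokes that lemma by reference, observing that its proof used only the quadratic upper bound and convexity). One small correction to your closing remark: Cauchy--Schwarz is a purely algebraic inequality and does not use convexity; convexity enters the converse exactly once, to guarantee \(\nabla\phi(x)=0\) makes \(x\) a global minimizer of \(\phi\), i.e.\ to provide the one-sided bound \(f(y)-f(x)-\langle\nabla f(x),y-x\rangle\ge 0\).
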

\begin{proof}
	The first direction is taken from \textcite[Lemma
	1.2.3]{nesterovLecturesConvexOptimization2018}.
 \begin{align*}
		f(y) = f(x) + \int_0^1\langle\nabla f(x+\tau(y-x)), y-x \rangle d\tau
	\end{align*}
	implies (using the  Cauchy-Schwarz inequality)
	\begin{align*}
		&| f(y) - f(x) - \langle \nabla f(x), y-x\rangle | \\
		&\le \int_0^1 | \langle\nabla f(x+\tau(y-x))-\nabla f(x), y-x\rangle | d\tau \\
		&\lxle{\text{C.S.}}
		\int_0^1 \|\langle\nabla f(x+\tau(y-x))-\nabla f(x)\| \cdot \|y-x\| d\tau\\
		&\le \int_0^1 \ubound \|\tau(y-x)\|\cdot\|y-x\| d\tau
		= \tfrac{\ubound}2 \|y-x\|^2.
	\end{align*}
	The opposite direction is taken from \textcite[Lemma
	2.1.5]{nesterovLecturesConvexOptimization2018}.
	As we have only used
	\begin{align*}
		0\xle{\text{Convexity}} f(y) - f(x) - \langle \nabla f(x), y-x\rangle
		\le \tfrac{\ubound}2 \|y-x\|^2
	\end{align*}	
	in Lemma~\ref{lem: bregmanDiv lower bound} (and Lemma~\ref{lem: smallest upper
	bound} which was used in the proof), we know that equation (\ref{eq:
	bregmanDiv lower bound b}) follows from it and after applying Cauchy-Schwarz
	to this equation
	\begin{align*}
		\tfrac{1}\ubound \|\nabla f(x)-\nabla f(y)\|^2
		&\le \langle \nabla f(x) - \nabla f(y), x-y\rangle \\
		&\lxle{\text{C.S.}} \|\nabla f(x) - \nabla f(y)\| \|x-y\|,
	\end{align*}
	we only have to divide both sides by \(\tfrac{1}\ubound \|\nabla f(x)-\nabla
	f(y)\|\) to get \(\ubound\)-Lipschitz continuity back.
\end{proof}

\subsection{Sequences}

\begin{theorem}[Cesàro-Stolz]\label{thm-appendix: cesaro-stolz}
	Let \((x_n, n\in\naturals),(y_n, n\in\naturals)\) be real sequences with
	\(\lim_{n\to\infty}y_n\to\infty\), then for \(\Delta x_n:=x_{n+1}-x_n\)
	\begin{align}
		\liminf_n \frac{\Delta x_n}{\Delta y_n}
		\le \liminf_n \frac{x_n}{y_n} 
		\le \limsup_n \frac{x_n}{y_n} 
		\le \limsup_n \frac{\Delta x_n}{\Delta y_n}
	\end{align}
	In particular if the limit \(\lim_n\frac{\Delta x_n}{\Delta y_n}\) exists
	we have equality. Thereby it becomes a discrete version of L'Hôpital's
	rule.
\end{theorem}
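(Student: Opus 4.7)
The plan is to prove only the inequality $\limsup_n \tfrac{x_n}{y_n} \le \limsup_n \tfrac{\Delta x_n}{\Delta y_n}$; the $\liminf$ counterpart follows by applying the same argument to $(-x_n)$, which flips $\limsup$ and $\liminf$, and the middle inequality $\liminf \le \limsup$ is trivial. I would also implicitly work under the assumption that $(y_n)$ is eventually strictly increasing (which together with $y_n\to\infty$ is the standard hypothesis of Cesàro-Stolz); without this, $\Delta y_n$ can change sign and the statement as written needs clarification, so I would either add this to the hypotheses or argue that eventually monotonicity can be arranged.

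The core idea is a telescoping sum followed by a ``tail dominates'' argument. Let $L:=\limsup_n \tfrac{\Delta x_n}{\Delta y_n}$. The case $L=+\infty$ is trivial, so assume $L<\infty$. Fix $\epsilon>0$; by definition of $\limsup$ there is $N$ such that $\tfrac{\Delta x_k}{\Delta y_k}<L+\epsilon$ for all $k\ge N$. Since $\Delta y_k>0$ eventually, this rearranges to $\Delta x_k<(L+\epsilon)\Delta y_k$. Summing this inequality from $k=N$ to $k=n-1$ telescopes both sides:
\begin{align*}
    x_n - x_N \;<\; (L+\epsilon)(y_n - y_N).
\end{align*}
Dividing by $y_n$ (which is eventually positive, since $y_n\to\infty$) gives
\begin{align*}
    \frac{x_n}{y_n} \;<\; (L+\epsilon) + \frac{x_N - (L+\epsilon)y_N}{y_n}.
\end{align*}

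Now I would take $\limsup_{n\to\infty}$: the second term vanishes because its numerator is a fixed constant (depending on $N$ and $\epsilon$ but not on $n$) while its denominator $y_n\to\infty$. Hence $\limsup_n \tfrac{x_n}{y_n}\le L+\epsilon$, and letting $\epsilon\downarrow 0$ yields the desired bound. The main obstacle I anticipate is purely bookkeeping: one must be careful that $L+\epsilon$ could be negative (so ``dividing by $y_n$'' does not flip the inequality because $y_n>0$, not because $L+\epsilon>0$), and one must handle the case $L=-\infty$ separately, essentially by replacing $L+\epsilon$ with an arbitrarily large negative constant $M$ and showing $\limsup\tfrac{x_n}{y_n}\le M$ for all $M$. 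Finally, the ``in particular'' statement for L'Hôpital-style limits follows immediately once both $\liminf$ and $\limsup$ of $\tfrac{\Delta x_n}{\Delta y_n}$ coincide, forcing $\tfrac{x_n}{y_n}$ to have the same limit by the sandwich chain already established.
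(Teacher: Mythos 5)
Your proof is correct and uses the same mechanism as the paper's: threshold the increment ratio ($L+\epsilon$ in your version, $\gamma<\liminf$ in the paper's), telescope to bound $x_n - x_N$, divide by $y_n$, and let $n\to\infty$ kill the constant. The only cosmetic difference is that you prove the $\limsup$ bound and reduce the $\liminf$ bound to it via $-x_n$, while the paper proves the $\liminf$ bound and says the $\limsup$ is symmetric; these are the same argument. Your observation that one actually needs $\Delta y_n>0$ eventually (not just $y_n\to\infty$) is a valid catch — the paper's proof also silently uses it when it passes from $\gamma<\Delta x_n/\Delta y_n$ to $\gamma\,\Delta y_n<\Delta x_n$, and strictly speaking this hypothesis should appear in the theorem statement.
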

\begin{proof}[{Proof \parencite{IMOmathHopitalTheorem}}]
	To show the first inequality let us assume we have
	\begin{align*}
		\gamma < \liminf_n \frac{\Delta x_n}{\Delta y_n}.
	\end{align*}
	Then by definition of the limes inferior there exists \(N\) such that
	\begin{align*}
		\gamma \Delta y_n < \Delta x_n \qquad \forall n\ge N
	\end{align*}
	Using telescoping sums this implies
	\begin{align*}
		\gamma (y_m- y_N)
		= \gamma \sum_{n=N}^{m-1} \Delta y_n
		< \sum_{n=N}^{m-1} \Delta x_n
		= x_m- x_N.
	\end{align*}
	Dividing both sides by \(y_m\) and taking the limes inferior keeping in
	mind that \(y_m\to\infty\) results in
	\begin{align*}
		\gamma = \liminf_m \gamma\left(\frac{y_m}{y_m} - \frac{y_N}{y_m}\right)
		< \liminf_m \frac{x_m}{y_m} - \frac{x_N}{y_m}
		= \liminf_m \frac{x_m}{y_m}.
	\end{align*}
	Since \(\gamma\) was arbitrary this proves the first inequality. The second
	inequality is true by definition of the limes inferior (and superior) and the
	third inequality can be proven in the same way as the first starting with
	\(\limsup_n \frac{\Delta x_n}{\Delta y_n} < \gamma\).
\end{proof}
\begin{lemma}
	\label{lem-appendix: diminishing contraction}
	Let \(a_0 \in [0, 1/q)\) for \(q>0\) and assume 
	\begin{align*}
		a_{n+1} = (1-q a_n)a_n \quad \forall n \ge 0,
	\end{align*}
	then we have
	\begin{align}
		\lim_{n\to\infty} n a_n = 1/q
	\end{align}
\end{lemma}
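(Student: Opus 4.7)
The plan is to apply the Cesàro--Stolz Theorem (Theorem~\ref{thm-appendix: cesaro-stolz}) to the sequence $x_n := 1/a_n$ and $y_n := n$. The target identity $\lim_n na_n = 1/q$ is equivalent to $\lim_n x_n/y_n = q$, and Cesàro--Stolz reduces this to computing the limit of the discrete difference quotient $(x_{n+1}-x_n)/(y_{n+1}-y_n) = 1/a_{n+1} - 1/a_n$.

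First I would dispose of the case $a_0=0$ (which would make the claim fail) and henceforth assume $a_0\in(0,1/q)$. A short induction then shows $a_n\in(0,1/q)$ for all $n$: if $a_n\in(0,1/q)$, then $1-qa_n\in(0,1)$, so $a_{n+1}=(1-qa_n)a_n\in(0,a_n)\subseteq(0,1/q)$. In particular the sequence is strictly decreasing and bounded below by $0$, hence convergent; its limit $a_\infty$ must satisfy $a_\infty=(1-qa_\infty)a_\infty$, forcing $a_\infty=0$. (Alternatively, one could simply cite the bound $a_n\le 1/((n+1)q)$ from Lemma~\ref{lem: upper bound on diminishing contraction}, which also yields $a_n\to 0$ and, incidentally, $\limsup_n na_n\le 1/q$.)

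The key computation is then
\begin{align*}
    \frac{1}{a_{n+1}} - \frac{1}{a_n}
    = \frac{1}{(1-qa_n)a_n} - \frac{1}{a_n}
    = \frac{1}{a_n}\left(\frac{1}{1-qa_n}-1\right)
    = \frac{1}{a_n}\cdot\frac{qa_n}{1-qa_n}
    = \frac{q}{1-qa_n}.
\end{align*}
Since $a_n\to 0$, the right-hand side converges to $q$. Because $y_n=n\to\infty$, Theorem~\ref{thm-appendix: cesaro-stolz} yields
\begin{align*}
    \lim_{n\to\infty}\frac{1/a_n}{n}
    = \lim_{n\to\infty}\left(\frac{1}{a_{n+1}}-\frac{1}{a_n}\right)
    = q,
\end{align*}
which is equivalent to the claim $\lim_{n\to\infty} n a_n = 1/q$.

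The only mildly delicate point is verifying positivity and monotone decay of $a_n$ so that $1/a_n$ is well defined and $a_n\to 0$; everything else is a one-line algebraic manipulation plus Cesàro--Stolz. No significant obstacle is expected.
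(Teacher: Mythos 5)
Your proof is correct and takes essentially the same route as the paper: both apply Cesàro--Stolz to the pair $(n,\,1/a_n)$ after establishing $a_n\to 0$, the only cosmetic difference being that you put $1/a_n$ in the numerator and compute $1/a_{n+1}-1/a_n \to q$, whereas the paper swaps the roles and computes the reciprocal difference quotient $\frac{a_{n+1}a_n}{a_n-a_{n+1}}\to 1/q$ directly. You also correctly flag that the case $a_0=0$ must be excluded (the stated lemma allows $a_0=0$, for which $na_n\equiv 0\ne 1/q$), a point the paper's proof silently glosses over by writing $1/a_n$.
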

\begin{proof}
	The proof follows \textcite{israelHowWorkOut2012}. By induction we have
	\begin{align*}
		0\le a_n \le a_{n-1} < 1/q.
	\end{align*}
	Therefore \(a_n\) is a falling bounded sequences and thus converges to
	some \(a\in[0,1/q)\) which necessarily fulfills
		\(a(1-qa) = a\)
	which implies
	\begin{align*}
		\lim_{n\to\infty} a_n = a = 0
	\end{align*}
	Thus for \(y_n := 1/a_n\) and \(x_n:=n\) we have
	\begin{align*}
		\frac{\Delta x_n}{\Delta y_n}
		= \frac{1}{\frac{1}{a_{n+1}}-\frac1{a_n}}
		= \frac{a_{n+1}a_n}{a_n - a_{n+1}}
		= \frac{(1-q a_n)a_n}{1 - (1-q a_n)}
		= \frac{1-q a_n}{q} \to 1/q
	\end{align*}
	and since \(y_n\to\infty\) we can apply Theorem~\ref{thm-appendix: cesaro-stolz}
	to get
	\begin{align*}
		\lim_{n\to\infty} n a_n
		&= \lim_{n\to\infty} \frac{x_n}{y_n}
		= \lim_{n\to\infty} \frac{\Delta x_n}{\Delta y_n}
		= 1/q.
		\qedhere
	\end{align*}
\end{proof}

\begin{lemma}[Discrete Gr\"onwall]\label{lem-appendix: discrete gronwall}
	Let \(g_n, a_n, b_n \ge 0\), \(g_0=0\) and
	\begin{align*}
		g_{n+1} \le a_n g_n + b_n
	\end{align*}
	then we have
	\begin{align*}
		g_n \le \sum_{k=0}^{n-1}b_k\prod_{i=k+1}^{n-1}a_i
	\end{align*}
\end{lemma}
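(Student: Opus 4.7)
The plan is to prove this by straightforward induction on $n$, with the main care being to handle the empty-sum and empty-product conventions correctly (an empty product equals $1$ and an empty sum equals $0$). Since the recursion only provides a one-step bound relating $g_{n+1}$ to $g_n$, and since all quantities are non-negative (so monotonicity is preserved when we multiply by $a_n$), induction is the natural tool here.

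For the base case $n=0$, the claim reduces to $g_0 \le 0$, which holds by assumption since the right-hand side is an empty sum. (It is also useful to verify $n=1$ as a sanity check: the recursion gives $g_1 \le a_0 g_0 + b_0 = b_0$, while the right-hand side is $b_0 \prod_{i=1}^{0} a_i = b_0$, matching exactly.)

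For the inductive step, assuming the bound for $n$, I would apply the recursion once and then distribute $a_n$ into the inductive estimate:
\begin{align*}
g_{n+1} &\le a_n g_n + b_n
\le a_n \sum_{k=0}^{n-1} b_k \prod_{i=k+1}^{n-1} a_i + b_n \\
&= \sum_{k=0}^{n-1} b_k \prod_{i=k+1}^{n} a_i + b_n \prod_{i=n+1}^{n} a_i
= \sum_{k=0}^{n} b_k \prod_{i=k+1}^{n} a_i,
\end{align*}
where in the last line I use the empty-product convention to absorb the $b_n$ term into the sum as the $k=n$ summand. This is exactly the claimed bound with $n$ replaced by $n+1$.

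The only conceptual subtlety is that the inductive hypothesis can be multiplied by $a_n \ge 0$ without reversing the inequality, which relies crucially on the non-negativity assumption on $a_n$. Otherwise the argument is routine algebraic manipulation of sums and products, and there is no real obstacle beyond keeping the index bookkeeping straight.
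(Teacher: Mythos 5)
Your proof is correct and takes essentially the same route as the paper: induction on $n$ with the $n=0$ base case handled by the empty-sum convention, followed by distributing $a_n$ into the inductive bound and absorbing the $b_n$ term as the $k=n$ summand. Your write-up is just a slightly more explicit version of the paper's one-line chain, with the additional $n=1$ sanity check and a note on why non-negativity of $a_n$ matters.
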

\begin{proof}
	By induction where \(n=0\) is trivial interpreting the empty sum as zero, we have	
	\begin{align*}
		g_{n+1}
		&\le a_n g_n + b_n
		\xle{\text{ind.}} a_n \left(\sum_{k=0}^{n-1}b_k\prod_{i=k+1}^{n-1}a_i\right) + b_n
		\le \sum_{k=0}^{n}b_k\prod_{i=k+1}^{n}a_i
		\qedhere
	\end{align*}
\end{proof}

\subsection{Operators}

\begin{lemma}[Absolute Values Inside Operator Norms]
	\label{lem-appdx: absolute value inside operator norms}
	For complex matrices \(A^{(1)},\dots,A^{(n)}\in\complex^{\dimension\times\dimension}\)
	we define \(B^{(1)},\dots,B^{(n)}\in\complex^{\dimension\times\dimension}\)
	with
	\begin{align*}
		B^{(k)}_{ij} := |A^{(k)}_{ij}| \qquad \forall k,i,j.
	\end{align*}
	Then we have
	\begin{align*}
		\Bigg\|\prod_{k=1}^n A^{(k)}\Bigg\| \le \Bigg\|\prod_{k=1}^n B^{(k)}\Bigg\|.
	\end{align*}
\end{lemma}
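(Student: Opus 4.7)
My plan is to decouple the claim into an entry-wise bound and a monotonicity property of the spectral norm with respect to entry-wise absolute values.

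First I would define $C := \prod_{k=1}^n A^{(k)}$ and $D := \prod_{k=1}^n B^{(k)}$ and establish the entry-wise inequality $|C_{ij}| \le D_{ij}$ for all $i,j$. Expanding the matrix product,
\begin{align*}
	C_{ij} = \sum_{i_1, \dots, i_{n-1}} A^{(1)}_{i,i_1} A^{(2)}_{i_1,i_2}\cdots A^{(n)}_{i_{n-1},j},
\end{align*}
so the triangle inequality for the complex modulus together with $|A^{(k)}_{ab}| = B^{(k)}_{ab}$ gives
\begin{align*}
	|C_{ij}|
	\le \sum_{i_1, \dots, i_{n-1}} B^{(1)}_{i,i_1}\cdots B^{(n)}_{i_{n-1},j}
	= D_{ij}.
\end{align*}
Note that the $B^{(k)}$ have non-negative real entries, hence so does $D$.

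Next I would lift this entry-wise bound to the operator (spectral) norm. For an arbitrary $x\in\complex^{\dimension}$ with $\|x\|=1$, let $y := (|x_1|,\dots,|x_\dimension|)^T\in\reals^\dimension$, so that $\|y\|=\|x\|=1$. Applying the triangle inequality inside each coordinate and then the entry-wise bound yields
\begin{align*}
	\|Cx\|^2
	= \sum_i \Big|\sum_j C_{ij}x_j\Big|^2
	\le \sum_i \Big(\sum_j |C_{ij}|\,|x_j|\Big)^2
	\le \sum_i \Big(\sum_j D_{ij}\,y_j\Big)^2
	= \|Dy\|^2
	\le \|D\|^2,
\end{align*}
using $\|y\|=1$ in the last step. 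Taking the supremum over unit $x$ finishes the proof.

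The main thing to get right is the switch from the complex matrix $C$ to the non-negative matrix $D$ in the middle step: we must pass through the auxiliary vector $y$ of coordinate-wise moduli so that the non-negativity of $D_{ij}$ makes $\sum_j D_{ij} y_j$ equal to $(Dy)_i$ with no sign cancellation lost; otherwise one only recovers a bound by $\||D|\,|x|\|$ which is what we have but phrased carefully. Everything else is routine: the entry-wise bound is just the triangle inequality applied to an $n$-fold sum, and the spectral-norm step is a one-line estimate.
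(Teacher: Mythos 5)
Your proof is correct and follows essentially the same approach as the paper: both expand the matrix product entrywise, apply the triangle inequality to pass from the $A^{(k)}$ to the nonnegative $B^{(k)}$ and from $x$ to the coordinate-wise modulus vector $y$, note $\|x\|=\|y\|$, and take the supremum. You merely factor the argument into an explicit entrywise lemma $|C_{ij}|\le D_{ij}$ before the norm step, whereas the paper does both steps in a single displayed chain.
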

\begin{proof}
	Let \(x\in\complex^\dimension\) be an arbitrary vector and define \(y\in\complex^\dimension\)
	with
	\begin{align*}
		y_i = |x_i| \qquad \forall i=1,\dots,\dimension.
	\end{align*}
	Then we have
	\begin{align*}
		\Bigg\| \prod_{k=1}^n A^{(k)} x \Bigg\|
		&= \sum_{j=1}^{d} \Big| \sum_{1\le i_1,\dots, i_n \le d}
		A^{(1)}_{j i_1} A^{(2)}_{i_1 i_2} \dots A^{(n)}_{i_{n-1} i_n} x_{i_n} \Big|^2
		\\
		&\le \sum_{j=1}^{d} \Big| \sum_{1\le i_1,\dots, i_n \le d}
		B^{(1)}_{j i_1} B^{(2)}_{i_1 i_2} \dots B^{(n)}_{i_{n-1} i_n} y_{i_n} \Big|^2
		\\
		&= \Bigg\| \prod_{k=1}^n B^{(k)} y \Bigg\|
	\end{align*}
	Since we have \(\|x\|=\|y\|\) as the norm takes the absolute value anyway, we
	get our claim
	\begin{align*}
		\Bigg\|\prod_{k=1}^n A^{(k)}\Bigg\|
		&= \sup_{\|x\|=1}
		\Bigg\|\prod_{k=1}^n A^{(k)} x\Bigg\|
		\le \sup_{\|y\|=1}
		\Bigg\|\prod_{k=1}^n B^{(k)} y\Bigg\|
		= \Bigg\|\prod_{k=1}^n B^{(k)}\Bigg\|.
		\qedhere
	\end{align*}
\end{proof}

\section{Momentum Convergence}

\subsection{Eigenvalue Analysis}

\begin{theorem}[\cite{qianMomentumTermGradient1999}]
	\label{thm-appdx: momentum - stable set of parameters}
	Let
	\begin{align*}
		\momEV_{1/2}
		= \tfrac12 \left(
			1+\momCoeff-\lrSq\hesseEV \pm \sqrt{(1+\momCoeff-\lrSq\hesseEV)^2 - 4\momCoeff}
		\right)
	\end{align*}
	then 
	\begin{enumerate}
		\item \(\max\{|\momEV_1|,|\momEV_2|\}<1\) if and only if
		\begin{align*}
			0<\lrSq\hesseEV < 2(1+\momCoeff) \qquad \text{and} \qquad |\momCoeff|<1
		\end{align*}
		\item The complex case can be characterized by either
		\begin{align*}
			0<(1-\sqrt{\lrSq\hesseEV})^2 < \momCoeff < 1
		\end{align*}		
		or alternatively \(\momCoeff>0\) and
		\begin{align*}
			(1-\sqrt{\momCoeff})^2 < \lrSq\hesseEV < (1+\sqrt{\momCoeff})^2,
		\end{align*}
		for which we have \(|\momEV_1|=|\momEV_2|=\sqrt{\momCoeff}\).
		
		\item In the real case we have \(\momEV_1>\momEV_2\) and
		\begin{align}\label{eq: when does sigma_1 or sigma_2 dominate?}
			\max\{|\momEV_1|, |\momEV_2|\} = \begin{cases}
				|\momEV_1|=\momEV_1 & \lrSq\hesseEV < 1+\momCoeff \\
				|\momEV_2|=-\momEV_2 & \lrSq\hesseEV \ge 1+\momCoeff.
			\end{cases}
		\end{align}
		Restricted to \(1>\momCoeff>0\) this results in two different	
		behaviors. For
		\begin{align*}
			0<\lrSq\hesseEV \le (1-\sqrt{\momCoeff})^2 < 1+\momCoeff
		\end{align*}
		we have \(1>\momEV_1 > \momEV_2 > 0\). For
		\begin{align*}
			1+\momCoeff < (1+\sqrt{\momCoeff})^2\le \lrSq\hesseEV < 2(1+\momCoeff)
		\end{align*}
		on the other hand, we get \(-1 < \momEV_2 < \momEV_1 < 0\).
	\end{enumerate}
\end{theorem}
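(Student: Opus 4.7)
I would recast everything in terms of the monic quadratic $p(x) := x^2 - sx + \momCoeff$ with $s := 1+\momCoeff-\lrSq\hesseEV$, so that $\momEV_{1},\momEV_{2}$ are its roots and Vieta's formulas give $\momEV_1 \momEV_2 = \momCoeff$, $\momEV_1 + \momEV_2 = s$. The discriminant $D := s^2 - 4\momCoeff$ is the single algebraic object controlling all three claims, and the key preparatory computation is its dual factorization: viewed as a quadratic in $\lrSq\hesseEV$ the $pq$-formula yields roots $(1\pm\sqrt{\momCoeff})^2$ (valid for $\momCoeff\ge 0$), while viewed as a quadratic in $\momCoeff$ it has roots $(1\pm\sqrt{\lrSq\hesseEV})^2$. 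These two factorizations are what make the two equivalent characterizations in part (2) fall out simultaneously.

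\textbf{Part (2).} In the complex regime $D<0$ the roots of $p$ are complex conjugates, so
\begin{align*}
|\momEV_1|^2 \;=\; \momEV_1 \overline{\momEV_1} \;=\; \momEV_1\momEV_2 \;=\; \momCoeff,
\end{align*}
which in particular forces $\momCoeff>0$ and gives $|\momEV_1|=|\momEV_2|=\sqrt{\momCoeff}$. Reading $D<0$ off the $\momCoeff$-form of the factorization yields $(1-\sqrt{\lrSq\hesseEV})^2<\momCoeff<(1+\sqrt{\lrSq\hesseEV})^2$; intersecting with the stability bound $\momCoeff<1$ established in part (1) and noting $(1+\sqrt{\lrSq\hesseEV})^2>1$ gives the first stated characterization. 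Reading $D<0$ off the $\lrSq\hesseEV$-form directly gives the second.

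\textbf{Part (1).} I would use the standard Schur criterion for the monic quadratic $p$: both roots lie in the open unit disc iff (i) $|\momCoeff|<1$ (product condition $|\momEV_1\momEV_2|<1$) and (ii) $p(1)>0$ and $p(-1)>0$. A direct evaluation gives $p(1)=\lrSq\hesseEV$ and $p(-1)=2(1+\momCoeff)-\lrSq\hesseEV$, so (ii) is precisely $0<\lrSq\hesseEV<2(1+\momCoeff)$. If one wants a self-contained proof of the Schur criterion, it splits cleanly into the complex case (handled above, where $|\momEV_i|^2=\momCoeff<1$) and the real case, where the sign conditions $p(\pm 1)>0$ combined with positivity of the product force both real roots into $(-1,1)$.

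\textbf{Part (3).} In the real case $D\ge 0$ with $\momCoeff\in(0,1)$, Vieta gives $\momEV_1\momEV_2=\momCoeff>0$, so both roots share a sign—necessarily the sign of $s=1+\momCoeff-\lrSq\hesseEV$. Sorting $\momEV_1>\momEV_2$, this makes $\max\{|\momEV_1|,|\momEV_2|\}=\momEV_1$ when $s>0$ and $-\momEV_2$ when $s<0$, which is exactly (\ref{eq: when does sigma_1 or sigma_2 dominate?}). The elementary chain $(1-\sqrt{\momCoeff})^2<1+\momCoeff<(1+\sqrt{\momCoeff})^2$ (immediate from $2\sqrt{\momCoeff}>0$) then aligns the borders of the real regime coming from part (2) with the sign of $s$, separating the ``Monotonic'' sub-case $0<\lrSq\hesseEV\le(1-\sqrt{\momCoeff})^2$ from the ``Oscillation'' sub-case $(1+\sqrt{\momCoeff})^2\le \lrSq\hesseEV<2(1+\momCoeff)$; the strict inner bounds $\momEV_1<1$ and $\momEV_2>-1$ are inherited from the stability statement of part (1).

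\textbf{Main obstacle.} No single step is conceptually deep, but the bookkeeping around the dual factorization of $D$ is delicate: one must be careful to intersect the raw complex-case condition with the stability constraint to obtain the tight upper bound $\momCoeff<1$ in the first characterization of part (2), rather than the looser $(1+\sqrt{\lrSq\hesseEV})^2$, and to verify that the two characterizations are genuinely equivalent on the stable region. Everything else reduces to routine $pq$-formula manipulations and sign checks.
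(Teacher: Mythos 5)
Your proof is correct, and it takes a genuinely different—and considerably cleaner—route than the paper's. The paper works directly with the $pq$-formula expression for $\momEV_{1/2}$, separately squares and rearranges each of the inequalities $\momEV_1<1$ and $-1<\momEV_2$, and must explicitly dispatch several boundary cases along the way ($\lrSq\hesseEV\le 0$ at the outset, $\lrSq\hesseEV>4$ in the real case, and so on). You instead package the roots as the monic quadratic $p(x)=x^2-sx+\momCoeff$ and invoke the Jury/Schur stability criterion, under which part (1) collapses to the three conditions $|\momCoeff|<1$, $p(1)>0$, $p(-1)>0$; the striking observation that these evaluate to exactly $\lrSq\hesseEV$ and $2(1+\momCoeff)-\lrSq\hesseEV$ makes the learning-rate window drop out with no case bookkeeping at all. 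Your Vieta argument $|\momEV_1|^2=\momEV_1\overline{\momEV_1}=\momEV_1\momEV_2=\momCoeff$ likewise replaces the paper's explicit modulus computation, and the sign-of-$s$ argument for part (3) is a clean substitute for the paper's comparison under the absolute value. What the paper's version buys is complete self-containment, while yours leans on a named criterion; your sketch of a self-contained Schur argument is sound in outline, but the phrase ``positivity of the product'' is imprecise—the product $\momEV_1\momEV_2=\momCoeff$ may be negative in the stable region. The correct splitting is: if $\momCoeff<0$ the real roots straddle zero and $p(\pm 1)>0$ alone already confines them to $(-1,1)$, while if $\momCoeff\ge 0$ the condition $|\momCoeff|<1$ rules out the possibility that both roots lie outside $[-1,1]$ on the same side. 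With that small repair, your argument is complete and is, to my eye, the more illuminating of the two.
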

\begin{proof}
	Define
	\begin{align*}
		\Delta(\momCoeff)
		&:= (1+\momCoeff-\lrSq\hesseEV)^2 - 4\momCoeff\\
		&= \momCoeff^2 - 2(1+\lrSq\hesseEV)\momCoeff + (1-\lrSq\hesseEV)^2
	\end{align*}
	then \(\momEV_{1/2}\) is complex iff \(\Delta(\momCoeff)<0\). Since it is a
	convex parabola this implies that \(\momCoeff\) needs to be between the roots
	\begin{align*}
		\momCoeff_{1/2}
		= (1+\lrSq\hesseEV) \pm 
		\underbrace{
			\sqrt{(1+\lrSq\hesseEV)^2 - (1-\lrSq\hesseEV)^2}
		}_{=\sqrt{4\lrSq\hesseEV}=\mathrlap{2\sqrt{\lrSq\hesseEV}}}
		= (1\pm \sqrt{\lrSq\hesseEV})^2	
	\end{align*}
	of \(\Delta\). Assuming those roots are real, which requires \(\lrSq\hesseEV>0\).
	But if \(\lrSq\hesseEV\le0\) then we would have
	\begin{align*}
		\momEV_1
		= \tfrac12 \Big(
			\underbrace{1+\momCoeff+|\lrSq\hesseEV|}_{\ge 1+\momCoeff}
			+ \sqrt{\underbrace{(1+\momCoeff+|\lrSq\hesseEV|)^2 - 4\momCoeff}_{\smash{\ge (1-\momCoeff)^2}}}
		\Big)
		\ge \frac{1+\momCoeff + |1-\momCoeff|}2 \ge 1
	\end{align*}
	which means that \(\max\{|\momEV_1|,|\momEV_2|\}<1\) implies \(\lrSq\hesseEV>0\).
	The roots \(\momCoeff_{1/2}\) are therefore real no matter the direction we
	wish to prove.
	\begin{description}[wide, labelindent=0pt]
	\item[Complex Case:]
		\(\momEV_{1/2}\) are complex iff	
		\begin{align}\label{eq: complex case}
			0 < (1-\sqrt{\lrSq\hesseEV})^2 < \momCoeff < (1+\sqrt{\lrSq\hesseEV})^2.
		\end{align}
		In that case we have
		\begin{align*}
			|\momEV_1| = |\momEV_2|
			&= \sqrt{|\Re(\momEV_1)|^2 + |\Im(\momEV_1)|^2}\\
			&= \tfrac12 \sqrt{(1+\momCoeff-\lrSq\hesseEV)^2 + 4\momCoeff - (1+\momCoeff-\lrSq\hesseEV)^2}
			= \sqrt{\momCoeff}.
		\end{align*}
		So the condition \(\momCoeff<1\) is necessary and sufficient for
		\(\max\{|\momEV_1|,|\momEV_2|\} <1\) in the complex case.
		And since we have seen that \(\lrSq\hesseEV>0\) is necessary and \(0<\momCoeff\)
		from (\ref{eq: complex case}) implies the condition \(|\momCoeff|<1\), we only
		need to show that \(\lrSq\hesseEV <2(1+\momCoeff)\) is necessary in the complex
		case to have this case covered. Using \((\sqrt{\lrSq\hesseEV}-1)^2 < \momCoeff\)
		from (\ref{eq: complex case}) and \(ab \le a^2 + b^2\) we get
		\begin{align*}
			\sqrt{\lrSq\hesseEV} - 1
			< \momCoeff \implies \lrSq\hesseEV < (1+\sqrt{\momCoeff})^2
			\le 2(1+\momCoeff).
		\end{align*}

		The remaining characterization of the complex case follows from
		restricting (\ref{eq: complex case}) to \(\momCoeff<1\) and reordering.
		\item[Real Case:] Since we have \(\momEV_2 \le \momEV_1\),
		\(\max\{|\momEV_1|,|\momEV_2|\}<1\) is equivalent to
		\begin{align*}
			-1 < \momEV_2 \qquad \text{and} \qquad \momEV_1 < 1
		\end{align*}
		By subtracting the part before the "\(\pm\)" from the equation \(\momEV_1<1\)
		after multiplying it by two, we get
		\begin{align}\label{eq: sigma1 condition}
			0\xle{\text{Real Case}} \sqrt{(1+\momCoeff-\lrSq\hesseEV)^2 -4\momCoeff} < 1-\momCoeff +\lrSq\hesseEV.
		\end{align}
		And since \((1+\sqrt{\lrSq\hesseEV})^2 \le \momCoeff\) leads to a contradiction
		\begin{align*}
			0 \le 1-\momCoeff+\lrSq\hesseEV \le 1-(1+\sqrt{\lrSq\hesseEV})^2 +\lrSq\hesseEV
			= -2\sqrt{\lrSq\hesseEV} \ge 0,
		\end{align*}
		the real case is restricted to the case \(\momCoeff < (1-\sqrt{\lrSq\hesseEV})^2\)
		as we are otherwise in the complex case (\ref{eq: complex case}).
		The inequality (\ref{eq: sigma1 condition}) is (in the real case) equivalent to
		\begin{align*}
			&(1+\momCoeff-\lrSq\hesseEV)^2 - 4\momCoeff < (1-\momCoeff+\lrSq\hesseEV)^2\\
			&\iff \cancel{1^2} + 2(\momCoeff-\lrSq\hesseEV) + 
			\cancel{(\momCoeff-\lrSq\hesseEV)^2} - 4\momCoeff
			< \cancel{1^2} - 2(\momCoeff-\lrSq\hesseEV) + 
			\cancel{(\momCoeff-\lrSq\hesseEV)^2}\\
			&\iff 0 < 4\lrSq\hesseEV.
		\end{align*}
		Which is a given for positive learning rates in the convex	
		case. This is not surprising if one remembers the proof from gradient
		descent. The real issue is not overshooting \(-1\) with large learning
		rates. So multiplying \(-1<\momEV_2\) by two, adding 2 and moving the
		root to the other side we get
		\begin{align}\label{eq: from -1<sigma_2 followed}
			0 \xle{\text{Real Case}} \sqrt{(1+\momCoeff-\lrSq\hesseEV)^2 - 4\momCoeff}
			< 3+\momCoeff-\lrSq\hesseEV.
		\end{align}
		From this we get \(\lrSq\hesseEV < 3+\momCoeff\). We also get
		\begin{align}
			\nonumber
			&(1+\momCoeff-\lrSq\hesseEV)^2 - 4\momCoeff < (3+\momCoeff-\lrSq\hesseEV)^2\\
			\nonumber
			&\iff 1^2 + 2(\momCoeff-\lrSq\hesseEV) + \cancel{(\momCoeff-\lrSq\hesseEV)^2} - 4\momCoeff
			< 3^2 + 6(\momCoeff-\lrSq\hesseEV) + \cancel{(\momCoeff-\lrSq\hesseEV)^2}\\
			\label{eq: momentum upper bound derivation}
			&\iff 4\lrSq\hesseEV < 8 + 8\momCoeff
			\iff \lrSq\hesseEV < 2(1+\momCoeff)
		\end{align}
		which is the important requirement on the learning rate we are looking for.
		Now for \(\momCoeff<1\) this requirement is actually stronger than
		\(\lrSq\hesseEV <3+\momCoeff\) which means that we have proven that our
		requirements are sufficient for \(\max\{|\momEV_1|,|\momEV_2|\}<1\).
		What is left to show is that \(|\momCoeff|<1\) is also necessary in the real
		case. From (\ref{eq: momentum upper bound derivation}) we immediately
		get
		\begin{align*}
			0 \le \tfrac{\lrSq\hesseEV}2 < 1+\momCoeff
		\end{align*}
		which implies \(-1 < \momCoeff\). At first glance it might look like
		\(\momCoeff<(1-\sqrt{\lrSq\hesseEV})^2\) is already sufficient for the upper
		bound, but for \(\lrSq\hesseEV>4\) this bound is greater than one again.
		So let us assume \(\lrSq\hesseEV>2\) and \(\momCoeff>1\), then
		\(\momCoeff<(1-\sqrt{\lrSq\hesseEV})^2\) implies a contradiction:
		\begin{align*}
			\sqrt{\momCoeff} < \sqrt{\lrSq\hesseEV} -1
			&\implies \underbrace{(1+\sqrt{\momCoeff})^2}_{1+2\sqrt{\momCoeff} + \momCoeff}
			< \lrSq\hesseEV
			\stackrel{(\ref{eq: from -1<sigma_2 followed})}{<} 3+\momCoeff \\
			&\implies -2 +2\sqrt{\momCoeff} < 0
			\implies \sqrt{\momCoeff} < 1
		\end{align*}

		For the \emph{characterization of the real case} one only needs to observe
		that for any \(b>0\)
		\begin{align}
			\max|a\pm b| = \begin{cases}
				a + b & a \ge 0\\
				-(a-b) & a < 0
			\end{cases}.
		\end{align}
		In our case \(b\) is a square root in the real case, so by definition
		positive and \(a>0\) is equivalent to \(1+\momCoeff - \lrSq\hesseEV >0\)
		and thus implies (\ref{eq: when does sigma_1 or sigma_2 dominate?}).
		
		For \(\momCoeff>0\) one only needs to keep in mind that for
		 \(\lrSq\hesseEV < (1+\sqrt{\momCoeff})^2 <1+\momCoeff\)
		\begin{align*}
			1+\momCoeff-\lrSq\hesseEV = \sqrt{(1+\momCoeff-\lrSq\hesseEV)^2}
			> \sqrt{(1+\momCoeff-\lrSq\hesseEV)^2 - 4\momCoeff}
		\end{align*}
		ensures that \(\momEV_2 >0\). On the other hand for \(\momCoeff>0\)
		and \(\lrSq\hesseEV > (1-\sqrt{\momCoeff})^2> 1+\momCoeff\)
		\begin{align*}
			- (1+\momCoeff-\lrSq\hesseEV) = \sqrt{(1+\momCoeff-\lrSq\hesseEV)^2}
			> \sqrt{(1+\momCoeff-\lrSq\hesseEV)^2 - 4\momCoeff}
		\end{align*}
		implies \(\momEV_1 < 0\). \qedhere
 \end{description}
\end{proof}

\begin{lemma}\label{lem-appdx: just at the border of complex case is best beta}
	\begin{align*}
		\frac{d}{d\momCoeff}\max\{|\momEV_1|,|\momEV_2|\} < 0 \qquad
		\forall -1<\momCoeff <(1-\sqrt{\lrSq\hesseEV})^2
	\end{align*}
\end{lemma}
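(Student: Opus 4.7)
The plan is to split the real case into the two sub-regimes identified in Theorem~\ref{thm: momentum - stable set of parameters}(3): write $\xi := 1+\momCoeff-\lrSq\hesseEV$ and $D(\momCoeff) := \xi^2 - 4\momCoeff$, so that the maximum equals $\momEV_1 = \tfrac12(\xi + \sqrt{D})$ when $\xi > 0$ and $-\momEV_2 = \tfrac12(\sqrt{D} - \xi)$ when $\xi \le 0$. In the open real-case interval we have $D > 0$ (the endpoint $\momCoeff = (1-\sqrt{\lrSq\hesseEV})^2$ is where $D$ first vanishes), so $\sqrt{D}$ is smoothly differentiable and the derivative computations below are well-defined.

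From $d\xi/d\momCoeff = 1$ and $dD/d\momCoeff = 2\xi - 4$, I get $d\sqrt{D}/d\momCoeff = (\xi-2)/\sqrt{D}$. In the sub-regime $\xi > 0$, this yields
\[
\frac{d\momEV_1}{d\momCoeff}
= \tfrac12\!\left(1 + \tfrac{\xi - 2}{\sqrt{D}}\right),
\]
which is negative iff $\sqrt{D} < 2 - \xi$. The bound $\momCoeff < (1-\sqrt{\lrSq\hesseEV})^2$ immediately gives $2 - \xi = 1 - \momCoeff + \lrSq\hesseEV > 2\sqrt{\lrSq\hesseEV} > 0$, so both sides are nonnegative and squaring reduces the claim to $D < (2-\xi)^2$, i.e.\ $\xi < 1 + \momCoeff$, which is exactly $\lrSq\hesseEV > 0$ — true by assumption. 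In the sub-regime $\xi \le 0$,
\[
\frac{d(-\momEV_2)}{d\momCoeff}
= \tfrac12\!\left(\tfrac{\xi - 2}{\sqrt{D}} - 1\right)
\]
is negative without further work since $\xi - 2 < 0$ and $\sqrt{D} > 0$.

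Finally I would observe that at $\xi = 0$ both expressions coincide (they both equal $\tfrac12\sqrt{D}$ and its derivative is $(\xi-2)/(2\sqrt{D}) = -1/\sqrt{D} < 0$), so the two pieces splice together continuously with a strictly negative derivative across the whole open interval $(-1,(1-\sqrt{\lrSq\hesseEV})^2)$. This gives strict monotonicity of $\max\{|\momEV_1|,|\momEV_2|\}$ in $\momCoeff$ throughout the real case.

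The main obstacle is not any one calculation but the bookkeeping: one must confirm that the piecewise description of the max is smooth enough at the switch $\xi = 0$, and that the real-case bound $\momCoeff < (1-\sqrt{\lrSq\hesseEV})^2$ is used precisely to guarantee both $2 - \xi > 0$ and $D > 0$ on the relevant open interval. The derivative of $\sqrt{D}$ blows up at the complex boundary, but this only means that monotonicity is obtained on the open interval and extended by continuity up to the boundary — which is all Lemma~\ref{lem: optimal hb parameter selection} needs.
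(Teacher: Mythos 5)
Your calculations and the final conclusion are correct, and your route is essentially the one the paper takes: split at the sign of $\xi := 1+\momCoeff-\lrSq\hesseEV$ (equivalently, on which side of $\momCoeff = \lrSq\hesseEV - 1$ you sit), differentiate $\momEV_1 = \tfrac12(\xi + \sqrt{D})$ or $-\momEV_2 = \tfrac12(\sqrt{D}-\xi)$ in $\momCoeff$, and show each one-sided derivative is negative. The one genuine misstep is the splicing remark: the map $\momCoeff \mapsto \max\{|\momEV_1|,|\momEV_2|\} = \tfrac12\bigl(|\xi| + \sqrt{D}\bigr)$ is \emph{not} differentiable at $\xi=0$. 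The right and left derivatives are
\begin{align*}
	\tfrac12\Bigl(1 + \tfrac{\xi-2}{\sqrt{D}}\Bigr)\Big|_{\xi=0} = \tfrac12 - \tfrac{1}{\sqrt{D}}
	\qquad\text{and}\qquad
	\tfrac12\Bigl(\tfrac{\xi-2}{\sqrt{D}} - 1\Bigr)\Big|_{\xi=0} = -\tfrac12 - \tfrac{1}{\sqrt{D}},
\end{align*}
which differ by $1$; you dropped the $\pm\tfrac12\,d\xi/d\momCoeff$ contribution when you asserted both pieces share the derivative of $\tfrac12\sqrt{D}$. This does not damage the argument — both one-sided slopes are strictly negative (the right one because $\sqrt{D} = 2\sqrt{|\momCoeff|} < 2$ whenever $\xi=0$ lies in the admissible interval), so strict monotonic decrease holds across the junction, which is all Lemma~\ref{lem: optimal hb parameter selection} needs — but the claim of a continuous derivative should be retracted. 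Incidentally, in the $\xi>0$ sub-regime the paper shortcuts the squaring step by invoking the already-established bound $\momEV_1<1$ from Theorem~\ref{thm: momentum - stable set of parameters}; your direct derivation of $\sqrt{D} < 2-\xi$ from $\momCoeff<(1-\sqrt{\lrSq\hesseEV})^2$ and $\lrSq\hesseEV>0$ reproves exactly that inequality, so the two are mathematically identical.
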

\begin{proof}
	Using the characterizations of the real case
	from Theorem~\ref{thm: momentum - stable set of parameters}, we get
	\begin{align*}
		\frac{d}{d\momCoeff}\max\{|\momEV_1|,|\momEV_2|\}
		= \begin{cases}
			\frac{d\momEV_1}{d\momCoeff} & \momCoeff \ge \lrSq\hesseEV -1 \\
			-\frac{d\momEV_2}{d\momCoeff} & \momCoeff \le \lrSq\hesseEV -1 
		\end{cases}
	\end{align*}
	This allows us to treat those cases separately. For the first case we use
	the same definition for \(\Delta\) as in Theorem~\ref{thm: momentum - stable
	set of parameters}. Calculating its derivative
	\begin{align*}
		\Delta'(\momCoeff) = 2(1+\momCoeff-\lrSq\hesseEV)	-4
	\end{align*}
	allows us to represent \(\momEV_1\) as
	\begin{align*}
		1>\momEV_1
		= 1 + \frac{\Delta'(\momCoeff)}{4} + \tfrac12 \sqrt{\Delta(\momCoeff)},
	\end{align*}
	which results in the inequality
	\begin{align*}
		0 > \tfrac14\Delta'(\momCoeff) + \tfrac12\sqrt{\Delta(\momCoeff)}
		\implies -\tfrac12 > \tfrac14\frac{\Delta'(\momCoeff)}{\sqrt{\Delta{\momCoeff}}}
	\end{align*}
	We can now use this inequality to show that\footnote{
		I want to apologize for the lack of insight in this proof upon which I
		stumbled by accident playing around with the \(\Delta(\momCoeff)\)
		representation of \(\momEV_{1/2}\). Its only redeeming merit is its brevity.
	}
	\begin{align*}
		\frac{d\momEV_1}{d\momCoeff}
		= \frac12 + \frac14 \frac{\Delta'(\momCoeff)}{\sqrt{\Delta(\momCoeff)}} < 0.
	\end{align*}
	The second case on the other hand requires us to show that
	\begin{align*}
		\frac{d\momEV_2}{d\momCoeff}
		= \frac12 - \frac14 \frac{\Delta'(\momCoeff)}{\sqrt{\Delta(\momCoeff)}} > 0
		\iff 2 > \frac{\Delta'(\momCoeff)}{\sqrt{\Delta(\momCoeff)}}.
	\end{align*}
	Which is true due to
	\begin{align*}
		2\sqrt{\Delta(\momCoeff)} > 0 > -4\sqrt{\lrSq\hesseEV}
		&= 2\overbrace{
			(1-\lrSq\hesseEV + (1-\sqrt{\lrSq\hesseEV})^2)
		}^{\smash{=2-2\sqrt{\lrSq\hesseEV}}} -4\\
		&= \Delta'((1-\sqrt{\lrSq\hesseEV})^2) > \Delta'(\momCoeff),
	\end{align*}
	since \(\momCoeff<(1-\sqrt{\lrSq\hesseEV})^2\) and \(\Delta'\) is monotonously
	increasing in \(\momCoeff\).
\end{proof}

\subsection{Nesterov's Momentum Convergence}

\begin{lemma}[Dumpster Dive]
	\label{lem-appendix: dumpster dive}
	The conditions
	\begin{align*}
		\ubound\gamma_n^2 \le \lbound_n
	\end{align*}
	and
	\begin{align*}
		y_n
		&= \tfrac{\lbound_n\weights_{n-1} + \lbound_{n-1}\gamma_n z_{n-1}}{\gamma_n\lbound + \lbound_{n-1}}
	\end{align*}
	are sufficient for lower bounding
	\begin{align*}
		J := (1-\gamma_n)\left(
			\langle \Loss(y_n), \weights_{n-1} - y_n\rangle
			+ \tfrac{\lbound_{n-1}}{2}\|y_n - z_{n-1}\|^2
		\right)
		- \tfrac{\lbound_n}{2}\|y_n - z_n\|^2
	\end{align*}
	by \(- \tfrac{1}{2\ubound}\|\nabla\Loss(y_n)\|^2\).
\end{lemma}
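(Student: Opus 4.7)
The strategy is to compute $z_n$ explicitly and then expand $\|y_n-z_n\|^2$ in $J$, collecting terms by $\|y_n-z_{n-1}\|^2$, $\langle\nabla\Loss(y_n),\cdot\rangle$ and $\|\nabla\Loss(y_n)\|^2$. The hypothesis $\ubound\gamma_n^2\le\lbound_n$ will exactly match the coefficient of the quadratic gradient term, while the special choice of $y_n$ will cause the scalar-product terms to cancel.

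First I would compute $z_n$ by differentiating the representation
\begin{align*}
  \Phi_n(\weights) = \gamma_n\phi_n(\weights) + (1-\gamma_n)\Phi_{n-1}(\weights)
\end{align*}
and setting $\nabla\Phi_n(z_n)=0$. Since $\nabla\phi_n(\weights)=\nabla\Loss(y_n)+\lbound(\weights-y_n)$ and $\nabla\Phi_{n-1}(\weights)=\lbound_{n-1}(\weights-z_{n-1})$, this yields
\begin{align}\label{eq: center of Phi recursion}
  z_n = \tfrac{1}{\lbound_n}\left[\gamma_n\lbound y_n + (1-\gamma_n)\lbound_{n-1} z_{n-1} - \gamma_n\nabla\Loss(y_n)\right],
\end{align}
where I use the recursion $\lbound_n=\gamma_n\lbound+(1-\gamma_n)\lbound_{n-1}$ from (\ref{eq: lbound recursion}). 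This identity also rewrites $\lbound_n-\gamma_n\lbound=(1-\gamma_n)\lbound_{n-1}$, so
\begin{align*}
  y_n - z_n = \tfrac{1}{\lbound_n}\left[(1-\gamma_n)\lbound_{n-1}(y_n-z_{n-1}) + \gamma_n\nabla\Loss(y_n)\right].
\end{align*}

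Next I would square this expression, multiply by $-\lbound_n/2$, and substitute into $J$. Three kinds of terms appear. The $\|y_n-z_{n-1}\|^2$ coefficient combines $\tfrac{(1-\gamma_n)\lbound_{n-1}}{2}$ from $J$'s second summand with $-\tfrac{(1-\gamma_n)^2\lbound_{n-1}^2}{2\lbound_n}$ from expanding $\|y_n-z_n\|^2$; a short calculation using $\lbound_n-(1-\gamma_n)\lbound_{n-1}=\gamma_n\lbound$ shows this coefficient equals $\tfrac{\gamma_n(1-\gamma_n)\lbound\lbound_{n-1}}{2\lbound_n}\ge 0$, so this whole chunk is nonnegative and can simply be dropped.

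Then I would turn to the scalar-product terms. They take the form
\begin{align*}
  \Big\langle\nabla\Loss(y_n),\,(1-\gamma_n)(\weights_{n-1}-y_n) - \tfrac{\gamma_n(1-\gamma_n)\lbound_{n-1}}{\lbound_n}(y_n-z_{n-1})\Big\rangle,
\end{align*}
and the key step is to observe that the stated choice
\begin{align*}
  y_n = \frac{\lbound_n\weights_{n-1} + \lbound_{n-1}\gamma_n z_{n-1}}{\gamma_n\lbound + \lbound_{n-1}}
\end{align*}
(together with $\lbound_n+\gamma_n\lbound_{n-1}=\gamma_n\lbound+\lbound_{n-1}$, again from (\ref{eq: lbound recursion})) makes the bracketed vector vanish identically. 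Thus the entire scalar-product contribution is zero.

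Finally, the only surviving piece is the pure gradient term $-\tfrac{\gamma_n^2}{2\lbound_n}\|\nabla\Loss(y_n)\|^2$ coming from squaring the $\gamma_n\nabla\Loss(y_n)$ part of $y_n-z_n$. The inequality
\begin{align*}
  -\tfrac{\gamma_n^2}{2\lbound_n}\|\nabla\Loss(y_n)\|^2 \ge -\tfrac{1}{2\ubound}\|\nabla\Loss(y_n)\|^2
\end{align*}
is equivalent to $\ubound\gamma_n^2\le\lbound_n$, which is precisely the first hypothesis. The main obstacle is purely bookkeeping: keeping signs straight and verifying that $\lbound_n+\gamma_n\lbound_{n-1}=\gamma_n\lbound+\lbound_{n-1}$ is indeed the quantity that makes the chosen $y_n$ kill the cross terms. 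Once this algebraic miracle is confirmed, the bound follows immediately.
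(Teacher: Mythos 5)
Your proposal is correct and follows essentially the same route as the paper's proof: compute $z_n$ from the first-order condition on $\Phi_n$, express $y_n-z_n$ via $y_n-z_{n-1}$ and $\nabla\Loss(y_n)$, expand the square in $J$, drop the nonnegative $\|y_n-z_{n-1}\|^2$ piece, observe the cross terms cancel by the choice of $y_n$, and match $\ubound\gamma_n^2\le\lbound_n$ against the remaining gradient term. Your bookkeeping is in fact a bit cleaner than the paper's (which has a couple of index typos in the displayed recursion for $z_n$ and in the final condition), but the argument is identical.
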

\begin{proof}
	By definition \(z_n\) is the unique minimum of \(\Phi_n\) so we can find
	it with
	\begin{align*}
		0\xeq{!}\Phi'_n(z_n)
		= \gamma_n\underbrace{\phi_n'(z_n)}_{
			= \nabla\Loss(y_n) \mathrlap{+ \lbound(z_n-y_n)}
		} +(1-\gamma_n)
		\underbrace{\Phi'_{n-1}(z_n)}_{
			=\lbound_{n-1}\mathrlap{(z_n-z_{n-1})}
		},
	\end{align*}
	where we have used the representation (\ref{eq: centered Phi representation})
	for \(\Phi_{n-1}\). It is also quite reassuring that the gradient at \(y_n\)
	which we ultimately need makes its first appearance here. Collecting all
	the \(z_n\) we get
	\begin{align}\label{eq: center of Phi recursion}
		\overbrace{\lbound_n}^{
			=\mathrlap{\gamma_n\lbound + (1-\gamma_n)\lbound_{n-1}}
		}z_n
		= \gamma_n\lbound y_n + (1-\gamma_n)\lbound_n z_{n-1}
		- \gamma_n\nabla\Loss(y_n).
	\end{align}
	Subtracting \(\lbound_n y_n\) results in
	\begin{align*}
		\lbound_n(z_n-y_n)
		= (1-\gamma_n)\lbound_{n-1}(z_{n-1}-y_n)
		- \gamma_n\nabla\Loss(y_n).
	\end{align*}
	Plugging this into the last part of our ``junk'', we get
	\begin{align*}
		\tfrac{\lbound_n}2\|y_n-z_n\|^2
		&= \tfrac1{2\lbound_n}
		\underbrace{
			\|{\scriptstyle (1-\gamma_n)\lbound_{n-1}}(z_{n-1}-y_n)
			- {\scriptstyle \gamma_n}\nabla\Loss(y_n)\|^2
		}_{
		\begin{aligned}
			=&{\scriptstyle (1-\gamma_n)^2\lbound_{n-1}^2}\|z_{n-1}-y_n\|^2 \\
			&- {\scriptstyle 2(1-\gamma_n)\lbound_{n-1}\gamma_n}\langle
			\nabla\Loss(y_n), z_{n-1}-y_n\rangle \\
			&+ {\scriptstyle\gamma_n^2}\|\nabla\Loss(y_n)\|^2
		\end{aligned}
		}
	\end{align*}
	Therefore our ``junk'' is equal to
	\begin{align*}
		-\tfrac{\gamma_n^2}{2\lbound_n}\|\nabla\Loss(y_n)\|^2
		\begin{aligned}[t]
			&+ {\scriptstyle(1-\gamma_n)}\left\langle \Loss(y_n),
			(\weights_n -y_n)
			+ \tfrac{\lbound_{n-1}\gamma_n}{\lbound_n}(z_{n-1} - y_n)
			\right\rangle\\
			&+\tfrac{(1-\gamma_n)\lbound_{n-1}}{2}
			\underbrace{\left(1-\tfrac{(1-\gamma_n)\lbound_{n-1}}{\lbound_n}\right)}_{
				= \tfrac{\gamma_n \lbound}{\lbound_n} \ge 0
			}
			\|y_n - z_{n-1}\|^2.
		\end{aligned}
	\end{align*}
	As the last summand is positive we can discard it and end up with two equations
	sufficient to achieve our lower bound
	\begin{align*}
		-\tfrac{\gamma_n^2}{\lbound_n}
		&\ge -\tfrac{1}{\ubound} \iff \ubound\gamma_n^2 \le \lbound_n
		\\
		0
		&= (\weights_{n-1} -y_n) + \tfrac{\lbound_n\gamma_n}{\lbound_n}(z_{n-1} - y_n)
		\xiff{\text{reorder}}
		y_n
		= \tfrac{\lbound_n\weights_{n-1} + \lbound_{n-1}\gamma_n z_{n-1}}{\gamma_n\lbound + \lbound_{n-1}}
		\qedhere
	\end{align*}
\end{proof}

\begin{lemma}\label{lem-appendix: streamline general schema of optimal methods}
	If we use the recursion
	\begin{align}
		\weights_n = y_n - \tfrac1\ubound\Loss(y_n)
	\end{align}
	in Algorithm~\ref{algo: general schema of optimal methods} line~\ref{line:
	selecting next position}, then we can simplify \(z_n\) and \(y_n\)
	to
	\begin{align}
		\label{eq: z_n recursion}
		z_n
		&= \weights_{n-1} + \tfrac1{\gamma_n}(\weights_n-\weights_{n-1})
		\qquad \forall n\ge 1\\
		y_{n+1}
		&=\weights_n + \momCoeff_n(\weights_n-\weights_{n-1}) \qquad \forall n\ge 0
	\end{align}
	for \(\weights_{-1}:=\weights_0=z_0\) and
	\begin{align}
		\momCoeff_n
		= \frac{\lbound_n \gamma_{n+1}(1-\gamma_n)}{\gamma_n(\gamma_{n+1}\lbound +\lbound_n)}
		= \frac{\gamma_n(1-\gamma_n)}{\gamma_n^2 + \gamma_{n+1}}
	\end{align}
\end{lemma}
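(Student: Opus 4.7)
The plan is to prove both formulas by induction on $n$, with the key algebraic input being the equality $\ubound\gamma_n^2 = \lbound_n$ that the algorithm enforces in line~\ref{line: pick gamma} (together with the linearity identity $\gamma_n\lbound + \lbound_{n-1} = \lbound_n + \gamma_n\lbound_{n-1}$ coming from the definition of $\lbound_n$).

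\textbf{Step 1 (base case).} I would first check $n=1$: with $z_0 = \weights_0$, the formula from line~\ref{line: definition of y_n} evaluates to $y_1 = \weights_0$ after applying the identity $\lbound_1 + \gamma_1\lbound_0 = \gamma_1\lbound + \lbound_0$, which matches $y_1 = \weights_0 + \momCoeff_0(\weights_0 - \weights_{-1})$ for any $\momCoeff_0$ under the convention $\weights_{-1} = \weights_0$ (hence $\momCoeff_0$ plays no role, as the algorithm already hints). Plugging $y_1 = \weights_0$ into $\weights_1 = y_1 - \tfrac1\ubound\nabla\Loss(y_1)$ and into the $z_n$ recursion (\ref{eq: center of Phi recursion}) then yields $z_1 = \weights_0 - \tfrac{\gamma_1}{\lbound_1}\nabla\Loss(\weights_0) = \weights_0 + \tfrac{1}{\gamma_1}(\weights_1-\weights_0)$ precisely because $\gamma_1/\lbound_1 = 1/(\gamma_1\ubound)$ by the equality $\ubound\gamma_1^2 = \lbound_1$.

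\textbf{Step 2 (the $z_n$ identity).} The cleanest route is to derive the simplified form of $z_n$ directly from the recursion (\ref{eq: center of Phi recursion}) without relying on induction for $z_{n-1}$. I would replace $\nabla\Loss(y_n)$ by $\ubound(y_n-\weights_n)$ using the assumed weight rule, and eliminate $(1-\gamma_n)\lbound_{n-1}z_{n-1}$ by solving the definition of $y_n$ from line~\ref{line: definition of y_n} for $\lbound_{n-1}\gamma_n z_{n-1}$. Collecting coefficients of $y_n$ makes use of $\lbound + \tfrac{1-\gamma_n}{\gamma_n}\lbound_{n-1} = \tfrac{\lbound_n}{\gamma_n}$ and produces
\begin{align*}
  \lbound_n z_n = \tfrac{\lbound_n}{\gamma_n}y_n - \tfrac{(1-\gamma_n)\lbound_n}{\gamma_n}\weights_{n-1} - \gamma_n\nabla\Loss(y_n).
\end{align*}
Dividing by $\lbound_n$ and again using $\ubound\gamma_n^2 = \lbound_n$ to rewrite $\gamma_n/\lbound_n = 1/(\gamma_n\ubound)$ absorbs the gradient term into $y_n - \tfrac1\ubound\nabla\Loss(y_n) = \weights_n$, giving the claimed $z_n = \weights_{n-1} + \tfrac1{\gamma_n}(\weights_n - \weights_{n-1})$.

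\textbf{Step 3 (the $y_{n+1}$ identity).} With the explicit $z_n$ now in hand, substituting into line~\ref{line: definition of y_n} for index $n+1$ and collecting the coefficients of $\weights_n$ and $\weights_{n-1}$ should directly yield $y_{n+1} = \weights_n + \momCoeff_n(\weights_n - \weights_{n-1})$ with $\momCoeff_n = \tfrac{\lbound_n\gamma_{n+1}(1-\gamma_n)}{\gamma_n(\gamma_{n+1}\lbound + \lbound_n)}$. Finally, to convert to the second form I would use $\lbound_n = \ubound\gamma_n^2$ and $\lbound_{n+1} = \ubound\gamma_{n+1}^2 = \gamma_{n+1}\lbound + (1-\gamma_{n+1})\lbound_n$, which together give $\gamma_{n+1}\lbound + \lbound_n = \ubound\gamma_{n+1}(\gamma_{n+1} + \gamma_n^2)$, producing $\momCoeff_n = \tfrac{\gamma_n(1-\gamma_n)}{\gamma_{n+1} + \gamma_n^2}$.

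\textbf{Main obstacle.} There is no deep difficulty; the entire proof is bookkeeping. The trickiest bit is keeping track of which identity $\ubound\gamma_n^2 = \lbound_n$ is needed (it appears both when collapsing the gradient term in Step 2 and when passing between the two forms of $\momCoeff_n$ in Step 3), and remembering that the algorithm enforces equality rather than the inequality version of (\ref{eq: morphing speed equation}). Care must also be taken with the convention $\weights_{-1} = \weights_0$, which is what makes $\momCoeff_0$ irrelevant and lets the $y_{n+1}$ identity hold already at $n=0$.
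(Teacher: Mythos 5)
Your proposal is correct and follows essentially the same route as the paper: solve the line-\ref{line: definition of y_n} assignment for $\lbound_{n-1}z_{n-1}$, substitute into (\ref{eq: center of Phi recursion}), collapse the gradient term via $\ubound\gamma_n^2=\lbound_n$ and $\weights_n = y_n - \tfrac1\ubound\nabla\Loss(y_n)$ to get (\ref{eq: z_n recursion}), then plug $z_n$ back into the definition of $y_{n+1}$ and use $\lbound_{n+1} = \ubound\gamma_{n+1}^2$ to pass between the two forms of $\momCoeff_n$. The only cosmetic difference is that you frame the start as an induction with an $n=1$ base case, whereas the $z_n$ derivation is actually direct for all $n\ge1$; the only place the boundary matters is in the $y_{n+1}$ identity at $n=0$, which the paper dispatches by noting $z_0-\weights_0 = 0 = (\tfrac1{\gamma_0}-1)(\weights_0-\weights_{-1})$ under the convention $\weights_{-1}=\weights_0$ — equivalent to your observation that $y_1=\weights_0$ makes $\momCoeff_0$ irrelevant.
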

\begin{proof}
	After the assignment in line~\ref{line: definition of y_n} we can reorder it
	as an equation to get
	\begin{align*}
		\lbound_{n-1} z_{n-1}
		= \tfrac{1}{\gamma_n} [(\gamma_n\lbound + \lbound_{n-1})y_n - \lbound_n\weights_{n-1}]
	\end{align*}
	plugging this into our definition of \(z_n\) results in our first claim
	\begin{align*}
		z_n
		&= \tfrac{1}{\lbound_n}
		\left[
			\gamma_n\lbound y_n
			+ \tfrac{(1-\gamma_n)}{\gamma_n}
			[(\gamma_n\lbound + \lbound_{n-1})y_n - \lbound_n\weights_{n-1}]
			- \gamma_n\nabla\Loss(y_n)
		\right]\\
		&=\tfrac{1}{\lbound_n} \underbrace{\Big[
			\underbrace{\gamma_n\lbound + (1-\gamma_n)\lbound}_{=\lbound}
			+ \tfrac{1-\gamma_n}{\gamma_n}\lbound_{n-1}
		\Big]}_{
			=\lbound_n/\gamma_n
		}y_n
		-\tfrac{1-\gamma_n}{\gamma_n}\weights_{n-1}
		-\underbrace{\tfrac{\gamma_n}{\lbound_n}}_{
			=\frac{1}{\ubound\gamma_n} \mathrlap{\ (L\gamma_n^2 = \lbound_n)}
		}\nabla\Loss(y_n)\\
		&= \weights_{n-1} + \tfrac{1}{\gamma_n}(y_n - \weights_{n-1})
		- \tfrac{1}{\ubound\gamma_n}\nabla\Loss(y_n)\\
		&\lxeq{(\ref{eq-lem-assmpt: weigth recursion})}
		\weights_{n-1} + \tfrac{1}{\gamma_n}(\weights_n-\weights_{n-1})
	\end{align*}
	Now we will rewrite \(y_{n+1}\):
	\begin{align*}
		y_{n+1}
		&= \frac{
			\overbrace{[\gamma_{n+1} \lbound + (1-\gamma_{n+1})\lbound_n]}^{\lbound_{n+1}}\weights_n
			+ \lbound_n \gamma_{n+1} z_n
		}{\gamma_{n+1}\lbound +\lbound_n}
		=\weights_n
		+ \frac{\lbound_n \gamma_{n+1} (z_n - \weights_n)}{\gamma_{n+1}\lbound +\lbound_n}
	\end{align*}
	For \(n\ge1\) we can apply our previous result (\ref{eq: z_n recursion})
	\begin{align*}
		z_n - \weights_n
		= \left(\tfrac1{\gamma_n} -1\right)(\weights_n-\weights_{n-1}),
	\end{align*}
	for \(n=0\) both sides are zero and we therefore obtain our second recursion
	\begin{align*}
		y_{n+1}
		=\weights_n
		+ \underbrace{
			\frac{\lbound_n \gamma_{n+1}(1-\gamma_n)}{\gamma_n(\gamma_{n+1}\lbound +\lbound_n)}
		}_{=\momCoeff_n}
		(\weights_n-\weights_{n-1}).
	\end{align*}
	Now we just use the identity \(\gamma_{n+1}\lbound = \ubound\gamma_{n+1}^2 -
	(1-\gamma_{n+1})\lbound_n\) from the definition of \(\gamma_{n+1}\) to simplify our
	momentum coefficient
	\begin{align*}
		\momCoeff_n
		&= \frac{\lbound_n \gamma_{n+1}(1-\gamma_n)}{
			\gamma_n([\ubound\gamma_{n+1}^2 - (\cancel{1}-\gamma_{n+1})\lbound_n] +\cancel{\lbound_n})
		}
		= \frac{\lbound_n\cancel{\gamma_{n+1}}(1-\gamma_n)}{
			\gamma_n\cancel{\gamma_{n+1}}(\ubound\gamma_{n+1} + \underbrace{\lbound_n}_{=\ubound\gamma_n^2})
		}\\
		&\lxeq{\bcancel{\ubound\gamma_n}} \frac{\gamma_n(1-\gamma_n)}{\gamma_{n+1} + \gamma_n^2}
		\qedhere
	\end{align*}
\end{proof}

\begin{lemma}[{\cite[Lemma 2.2.4]{nesterovLecturesConvexOptimization2018}}]
	\label{lem-appendix: convergence rate bounds for estimating sequences}
	For \(\condition_0^{-1}\in (\condition^{-1}, 3 + \condition^{-1}]\) we have
	\begin{align*}
		\Gamma^n
		\le \frac{4\condition^{-1}}{(\condition_0^{-1}-\condition^{-1})\left[
			\exp\left(\frac{n+1}{2\sqrt{\condition}}\right)
			-\exp\left(-\frac{n+1}{2\sqrt{\condition}}\right)
		\right]^2}
		\le \frac{4}{(\condition_0^{-1}-\condition^{-1})(n+1)^2}
	\end{align*}
\end{lemma}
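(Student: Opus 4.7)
The plan is to reduce the bound on $\Gamma^n$ to a bound on $\gamma_n$, then convert the recursion for $\gamma_n$ into a discrete Riccati-type inequality whose solution is a hyperbolic sine. First, recall that line~\ref{line: pick gamma} of Algorithm~\ref{algo: general schema of optimal methods} is taken with equality, so $\gamma_{n+1}^2 = \gamma_{n+1}\condition^{-1} + (1-\gamma_{n+1})\gamma_n^2$. Subtracting $\condition^{-1}$ from both sides yields
\begin{align*}
	\gamma_{n+1}^2 - \condition^{-1} = (1-\gamma_{n+1})(\gamma_n^2 - \condition^{-1}),
\end{align*}
so $\gamma_n^2 - \condition^{-1} = \Gamma^n(\condition_0^{-1}-\condition^{-1})$ by induction. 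Since $(e^{x}-e^{-x})^2 = 4\sinh^2(x)$, the target inequality reduces to the upper bound $a_n \le \sqrt{\condition^{-1}}/\sinh((n+1)/(2\sqrt{\condition}))$ on $a_n := \sqrt{\gamma_n^2 - \condition^{-1}}$.

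Next, I work with the reciprocal $b_n := 1/a_n$, which is increasing because $a_{n+1} = a_n\sqrt{1-\gamma_{n+1}} \le a_n$. From $a_n - a_{n+1} = a_n(1-\sqrt{1-\gamma_{n+1}}) \ge a_n\gamma_{n+1}/2$ together with $\gamma_{n+1}^2 = a_{n+1}^2 + \condition^{-1}$, a short calculation gives
\begin{align*}
	b_{n+1}-b_n \ge \tfrac{\gamma_{n+1}}{2 a_{n+1}}
	= \tfrac12\sqrt{1+\condition^{-1} b_{n+1}^2}.
\end{align*}
The right-hand side begs for the substitution $b_n = \sqrt{\condition}\sinh\theta_n$, under which $\sqrt{1+\condition^{-1}b_n^2} = \cosh\theta_n$ and the inequality becomes $\sqrt{\condition}(\sinh\theta_{n+1}-\sinh\theta_n) \ge \tfrac12\cosh\theta_{n+1}$. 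Using $\sinh\theta_{n+1}-\sinh\theta_n \le \cosh\theta_{n+1}(\theta_{n+1}-\theta_n)$ (mean value theorem plus monotonicity of $\cosh$), this linearizes to the clean telescoping statement $\theta_{n+1}-\theta_n \ge \frac{1}{2\sqrt{\condition}}$.

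Summing yields $\theta_n \ge \theta_0 + n/(2\sqrt{\condition})$, and it remains to verify $\theta_0 \ge 1/(2\sqrt{\condition})$, i.e., $\sinh(1/(2\sqrt{\condition})) \le \sqrt{\condition^{-1}/(\condition_0^{-1}-\condition^{-1})}$. Writing $t = 1/(2\sqrt{\condition}) \in (0,1/2]$ (using $\condition \ge 1$), this is equivalent to $2t\sinh(t)\sqrt{\condition_0^{-1}-\condition^{-1}} \le 1$; the map $t\mapsto 2t\sinh(t)$ is increasing, $\sinh(1/2) < 1/\sqrt{3}$, and the hypothesis $\condition_0^{-1}-\condition^{-1}\le 3$ closes the gap. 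One then obtains $\theta_n \ge (n+1)/(2\sqrt{\condition})$, which gives the first bound on $\Gamma^n$; the second bound drops out from $\sinh(x)\ge x$. The main obstacle is this last initial-condition check: it is the sole place where the somewhat peculiar constant $3$ in the range of $\condition_0^{-1}$ is used, and finding an elementary estimate on $\sinh(1/(2\sqrt{\condition}))$ sharp enough to cover the whole range $\condition \ge 1$ is the pinch point of the argument.
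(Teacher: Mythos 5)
Your proof is correct and, at bottom, is the same argument as the paper's, just presented in a way that \emph{derives} the hyperbolic-sine bound rather than pulling it out of a hat. The translation dictionary is: your $a_n = \sqrt{\gamma_n^2 - \condition^{-1}}$ equals $c\sqrt{\Gamma^n}$ with $c=\sqrt{\condition_0^{-1}-\condition^{-1}}$, so your $b_n$ is $\tfrac1c \cdot \Gamma^{-n/2}$, the quantity the paper calls $1/\sqrt{\Gamma^n}$ up to the constant $c$; your inequality $1-\sqrt{1-x}\ge x/2$ is exactly the paper's ``third binomial formula'' plus $\Gamma^k\le\Gamma^{k-1}$ (both amount to $1+\sqrt{1-\gamma_k}\le 2$); and your mean-value-theorem-plus-monotonicity step $\sinh\theta_{n+1}-\sinh\theta_n\le\cosh(\theta_{n+1})(\theta_{n+1}-\theta_n)$ is the paper's use of $f(n)-f(n-1)\le f'(n)$ from convexity of $f$. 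What your version buys is discoverability: the substitution $b_n=\sqrt{\condition}\sinh\theta_n$ linearizes the Riccati-type increment bound into the telescoping statement $\theta_{n+1}-\theta_n\ge\tfrac1{2\sqrt\condition}$, which makes it evident where the $\sinh$ comes from, whereas the paper concedes it ``has no idea how Nesterov came up with'' the test function $f$ and proceeds by a somewhat obfuscated first-failure contradiction that is really just an induction. Two small points of hygiene: you use $\theta_{n+1}\ge\theta_n$ in the MVT step, which requires the monotonicity of $b_n$ you noted but did not explicitly invoke there; and the line ``$2t\sinh(t)\sqrt{\condition_0^{-1}-\condition^{-1}}\le1$'' should read $\tfrac{\sinh(t)}{2t}\sqrt{\condition_0^{-1}-\condition^{-1}}\le1$, since $\sqrt{\condition^{-1}}=2t$ appears in the numerator, not the denominator --- by coincidence both expressions equal $\sinh(1/2)$ at $t=1/2$ and both are increasing on $(0,1/2]$, so the numerical check and the conclusion go through unchanged, but the stated equivalence is a slip.
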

\begin{proof}
	We are interested in finding a function fulfilling
	\begin{align}\label{eq: bounding function properties}
		\Gamma^n \le \frac{1}{f(n)^2} \iff f(n) \le \frac1{\sqrt{\Gamma^n}}
	\end{align}
	Since the weaker inequality would require \(f(n)\sim n\) it might be a
	good idea to look at the increments.
	In particular it would be sufficient if we had \(f(0) \le
	\frac1{\sqrt{\Gamma^0}}=1\) and
	\begin{align*}
		f(k) - f(k-1) \le \frac{1}{\sqrt{\Gamma^k}}-\frac{1}{\sqrt{\Gamma^{k-1}}}
		\qquad \forall k > 0.
	\end{align*}
	For our weaker inequality this requires the right side to be lower bounded
	by a constant. Now if we had not started out looking for a squared \(f\), we
	would have instead ended up with the increment
	\begin{align*}
		\frac{1}{\Gamma^k}-\frac{1}{\Gamma^{k-1}}.
	\end{align*}
	This provides some intuition how Nesterov might have come up with the following	
	idea. Let us consider the increment above and multiply it by \(\Gamma^k\).
	Then we get
	\begin{align*}
		1-\frac{\Gamma^k}{\Gamma^{k-1}}
		&\lxeq{\text{def.}} \gamma_k
		\xeq{(\ref{eq: morphing speed equation})}
		\sqrt{\frac{\lbound_k}{\ubound}} \xeq{(\ref{eq: lbound recursion})}
		\sqrt{\frac{\Gamma^k \lbound_0 + (1-\Gamma^k)\lbound}{\ubound}}\\
		&\lxeq{\text{def.}} \sqrt{\condition^{-1} + \Gamma^k(\condition_0^{-1}-\condition^{-1})}.
	\end{align*}
	Now we not only need to return to the increments by dividing by \(\Gamma^k\)
	again, we also need to somehow add roots. To do that we
	use the third binomial formula
	\begin{align}
		\nonumber
		\tfrac1{\sqrt{\Gamma^k}}
		\sqrt{\tfrac{\condition^{-1}}{\Gamma^k} + \condition_0^{-1}-\condition^{-1}}
		&= \frac{1}{\Gamma^k}-\frac{1}{\Gamma^{k-1}}\\
		\nonumber
		&\lxeq{\text{bin.}} 
		\left(
			\frac{1}{\sqrt{\Gamma^k}}+\frac{1}{\sqrt{\Gamma^{k-1}}}
		\right)
		\left(
			\frac{1}{\sqrt{\Gamma^k}}-\frac{1}{\sqrt{\Gamma^{k-1}}}
		\right) \\
		&\le \frac{2}{\sqrt{\Gamma^k}}
		\left(
			\frac{1}{\sqrt{\Gamma^k}}-\frac{1}{\sqrt{\Gamma^{k-1}}}
		\right)
		\label{eq: inverse gamma increment}
	\end{align}
	where we have used \(\Gamma^k\le\Gamma^{k-1}\) for the inequality. From
	this we get the sufficient condition
	\begin{align*}
		f(k) - f(k-1)
		\le \tfrac12\sqrt{\tfrac{\condition^{-1}}{\Gamma^k} + \condition_0^{-1}-\condition^{-1}}
		= \tfrac12\underbrace{\sqrt{\condition_0^{-1}-\condition^{-1}}}_{=:c}
		\sqrt{\tfrac{\condition^{-1}}{\Gamma^k}c^{-2} + 1}
	\end{align*}
	In particular we could throw away the complicated positive part in the root
	to get the sufficient condition
	\begin{align*}
		f(k) - f(k-1) := \tfrac{c}2\quad \forall k>0 \quad \text{and}
		\quad f(0):=\tfrac{c}2 = \tfrac{\sqrt{\condition_0^{-1}-\condition^{-1}}}2
		\le \tfrac{\sqrt{3}}{2}<1.
	\end{align*}
	And this immediately leads to \(f(n) = \tfrac{c}{2}(n+1)\) which provides us
	with the weaker second bound. Note that the ``complicated'' part we have
	thrown away is zero in case of \(\condition^{-1}\) i.e.\ when we do not
	have strong convexity this is the best we can do.
	
	But for strong convexity we need to find a tighter upper bound \(f\). 
	If we shuffle around (\ref{eq: inverse gamma increment}) a bit we get
	\begin{align*}
		\frac{1}{\sqrt{\Gamma^{k-1}}}
		\le \frac{1}{\sqrt{\Gamma^k}}
		- \frac{c}{2}\sqrt{\tfrac{\condition^{-1}}{c^2}\left(\tfrac{1}{\sqrt{\Gamma^k}}\right)^2+1}
		=: g\left(\tfrac1{\sqrt{\Gamma^k}}\right)
	\end{align*}
	Now here is the trick: \(g\) is strictly monotonously increasing (its
	derivative is positive). So if \(n\) was the first number violating
	our requirement (\ref{eq: bounding function properties}), (i.e.
	\(\tfrac{1}{\sqrt{\Gamma^n}}< f(n)\)), then due to
	\begin{align*}
		f(n-1)
		\xle{\text{(\ref{eq: bounding function properties})}} \frac{1}{\sqrt{\Gamma^{n-1}}}
		\le g\left(\tfrac1{\sqrt{\Gamma^n}}\right) 
		< g(f(n))
	\end{align*}
	it would be enough for
	\begin{align}\label{eq: weird contradiction requirement}
		g(f(n))\le f(n-1)
	\end{align}
	to be true, to obtain a
	contradiction. So if \(f(0)\le 1\) is a given, \(f(n)\) would then fulfill
	(\ref{eq: bounding function properties}) by induction\footnote{Unfortunately,
	I have no idea how Nesterov came up with this trick, nor how he eventually
	found the function \(f\) that satisfies the requirements}. As we know the
	result we can define
	\begin{align*}
		f(n)
		:=\tfrac{c}{2\sqrt{\condition^{-1}}}\left[
			\exp\left(\tfrac{n+1}{2\sqrt{\condition}}\right)
			-\exp\left(-\tfrac{n+1}{2\sqrt{\condition}}\right)
		\right]
		=\frac{c}{4\delta}\left[
			e^{(n+1)\delta}- e^{-(n+1)\delta}
		\right]
	\end{align*}
	for \(\delta:=\sqrt{\condition^{-1}}/2\le 1/2\). Since \(f(0)\) is increasing in \(\delta\)
	we have the induction start 
	\begin{align*}
		f(0)=\tfrac{c}{4\delta}\left[e^\delta-e^{-\delta}\right]
		\le \tfrac{c}2\left[e^{1/2}-e^{-1/2}\right]
		\le \tfrac{c}{\sqrt{3}} \le 1.
	\end{align*}
	Now we only need to show (\ref{eq: weird contradiction requirement}) to be
	true to finish the proof:
	\begin{align*}
		g(f(n))
		&= f(n) - \frac{c}{2}\sqrt{\tfrac{\condition^{-1}}{c^2}f(n)^2 + 1}\\
		&= f(n) - \frac{c}{2}\sqrt{
			\tfrac{1}{2^2}\left[e^{(n+1)\delta}- e^{-(n+1)\delta}\right]^2
			+ 1
		} \\
		&= f(n) - \frac{c}{2}\sqrt{
			\tfrac{1}{2}\left[
				\tfrac12 e^{2(n+1)\delta}
				-\smash{\underbrace{e^{(n+1)\delta}e^{-(n+1)\delta}}_{=1}}
				+\tfrac12e^{-2(n+1)\delta}
			\right]
			+ 1
		}\\
		&= f(n) - \underbrace{\frac{c}{4}\left[e^{(n+1)\delta}+ e^{-(n+1)\delta}\right]}_{=f'(n)}\\
		&= f(n) + \langle f'(n), n - (n-1) \rangle \xle{f\text{ convex}} f(n-1)
		\qedhere
	\end{align*}
\end{proof}
\begin{lemma}\label{lem-appendix: derivative loss difference constant in gamma}
	\begin{align*}
		c(\gamma_1)
		=2\ubound\frac{1-\gamma_1(1+\condition^{-1}) + \gamma_1^2}{\gamma_1^2 -\condition^{-1}}
		\qquad \gamma_1\in(\sqrt{\condition^{-1}}, 1).
	\end{align*}
	is monotonously decreasing.
\end{lemma}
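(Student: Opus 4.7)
The plan is to compute the derivative $c'(\gamma_1)$ directly and show it is non-positive on the interval $(\sqrt{\condition^{-1}}, 1)$. Before differentiating blindly, I would first perform a polynomial division in the numerator, writing
\begin{align*}
1-\gamma_1(1+\condition^{-1})+\gamma_1^2
=(\gamma_1^2-\condition^{-1})+(1+\condition^{-1})(1-\gamma_1),
\end{align*}
which splits $c$ into the much cleaner form
\begin{align*}
c(\gamma_1)=2\ubound+2\ubound(1+\condition^{-1})\,\frac{1-\gamma_1}{\gamma_1^2-\condition^{-1}}.
\end{align*}
Because the prefactor $2\ubound(1+\condition^{-1})$ is positive and the constant summand $2\ubound$ is irrelevant for monotonicity, it suffices to show that $h(\gamma_1):=\frac{1-\gamma_1}{\gamma_1^2-\condition^{-1}}$ is strictly decreasing on $(\sqrt{\condition^{-1}},1)$.

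Next I would differentiate $h$ using the quotient rule. A short computation gives
\begin{align*}
h'(\gamma_1)=\frac{\gamma_1^2-2\gamma_1+\condition^{-1}}{(\gamma_1^2-\condition^{-1})^2},
\end{align*}
so the sign of $h'$ coincides with the sign of the numerator $p(\gamma_1):=\gamma_1^2-2\gamma_1+\condition^{-1}$. Completing the square yields $p(\gamma_1)=(\gamma_1-1)^2-(1-\condition^{-1})$. I then want to argue that $p(\gamma_1)<0$ on the open interval $(\sqrt{\condition^{-1}},1)$.

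For this, the key inequality is $(1-\sqrt{\condition^{-1}})^2\le 1-\condition^{-1}$, equivalent after expansion to $\sqrt{\condition^{-1}}\le 1$, which holds since $\condition\ge 1$. Since $\gamma_1\mapsto(\gamma_1-1)^2$ is strictly decreasing on $(\sqrt{\condition^{-1}},1)$, we obtain $(\gamma_1-1)^2<(1-\sqrt{\condition^{-1}})^2\le 1-\condition^{-1}$, so $p(\gamma_1)<0$. Consequently $h'<0$ on the interval, which implies $c'<0$ there and proves the claim. The only subtle point is handling the boundary case $\condition=1$: then the domain $(\sqrt{\condition^{-1}},1)=(1,1)$ is empty and the statement is vacuous, so no separate argument is needed. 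I do not anticipate any real obstacle; the whole proof is a routine derivative sign check once the convenient decomposition of $c$ is spotted.
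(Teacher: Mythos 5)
Your proof is correct, and your opening polynomial division is a genuinely cleaner route than the paper's. You rewrite $1-\gamma_1(1+\condition^{-1})+\gamma_1^2 = (\gamma_1^2-\condition^{-1}) + (1+\condition^{-1})(1-\gamma_1)$, so that $c$ splits into the constant $2\ubound$ plus a positive multiple of the simple fraction $h(\gamma_1)=(1-\gamma_1)/(\gamma_1^2-\condition^{-1})$; a side benefit is that $c(1)=2\ubound$ is immediate, which the surrounding discussion relies on. The paper instead applies the quotient rule to $c$ directly and collects terms in the numerator to extract a factor $1+\condition^{-1}$, arriving at the same quadratic $\gamma_1^2-2\gamma_1+\condition^{-1}$ that appears as the numerator of your $h'$. (The paper's displayed auxiliary parabola $f(\gamma_1)=\condition^{-1}-2\gamma_1+2\gamma_1^2$ contains a typo — the leading coefficient should be $1$, not $2$ — since its own endpoint values $f(\sqrt{\condition^{-1}})=2(\condition^{-1}-\sqrt{\condition^{-1}})$ and $f(1)=\condition^{-1}-1$ only hold for the correct quadratic.) To show that quadratic is negative on $(\sqrt{\condition^{-1}},1)$ you complete the square and use monotonicity of $(\gamma_1-1)^2$; the paper evaluates the parabola at both endpoints and invokes convexity. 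These are equivalent, and both are fine. Your remark that the case $\condition=1$ is vacuous is also correct.
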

\begin{proof}
	We discard \(2\ubound\) without loss of generality to get	
	\begin{align*}
		\frac{dc}{d\gamma_1}
		&= \frac{
			(-(1+\condition^{-1})+2\gamma_1)(\gamma_1^2 -\condition^{-1})
			- (1-\gamma_1(1+\condition^{-1})+\gamma_1^2)2\gamma_1
		}{(\gamma_1^2-\condition^{-1})^2}.
	\end{align*}
	Since we are interested in the sign of the derivative we can discard the
	denominator and look at just the enumerator
	\begin{align*}
		&2\gamma_1(\cancel{\gamma_1^2}-\condition^{-1}) - (1+\condition^{-1})(\gamma_1^2-\condition^{-1})
		- (1-\gamma_1(1+\condition^{-1})+\cancel{\gamma_1^2})2\gamma_1\\
		&= -2\gamma_1\condition^{-1} - (1+\condition^{-1})(\gamma_1^2 + \condition^{-1})
		-2\gamma_1 + 2\gamma_1^2(1+\condition^{-1})\\
		&= (1+\condition^{-1})\underbrace{(\condition^{-1}-2\gamma_1 + 2\gamma_1^2)}_{=:f(\gamma_1)}.
	\end{align*}
	In which we only need to consider the parabola \(f\). Now since the parabola
	is convex, it is negative for all points between two ends where it
	is negative. And we have
	\begin{align*}
		f(\sqrt{\condition^{-1}}) &= 2(\condition^{-1}-\sqrt{\condition^{-1}}) < 0\\
		f(1) &= \condition^{-1} -1 < 0
		\qedhere
	\end{align*}
\end{proof}

\chapter{Code for Visualizations}

The Programming Language Julia \parencite{bezansonJuliaFreshApproach2017} was
used to generate all the visualizations without citation in this work. The code
can be found in various .jl files on GitHub at
\url{https://github.com/FelixBenning/masterthesis/tree/main/media}. While
these are normal Julia files which can be run as-is, they were written for
the \href{https://github.com/fonsp/Pluto.jl}{Pluto.jl} notebook, which is
required for the interactive features of these plots.


	\backmatter
	\printbibliography[heading=bibintoc]

@unpublished{ankirchnerApproximatingStochasticGradient2021,
  title = {Approximating Stochastic Gradient Descent with Diffusions: Error Expansions and Impact of Learning Rate Schedules},
  shorttitle = {Approximating Stochastic Gradient Descent with Diffusions},
  author = {Ankirchner, Stefan and Perko, Stefan},
  date = {2021-06},
  url = {https://hal.archives-ouvertes.fr/hal-03262396},
  urldate = {2021-09-22},
  abstract = {Applying a stochastic gradient descent method for minimizing an objective gives rise to a discrete-time process of estimated parameter values. In order to better understand the dynamics of the estimated values it can make sense to approximate the discrete-time process with a continuous-time diffusion. We refine some results on the weak error of diffusion approximations. In particular, we explicitly compute the leading term in the error expansion of an ODE approximation with respect to a parameter h discretizing the learning rate schedule. The leading term changes if one extends the ODE with a Brownian diffusion component. Finally, we show that if the learning rate is time varying, then its rate of change needs to enter the drift coefficient in order to obtain an approximation of order 2.},
  file = {C\:\\Users\\Felix\\gdrive\\ZoteroPaper\\2021_Ankirchner_Perko\\Ankirchner_Perko_2021_Approximating stochastic gradient descent with diffusions.pdf}
}

@article{armijoMinimizationFunctionsHaving1966,
  title = {Minimization of Functions Having {{Lipschitz}} Continuous First Partial Derivatives.},
  author = {Armijo, Larry},
  date = {1966-01},
  journaltitle = {Pacific Journal of Mathematics},
  volume = {16},
  number = {1},
  pages = {1--3},
  publisher = {{Pacific Journal of Mathematics, A Non-profit Corporation}},
  issn = {0030-8730},
  url = {https://projecteuclid.org/journals/pacific-journal-of-mathematics/volume-16/issue-1/Minimization-of-functions-having-Lipschitz-continuous-first-partial-derivatives/pjm/1102995080.full},
  urldate = {2021-09-27},
  abstract = {Pacific Journal of Mathematics},
  keywords = {65.10,65.30},
  file = {C\:\\Users\\Felix\\gdrive\\ZoteroPaper\\1966_Armijo\\Armijo_1966_Minimization of functions having Lipschitz continuous first partial derivatives.pdf;C\:\\Users\\Felix\\Zotero\\storage\\MU2L36LP\\1102995080.html}
}

@online{bachNonstronglyconvexSmoothStochastic2013,
  title = {Non-Strongly-Convex Smooth Stochastic Approximation with Convergence Rate {{O}}(1/n)},
  author = {Bach, Francis and Moulines, Eric},
  date = {2013-06-10},
  eprint = {1306.2119},
  eprinttype = {arxiv},
  primaryclass = {cs, math, stat},
  url = {http://arxiv.org/abs/1306.2119},
  urldate = {2021-07-15},
  abstract = {We consider the stochastic approximation problem where a convex function has to be minimized, given only the knowledge of unbiased estimates of its gradients at certain points, a framework which includes machine learning methods based on the minimization of the empirical risk. We focus on problems without strong convexity, for which all previously known algorithms achieve a convergence rate for function values of O(1/n\^\{1/2\}). We consider and analyze two algorithms that achieve a rate of O(1/n) for classical supervised learning problems. For least-squares regression, we show that averaged stochastic gradient descent with constant step-size achieves the desired rate. For logistic regression, this is achieved by a simple novel stochastic gradient algorithm that (a) constructs successive local quadratic approximations of the loss functions, while (b) preserving the same running time complexity as stochastic gradient descent. For these algorithms, we provide a non-asymptotic analysis of the generalization error (in expectation, and also in high probability for least-squares), and run extensive experiments on standard machine learning benchmarks showing that they often outperform existing approaches.},
  archiveprefix = {arXiv},
  keywords = {Computer Science - Machine Learning,Mathematics - Optimization and Control,Statistics - Machine Learning},
  file = {C\:\\Users\\Felix\\gdrive\\ZoteroPaper\\2013_Bach_Moulines\\Bach_Moulines_2013_Non-strongly-convex smooth stochastic approximation with convergence rate O(1-n).pdf;C\:\\Users\\Felix\\Zotero\\storage\\CFCQP8CT\\1306.html}
}

@article{bezansonJuliaFreshApproach2017,
  title = {Julia: {{A Fresh Approach}} to {{Numerical Computing}}},
  shorttitle = {Julia},
  author = {Bezanson, Jeff and Edelman, Alan and Karpinski, Stefan and Shah, Viral B.},
  date = {2017-01-01},
  journaltitle = {SIAM Review},
  shortjournal = {SIAM Rev.},
  volume = {59},
  number = {1},
  pages = {65--98},
  publisher = {{Society for Industrial and Applied Mathematics}},
  issn = {0036-1445},
  doi = {10.1137/141000671},
  abstract = {Bridging cultures that have often been distant, Julia combines expertise from the diverse fields of computer science and computational science to create a new approach to numerical computing. Julia is designed to be easy and fast and questions notions generally held to be “laws of nature" by practitioners of numerical computing: \textbackslash beginlist \textbackslash item High-level dynamic programs have to be slow. \textbackslash item One must prototype in one language and then rewrite in another language for speed or deployment. \textbackslash item There are parts of a system appropriate for the programmer, and other parts that are best left untouched as they have been built by the experts. \textbackslash endlist We introduce the Julia programming language and its design---a dance between specialization and abstraction. Specialization allows for custom treatment. Multiple dispatch, a technique from computer science, picks the right algorithm for the right circumstance. Abstraction, which is what good computation is really about, recognizes what remains the same after differences are stripped away. Abstractions in mathematics are captured as code through another technique from computer science, generic programming. Julia shows that one can achieve machine performance without sacrificing human convenience.},
  keywords = {65Y05,68N15,97P40,Julia,numerical,parallel,scientific computing},
  file = {C\:\\Users\\Felix\\gdrive\\ZoteroPaper\\2017_Bezanson et al\\Bezanson et al_2017_Julia.pdf}
}

@online{bottouOptimizationMethodsLargeScale2018,
  title = {Optimization {{Methods}} for {{Large-Scale Machine Learning}}},
  author = {Bottou, Léon and Curtis, Frank E. and Nocedal, Jorge},
  date = {2018-02-08},
  eprint = {1606.04838},
  eprinttype = {arxiv},
  primaryclass = {cs, math, stat},
  url = {http://arxiv.org/abs/1606.04838},
  urldate = {2021-06-08},
  abstract = {This paper provides a review and commentary on the past, present, and future of numerical optimization algorithms in the context of machine learning applications. Through case studies on text classification and the training of deep neural networks, we discuss how optimization problems arise in machine learning and what makes them challenging. A major theme of our study is that large-scale machine learning represents a distinctive setting in which the stochastic gradient (SG) method has traditionally played a central role while conventional gradient-based nonlinear optimization techniques typically falter. Based on this viewpoint, we present a comprehensive theory of a straightforward, yet versatile SG algorithm, discuss its practical behavior, and highlight opportunities for designing algorithms with improved performance. This leads to a discussion about the next generation of optimization methods for large-scale machine learning, including an investigation of two main streams of research on techniques that diminish noise in the stochastic directions and methods that make use of second-order derivative approximations.},
  archiveprefix = {arXiv},
  keywords = {Computer Science - Machine Learning,Mathematics - Optimization and Control,Statistics - Machine Learning},
  file = {C\:\\Users\\Felix\\gdrive\\ZoteroPaper\\2018_Bottou et al\\Bottou et al_2018_Optimization Methods for Large-Scale Machine Learning.pdf;C\:\\Users\\Felix\\Zotero\\storage\\5PK2UUXE\\1606.html}
}

@article{bouttierConvergenceRateSimulated2019,
  title = {Convergence {{Rate}} of a {{Simulated Annealing Algorithm}} with {{Noisy Observations}}},
  author = {Bouttier, Clément and Gavra, Ioana},
  date = {2019},
  journaltitle = {Journal of Machine Learning Research},
  volume = {20},
  number = {4},
  pages = {1--45},
  issn = {1533-7928},
  url = {http://jmlr.org/papers/v20/16-588.html},
  urldate = {2021-07-05},
  file = {C\:\\Users\\Felix\\gdrive\\ZoteroPaper\\2019_Bouttier_Gavra\\Bouttier_Gavra_2019_Convergence Rate of a Simulated Annealing Algorithm with Noisy Observations.pdf;C\:\\Users\\Felix\\Zotero\\storage\\3EVBRBLD\\16-588.html}
}

@book{bovierMetastabilityPotentialTheoreticApproach2015,
  title = {Metastability: {{A Potential-Theoretic Approach}}},
  shorttitle = {Metastability},
  author = {Bovier, Anton and den Hollander, Frank},
  date = {2015},
  series = {Grundlehren Der Mathematischen {{Wissenschaften}}},
  publisher = {{Springer International Publishing}},
  doi = {10.1007/978-3-319-24777-9},
  abstract = {This monograph provides a concise presentation of a mathematical approach to metastability, a wide-spread phenomenon in the dynamics of non-linear systems - physical, chemical, biological or economic - subject to the action of temporal random forces typically referred to as noise, based on potential theory of reversible Markov processes. The authors shed new light on the metastability phenomenon as a sequence of visits of the path of the process to different metastable sets, and focuses on the precise analysis of the respective hitting probabilities and hitting times of these sets.The theory is illustrated with many examples, ranging from finite-state Markov chains, finite-dimensional diffusions and stochastic partial differential equations, via mean-field dynamics with and without disorder, to stochastic spin-flip and particle-hop dynamics and probabilistic cellular automata, unveiling the common universal features of these systems with respect to their metastable behaviour. The monograph will serve both as comprehensive introduction and as reference for graduate students and researchers interested in metastability.},
  isbn = {978-3-319-24775-5},
  langid = {english},
  file = {C\:\\Users\\Felix\\gdrive\\ZoteroPaper\\2015_Bovier_Hollander\\Bovier_Hollander_2015_Metastability.pdf;C\:\\Users\\Felix\\Zotero\\storage\\2D2GI7LF\\9783319247755.html}
}

@article{brayStatisticsCriticalPoints2007,
  title = {The Statistics of Critical Points of {{Gaussian}} Fields on Large-Dimensional Spaces},
  author = {Bray, Alan J. and Dean, David S.},
  date = {2007-04-10},
  journaltitle = {Physical Review Letters},
  shortjournal = {Phys. Rev. Lett.},
  volume = {98},
  number = {15},
  eprint = {cond-mat/0611023},
  eprinttype = {arxiv},
  issn = {0031-9007, 1079-7114},
  doi = {10.1103/PhysRevLett.98.150201},
  abstract = {We calculate the average number of critical points of a Gaussian field on a high-dimensional space as a function of their energy and their index. Our results give a complete picture of the organization of critical points and are of relevance to glassy and disordered systems, and to landscape scenarios coming from the anthropic approach to string theory.},
  archiveprefix = {arXiv},
  keywords = {Condensed Matter - Disordered Systems and Neural Networks,Condensed Matter - Statistical Mechanics},
  file = {C\:\\Users\\Felix\\gdrive\\ZoteroPaper\\2007_Bray_Dean\\Bray_Dean_2007_The statistics of critical points of Gaussian fields on large-dimensional spaces.pdf;C\:\\Users\\Felix\\Zotero\\storage\\RV2A6TST\\0611023.html}
}

@article{broydenConvergenceClassDoublerank1970,
  title = {The {{Convergence}} of a {{Class}} of {{Double-rank Minimization Algorithms}} 1. {{General Considerations}}},
  author = {Broyden, C. G.},
  date = {1970-03-01},
  journaltitle = {IMA Journal of Applied Mathematics},
  shortjournal = {IMA Journal of Applied Mathematics},
  volume = {6},
  number = {1},
  pages = {76--90},
  issn = {0272-4960},
  doi = {10.1093/imamat/6.1.76},
  abstract = {This paper presents a more detailed analysis of a class of minimization algorithms, which includes as a special case the DFP (Davidon-Fletcher-Powell) method, than has previously appeared. Only quadratic functions are considered but particular attention is paid to the magnitude of successive errors and their dependence upon the initial matrix. On the basis of this a possible explanation of some of the observed characteristics of the class is tentatively suggested.},
  file = {C\:\\Users\\Felix\\gdrive\\ZoteroPaper\\1970_Broyden\\Broyden_1970_The Convergence of a Class of Double-rank Minimization Algorithms 2.pdf;C\:\\Users\\Felix\\Zotero\\storage\\7H62GZ7R\\746016.html}
}

@online{bubeckConvexOptimizationAlgorithms2015,
  title = {Convex {{Optimization}}: {{Algorithms}} and {{Complexity}}},
  shorttitle = {Convex {{Optimization}}},
  author = {Bubeck, Sébastien},
  date = {2015-11-16},
  eprint = {1405.4980},
  eprinttype = {arxiv},
  primaryclass = {cs, math, stat},
  url = {http://arxiv.org/abs/1405.4980},
  urldate = {2021-07-12},
  abstract = {This monograph presents the main complexity theorems in convex optimization and their corresponding algorithms. Starting from the fundamental theory of black-box optimization, the material progresses towards recent advances in structural optimization and stochastic optimization. Our presentation of black-box optimization, strongly influenced by Nesterov's seminal book and Nemirovski's lecture notes, includes the analysis of cutting plane methods, as well as (accelerated) gradient descent schemes. We also pay special attention to non-Euclidean settings (relevant algorithms include Frank-Wolfe, mirror descent, and dual averaging) and discuss their relevance in machine learning. We provide a gentle introduction to structural optimization with FISTA (to optimize a sum of a smooth and a simple non-smooth term), saddle-point mirror prox (Nemirovski's alternative to Nesterov's smoothing), and a concise description of interior point methods. In stochastic optimization we discuss stochastic gradient descent, mini-batches, random coordinate descent, and sublinear algorithms. We also briefly touch upon convex relaxation of combinatorial problems and the use of randomness to round solutions, as well as random walks based methods.},
  archiveprefix = {arXiv},
  keywords = {Computer Science - Computational Complexity,Computer Science - Machine Learning,Mathematics - Numerical Analysis,Mathematics - Optimization and Control,Statistics - Machine Learning},
  file = {C\:\\Users\\Felix\\gdrive\\ZoteroPaper\\2015_Bubeck\\Bubeck_2015_Convex Optimization.pdf;C\:\\Users\\Felix\\Zotero\\storage\\CLBZ7TQV\\1405.html}
}

@online{bubeckGeometricAlternativeNesterov2015,
  title = {A Geometric Alternative to {{Nesterov}}'s Accelerated Gradient Descent},
  author = {Bubeck, Sébastien and Lee, Yin Tat and Singh, Mohit},
  date = {2015-06-26},
  eprint = {1506.08187},
  eprinttype = {arxiv},
  primaryclass = {cs, math},
  url = {http://arxiv.org/abs/1506.08187},
  urldate = {2021-07-07},
  abstract = {We propose a new method for unconstrained optimization of a smooth and strongly convex function, which attains the optimal rate of convergence of Nesterov’s accelerated gradient descent. The new algorithm has a simple geometric interpretation, loosely inspired by the ellipsoid method. We provide some numerical evidence that the new method can be superior to Nesterov’s accelerated gradient descent.},
  archiveprefix = {arXiv},
  langid = {english},
  keywords = {Computer Science - Data Structures and Algorithms,Computer Science - Machine Learning,Mathematics - Numerical Analysis,Mathematics - Optimization and Control},
  file = {C\:\\Users\\Felix\\gdrive\\ZoteroPaper\\2015_Bubeck et al\\Bubeck et al_2015_A geometric alternative to Nesterov's accelerated gradient descent.pdf}
}

@article{cauchyMethodeGeneralePour1847,
  title = {Méthode Générale Pour La Résolution Des Systemes d’équations Simultanées},
  author = {Cauchy, Augustin},
  date = {1847},
  journaltitle = {Comptes rendus de l'Académie des Sciences},
  volume = {25},
  pages = {536--538},
  file = {C\:\\Users\\Felix\\gdrive\\ZoteroPaper\\1847_Cauchy\\Cauchy_1847_Méthode générale pour la résolution des systemes d’équations simultanées.pdf}
}

@unpublished{chenLargeScaleOptimizationData2019,
  title = {Large-{{Scale Optimization}} for {{Data Science}}: {{Mirror Descent}}},
  author = {Chen, Yuxin},
  date = {2019},
  url = {http://www.princeton.edu/~yc5/ele522_optimization/lectures/mirror_descent.pdf},
  urldate = {2021-07-07},
  file = {C\:\\Users\\Felix\\gdrive\\ZoteroPaper\\2019_Chen\\Chen_2019_Large-Scale Optimization for Data Science.pdf}
}

@online{choromanskaLossSurfacesMultilayer2015,
  title = {The {{Loss Surfaces}} of {{Multilayer Networks}}},
  author = {Choromanska, Anna and Henaff, Mikael and Mathieu, Michael and Arous, Gérard Ben and LeCun, Yann},
  date = {2015-01-21},
  eprint = {1412.0233},
  eprinttype = {arxiv},
  primaryclass = {cs},
  url = {http://arxiv.org/abs/1412.0233},
  urldate = {2021-06-16},
  abstract = {We study the connection between the highly non-convex loss function of a simple model of the fully-connected feed-forward neural network and the Hamiltonian of the spherical spin-glass model under the assumptions of: i) variable independence, ii) redundancy in network parametrization, and iii) uniformity. These assumptions enable us to explain the complexity of the fully decoupled neural network through the prism of the results from random matrix theory. We show that for large-size decoupled networks the lowest critical values of the random loss function form a layered structure and they are located in a well-defined band lower-bounded by the global minimum. The number of local minima outside that band diminishes exponentially with the size of the network. We empirically verify that the mathematical model exhibits similar behavior as the computer simulations, despite the presence of high dependencies in real networks. We conjecture that both simulated annealing and SGD converge to the band of low critical points, and that all critical points found there are local minima of high quality measured by the test error. This emphasizes a major difference between large- and small-size networks where for the latter poor quality local minima have non-zero probability of being recovered. Finally, we prove that recovering the global minimum becomes harder as the network size increases and that it is in practice irrelevant as global minimum often leads to overfitting.},
  archiveprefix = {arXiv},
  keywords = {Computer Science - Machine Learning},
  file = {C\:\\Users\\Felix\\gdrive\\ZoteroPaper\\2015_Choromanska et al\\Choromanska et al_2015_The Loss Surfaces of Multilayer Networks.pdf;C\:\\Users\\Felix\\Zotero\\storage\\2AF42JU2\\1412.html}
}

@inproceedings{darkenFasterStochasticGradient1991,
  title = {Towards Faster Stochastic Gradient Search},
  booktitle = {{{NIPs}}},
  author = {Darken, Christian and Moody, John},
  date = {1991},
  volume = {91},
  pages = {1009--1016},
  file = {C\:\\Users\\Felix\\gdrive\\ZoteroPaper\\1991_Darken_Moody\\Darken_Moody_1991_Towards faster stochastic gradient search.pdf}
}

@online{dauphinIdentifyingAttackingSaddle2014,
  title = {Identifying and Attacking the Saddle Point Problem in High-Dimensional Non-Convex Optimization},
  author = {Dauphin, Yann and Pascanu, Razvan and Gulcehre, Caglar and Cho, Kyunghyun and Ganguli, Surya and Bengio, Yoshua},
  date = {2014},
  eprint = {1406.2572},
  eprinttype = {arxiv},
  abstract = {A central challenge to many fields of science and engineering involves minimizing non-convex error functions over continuous, high dimensional spaces. Gradient descent or quasi-Newton methods are almost ubiquitously used to perform such minimizations, and it is often thought that a main source of difficulty for these local methods to find the global minimum is the proliferation of local minima with much higher error than the global minimum. Here we argue, based on results from statistical physics, random matrix theory, neural network theory, and empirical evidence, that a deeper and more profound difficulty originates from the proliferation of saddle points, not local minima, especially in high dimensional problems of practical interest. Such saddle points are surrounded by high error plateaus that can dramatically slow down learning, and give the illusory impression of the existence of a local minimum. Motivated by these arguments, we propose a new approach to second-order optimization, the saddle-free Newton method, that can rapidly escape high dimensional saddle points, unlike gradient descent and quasi-Newton methods. We apply this algorithm to deep or recurrent neural network training, and provide numerical evidence for its superior optimization performance. This work extends the results of Pascanu et al. (2014).},
  archiveprefix = {arXiv},
  file = {C\:\\Users\\Felix\\gdrive\\ZoteroPaper\\2014_Dauphin et al\\Dauphin et al_2014_Identifying and attacking the saddle point problem in high-dimensional.pdf;C\:\\Users\\Felix\\Zotero\\storage\\ZICT2QCI\\1406.html}
}

@article{devolderFirstorderMethodsSmooth2014,
  title = {First-Order Methods of Smooth Convex Optimization with Inexact Oracle},
  author = {Devolder, Olivier and Glineur, François and Nesterov, Yurii},
  date = {2014-08-01},
  journaltitle = {Mathematical Programming},
  shortjournal = {Math. Program.},
  volume = {146},
  number = {1},
  pages = {37--75},
  issn = {1436-4646},
  doi = {10.1007/s10107-013-0677-5},
  abstract = {We introduce the notion of inexact first-order oracle and analyze the behavior of several first-order methods of smooth convex optimization used with such an oracle. This notion of inexact oracle naturally appears in the context of smoothing techniques, Moreau–Yosida regularization, Augmented Lagrangians and many other situations. We derive complexity estimates for primal, dual and fast gradient methods, and study in particular their dependence on the accuracy of the oracle and the desired accuracy of the objective function. We observe that the superiority of fast gradient methods over the classical ones is no longer absolute when an inexact oracle is used. We prove that, contrary to simple gradient schemes, fast gradient methods must necessarily suffer from error accumulation. Finally, we show that the notion of inexact oracle allows the application of first-order methods of smooth convex optimization to solve non-smooth or weakly smooth convex problems.},
  langid = {english},
  file = {C\:\\Users\\Felix\\gdrive\\ZoteroPaper\\2014_Devolder et al\\Devolder et al_2014_First-order methods of smooth convex optimization with inexact oracle.pdf}
}

@online{dontlooWhatDifferenceMomentum2016,
  title = {What's the Difference between Momentum Based Gradient Descent and {{Nesterov}}'s Accelerated Gradient Descent?},
  author = {{dontloo}, (https://stats.stackexchange.com/users/95569/dontloo)},
  date = {2016-01-21},
  url = {https://stats.stackexchange.com/q/191727},
  howpublished = {Cross Validated},
  organization = {{Cross Validated Stack Exchange}}
}

@article{dozatIncorporatingNesterovMomentum2016,
  title = {Incorporating {{Nesterov Momentum}} into {{Adam}}},
  author = {Dozat, Timothy},
  date = {2016-02-18},
  url = {https://openreview.net/forum?id=OM0jvwB8jIp57ZJjtNEZ},
  urldate = {2021-11-16},
  abstract = {This work aims to improve upon the recently proposed and rapidly popular- ized optimization algorithm Adam (Kingma \& Ba, 2014). Adam has two main components—a momentum component and an adaptive...},
  langid = {english},
  file = {C\:\\Users\\Felix\\gdrive\\ZoteroPaper\\2016_Dozat\\Dozat_2016_Incorporating Nesterov Momentum into Adam.pdf;C\:\\Users\\Felix\\Zotero\\storage\\L89XBWYX\\forum.html}
}

@article{duchiAdaptiveSubgradientMethods2011,
  title = {Adaptive {{Subgradient Methods}} for {{Online Learning}} and {{Stochastic Optimization}}},
  author = {Duchi, John and Hazan, Elad and Singer, Yoram},
  date = {2011-07-01},
  journaltitle = {The Journal of Machine Learning Research},
  shortjournal = {J. Mach. Learn. Res.},
  volume = {12},
  pages = {2121--2159},
  issn = {1532-4435},
  abstract = {We present a new family of subgradient methods that dynamically incorporate knowledge of the geometry of the data observed in earlier iterations to perform more informative gradient-based learning. Metaphorically, the adaptation allows us to find needles in haystacks in the form of very predictive but rarely seen features. Our paradigm stems from recent advances in stochastic optimization and online learning which employ proximal functions to control the gradient steps of the algorithm. We describe and analyze an apparatus for adaptively modifying the proximal function, which significantly simplifies setting a learning rate and results in regret guarantees that are provably as good as the best proximal function that can be chosen in hindsight. We give several efficient algorithms for empirical risk minimization problems with common and important regularization functions and domain constraints. We experimentally study our theoretical analysis and show that adaptive subgradient methods outperform state-of-the-art, yet non-adaptive, subgradient algorithms.},
  issue = {null},
  file = {C\:\\Users\\Felix\\gdrive\\ZoteroPaper\\2011_Duchi et al\\Duchi et al_2011_Adaptive Subgradient Methods for Online Learning and Stochastic Optimization.pdf}
}

@online{flammarionAveragingAccelerationThere2015,
  title = {From {{Averaging}} to {{Acceleration}}, {{There}} Is {{Only}} a {{Step-size}}},
  author = {Flammarion, Nicolas and Bach, Francis},
  date = {2015-04-07},
  eprint = {1504.01577},
  eprinttype = {arxiv},
  primaryclass = {math, stat},
  url = {http://arxiv.org/abs/1504.01577},
  urldate = {2021-06-21},
  abstract = {We show that accelerated gradient descent, averaged gradient descent and the heavyball method for non-strongly-convex problems may be reformulated as constant parameter second-order difference equation algorithms, where stability of the system is equivalent to convergence at rate O(1/n2), where n is the number of iterations. We provide a detailed analysis of the eigenvalues of the corresponding linear dynamical system, showing various oscillatory and non-oscillatory behaviors, together with a sharp stability result with explicit constants. We also consider the situation where noisy gradients are available, where we extend our general convergence result, which suggests an alternative algorithm (i.e., with different step sizes) that exhibits the good aspects of both averaging and acceleration.},
  archiveprefix = {arXiv},
  langid = {english},
  keywords = {Mathematics - Optimization and Control,Statistics - Machine Learning},
  file = {C\:\\Users\\Felix\\gdrive\\ZoteroPaper\\2015_Flammarion_Bach\\Flammarion_Bach_2015_From Averaging to Acceleration, There is Only a Step-size.pdf}
}

@article{fletcherNewApproachVariable1970,
  title = {A New Approach to Variable Metric Algorithms},
  author = {Fletcher, R.},
  date = {1970-01-01},
  journaltitle = {The Computer Journal},
  shortjournal = {The Computer Journal},
  volume = {13},
  number = {3},
  pages = {317--322},
  issn = {0010-4620},
  doi = {10.1093/comjnl/13.3.317},
  abstract = {An approach to variable metric algorithms has been investigated in which the linear search sub-problem no longer becomes necessary. The property of quadratic termination has been replaced by one of monotonic convergence of the eigenvalues of the approximating matrix to the inverse hessian. A convex class of updating formulae which possess this property has been established, and a strategy has been indicated for choosing a member of the class so as to keep the approximation away from both singularity and unboundedness. A FORTRAN program has been tested extensively with encouraging results.},
  file = {C\:\\Users\\Felix\\gdrive\\ZoteroPaper\\1970_Fletcher\\Fletcher_1970_A new approach to variable metric algorithms.pdf;C\:\\Users\\Felix\\Zotero\\storage\\PIUQ5Z2V\\345520.html}
}

@online{garipovLossSurfacesMode2018,
  title = {Loss {{Surfaces}}, {{Mode Connectivity}}, and {{Fast Ensembling}} of {{DNNs}}},
  author = {Garipov, Timur and Izmailov, Pavel and Podoprikhin, Dmitrii and Vetrov, Dmitry and Wilson, Andrew Gordon},
  date = {2018-10-30},
  eprint = {1802.10026},
  eprinttype = {arxiv},
  primaryclass = {cs, stat},
  url = {http://arxiv.org/abs/1802.10026},
  urldate = {2021-06-15},
  abstract = {The loss functions of deep neural networks are complex and their geometric properties are not well understood. We show that the optima of these complex loss functions are in fact connected by simple curves over which training and test accuracy are nearly constant. We introduce a training procedure to discover these high-accuracy pathways between modes. Inspired by this new geometric insight, we also propose a new ensembling method entitled Fast Geometric Ensembling (FGE). Using FGE we can train high-performing ensembles in the time required to train a single model. We achieve improved performance compared to the recent state-of-the-art Snapshot Ensembles, on CIFAR-10, CIFAR-100, and ImageNet.},
  archiveprefix = {arXiv},
  keywords = {Computer Science - Artificial Intelligence,Computer Science - Machine Learning,Statistics - Machine Learning},
  file = {C\:\\Users\\Felix\\gdrive\\ZoteroPaper\\2018_Garipov et al\\Garipov et al_2018_Loss Surfaces, Mode Connectivity, and Fast Ensembling of DNNs.pdf;C\:\\Users\\Felix\\Zotero\\storage\\SETXQE29\\1802.html}
}

@article{gelfandNormierteRinge1941,
  title = {Normierte Ringe},
  author = {Gelfand, Israel},
  date = {1941},
  journaltitle = {Matematicheskii Sbornik},
  shortjournal = {Mat. Sb.},
  volume = {9(51)},
  number = {1},
  pages = {3--24},
  url = {http://www.mathnet.ru/links/78bac43c7b4d5544986886ed8be19701/sm6046.pdf},
  urldate = {2021-11-08},
  langid = {german},
  file = {C\:\\Users\\Felix\\gdrive\\ZoteroPaper\\1941_Gelfand\\Gelfand_1941_Normierte Ringe.pdf}
}

@inproceedings{ghadimiGlobalConvergenceHeavyball2015,
  title = {Global Convergence of the {{Heavy-ball}} Method for Convex Optimization},
  booktitle = {2015 {{European Control Conference}} ({{ECC}})},
  author = {Ghadimi, Euhanna and Feyzmahdavian, Hamid Reza and Johansson, Mikael},
  date = {2015-07},
  pages = {310--315},
  doi = {10.1109/ECC.2015.7330562},
  abstract = {This paper establishes global convergence and provides global bounds of the rate of convergence for the Heavy-ball method for convex optimization. When the objective function has Lipschitz-continuous gradient, we show that the Cesáro average of the iterates converges to the optimum at a rate of O(1/k) where k is the number of iterations. When the objective function is also strongly convex, we prove that the Heavy-ball iterates converge linearly to the unique optimum. Numerical examples validate our theoretical findings.},
  eventtitle = {2015 {{European Control Conference}} ({{ECC}})},
  keywords = {Acceleration,Algorithm design and analysis,Convergence,Convex functions,Gradient methods,Linear programming,Radio frequency},
  file = {C\:\\Users\\Felix\\gdrive\\ZoteroPaper\\2015_Ghadimi et al\\Ghadimi et al_2015_Global convergence of the Heavy-ball method for convex optimization.pdf;C\:\\Users\\Felix\\Zotero\\storage\\W2IV5QKI\\7330562.html}
}

@article{gohWhyMomentumReally2017,
  title = {Why {{Momentum Really Works}}},
  author = {Goh, Gabriel},
  date = {2017-04-04},
  journaltitle = {Distill},
  shortjournal = {Distill},
  volume = {2},
  number = {4},
  pages = {e6},
  issn = {2476-0757},
  doi = {10.23915/distill.00006},
  abstract = {We often think of optimization with momentum as a ball rolling down a hill. This isn't wrong, but there is much more to the story.},
  langid = {english},
  file = {C\:\\Users\\Felix\\Zotero\\storage\\SQTH5E9A\\momentum.html}
}

@article{goldfarbFamilyVariablemetricMethods1970,
  title = {A Family of Variable-Metric Methods Derived by Variational Means},
  author = {Goldfarb, Donald},
  date = {1970},
  journaltitle = {Mathematics of Computation},
  shortjournal = {Math. Comp.},
  volume = {24},
  number = {109},
  pages = {23--26},
  issn = {0025-5718, 1088-6842},
  doi = {10.1090/S0025-5718-1970-0258249-6},
  abstract = {A new rank-two variable-metric method is derived using Greenstadt’s variational approach [Math. Comp., this issue]. Like the Davidon-Fletcher-Powell (DFP) variable-metric method, the new method preserves the positive-definiteness of the approximating matrix. Together with Greenstadt’s method, the new method gives rise to a one-parameter family of variable-metric methods that includes the DFP and rank-one methods as special cases. It is equivalent to Broyden’s one-parameter family [Math. Comp., v. 21, 1967, pp. 368–381]. Choices for the inverse of the weighting matrix in the variational approach are given that lead to the derivation of the DFP and rank-one methods directly.},
  langid = {english},
  keywords = {Davidon method,rank-one formulas,Unconstrained optimization,variable-metric,variational methods},
  file = {C\:\\Users\\Felix\\gdrive\\ZoteroPaper\\1970_Goldfarb\\Goldfarb_1970_A family of variable-metric methods derived by variational means.pdf;C\:\\Users\\Felix\\Zotero\\storage\\4GXV7KTR\\home.html}
}

@book{golubMatrixComputations2013,
  title = {Matrix Computations},
  author = {Golub, Gene H. and Van Loan, Charles F.},
  date = {2013},
  series = {Johns {{Hopkins}} Studies in the Mathematical Sciences},
  edition = {Fourth edition},
  publisher = {{The Johns Hopkins University Press}},
  location = {{Baltimore}},
  url = {http://math.ecnu.edu.cn/~jypan/Teaching/books/2013%20Matrix%20Computations%204th.pdf},
  urldate = {2021-11-10},
  isbn = {978-1-4214-0794-4},
  langid = {english},
  pagetotal = {756},
  keywords = {Data processing,Matrices},
  annotation = {OCLC: ocn824733531},
  file = {C\:\\Users\\Felix\\gdrive\\ZoteroPaper\\2013_Golub_Van Loan\\Golub_Van Loan_2013_Matrix computations.pdf}
}

@unpublished{guptaAdvancedAlgorithmsFall2020,
  title = {Advanced {{Algorithms}}, {{Fall}} 2020, {{Lecture}} 19},
  author = {Gupta, Anupam},
  date = {2020},
  url = {http://www.cs.cmu.edu/~15850/notes/lec19.pdf},
  urldate = {2021-07-07},
  abstract = {The gradient descent algorithm of the previous chapter is general and powerful: it allows us to (approximately) minimize convex functions over convex bodies. Moreover, it also works in the model of online convex optimization, where the convex function can vary over time, and we want to find a low-regret strategy—one which performs well against every fixed point x . This power and broad applicability means the algorithm is not always the best for specific classes of functions and bodies: for instance, for minimizing linear functions over the probability simplex Dn, we saw in §16.4.1 that the generic gradient descent algorithm does significantly worse than the specialized Hedge algorithm. Show that not only the analysis but the algorithm is bad. This suggests asking: can we somehow change gradient descent to adapt to the “geometry” of the problem? The mirror descent framework of this section allows us to do precisely this. There are many different (and essentially equivalent) ways to explain this framework, each with its positives. We present two of them here: the proximal point view, and the mirror map view, and only mention the others (the preconditioned or quasi-Newton gradient flow view, and the follow the regularized leader view) in passing.},
  file = {C\:\\Users\\Felix\\gdrive\\ZoteroPaper\\2020_Gupta\\Gupta_2020_Advanced Algorithms, Fall 2020, Lecture 19.pdf}
}

@online{hardtTrainFasterGeneralize2016,
  title = {Train Faster, Generalize Better: {{Stability}} of Stochastic Gradient Descent},
  shorttitle = {Train Faster, Generalize Better},
  author = {Hardt, Moritz and Recht, Benjamin and Singer, Yoram},
  date = {2016-02-07},
  eprint = {1509.01240},
  eprinttype = {arxiv},
  primaryclass = {cs, math, stat},
  url = {http://arxiv.org/abs/1509.01240},
  urldate = {2021-04-19},
  abstract = {We show that parametric models trained by a stochastic gradient method (SGM) with few iterations have vanishing generalization error. We prove our results by arguing that SGM is algorithmically stable in the sense of Bousquet and Elisseeff. Our analysis only employs elementary tools from convex and continuous optimization. We derive stability bounds for both convex and non-convex optimization under standard Lipschitz and smoothness assumptions. Applying our results to the convex case, we provide new insights for why multiple epochs of stochastic gradient methods generalize well in practice. In the non-convex case, we give a new interpretation of common practices in neural networks, and formally show that popular techniques for training large deep models are indeed stability-promoting. Our findings conceptually underscore the importance of reducing training time beyond its obvious benefit.},
  archiveprefix = {arXiv},
  keywords = {Computer Science - Machine Learning,Mathematics - Optimization and Control,Statistics - Machine Learning},
  file = {C\:\\Users\\Felix\\gdrive\\ZoteroPaper\\2016_Hardt et al\\Hardt et al_2016_Train faster, generalize better.pdf;C\:\\Users\\Felix\\Zotero\\storage\\Z4N4PGBE\\1509.html}
}

@article{heConditionsConvergenceEvolutionary2001,
  title = {Conditions for the Convergence of Evolutionary Algorithms},
  author = {He, Jun and Yu, Xinghuo},
  date = {2001-07-01},
  journaltitle = {Journal of Systems Architecture},
  shortjournal = {Journal of Systems Architecture},
  series = {Evolutionary Computing},
  volume = {47},
  number = {7},
  pages = {601--612},
  issn = {1383-7621},
  doi = {10.1016/S1383-7621(01)00018-2},
  abstract = {This paper presents a theoretical analysis of the convergence conditions for evolutionary algorithms. The necessary and sufficient conditions, necessary conditions, and sufficient conditions for the convergence of evolutionary algorithms to the global optima are derived, which describe their limiting behaviors. Their relationships are explored. Upper and lower bounds of the convergence rates of the evolutionary algorithms are given.},
  langid = {english},
  keywords = {Convergence,Convergence rate,Evolutionary algorithms,Markov chain},
  file = {C\:\\Users\\Felix\\gdrive\\ZoteroPaper\\2001_He_Yu\\He_Yu_2001_Conditions for the convergence of evolutionary algorithms.pdf;C\:\\Users\\Felix\\Zotero\\storage\\JT88CMBC\\S1383762101000182.html}
}

@unpublished{hintonNeuralNetworksMachine2012,
  type = {Massive Open Online Course},
  title = {Neural {{Networks}} for {{Machine Learning}}},
  author = {Hinton, Geoffrey},
  date = {2012},
  url = {https://www.cs.toronto.edu/~hinton/coursera_lectures.html},
  urldate = {2021-11-16},
  editora = {Sirvastava, Nitish and Swersky, Kevin},
  editoratype = {collaborator},
  venue = {{Coursera}},
  annotation = {Lecture 6 Slides: http://www.cs.toronto.edu/\textasciitilde tijmen/csc321/slides/lecture\_slides\_lec6.pdf},
  file = {C\:\\Users\\Felix\\gdrive\\ZoteroPaper\\2012_Hinton\\Hinton_2012_Neural Networks for Machine Learning.pdf}
}

@article{hochreiterFlatMinima1997,
  title = {Flat {{Minima}}},
  author = {Hochreiter, Sepp and Schmidhuber, Jürgen},
  date = {1997-01-01},
  journaltitle = {Neural Computation},
  shortjournal = {Neural Computation},
  volume = {9},
  number = {1},
  pages = {1--42},
  issn = {0899-7667},
  doi = {10.1162/neco.1997.9.1.1},
  abstract = {We present a new algorithm for finding low-complexity neural networks with high generalization capability. The algorithm searches for a “flat” minimum of the error function. A flat minimum is a large connected region in weight space where the error remains approximately constant. An MDL-based, Bayesian argument suggests that flat minima correspond to “simple” networks and low expected overfitting. The argument is based on a Gibbs algorithm variant and a novel way of splitting generalization error into underfitting and overfitting error. Unlike many previous approaches, ours does not require gaussian assumptions and does not depend on a “good” weight prior. Instead we have a prior over input output functions, thus taking into account net architecture and training set. Although our algorithm requires the computation of second-order derivatives, it has backpropagation's order of complexity. Automatically, it effectively prunes units, weights, and input lines. Various experiments with feedforward and recurrent nets are described. In an application to stock market prediction, flat minimum search outperforms conventional backprop, weight decay, and “optimal brain surgeon/optimal brain damage.”},
  file = {C\:\\Users\\Felix\\gdrive\\ZoteroPaper\\1997_Hochreiter_Schmidhuber\\Hochreiter_Schmidhuber_1997_Flat Minima.pdf;C\:\\Users\\Felix\\Zotero\\storage\\T24KCAGB\\Flat-Minima.html}
}

@online{IMOmathHopitalTheorem,
  title = {{{IMOmath}}: {{L}}’{{Hopital}}’s {{Theorem}}},
  url = {https://www.imomath.com/index.php?options=686},
  urldate = {2021-09-15},
  file = {C\:\\Users\\Felix\\Zotero\\storage\\CQ445NBU\\index.html}
}

@online{israelHowWorkOut2012,
  title = {How to Work out This Problem Using {{Stolz}}'s Theorem?},
  author = {Israel, Robert (https://math.stackexchange.com/users/8508)},
  date = {2012-10-05},
  url = {https://math.stackexchange.com/q/207596},
  organization = {{Mathematics Stack Exchange}}
}

@online{izmailovAveragingWeightsLeads2019,
  title = {Averaging {{Weights Leads}} to {{Wider Optima}} and {{Better Generalization}}},
  author = {Izmailov, Pavel and Podoprikhin, Dmitrii and Garipov, Timur and Vetrov, Dmitry and Wilson, Andrew Gordon},
  date = {2019-02-25},
  eprint = {1803.05407},
  eprinttype = {arxiv},
  primaryclass = {cs, stat},
  url = {http://arxiv.org/abs/1803.05407},
  urldate = {2021-06-15},
  abstract = {Deep neural networks are typically trained by optimizing a loss function with an SGD variant, in conjunction with a decaying learning rate, until convergence. We show that simple averaging of multiple points along the trajectory of SGD, with a cyclical or constant learning rate, leads to better generalization than conventional training. We also show that this Stochastic Weight Averaging (SWA) procedure finds much flatter solutions than SGD, and approximates the recent Fast Geometric Ensembling (FGE) approach with a single model. Using SWA we achieve notable improvement in test accuracy over conventional SGD training on a range of state-of-the-art residual networks, PyramidNets, DenseNets, and Shake-Shake networks on CIFAR-10, CIFAR-100, and ImageNet. In short, SWA is extremely easy to implement, improves generalization, and has almost no computational overhead.},
  archiveprefix = {arXiv},
  keywords = {Computer Science - Artificial Intelligence,Computer Science - Computer Vision and Pattern Recognition,Computer Science - Machine Learning,Statistics - Machine Learning},
  file = {C\:\\Users\\Felix\\gdrive\\ZoteroPaper\\2019_Izmailov et al\\Izmailov et al_2019_Averaging Weights Leads to Wider Optima and Better Generalization.pdf;C\:\\Users\\Felix\\Zotero\\storage\\CH682FMV\\1803.html}
}

@online{karimiLinearConvergenceGradient2020,
  title = {Linear {{Convergence}} of {{Gradient}} and {{Proximal-Gradient Methods Under}} the Polyak-\L{}ojasiewicz {{Condition}}},
  author = {Karimi, Hamed and Nutini, Julie and Schmidt, Mark},
  date = {2020-09-12},
  eprint = {1608.04636},
  eprinttype = {arxiv},
  primaryclass = {cs, math, stat},
  url = {http://arxiv.org/abs/1608.04636},
  urldate = {2021-09-21},
  abstract = {In 1963, Polyak proposed a simple condition that is sufficient to show a global linear convergence rate for gradient descent. This condition is a special case of the \textbackslash L\{\}ojasiewicz inequality proposed in the same year, and it does not require strong convexity (or even convexity). In this work, we show that this much-older Polyak-\textbackslash L\{\}ojasiewicz (PL) inequality is actually weaker than the main conditions that have been explored to show linear convergence rates without strong convexity over the last 25 years. We also use the PL inequality to give new analyses of randomized and greedy coordinate descent methods, sign-based gradient descent methods, and stochastic gradient methods in the classic setting (with decreasing or constant step-sizes) as well as the variance-reduced setting. We further propose a generalization that applies to proximal-gradient methods for non-smooth optimization, leading to simple proofs of linear convergence of these methods. Along the way, we give simple convergence results for a wide variety of problems in machine learning: least squares, logistic regression, boosting, resilient backpropagation, L1-regularization, support vector machines, stochastic dual coordinate ascent, and stochastic variance-reduced gradient methods.},
  archiveprefix = {arXiv},
  keywords = {65K10,Computer Science - Machine Learning,G.1.6,I.2.6,Mathematics - Optimization and Control,Statistics - Computation,Statistics - Machine Learning},
  file = {C\:\\Users\\Felix\\gdrive\\ZoteroPaper\\2020_Karimi et al\\Karimi et al_2020_Linear Convergence of Gradient and Proximal-Gradient Methods Under the.pdf;C\:\\Users\\Felix\\Zotero\\storage\\GHLSLYL9\\1608.html}
}

@online{kingmaAdamMethodStochastic2017,
  title = {Adam: {{A Method}} for {{Stochastic Optimization}}},
  shorttitle = {Adam},
  author = {Kingma, Diederik P. and Ba, Jimmy},
  date = {2017-01-29},
  eprint = {1412.6980},
  eprinttype = {arxiv},
  primaryclass = {cs},
  url = {http://arxiv.org/abs/1412.6980},
  urldate = {2021-04-19},
  abstract = {We introduce Adam, an algorithm for first-order gradient-based optimization of stochastic objective functions, based on adaptive estimates of lower-order moments. The method is straightforward to implement, is computationally efficient, has little memory requirements, is invariant to diagonal rescaling of the gradients, and is well suited for problems that are large in terms of data and/or parameters. The method is also appropriate for non-stationary objectives and problems with very noisy and/or sparse gradients. The hyper-parameters have intuitive interpretations and typically require little tuning. Some connections to related algorithms, on which Adam was inspired, are discussed. We also analyze the theoretical convergence properties of the algorithm and provide a regret bound on the convergence rate that is comparable to the best known results under the online convex optimization framework. Empirical results demonstrate that Adam works well in practice and compares favorably to other stochastic optimization methods. Finally, we discuss AdaMax, a variant of Adam based on the infinity norm.},
  archiveprefix = {arXiv},
  keywords = {Computer Science - Machine Learning},
  file = {C\:\\Users\\Felix\\gdrive\\ZoteroPaper\\2017_Kingma_Ba\\Kingma_Ba_2017_Adam.pdf;C\:\\Users\\Felix\\Zotero\\storage\\6WJBPWQT\\1412.html}
}

@article{kozyakinAccuracyApproximationSpectral2009,
  title = {On Accuracy of Approximation of the Spectral Radius by the {{Gelfand}} Formula},
  author = {Kozyakin, Victor},
  date = {2009-11-01},
  journaltitle = {Linear Algebra and its Applications},
  shortjournal = {Linear Algebra and its Applications},
  volume = {431},
  number = {11},
  pages = {2134--2141},
  issn = {0024-3795},
  doi = {10.1016/j.laa.2009.07.008},
  abstract = {The famous Gelfand formula ρ(A)=limsupn→∞‖An‖1/n for the spectral radius of a matrix is of great importance in various mathematical constructions. Unfortunately, the range of applicability of this formula is substantially restricted by a lack of estimates for the rate of convergence of the quantities ‖An‖1/n to ρ(A). In the paper this deficiency is made up to some extent. By using the Bochi inequalities we establish explicit computable estimates for the rate of convergence of the quantities ‖An‖1/n to ρ(A). The obtained estimates are then extended for evaluation of the joint spectral radius of matrix sets.},
  langid = {english},
  keywords = {Generalized spectral radius,Infinite matrix products,Joint spectral radius},
  file = {C\:\\Users\\Felix\\gdrive\\ZoteroPaper\\2009_Kozyakin\\Kozyakin_2009_On accuracy of approximation of the spectral radius by the Gelfand formula.pdf}
}

@book{kushnerStochasticApproximationAlgorithms1997,
  title = {Stochastic Approximation Algorithms and Applications},
  author = {Kushner, Harold J.},
  date = {1997},
  series = {Applications of Mathematics; 35},
  publisher = {{Springer}},
  location = {{Berlin, et al.}},
  editora = {Yin, George},
  editoratype = {collaborator},
  isbn = {978-0-387-94916-1},
  langid = {english},
  keywords = {Stochastic approximation,Stochastic approximation; Stochastic approximation,Stochastische Approximation},
  file = {C\:\\Users\\Felix\\gdrive\\ZoteroPaper\\1997_Kushner\\Kushner_1997_Stochastic approximation algorithms and applications.pdf}
}

@article{lessardAnalysisDesignOptimization2016,
  title = {Analysis and {{Design}} of {{Optimization Algorithms}} via {{Integral Quadratic Constraints}}},
  author = {Lessard, Laurent and Recht, Benjamin and Packard, Andrew},
  date = {2016-01-01},
  journaltitle = {SIAM Journal on Optimization},
  shortjournal = {SIAM J. Optim.},
  volume = {26},
  number = {1},
  pages = {57--95},
  publisher = {{Society for Industrial and Applied Mathematics}},
  issn = {1052-6234},
  doi = {10.1137/15M1009597},
  abstract = {This paper develops a new framework to analyze and design iterative optimization algorithms built on the notion of integral quadratic constraints (IQCs) from robust control theory. IQCs provide sufficient conditions for the stability of complicated interconnected systems, and these conditions can be checked by semidefinite programming. We discuss how to adapt IQC theory to study optimization algorithms, proving new inequalities about convex functions and providing a version of IQC theory adapted for use by optimization researchers. Using these inequalities, we derive numerical upper bounds on convergence rates for the Gradient method, the Heavy-ball method, Nesterov's accelerated method, and related variants by solving small, simple semidefinite programming problems. We also briefly show how these techniques can be used to search for optimization algorithms with desired performance characteristics, establishing a new methodology for algorithm design.},
  keywords = {90C22,90C25,90C30,93C10,93D99,control theory,convex optimization,first-order methods,Heavy-ball method,integral quadratic constraints,Nesterov's method,proximal gradient methods,semidefinite programming},
  file = {C\:\\Users\\Felix\\gdrive\\ZoteroPaper\\2016_Lessard et al\\Lessard et al_2016_Analysis and Design of Optimization Algorithms via Integral Quadratic.pdf}
}

@article{levenbergMethodSolutionCertain1944,
  title = {A Method for the Solution of Certain Non-Linear Problems in Least Squares},
  author = {Levenberg, Kenneth},
  date = {1944},
  journaltitle = {Quarterly of Applied Mathematics},
  shortjournal = {Quart. Appl. Math.},
  volume = {2},
  number = {2},
  pages = {164--168},
  issn = {0033-569X, 1552-4485},
  doi = {10.1090/qam/10666},
  abstract = {Advancing research. Creating connections.},
  langid = {english},
  file = {C\:\\Users\\Felix\\gdrive\\ZoteroPaper\\1944_Levenberg\\Levenberg_1944_A method for the solution of certain non-linear problems in least squares.pdf;C\:\\Users\\Felix\\Zotero\\storage\\VPV643M6\\S0033-569X-1944-10666-0.html}
}

@inproceedings{liStochasticModifiedEquations2017,
  title = {Stochastic {{Modified Equations}} and {{Adaptive Stochastic Gradient Algorithms}}},
  booktitle = {Proceedings of the 34th {{International Conference}} on {{Machine Learning}}},
  author = {Li, Qianxiao and Tai, Cheng and E, Weinan},
  date = {2017-07-17},
  pages = {2101--2110},
  publisher = {{PMLR}},
  issn = {2640-3498},
  url = {https://proceedings.mlr.press/v70/li17f.html},
  urldate = {2021-09-23},
  abstract = {We develop the method of stochastic modified equations (SME), in which stochastic gradient algorithms are approximated in the weak sense by continuous-time stochastic differential equations. We exploit the continuous formulation together with optimal control theory to derive novel adaptive hyper-parameter adjustment policies. Our algorithms have competitive performance with the added benefit of being robust to varying models and datasets. This provides a general methodology for the analysis and design of stochastic gradient algorithms.},
  eventtitle = {International {{Conference}} on {{Machine Learning}}},
  langid = {english},
  file = {C\:\\Users\\Felix\\gdrive\\ZoteroPaper\\2017_Li et al\\Li et al_2017_Stochastic Modified Equations and Adaptive Stochastic Gradient Algorithms.pdf;C\:\\Users\\Felix\\Zotero\\storage\\ZG5EBU4I\\Li et al. - 2017 - Stochastic Modified Equations and Adaptive Stochas.pdf}
}

@online{liVisualizingLossLandscape2018,
  title = {Visualizing the {{Loss Landscape}} of {{Neural Nets}}},
  author = {Li, Hao and Xu, Zheng and Taylor, Gavin and Studer, Christoph and Goldstein, Tom},
  date = {2018-11-07},
  eprint = {1712.09913},
  eprinttype = {arxiv},
  primaryclass = {cs, stat},
  url = {http://arxiv.org/abs/1712.09913},
  urldate = {2021-06-16},
  abstract = {Neural network training relies on our ability to find "good" minimizers of highly non-convex loss functions. It is well-known that certain network architecture designs (e.g., skip connections) produce loss functions that train easier, and well-chosen training parameters (batch size, learning rate, optimizer) produce minimizers that generalize better. However, the reasons for these differences, and their effects on the underlying loss landscape, are not well understood. In this paper, we explore the structure of neural loss functions, and the effect of loss landscapes on generalization, using a range of visualization methods. First, we introduce a simple "filter normalization" method that helps us visualize loss function curvature and make meaningful side-by-side comparisons between loss functions. Then, using a variety of visualizations, we explore how network architecture affects the loss landscape, and how training parameters affect the shape of minimizers.},
  archiveprefix = {arXiv},
  keywords = {Computer Science - Computer Vision and Pattern Recognition,Computer Science - Machine Learning,Statistics - Machine Learning},
  file = {C\:\\Users\\Felix\\gdrive\\ZoteroPaper\\2018_Li et al\\Li et al_2018_Visualizing the Loss Landscape of Neural Nets.pdf;C\:\\Users\\Felix\\gdrive\\ZoteroPaper\\2018_Li et al\\Li et al_2018_Visualizing the Loss Landscape of Neural Nets2.pdf;C\:\\Users\\Felix\\Zotero\\storage\\CI5Q2T87\\1712.html}
}

@article{marquardtAlgorithmLeastSquaresEstimation1963,
  title = {An {{Algorithm}} for {{Least-Squares Estimation}} of {{Nonlinear Parameters}}},
  author = {Marquardt, Donald W.},
  date = {1963-06-01},
  journaltitle = {Journal of the Society for Industrial and Applied Mathematics},
  volume = {11},
  number = {2},
  pages = {431--441},
  publisher = {{Society for Industrial and Applied Mathematics}},
  issn = {0368-4245},
  doi = {10.1137/0111030}
}

@inproceedings{martensDeepLearningHessianfree2010,
  title = {Deep Learning via {{Hessian-free}} Optimization},
  booktitle = {{{ICML}}},
  author = {Martens, James},
  date = {2010},
  volume = {27},
  pages = {8},
  abstract = {We develop a 2nd-order optimization method based on the “Hessian-free” approach, and apply it to training deep auto-encoders. Without using pre-training, we obtain results superior to those reported by Hinton \& Salakhutdinov (2006) on the same tasks they considered. Our method is practical, easy to use, scales nicely to very large datasets, and isn’t limited in applicability to autoencoders, or any specific model class. We also discuss the issue of “pathological curvature” as a possible explanation for the difficulty of deeplearning and how 2nd-order optimization, and our method in particular, effectively deals with it.},
  langid = {english},
  file = {C\:\\Users\\Felix\\gdrive\\ZoteroPaper\\2010_Martens\\Martens_2010_Deep learning via hessian-free optimization2.pdf}
}

@online{metzGradientsAreNot2021,
  title = {Gradients Are {{Not All You Need}}},
  author = {Metz, Luke and Freeman, C. Daniel and Schoenholz, Samuel S. and Kachman, Tal},
  date = {2021-11-10},
  eprint = {2111.05803},
  eprinttype = {arxiv},
  primaryclass = {cs, stat},
  url = {http://arxiv.org/abs/2111.05803},
  urldate = {2021-11-16},
  abstract = {Differentiable programming techniques are widely used in the community and are responsible for the machine learning renaissance of the past several decades. While these methods are powerful, they have limits. In this short report, we discuss a common chaos based failure mode which appears in a variety of differentiable circumstances, ranging from recurrent neural networks and numerical physics simulation to training learned optimizers. We trace this failure to the spectrum of the Jacobian of the system under study, and provide criteria for when a practitioner might expect this failure to spoil their differentiation based optimization algorithms.},
  archiveprefix = {arXiv},
  keywords = {Computer Science - Machine Learning,Statistics - Machine Learning},
  file = {C\:\\Users\\Felix\\gdrive\\ZoteroPaper\\2021_Metz et al\\Metz et al_2021_Gradients are Not All You Need.pdf;C\:\\Users\\Felix\\Zotero\\storage\\CUR7MCM6\\2111.html;C\:\\Users\\Felix\\Zotero\\storage\\M6RJXWJG\\2111.html}
}

@article{nemirovskiRobustStochasticApproximation2009,
  title = {Robust {{Stochastic Approximation Approach}} to {{Stochastic Programming}}},
  author = {Nemirovski, A. and Juditsky, A. and Lan, G. and Shapiro, A.},
  date = {2009-01-01},
  journaltitle = {SIAM Journal on Optimization},
  shortjournal = {SIAM J. Optim.},
  volume = {19},
  number = {4},
  pages = {1574--1609},
  publisher = {{Society for Industrial and Applied Mathematics}},
  issn = {1052-6234},
  doi = {10.1137/070704277},
  abstract = {In this paper we consider optimization problems where the objective function is given in a form of the expectation. A basic difficulty of solving such stochastic optimization problems is that the involved multidimensional integrals (expectations) cannot be computed with high accuracy. The aim of this paper is to compare two computational approaches based on Monte Carlo sampling techniques, namely, the stochastic approximation (SA) and the sample average approximation (SAA) methods. Both approaches, the SA and SAA methods, have a long history. Current opinion is that the SAA method can efficiently use a specific (say, linear) structure of the considered problem, while the SA approach is a crude subgradient method, which often performs poorly in practice. We intend to demonstrate that a properly modified SA approach can be competitive and even significantly outperform the SAA method for a certain class of convex stochastic problems. We extend the analysis to the case of convex-concave stochastic saddle point problems and present (in our opinion highly encouraging) results of numerical experiments.},
  keywords = {90C15,90C25,complexity,minimax problems,mirror descent algorithm,Monte Carlo sampling,saddle point,sample average approximation method,stochastic approximation,stochastic programming},
  file = {C\:\\Users\\Felix\\gdrive\\ZoteroPaper\\2009_Nemirovski et al\\Nemirovski et al_2009_Robust Stochastic Approximation Approach to Stochastic Programming.pdf}
}

@book{nesterovLecturesConvexOptimization2018,
  title = {Lectures on {{Convex Optimization}}},
  author = {Nesterov, Yurii Evgen'evič},
  date = {2018},
  series = {Springer Optimization and {{Its}} Applications; Volume 137},
  edition = {Second edition},
  publisher = {{Springer}},
  location = {{Cham}},
  doi = {10.1007/978-3-319-91578-4},
  abstract = {This book provides a comprehensive, modern introduction to convex optimization, a field that is becoming increasingly important in applied mathematics, economics and finance, engineering, and computer science, notably in data science and machine learning. Written by a leading expert in the field, this book includes recent advances in the algorithmic theory of convex optimization, naturally complementing the existing literature. It contains a unified and rigorous presentation of the acceleration techniques for minimization schemes of first- and second-order. It provides readers with a full treatment of the smoothing technique, which has tremendously extended the abilities of gradient-type methods. Several powerful approaches in structural optimization, including optimization in relative scale and polynomial-time interior-point methods, are also discussed in detail. Researchers in theoretical optimization as well as professionals working on optimization problems will find this book very useful. It presents many successful examples of how to develop very fast specialized minimization algorithms. Based on the author’s lectures, it can naturally serve as the basis for introductory and advanced courses in convex optimization for students in engineering, economics, computer science and mathematics, Introduction -- Part I Black-Box Optimization -- 1 Nonlinear Optimization -- 2 Smooth Convex Optimization -- 3 Nonsmooth Convex Optimization -- 4 Second-Order Methods -- Part II Structural Optimization -- 5 Polynomial-time Interior-Point Methods -- 6 Primal-Dual Model of Objective Function -- 7 Optimization in Relative Scale -- Bibliographical Comments -- Appendix A. Solving some Auxiliary Optimization Problems -- References -- Index},
  isbn = {978-3-319-91578-4},
  langid = {english},
  keywords = {Computer software; Optimization; Mathematical optimization; Algorithms,Konvexe Optimierung},
  file = {C\:\\Users\\Felix\\gdrive\\ZoteroPaper\\2018_Nesterov\\Nesterov_2018_Lectures on Convex Optimization.pdf}
}

@article{nesterovMethodSolvingConvex1983,
  title = {A Method for Solving the Convex Programming Problem with Convergence Rate \(O(1/k^2)\)},
  author = {Nesterov, Yurii Evgen'evič},
  date = {1983},
  journaltitle = {Dokl. Akad. Nauk SSSR},
  volume = {269},
  pages = {543--547},
  url = {http://m.mathnet.ru/php/archive.phtml?wshow=paper&jrnid=dan&paperid=46009&option_lang=eng},
  urldate = {2021-07-17},
  file = {C\:\\Users\\Felix\\gdrive\\ZoteroPaper\\1983_Nesterov\\Nesterov_1983_A method for solving the convex programming problem with convergence rate.pdf;C\:\\Users\\Felix\\Zotero\\storage\\LUQ6H5UH\\10029946121.html}
}

@online{nguyenFirstExitTime2019,
  title = {First {{Exit Time Analysis}} of {{Stochastic Gradient Descent Under Heavy-Tailed Gradient Noise}}},
  author = {Nguyen, Thanh Huy and Şimşekli, Umut and Gürbüzbalaban, Mert and Richard, Gaël},
  date = {2019-06-21},
  eprint = {1906.09069},
  eprinttype = {arxiv},
  primaryclass = {cs, stat},
  url = {http://arxiv.org/abs/1906.09069},
  urldate = {2021-09-24},
  abstract = {Stochastic gradient descent (SGD) has been widely used in machine learning due to its computational efficiency and favorable generalization properties. Recently, it has been empirically demonstrated that the gradient noise in several deep learning settings admits a non-Gaussian, heavy-tailed behavior. This suggests that the gradient noise can be modeled by using α-stable distributions, a family of heavy-tailed distributions that appear in the generalized central limit theorem. In this context, SGD can be viewed as a discretization of a stochastic differential equation (SDE) driven by a Lévy motion, and the metastability results for this SDE can then be used for illuminating the behavior of SGD, especially in terms of ‘preferring wide minima’. While this approach brings a new perspective for analyzing SGD, it is limited in the sense that, due to the time discretization, SGD might admit a significantly different behavior than its continuous-time limit. Intuitively, the behaviors of these two systems are expected to be similar to each other only when the discretization step is sufficiently small; however, to the best of our knowledge, there is no theoretical understanding on how small the step-size should be chosen in order to guarantee that the discretized system inherits the properties of the continuous-time system. In this study, we provide formal theoretical analysis where we derive explicit conditions for the step-size such that the metastability behavior of the discrete-time system is similar to its continuous-time limit. We show that the behaviors of the two systems are indeed similar for small step-sizes and we identify how the error depends on the algorithm and problem parameters. We illustrate our results with simulations on a synthetic model and neural networks.},
  archiveprefix = {arXiv},
  langid = {english},
  keywords = {Computer Science - Machine Learning,Statistics - Machine Learning},
  file = {C\:\\Users\\Felix\\gdrive\\ZoteroPaper\\2019_Nguyen et al\\Nguyen et al_2019_First Exit Time Analysis of Stochastic Gradient Descent Under Heavy-Tailed.pdf}
}

@incollection{nocedalQuasiNewtonMethods2006,
  title = {Quasi-{{Newton Methods}}},
  booktitle = {Numerical {{Optimization}}},
  author = {Nocedal, Jorge and Wright, Stephen J.},
  date = {2006},
  series = {Springer {{Series}} in {{Operations Research}} and {{Financial Engineering}}},
  pages = {135--163},
  publisher = {{Springer}},
  location = {{New York, NY}},
  doi = {10.1007/978-0-387-40065-5_6},
  isbn = {978-0-387-40065-5},
  langid = {english},
  file = {C\:\\Users\\Felix\\gdrive\\ZoteroPaper\\2006_Nocedal_Wright\\Nocedal_Wright_2006_Quasi-Newton Methods.pdf}
}

@online{pascanuSaddlePointProblem2014,
  title = {On the Saddle Point Problem for Non-Convex Optimization},
  author = {Pascanu, Razvan and Dauphin, Yann N. and Ganguli, Surya and Bengio, Yoshua},
  date = {2014-05-27},
  eprint = {1405.4604},
  eprinttype = {arxiv},
  primaryclass = {cs},
  url = {http://arxiv.org/abs/1405.4604},
  urldate = {2021-09-16},
  abstract = {A central challenge to many fields of science and engineering involves minimizing non-convex error functions over continuous, high dimensional spaces. Gradient descent or quasi-Newton methods are almost ubiquitously used to perform such minimizations, and it is often thought that a main source of difficulty for the ability of these local methods to find the global minimum is the proliferation of local minima with much higher error than the global minimum. Here we argue, based on results from statistical physics, random matrix theory, and neural network theory, that a deeper and more profound difficulty originates from the proliferation of saddle points, not local minima, especially in high dimensional problems of practical interest. Such saddle points are surrounded by high error plateaus that can dramatically slow down learning, and give the illusory impression of the existence of a local minimum. Motivated by these arguments, we propose a new algorithm, the saddle-free Newton method, that can rapidly escape high dimensional saddle points, unlike gradient descent and quasi-Newton methods. We apply this algorithm to deep neural network training, and provide preliminary numerical evidence for its superior performance.},
  archiveprefix = {arXiv},
  keywords = {Computer Science - Machine Learning,Computer Science - Neural and Evolutionary Computing},
  file = {C\:\\Users\\Felix\\gdrive\\ZoteroPaper\\2014_Pascanu et al\\Pascanu et al_2014_On the saddle point problem for non-convex optimization.pdf;C\:\\Users\\Felix\\Zotero\\storage\\7B28YP5G\\1405.html}
}

@article{pearlmutterFastExactMultiplication1994,
  title = {Fast {{Exact Multiplication}} by the {{Hessian}}},
  author = {Pearlmutter, Barak A.},
  date = {1994-01},
  journaltitle = {Neural Computation},
  volume = {6},
  number = {1},
  pages = {147--160},
  issn = {0899-7667},
  doi = {10.1162/neco.1994.6.1.147},
  abstract = {Just storing the Hessian H (the matrix of second derivatives δ2E/δwiδwj of the error E with respect to each pair of weights) of a large neural network is difficult. Since a common use of a large matrix like H is to compute its product with various vectors, we derive a technique that directly calculates Hv, where v is an arbitrary vector. To calculate Hv, we first define a differential operator Rvf(w) = (δ/δr)f(w + rv)|r=0, note that Rv▽w = Hv and Rvw = v, and then apply Rv· to the equations used to compute ▽w. The result is an exact and numerically stable procedure for computing Hv, which takes about as much computation, and is about as local, as a gradient evaluation. We then apply the technique to a one pass gradient calculation algorithm (backpropagation), a relaxation gradient calculation algorithm (recurrent backpropagation), and two stochastic gradient calculation algorithms (Boltzmann machines and weight perturbation). Finally, we show that this technique can be used at the heart of many iterative techniques for computing various properties of H, obviating any need to calculate the full Hessian.},
  eventtitle = {Neural {{Computation}}},
  file = {C\:\\Users\\Felix\\gdrive\\ZoteroPaper\\1994_Pearlmutter\\Pearlmutter_1994_Fast Exact Multiplication by the Hessian.pdf;C\:\\Users\\Felix\\Zotero\\storage\\5X9LULGF\\6796137.html}
}

@inproceedings{pesmeConvergenceDiagnosticBasedStep2020,
  title = {On {{Convergence-Diagnostic}} Based {{Step Sizes}} for {{Stochastic Gradient Descent}}},
  booktitle = {Proceedings of the 37th {{International Conference}} on {{Machine Learning}}},
  author = {Pesme, Scott and Dieuleveut, Aymeric and Flammarion, Nicolas},
  date = {2020-11-21},
  pages = {7641--7651},
  publisher = {{PMLR}},
  issn = {2640-3498},
  url = {https://proceedings.mlr.press/v119/pesme20a.html},
  urldate = {2021-09-20},
  abstract = {Constant step-size Stochastic Gradient Descent exhibits two phases: a transient phase during which iterates make fast progress towards the optimum, followed by a stationary phase during which iterates oscillate around the optimal point. In this paper, we show that efficiently detecting this transition and appropriately decreasing the step size can lead to fast convergence rates. We analyse the classical statistical test proposed by Pflug (1983), based on the inner product between consecutive stochastic gradients. Even in the simple case where the objective function is quadratic we show that this test cannot lead to an adequate convergence diagnostic. We then propose a novel and simple statistical procedure that accurately detects stationarity and we provide experimental results showing state-of-the-art performance on synthetic and real-word datasets.},
  eventtitle = {International {{Conference}} on {{Machine Learning}}},
  langid = {english},
  file = {C\:\\Users\\Felix\\gdrive\\ZoteroPaper\\2020_Pesme et al\\Pesme et al_2020_On Convergence-Diagnostic based Step Sizes for Stochastic Gradient Descent.pdf;C\:\\Users\\Felix\\Zotero\\storage\\GF333ESK\\Pesme et al. - 2020 - On Convergence-Diagnostic based Step Sizes for Sto.pdf}
}

@book{polyakIntroductionOptimization1987,
  title = {Introduction to Optimization},
  author = {Polyak, Boris T.},
  date = {1987},
  publisher = {{New York Optimization Software}},
  isbn = {978-0-911575-14-9},
  langid = {english},
  keywords = {Optimisation mathématique;Optimierung},
  file = {C\:\\Users\\Felix\\gdrive\\ZoteroPaper\\1987_Polyak\\Polyak_1987_Introduction to optimization.pdf}
}

@article{polyakMethodsSpeedingConvergence1964,
  title = {Some Methods of Speeding up the Convergence of Iteration Methods},
  author = {Polyak, Boris T.},
  date = {1964-01-01},
  journaltitle = {USSR Computational Mathematics and Mathematical Physics},
  shortjournal = {USSR Computational Mathematics and Mathematical Physics},
  volume = {4},
  number = {5},
  pages = {1--17},
  issn = {0041-5553},
  doi = {10.1016/0041-5553(64)90137-5},
  abstract = {For the solution of the functional equation P (x) = 0 (1) (where P is an operator, usually linear, from B into B, and B is a Banach space) iteration methods are generally used. These consist of the construction of a series x0, …, xn, …, which converges to the solution (see, for example [1]). Continuous analogues of these methods are also known, in which a trajectory x(t), 0 ⩽ t ⩽ ∞ is constructed, which satisfies the ordinary differential equation in B and is such that x(t) approaches the solution of (1) as t → ∞ (see [2]). We shall call the method a k-step method if for the construction of each successive iteration xn+1 we use k previous iterations xn, …, xn−k+1. The same term will also be used for continuous methods if x(t) satisfies a differential equation of the k-th order or k-th degree. Iteration methods which are more widely used are one-step (e.g. methods of successive approximations). They are generally simple from the calculation point of view but often converge very slowly. This is confirmed both by the evaluation of the speed of convergence and by calculation in practice (for more details see below). Therefore the question of the rate of convergence is most important. Some multistep methods, which we shall consider further, which are only slightly more complicated than the corresponding one-step methods, make it possible to speed up the convergence substantially. Note that all the methods mentioned below are applicable also to the problem of minimizing the differentiable functional (x) in Hilbert space, so long as this problem reduces to the solution of the equation grad (x) = 0.},
  langid = {english},
  file = {C\:\\Users\\Felix\\gdrive\\ZoteroPaper\\1964_Polyak\\Polyak_1964_Some methods of speeding up the convergence of iteration methods.pdf;C\:\\Users\\Felix\\Zotero\\storage\\YHYZHVCI\\0041555364901375.html}
}

@article{powellSearchDirectionsMinimization1973,
  title = {On Search Directions for Minimization Algorithms},
  author = {Powell, M. J. D.},
  date = {1973-12-01},
  journaltitle = {Mathematical Programming},
  shortjournal = {Mathematical Programming},
  volume = {4},
  number = {1},
  pages = {193--201},
  issn = {1436-4646},
  doi = {10.1007/BF01584660},
  abstract = {Some examples are given of differentiable functions of three variables, having the property that if they are treated by the minimization algorithm that searches along the coordinate directions in sequence, then the search path tends to a closed loop. On this loop the gradient of the objective function is bounded away from zero. We discuss the relevance of these examples to the problem of proving general convergence theorems for minimization algorithms that use search directions.},
  langid = {english},
  file = {C\:\\Users\\Felix\\gdrive\\ZoteroPaper\\1973_Powell\\Powell_1973_On search directions for minimization algorithms.pdf}
}

@article{qianMomentumTermGradient1999,
  title = {On the Momentum Term in Gradient Descent Learning Algorithms},
  author = {Qian, Ning},
  date = {1999-01-01},
  journaltitle = {Neural Networks},
  volume = {12},
  number = {1},
  pages = {145--151},
  publisher = {{Pergamon}},
  issn = {0893-6080},
  doi = {10.1016/S0893-6080(98)00116-6},
  abstract = {A momentum term is usually included in the simulations of connectionist learning algorithms. Although it is well known that such a term greatly improv…},
  langid = {english},
  file = {C\:\\Users\\Felix\\gdrive\\ZoteroPaper\\1999_Qian\\Qian_1999_On the momentum term in gradient descent learning algorithms.pdf;C\:\\Users\\Felix\\Zotero\\storage\\26VLHHCM\\S0893608098001166.html}
}

@online{radfordVisualizingOptimizationAlgos2014,
  title = {Visualizing {{Optimization Algos}} - {{GIFs}}},
  author = {Radford, Alec},
  date = {2014-09-18},
  url = {https://imgur.com/a/Hqolp},
  urldate = {2021-11-16},
  abstract = {Algos without scaling based on gradient information really struggle to break symmetry here - SGD gets no where and Nesterov Accelerated Gradient / Momentum exhibits oscillations until they build up velocity in the optimization direction. Algos that scale step size based on the gradient quickly break symmetry and begin descent.},
  langid = {english},
  organization = {{Imgur}},
  file = {C\:\\Users\\Felix\\Zotero\\storage\\EGMXKMTZ\\Hqolp.html}
}

@unpublished{rechtOptimization2013,
  title = {Optimization {{I}}},
  author = {Recht, Benjamin},
  date = {2013-04-09},
  url = {https://simons.berkeley.edu/talks/ben-recht-2013-09-04},
  urldate = {2021-09-03},
  abstract = {Machine learning and computational statistics problems involving large datasets have proved to be a rich source of interesting and challenging optimization problems in recent years. The challenges arising from the complexity of these problems and the special requirements for their solutions have brought a wide range of optimization algorithms into play. We start this talk by surveying the application space, outlining several important analysis and learning tasks, and describing the contexts in which such problems are posed. We then describe optimization approaches that are proving to be relevant, including stochastic gradient methods, sparse optimization methods, first-order methods, coordinate descent, higher-order methods, and augmented Lagrangian methods. We also discuss parallel variants of some of these approaches.},
  eventtitle = {Big {{Data Boot Camp}}},
  langid = {english},
  venue = {{Simons Institute for the Theory of Computing, UC Berkeley}},
  annotation = {https://youtu.be/6WeyTUnbwQQ},
  file = {C\:\\Users\\Felix\\gdrive\\ZoteroPaper\\2013_Recht\\Recht_2013_Optimization I.pdf;C\:\\Users\\Felix\\Zotero\\storage\\FP36RD4H\\ben-recht-2013-09-04.html}
}

@online{reddiConvergenceAdam2019,
  title = {On the {{Convergence}} of {{Adam}} and {{Beyond}}},
  author = {Reddi, Sashank J. and Kale, Satyen and Kumar, Sanjiv},
  date = {2019-04-19},
  eprint = {1904.09237},
  eprinttype = {arxiv},
  primaryclass = {cs, math, stat},
  url = {http://arxiv.org/abs/1904.09237},
  urldate = {2021-06-08},
  abstract = {Several recently proposed stochastic optimization methods that have been successfully used in training deep networks such as RMSProp, Adam, Adadelta, Nadam are based on using gradient updates scaled by square roots of exponential moving averages of squared past gradients. In many applications, e.g. learning with large output spaces, it has been empirically observed that these algorithms fail to converge to an optimal solution (or a critical point in nonconvex settings). We show that one cause for such failures is the exponential moving average used in the algorithms. We provide an explicit example of a simple convex optimization setting where Adam does not converge to the optimal solution, and describe the precise problems with the previous analysis of Adam algorithm. Our analysis suggests that the convergence issues can be fixed by endowing such algorithms with `long-term memory' of past gradients, and propose new variants of the Adam algorithm which not only fix the convergence issues but often also lead to improved empirical performance.},
  archiveprefix = {arXiv},
  keywords = {Computer Science - Machine Learning,Mathematics - Optimization and Control,Statistics - Machine Learning},
  file = {C\:\\Users\\Felix\\gdrive\\ZoteroPaper\\2019_Reddi et al\\Reddi et al_2019_On the Convergence of Adam and Beyond.pdf;C\:\\Users\\Felix\\Zotero\\storage\\7HJCRKB3\\1904.html}
}

@online{ruderOverviewGradientDescent2017,
  title = {An Overview of Gradient Descent Optimization Algorithms},
  author = {Ruder, Sebastian},
  date = {2017-06-15},
  eprint = {1609.04747},
  eprinttype = {arxiv},
  primaryclass = {cs},
  url = {http://arxiv.org/abs/1609.04747},
  urldate = {2021-06-07},
  abstract = {Gradient descent optimization algorithms, while increasingly popular, are often used as black-box optimizers, as practical explanations of their strengths and weaknesses are hard to come by. This article aims to provide the reader with intuitions with regard to the behaviour of different algorithms that will allow her to put them to use. In the course of this overview, we look at different variants of gradient descent, summarize challenges, introduce the most common optimization algorithms, review architectures in a parallel and distributed setting, and investigate additional strategies for optimizing gradient descent.},
  archiveprefix = {arXiv},
  keywords = {Computer Science - Machine Learning},
  file = {C\:\\Users\\Felix\\gdrive\\ZoteroPaper\\2017_Ruder\\Ruder_2017_An overview of gradient descent optimization algorithms.pdf;C\:\\Users\\Felix\\Zotero\\storage\\RZCQGVS2\\1609.html}
}

@online{schmidtDescendingCrowdedValley2021,
  title = {Descending through a {{Crowded Valley}} -- {{Benchmarking Deep Learning Optimizers}}},
  author = {Schmidt, Robin M. and Schneider, Frank and Hennig, Philipp},
  date = {2021-02-11},
  eprint = {2007.01547},
  eprinttype = {arxiv},
  primaryclass = {cs, stat},
  url = {http://arxiv.org/abs/2007.01547},
  urldate = {2021-06-15},
  abstract = {Choosing the optimizer is considered to be among the most crucial design decisions in deep learning, and it is not an easy one. The growing literature now lists hundreds of optimization methods. In the absence of clear theoretical guidance and conclusive empirical evidence, the decision is often made based on anecdotes. In this work, we aim to replace these anecdotes, if not with a conclusive ranking, then at least with evidence-backed heuristics. To do so, we perform an extensive, standardized benchmark of fifteen particularly popular deep learning optimizers while giving a concise overview of the wide range of possible choices. Analyzing more than \$50,000\$ individual runs, we contribute the following three points: (i) Optimizer performance varies greatly across tasks. (ii) We observe that evaluating multiple optimizers with default parameters works approximately as well as tuning the hyperparameters of a single, fixed optimizer. (iii) While we cannot discern an optimization method clearly dominating across all tested tasks, we identify a significantly reduced subset of specific optimizers and parameter choices that generally lead to competitive results in our experiments: Adam remains a strong contender, with newer methods failing to significantly and consistently outperform it. Our open-sourced results are available as challenging and well-tuned baselines for more meaningful evaluations of novel optimization methods without requiring any further computational efforts.},
  archiveprefix = {arXiv},
  keywords = {Computer Science - Machine Learning,Statistics - Machine Learning},
  file = {C\:\\Users\\Felix\\gdrive\\ZoteroPaper\\2021_Schmidt et al\\Schmidt et al_2021_Descending through a Crowded Valley -- Benchmarking Deep Learning Optimizers.pdf;C\:\\Users\\Felix\\Zotero\\storage\\TJ3EARHJ\\2007.html}
}

@inproceedings{schraudolphFastCurvatureMatrixVector2001,
  title = {Fast {{Curvature Matrix-Vector Products}}},
  booktitle = {Artificial {{Neural Networks}} — {{ICANN}} 2001},
  author = {Schraudolph, Nicol N.},
  editor = {Dorffner, Georg and Bischof, Horst and Hornik, Kurt},
  date = {2001},
  series = {Lecture {{Notes}} in {{Computer Science}}},
  pages = {19--26},
  publisher = {{Springer}},
  location = {{Berlin, et al.}},
  doi = {10.1007/3-540-44668-0_4},
  abstract = {The Gauss-Newton approximation of the Hessian guarantees positive semi-definiteness while retaining more second-order information than the Fisher information. We extend it from nonlinear least squares to all differentiable objectives such that positive semi-definiteness is maintained for the standard loss functions in neural network regression and classification. We give efficient algorithms for computing the product of extended Gauss-Newton and Fisher information matrices with arbitrary vectors, using techniques similar to but even cheaper than the fast Hessian-vector product [1]. The stability of SMD [2,3,4,5], a learning rate adaptation method that uses curvature matrix-vector products, improves when the extended Gauss-Newton matrix is substituted for the Hessian.},
  isbn = {978-3-540-44668-2},
  langid = {english},
  file = {C\:\\Users\\Felix\\gdrive\\ZoteroPaper\\2001_Schraudolph\\Schraudolph_2001_Fast Curvature Matrix-Vector Products.pdf}
}

@article{sejnowskiUnreasonableEffectivenessDeep2020,
  title = {The Unreasonable Effectiveness of Deep Learning in Artificial Intelligence},
  author = {Sejnowski, Terrence J.},
  date = {2020-12-01},
  journaltitle = {Proceedings of the National Academy of Sciences},
  shortjournal = {PNAS},
  volume = {117},
  number = {48},
  eprint = {31992643},
  eprinttype = {pmid},
  pages = {30033--30038},
  publisher = {{National Academy of Sciences}},
  issn = {0027-8424, 1091-6490},
  doi = {10.1073/pnas.1907373117},
  abstract = {Deep learning networks have been trained to recognize speech, caption photographs, and translate text between languages at high levels of performance. Although applications of deep learning networks to real-world problems have become ubiquitous, our understanding of why they are so effective is lacking. These empirical results should not be possible according to sample complexity in statistics and nonconvex optimization theory. However, paradoxes in the training and effectiveness of deep learning networks are being investigated and insights are being found in the geometry of high-dimensional spaces. A mathematical theory of deep learning would illuminate how they function, allow us to assess the strengths and weaknesses of different network architectures, and lead to major improvements. Deep learning has provided natural ways for humans to communicate with digital devices and is foundational for building artificial general intelligence. Deep learning was inspired by the architecture of the cerebral cortex and insights into autonomy and general intelligence may be found in other brain regions that are essential for planning and survival, but major breakthroughs will be needed to achieve these goals.},
  langid = {english},
  keywords = {artificial intelligence,deep learning,neural networks},
  file = {C\:\\Users\\Felix\\gdrive\\ZoteroPaper\\2020_Sejnowski\\Sejnowski_2020_The unreasonable effectiveness of deep learning in artificial intelligence.pdf;C\:\\Users\\Felix\\Zotero\\storage\\L3VMZ7HC\\30033.html}
}

@article{shannoConditioningQuasiNewtonMethods1970,
  title = {Conditioning of Quasi-{{Newton}} Methods for Function Minimization},
  author = {Shanno, D. F.},
  date = {1970},
  journaltitle = {Mathematics of Computation},
  shortjournal = {Math. Comp.},
  volume = {24},
  number = {111},
  pages = {647--656},
  issn = {0025-5718, 1088-6842},
  doi = {10.1090/S0025-5718-1970-0274029-X},
  abstract = {Quasi-Newton methods accelerate the steepest-descent technique for function minimization by using computational history to generate a sequence of approximations to the inverse of the Hessian matrix. This paper presents a class of approximating matrices as a function of a scalar parameter. The problem of optimal conditioning of these matrices under an appropriate norm as a function of the scalar parameter is investigated. A set of computational results verifies the superiority of the new methods arising from conditioning considerations to known methods.},
  langid = {english},
  keywords = {conditioning of search methods,Function minimization,gradient search,Hessian matrix,inverse approximations,quasi-Newton methods,stability of search methods,steepest-descent methods,variable metric methods},
  file = {C\:\\Users\\Felix\\gdrive\\ZoteroPaper\\1970_Shanno\\Shanno_1970_Conditioning of quasi-Newton methods for function minimization.pdf;C\:\\Users\\Felix\\Zotero\\storage\\6XNM4FQF\\home.html}
}

@book{shewchukIntroductionConjugateGradient1994,
  title = {An Introduction to the Conjugate Gradient Method without the Agonizing Pain},
  author = {Shewchuk, Jonathan Richard},
  date = {1994},
  publisher = {{Carnegie-Mellon University. Department of Computer Science}},
  url = {https://www.cs.cmu.edu/~quake-papers/painless-conjugate-gradient.pdf},
  urldate = {2021-12-13},
  file = {C\:\\Users\\Felix\\gdrive\\ZoteroPaper\\1994_Shewchuk\\Shewchuk_1994_An introduction to the conjugate gradient method without the agonizing pain.pdf}
}

@online{simsekliTailIndexAnalysisStochastic2019,
  title = {A {{Tail-Index Analysis}} of {{Stochastic Gradient Noise}} in {{Deep Neural Networks}}},
  author = {Simsekli, Umut and Sagun, Levent and Gurbuzbalaban, Mert},
  date = {2019-01-17},
  eprint = {1901.06053},
  eprinttype = {arxiv},
  primaryclass = {cs, stat},
  url = {http://arxiv.org/abs/1901.06053},
  urldate = {2021-05-27},
  abstract = {The gradient noise (GN) in the stochastic gradient descent (SGD) algorithm is often considered to be Gaussian in the large data regime by assuming that the classical central limit theorem (CLT) kicks in. This assumption is often made for mathematical convenience, since it enables SGD to be analyzed as a stochastic differential equation (SDE) driven by a Brownian motion. We argue that the Gaussianity assumption might fail to hold in deep learning settings and hence render the Brownian motion-based analyses inappropriate. Inspired by non-Gaussian natural phenomena, we consider the GN in a more general context and invoke the generalized CLT (GCLT), which suggests that the GN converges to a heavy-tailed \$\textbackslash alpha\$-stable random variable. Accordingly, we propose to analyze SGD as an SDE driven by a L\textbackslash '\{e\}vy motion. Such SDEs can incur `jumps', which force the SDE transition from narrow minima to wider minima, as proven by existing metastability theory. To validate the \$\textbackslash alpha\$-stable assumption, we conduct extensive experiments on common deep learning architectures and show that in all settings, the GN is highly non-Gaussian and admits heavy-tails. We further investigate the tail behavior in varying network architectures and sizes, loss functions, and datasets. Our results open up a different perspective and shed more light on the belief that SGD prefers wide minima.},
  archiveprefix = {arXiv},
  keywords = {Computer Science - Machine Learning,Statistics - Machine Learning},
  file = {C\:\\Users\\Felix\\gdrive\\ZoteroPaper\\2019_Simsekli et al\\Simsekli et al_2019_A Tail-Index Analysis of Stochastic Gradient Noise in Deep Neural Networks.pdf;C\:\\Users\\Felix\\Zotero\\storage\\L46UQ4VK\\1901.html}
}

@online{smithDonDecayLearning2018,
  title = {Don't {{Decay}} the {{Learning Rate}}, {{Increase}} the {{Batch Size}}},
  author = {Smith, Samuel L. and Kindermans, Pieter-Jan and Ying, Chris and Le, Quoc V.},
  date = {2018-02-23},
  eprint = {1711.00489},
  eprinttype = {arxiv},
  primaryclass = {cs, stat},
  url = {http://arxiv.org/abs/1711.00489},
  urldate = {2021-10-06},
  abstract = {It is common practice to decay the learning rate. Here we show one can usually obtain the same learning curve on both training and test sets by instead increasing the batch size during training. This procedure is successful for stochastic gradient descent (SGD), SGD with momentum, Nesterov momentum, and Adam. It reaches equivalent test accuracies after the same number of training epochs, but with fewer parameter updates, leading to greater parallelism and shorter training times. We can further reduce the number of parameter updates by increasing the learning rate \$\textbackslash epsilon\$ and scaling the batch size \$B \textbackslash propto \textbackslash epsilon\$. Finally, one can increase the momentum coefficient \$m\$ and scale \$B \textbackslash propto 1/(1-m)\$, although this tends to slightly reduce the test accuracy. Crucially, our techniques allow us to repurpose existing training schedules for large batch training with no hyper-parameter tuning. We train ResNet-50 on ImageNet to \$76.1\textbackslash\%\$ validation accuracy in under 30 minutes.},
  archiveprefix = {arXiv},
  keywords = {Computer Science - Computer Vision and Pattern Recognition,Computer Science - Distributed; Parallel; and Cluster Computing,Computer Science - Machine Learning,Statistics - Machine Learning},
  file = {C\:\\Users\\Felix\\gdrive\\ZoteroPaper\\2018_Smith et al\\Smith et al_2018_Don't Decay the Learning Rate, Increase the Batch Size.pdf;C\:\\Users\\Felix\\Zotero\\storage\\WUIKNVAH\\1711.html}
}

@online{suDifferentialEquationModeling2015,
  title = {A {{Differential Equation}} for {{Modeling Nesterov}}'s {{Accelerated Gradient Method}}: {{Theory}} and {{Insights}}},
  shorttitle = {A {{Differential Equation}} for {{Modeling Nesterov}}'s {{Accelerated Gradient Method}}},
  author = {Su, Weijie and Boyd, Stephen and Candes, Emmanuel J.},
  date = {2015-10-27},
  eprint = {1503.01243},
  eprinttype = {arxiv},
  primaryclass = {math, stat},
  url = {http://arxiv.org/abs/1503.01243},
  urldate = {2021-07-12},
  abstract = {We derive a second-order ordinary differential equation (ODE) which is the limit of Nesterov's accelerated gradient method. This ODE exhibits approximate equivalence to Nesterov's scheme and thus can serve as a tool for analysis. We show that the continuous time ODE allows for a better understanding of Nesterov's scheme. As a byproduct, we obtain a family of schemes with similar convergence rates. The ODE interpretation also suggests restarting Nesterov's scheme leading to an algorithm, which can be rigorously proven to converge at a linear rate whenever the objective is strongly convex.},
  archiveprefix = {arXiv},
  keywords = {Mathematics - Classical Analysis and ODEs,Mathematics - Optimization and Control,Statistics - Machine Learning},
  file = {C\:\\Users\\Felix\\gdrive\\ZoteroPaper\\2015_Su et al\\Su et al_2015_A Differential Equation for Modeling Nesterov's Accelerated Gradient Method.pdf;C\:\\Users\\Felix\\Zotero\\storage\\Y73GE2KR\\1503.html}
}

@article{sunNonErgodicConvergenceAnalysis2019,
  title = {Non-{{Ergodic Convergence Analysis}} of {{Heavy-Ball Algorithms}}},
  author = {Sun, Tao and Yin, Penghang and Li, Dongsheng and Huang, Chun and Guan, Lei and Jiang, Hao},
  date = {2019-07-17},
  journaltitle = {Proceedings of the AAAI Conference on Artificial Intelligence},
  volume = {33},
  number = {01},
  pages = {5033--5040},
  issn = {2374-3468},
  doi = {10.1609/aaai.v33i01.33015033},
  abstract = {In this paper, we revisit the convergence of the Heavy-ball method, and present improved convergence complexity results in the convex setting. We provide the first non-ergodic O(1/k) rate result of the Heavy-ball algorithm with constant step size for coercive objective functions. For objective functions satisfying a relaxed strongly convex condition, the linear convergence is established under weaker assumptions on the step size and inertial parameter than made in the existing literature. We extend our results to multi-block version of the algorithm with both the cyclic and stochastic update rules. In addition, our results can also be extended to decentralized optimization, where the ergodic analysis is not applicable.},
  langid = {english},
  file = {C\:\\Users\\Felix\\gdrive\\ZoteroPaper\\2019_Sun et al\\Sun et al_2019_Non-Ergodic Convergence Analysis of Heavy-Ball Algorithms.pdf}
}

@inproceedings{sutskeverImportanceInitializationMomentum2013,
  title = {On the Importance of Initialization and Momentum in Deep Learning},
  booktitle = {International {{Conference}} on {{Machine Learning}}},
  author = {Sutskever, Ilya and Martens, James and Dahl, George and Hinton, Geoffrey},
  date = {2013-05-26},
  pages = {1139--1147},
  publisher = {{PMLR}},
  issn = {1938-7228},
  url = {http://proceedings.mlr.press/v28/sutskever13.html},
  urldate = {2021-06-07},
  abstract = {Deep and recurrent neural networks (DNNs and RNNs respectively) are powerful models that were considered to be almost impossible to train using stochastic gradient descent with momentum. In this pa...},
  eventtitle = {International {{Conference}} on {{Machine Learning}}},
  langid = {english},
  file = {C\:\\Users\\Felix\\gdrive\\ZoteroPaper\\2013_Sutskever et al\\Sutskever et al_2013_On the importance of initialization and momentum in deep learning.pdf;C\:\\Users\\Felix\\Zotero\\storage\\ZB8G53F7\\sutskever13.html}
}

@book{suttonReinforcementLearningIntroduction2018,
  title = {Reinforcement Learning: An Introduction},
  shorttitle = {Reinforcement Learning},
  author = {Sutton, Richard S. and Barto, Andrew G.},
  date = {2018},
  series = {Adaptive Computation and Machine Learning Series},
  edition = {Second edition},
  publisher = {{The MIT Press}},
  location = {{Cambridge, Massachusetts}},
  abstract = {Reinforcement learning, one of the most active research areas in artificial intelligence, is a computational approach to learning whereby an agent tries to maximize the total amount of reward it receives while interacting with a complex, uncertain environment. In Reinforcement Learning, Richard Sutton and Andrew Barto provide a clear and simple account of the field's key ideas and algorithms.},
  isbn = {978-0-262-03924-6},
  langid = {english},
  keywords = {Reinforcement learning},
  file = {C\:\\Users\\Felix\\gdrive\\ZoteroPaper\\2018_Sutton_Barto\\Sutton_Barto_2018_Reinforcement learning.pdf}
}

@online{truongBacktrackingGradientDescent2019,
  title = {Backtracking Gradient Descent Method for General $C^1$ Functions, with Applications to {{Deep Learning}}},
  author = {Truong, Tuyen Trung and Nguyen, Tuan Hang},
  date = {2019-04-04},
  eprint = {1808.05160},
  eprinttype = {arxiv},
  primaryclass = {cs, math, stat},
  url = {http://arxiv.org/abs/1808.05160},
  urldate = {2021-09-27},
  abstract = {While Standard gradient descent is one very popular optimisation method, its convergence cannot be proven beyond the class of functions whose gradient is globally Lipschitz continuous. As such, it is not actually applicable to realistic applications such as Deep Neural Networks. In this paper, we prove that its backtracking variant behaves very nicely, in particular convergence can be shown for all Morse functions. The main theoretical result of this paper is as follows. Theorem. Let \$f:\textbackslash mathbb\{R\}\^k\textbackslash rightarrow \textbackslash mathbb\{R\}\$ be a \$C\^1\$ function, and \$\textbackslash\{z\_n\textbackslash\}\$ a sequence constructed from the Backtracking gradient descent algorithm. (1) Either \$\textbackslash lim \_\{n\textbackslash rightarrow\textbackslash infty\}||z\_n||=\textbackslash infty\$ or \$\textbackslash lim \_\{n\textbackslash rightarrow\textbackslash infty\}||z\_\{n+1\}-z\_n||=0\$. (2) Assume that \$f\$ has at most countably many critical points. Then either \$\textbackslash lim \_\{n\textbackslash rightarrow\textbackslash infty\}||z\_n||=\textbackslash infty\$ or \$\textbackslash\{z\_n\textbackslash\}\$ converges to a critical point of \$f\$. (3) More generally, assume that all connected components of the set of critical points of \$f\$ are compact. Then either \$\textbackslash lim \_\{n\textbackslash rightarrow\textbackslash infty\}||z\_n||=\textbackslash infty\$ or \$\textbackslash\{z\_n\textbackslash\}\$ is bounded. Moreover, in the latter case the set of cluster points of \$\textbackslash\{z\_n\textbackslash\}\$ is connected. Some generalised versions of this result, including an inexact version, are included. Another result in this paper concerns the problem of saddle points. We then present a heuristic argument to explain why Standard gradient descent method works so well, and modifications of the backtracking versions of GD, MMT and NAG. Experiments with datasets CIFAR10 and CIFAR100 on various popular architectures verify the heuristic argument also for the mini-batch practice and show that our new algorithms, while automatically fine tuning learning rates, perform better than current state-of-the-art methods such as MMT, NAG, Adagrad, Adadelta, RMSProp, Adam and Adamax.},
  archiveprefix = {arXiv},
  keywords = {Computer Science - Machine Learning,Mathematics - Numerical Analysis,Mathematics - Optimization and Control,Statistics - Machine Learning},
  file = {C\:\\Users\\Felix\\gdrive\\ZoteroPaper\\2019_Truong_Nguyen\\Truong_Nguyen_2019_Backtracking gradient descent method for general $C^1$ functions, with.pdf;C\:\\Users\\Felix\\Zotero\\storage\\43XEVGBC\\1808.html}
}

@article{vapnikOverviewStatisticalLearning1999,
  title = {An Overview of Statistical Learning Theory},
  author = {Vapnik, V.N.},
  date = {1999-09},
  journaltitle = {IEEE Transactions on Neural Networks},
  volume = {10},
  number = {5},
  pages = {988--999},
  issn = {1941-0093},
  doi = {10.1109/72.788640},
  abstract = {Statistical learning theory was introduced in the late 1960's. Until the 1990's it was a purely theoretical analysis of the problem of function estimation from a given collection of data. In the middle of the 1990's new types of learning algorithms (called support vector machines) based on the developed theory were proposed. This made statistical learning theory not only a tool for the theoretical analysis but also a tool for creating practical algorithms for estimating multidimensional functions. This article presents a very general overview of statistical learning theory including both theoretical and algorithmic aspects of the theory. The goal of this overview is to demonstrate how the abstract learning theory established conditions for generalization which are more general than those discussed in classical statistical paradigms and how the understanding of these conditions inspired new algorithmic approaches to function estimation problems.},
  eventtitle = {{{IEEE Transactions}} on {{Neural Networks}}},
  keywords = {Algorithm design and analysis,Loss measurement,Machine learning,Multidimensional systems,Pattern recognition,Probability distribution,Risk management,Statistical learning,Support vector machines},
  file = {C\:\\Users\\Felix\\gdrive\\ZoteroPaper\\1999_Vapnik\\Vapnik_1999_An overview of statistical learning theory.pdf;C\:\\Users\\Felix\\Zotero\\storage\\6FCWKDS3\\788640.html}
}

@inproceedings{vinyalsKrylovSubspaceDescent2012,
  title = {Krylov {{Subspace Descent}} for {{Deep Learning}}},
  booktitle = {Proceedings of the {{Fifteenth International Conference}} on {{Artificial Intelligence}} and {{Statistics}}},
  author = {Vinyals, Oriol and Povey, Daniel},
  date = {2012-03-21},
  pages = {1261--1268},
  publisher = {{PMLR}},
  issn = {1938-7228},
  url = {https://proceedings.mlr.press/v22/vinyals12.html},
  urldate = {2021-09-29},
  abstract = {In this paper, we propose a second order optimization method to learn models where both the dimensionality of the parameter space and the number of training samples is high.  In our method, we construct on each iteration a Krylov subspace formed by the gradient and an approximation to the Hessian matrix, and then use a subset of the training data samples to optimize over this subspace.  As with the Hessian Free (HF) method of Martens (2010), the Hessian matrix is never explicitly constructed, and is computed using a subset of data.  In practice, as in HF, we typically use a positive definite substitute for the Hessian matrix such as the Gauss-Newton matrix.  We investigate the effectiveness of our proposed method on deep neural networks, and compare its performance to widely used methods such as stochastic gradient descent, conjugate gradient descent and L-BFGS, and also to HF. Our method leads to faster convergence than either L-BFGS or HF, and generally performs better than either of them in cross-validation accuracy.  It is also simpler and more general than HF, as it does not require a positive semidefinite approximation of the Hessian matrix to work well nor the setting of a damping parameter.  The chief drawback versus HF is the need for memory to store a basis for the Krylov subspace.},
  eventtitle = {Artificial {{Intelligence}} and {{Statistics}}},
  langid = {english},
  file = {C\:\\Users\\Felix\\gdrive\\ZoteroPaper\\2012_Vinyals_Povey\\Vinyals_Povey_2012_Krylov Subspace Descent for Deep Learning.pdf}
}

@online{wilsonMarginalValueAdaptive2018,
  title = {The {{Marginal Value}} of {{Adaptive Gradient Methods}} in {{Machine Learning}}},
  author = {Wilson, Ashia C. and Roelofs, Rebecca and Stern, Mitchell and Srebro, Nathan and Recht, Benjamin},
  date = {2018-05-21},
  eprint = {1705.08292},
  eprinttype = {arxiv},
  primaryclass = {cs, stat},
  url = {http://arxiv.org/abs/1705.08292},
  urldate = {2021-04-19},
  abstract = {Adaptive optimization methods, which perform local optimization with a metric constructed from the history of iterates, are becoming increasingly popular for training deep neural networks. Examples include AdaGrad, RMSProp, and Adam. We show that for simple overparameterized problems, adaptive methods often find drastically different solutions than gradient descent (GD) or stochastic gradient descent (SGD). We construct an illustrative binary classification problem where the data is linearly separable, GD and SGD achieve zero test error, and AdaGrad, Adam, and RMSProp attain test errors arbitrarily close to half. We additionally study the empirical generalization capability of adaptive methods on several state-of-the-art deep learning models. We observe that the solutions found by adaptive methods generalize worse (often significantly worse) than SGD, even when these solutions have better training performance. These results suggest that practitioners should reconsider the use of adaptive methods to train neural networks.},
  archiveprefix = {arXiv},
  keywords = {Computer Science - Machine Learning,Statistics - Machine Learning},
  file = {C\:\\Users\\Felix\\gdrive\\ZoteroPaper\\2018_Wilson et al\\Wilson et al_2018_The Marginal Value of Adaptive Gradient Methods in Machine Learning.pdf;C\:\\Users\\Felix\\Zotero\\storage\\59ZDTSIM\\1705.html}
}

@online{zeilerADADELTAAdaptiveLearning2012,
  title = {{{ADADELTA}}: {{An Adaptive Learning Rate Method}}},
  shorttitle = {{{ADADELTA}}},
  author = {Zeiler, Matthew D.},
  date = {2012-12-22},
  eprint = {1212.5701},
  eprinttype = {arxiv},
  primaryclass = {cs},
  url = {http://arxiv.org/abs/1212.5701},
  urldate = {2021-11-15},
  abstract = {We present a novel per-dimension learning rate method for gradient descent called ADADELTA. The method dynamically adapts over time using only first order information and has minimal computational overhead beyond vanilla stochastic gradient descent. The method requires no manual tuning of a learning rate and appears robust to noisy gradient information, different model architecture choices, various data modalities and selection of hyperparameters. We show promising results compared to other methods on the MNIST digit classification task using a single machine and on a large scale voice dataset in a distributed cluster environment.},
  archiveprefix = {arXiv},
  keywords = {Computer Science - Machine Learning},
  file = {C\:\\Users\\Felix\\gdrive\\ZoteroPaper\\2012_Zeiler\\Zeiler_2012_ADADELTA.pdf;C\:\\Users\\Felix\\Zotero\\storage\\KEXJ9289\\1212.html}
}
\end{document}